\documentclass[10pt, letterpaper,reqno]{amsart}
\usepackage[left=1in,right=1in,bottom=0.5in,top=0.8in]{geometry}
\usepackage{amsfonts}
\usepackage{amsmath, amssymb}
\usepackage{graphicx}
\usepackage[font=small,labelfont=bf]{caption}
\usepackage{epstopdf}
\usepackage{xcolor}
\usepackage{amsthm}
\usepackage{float}
\usepackage{enumitem}
\usepackage{pgfplots}
\usepackage{listings}
\usepackage{longtable}
\usepackage{mathrsfs}
\usepackage{dsfont}
\usepackage{mathtools}
\usepackage{dsfont}
\usepackage{stmaryrd}
\usepackage[pdfpagelabels,hyperindex]{hyperref}
\hypersetup{linkbordercolor=green}

\newcommand*{\rom}[1]{\expandafter\@slowromancap\romannumeral #1@}
 \colorlet{lgray}{white!80!black}
\colorlet{lred}{white!85!red}
\colorlet{lgreen}{white!60!green}
\colorlet{dgreen}{black!30!green}
\colorlet{lpurple}{white!60!purple}
\colorlet{lblue}{white!60!blue}
\definecolor{green}{rgb}{0.1,0.8,0.1}
\definecolor{yellow}{rgb}{1.0,0.85,0.25}
\definecolor{purple}{rgb}{1.0, 0, 1.0}
\definecolor{blue}{rgb}{0, 0, 1.0}
\tikzstyle{unfused}=[lgray, line width=1.5pt, ->]
\tikzstyle{fused}=[lgray, line width=4pt, ->]
\tikzstyle{dual}=[black, line width=1pt, dashed]
\tikzstyle{lightdual}=[black, line width=0.5pt, dashed]
\tikzstyle{cut}=[black, line width=1.0pt]

\theoremstyle{plain}
\newtheorem{theorem}{Theorem}[section]
\newtheorem{lemma}[theorem]{Lemma}
\newtheorem{proposition}[theorem]{Proposition}
\newtheorem{corollary}[theorem]{Corollary}

\theoremstyle{remark}
\newtheorem{definition}[theorem]{Definition}

\colorlet{shadecolor}{gray!20}
\pgfplotsset{compat=1.9}
\usetikzlibrary{shapes.multipart}
\usetikzlibrary{patterns}
\usetikzlibrary{shapes.multipart}
\usetikzlibrary{arrows}
\usetikzlibrary{decorations.markings}
\usepgflibrary{decorations.shapes}
\usetikzlibrary{decorations.shapes}
\usepgflibrary{shapes.symbols}
\usetikzlibrary{shapes.symbols}
\usetikzlibrary{decorations.pathreplacing}
\tikzstyle{fleche}=[>=stealth', postaction={decorate}, thick]
\tikzstyle{axis}=[->, >=stealth', thick, gray]
\tikzstyle{paths}=[>->, >=stealth', thick]
\tikzstyle{path}=[->, >=stealth', thick]
\tikzstyle{grille}=[dotted, gray]
\DeclareFontFamily{U}{mathx}{}
\DeclareFontShape{U}{mathx}{m}{n}{<-> mathx10}{}
\DeclareSymbolFont{mathx}{U}{mathx}{m}{n}
\DeclareMathAccent{\widehat}{0}{mathx}{"70}
\DeclareMathAccent{\widecheck}{0}{mathx}{"71}

\usepackage{romannum}
\usepackage{upgreek}

\usepackage[scr=boondoxo]{mathalfa}

\newcommand{\ber}{\mathrm{Ber}}
\newcommand{\lrw}{\mathsf{LRW}}
\newcommand{\bg}{\boldsymbol{\gamma}}
\newcommand{\bs}{\boldsymbol{\sigma}} 

\renewcommand{\a}{\mathsf{a}}
\renewcommand{\b}{\mathsf{b}}

\newcommand{\TV}[1]{{\left\lVert #1 \right\rVert}_{\normalfont
\text{TV}}}

\numberwithin{equation}{section}

\usepgflibrary{fpu}
\makeatletter
\let\NAT@parse\undefined
\makeatother

\author{Dominik Schmid}
\address
{
Dominik Schmid, 
University of Augsburg, Germany 
}
\email{d.schmid@uni-a.de} 
\author{Zongrui Yang}
\address
{
Zongrui Yang, 
University of California, Berkeley
}
\email{zongrui@berkeley.edu}

\title[Approximation for the stationary measure of the open ASEP]{Sharp approximations for the stationary measures of the open ASEP II: The coexistence line}  
\begin{document}
\pagenumbering{arabic}

\begin{abstract} 
We study the stationary measure of the open asymmetric simple exclusion process (ASEP) on the coexistence line. Our starting point is Bryc's two-layer Gibbs representation, together with the probabilistic interpretation, established in the first paper in this series, of the associated Bryc polynomial in terms of an inhomogeneous birth--death chain. Our main result gives a sharp approximation of the stationary measure. Specifically, we identify the shock location as an explicit observable of the particle configuration and obtain optimal bounds on the total variation distance between its law and the uniform measure on the lattice. Conditional on the shock location, we approximate, in total variation distance, the joint law of the occupation variables away from the shock by Bernoulli product measures with different densities to its left and right. These approximations lead to further applications, including fluctuation limits, moderate deviation bounds for the particle density, and limits for correlation functions under the stationary measure.
\end{abstract}
\maketitle

%---------------------------------------------------------------------------------------------------------------
% Section 1
%
%----------------------------------------------------------------------------------------------------------------
\section{Introduction and main results}

\subsection{Preface}

Shocks are a central phenomenon in interacting particle systems: they separate regions of different particle densities. For the asymmetric simple exclusion process (ASEP), they provide a discrete counterpart to discontinuities in solutions of the Burgers equation, with the two descriptions connected through hydrodynamic limits; see \cite{kipnis1999scaling} for a survey. The structure and fluctuations of shocks have been studied extensively for exclusion processes on the integer lattice \cite{liggett1999stochastic,FF94current,FF94shock}. Related questions also arise for stochastic partial differential equations, including the stochastic Burgers equation and the Kardar--Parisi--Zhang (KPZ) equation; see, for example, \cite{DS26,dunlap2021viscous} on viscous shock fluctuations.

In this paper, we study shocks in the ASEP with open boundaries, or open ASEP. Particles perform biased random walks on a finite segment, subject to the exclusion rule, and may enter or exit at the endpoints at prescribed rates. On the coexistence line, which forms the boundary between the high- and low-density phases, a shock separates regions of low and high particle density. Its location remains random on the scale of the system; see, for example, \cite{liggett1999stochastic,derrida1993exact,schutz1993phase,essler1996representations,mallick1997finite,derrida2003exact}. Confined by the two boundaries, this shock governs the macroscopic structure of the stationary measure. Our goal is to locate it precisely from a stationary particle configuration and to quantify its influence on the surrounding particles.

Several recent works have clarified this picture. Using Askey--Wilson signed measures, the authors of \cite{wang2023askey} determine the macroscopic density profile in the shock region, including the coexistence line. Microscopic local equilibrium on the coexistence line is studied in \cite{SY:openLight}, building on the local convergence results of \cite{BM06}. For the totally asymmetric case, the open TASEP, the authors of \cite{bryc2024limitCoex} identify the shock location directly from the particle configuration as the first location at which its centered height function attains its minimum. They prove that the rescaled shock location is asymptotically uniform and establish fluctuation limits for the height function after centering by the corresponding random shock profile.

The present paper is the second in a series of three on sharp approximations of the stationary measure of the open ASEP. We work under Liggett's condition on the rates. Our approach builds on Bryc's two-layer representation \cite{bryc2024stationary} (based on \cite{barraquand2024stationary}) and the probabilistic interpretation of its polynomial correction factors developed in the first paper \cite{SchmidYangShock}. These factors, called \emph{Bryc polynomials}, can be expressed in terms of inhomogeneous birth--death Markov chains. As in \cite{SchmidYangShock}, our analysis of these chains relies on the regeneration structure of biased simple random walk paths. On the coexistence line, however, this analysis is substantially more involved, as we must control the polynomial weights uniformly in the shock location.

Our main result gives a quantitative description of both the shock location and the stationary configuration around it. We define the shock location as the first location at which the centered particle height function attains its minimum. For a segment of length $N$, we prove matching upper and lower bounds of order $N^{-1}$ on the total variation distance between its law and the uniform measure on the lattice. This extends the shock-location results of \cite{bryc2024limitCoex} from the open TASEP to the open ASEP and strengthens macroscopic asymptotic uniformity to an approximation of the full discrete distribution at the optimal rate.

Conditional on the shock location, we also approximate the joint law of the occupation variables outside a window of radius $r$ around the shock by a Bernoulli product measure with low density to the left and high density to the right. The total variation error is exponentially small in $r$, uniformly in the system size and the admissible shock location. The estimate controls all sites outside the window jointly and therefore applies to arbitrary collections of observed sites, including those whose size grows with the system. To the best of our knowledge, this combination of sharp total variation estimates is new even for the open TASEP.

These approximations lead to further applications to the stationary measure. We obtain the fluctuation limits, moderate deviation bounds for particle counts centered by the random shock profile, and limits for the correlation functions, under the stationary measure of open ASEP on the coexistence line.

\subsection{The open ASEP}\label{subsec:model and results}

\begin{figure}
\centering
\begin{tikzpicture}[scale=0.85]
\def\spiral[#1](#2)(#3:#4:#5){% \spiral[draw options](placement)(end angle:revolutions:final radius)
\pgfmathsetmacro{\domain}{pi*#3/180+#4*2*pi}
\draw [#1,
       shift={(#2)},
       domain=0:\domain,
       variable=\t,
       smooth,
       samples=int(\domain/0.08)] plot ({\t r}: {#5*\t/\domain})
}
\def\particles(#1)(#2){
  \draw[black,thick](-3.9+#1,0.55-0.075+#2) -- (-4.9+#1,0.55-0.075+#2) -- (-4.9+#1,-0.4-0.075+#2) -- (-3.9+#1,-0.4-0.075+#2) -- (-3.9+#1,0.55-0.075+#2);
\node[shape=circle,scale=0.6,fill=red] (Y1) at (-4.15+#1,0.2-0.075+#2) {};
  	\node[shape=circle,scale=0.6,fill=red] (Y2) at (-4.6+#1,0.35-0.075+#2) {};
  	\node[shape=circle,scale=0.6,fill=red] (Y3) at (-4.2+#1,-0.2-0.075+#2) {};
   	\node[shape=circle,scale=0.6,fill=red] (Y4) at (-4.45+#1,0.05-0.075+#2) {};
  	\node[shape=circle,scale=0.6,fill=red] (Y5) at (-4.65+#1,-0.15-0.075+#2) {}; }
\def\annhil(#1)(#2){	  \spiral[black,thick](9.0+#1,0.09+#2)(0:3:0.42);
\draw[black,thick](8.5+#1,0.55+#2) -- (9.5+#1,0.55+#2) -- (9.5+#1,-0.4+#2) -- (8.5+#1,-0.4+#2) -- (8.5+#1,0.55+#2); }
\node[shape=circle,scale=1.5,draw] (B) at (2.3,0){} ;
	\node[shape=circle,scale=1.5,draw] (C) at (4.6,0) {};
	\node[shape=circle,scale=1.2,fill=red] (CB) at (2.3,0) {};
    \node[shape=circle,scale=1.5,draw] (A) at (0,0){} ;
 %   \node[shape=circle,scale=1.5,fill=red] (AB) at (0,0){} ;    
 	\node[shape=circle,scale=1.5,draw] (D) at (6.9,0){} ; 
% 		\node[shape=circle,scale=1.5,fill=red] (CD) at (6.9,0) {};
 	 	\node[shape=circle,scale=1.5,draw] (Z) at (-2.3,0){} ;
	\node[shape=circle,scale=1.2,fill=red] (YZ) at (-2.3,0) {};
   \node[line width=0pt,shape=circle,scale=1.6] (B2) at (2.3,0){};
	\node[line width=0pt,shape=circle,scale=2.5] (D2) at (6.9,0){};
		\node[line width=0pt,shape=circle,scale=2.5] (Z2) at (-2.3,0){};
		
	\node[line width=0pt,shape=circle,scale=2.5] (X10) at (6.8,0){};
		\node[line width=0pt,shape=circle,scale=2.5] (X11) at (-2.2,0){};	
			\node[line width=0pt,shape=circle,scale=2.5] (X12) at (8.4,0){};
		\node[line width=0pt,shape=circle,scale=2.5] (X13) at (-3.8,0){};		
\draw[thick] (Z) to (A);	
	\draw[thick] (A) to (B);	
		\draw[thick] (B) to (C);	
  \draw[thick] (C) to (D);

\particles(0)(0);
\particles(6.9+4.9+1.6)(0);

%\annhil(0)(0.7);
%\annhil(-13.4)(-0.7);

\draw [->,line width=1pt]  (B2) to [bend right,in=135,out=45] (C);
 \draw [->,line width=1pt] (6.9+1.6,-0.475) to [bend right,in=135,out=45] (D);	
   \draw [->,line width=1pt] (B2) to [bend right,in=-135,out=-45] (A);
     \draw [->,line width=1pt] (Z) to [bend right,in=135,out=45] (-3.9,-0.475);
    \node (text1) at (3.5,1){$1$} ;    
	\node (text2) at (1.1,1){$q$} ;   
	\node (text3) at (-2.3-1,1){$\alpha$}; 
	\node (text4) at (6.9+1,1){$\beta$};   
    \node (text3) at (-2.3-1,-1){$\gamma$}; 
	\node (text4) at (6.9+1,-1){$\delta$};   

\node[scale=0.9] (textL) at (-5.2,0){$\rho_{L}$} ;
\node[scale=0.9] (textR) at (9.9,0){$\rho_{R}$} ;

    \node[scale=0.9] (text1) at (-2.2,-0.7){$1$} ;    
    \node[scale=0.9] (text1) at (6.8,-0.7){$N$} ;   
  	
  \draw [->,line width=1pt] (-3.9,0.475) to [bend right,in=135,out=45] (Z);
 %  \draw [->,line width=1pt] (Z) to [bend right,in=135,out=45] (-3.9,-0.475); 
 %  \draw [->,line width=1pt] (6.9+1.6,-0.475) to [bend right,in=135,out=45] (D);	
   \draw [->,line width=1pt] (D) to [bend right,in=135,out=45] (6.9+1.6,0.475);

	\end{tikzpicture}	
\caption{\label{fig:openASEP} Visualization of the open ASEP for parameters $(q,\alpha,\beta,\gamma,\delta)$  on a segment of length $N$.}
\end{figure}

The open ASEP is a continuous-time Markov chain $(\tau(t))_{t \geq 0}$ defined on the state space $\{0,1\}^{N}$ and with respect to parameters
\begin{equation}\label{eq:conditions open ASEP}
\alpha,\beta>0,\quad \gamma,\delta\geq 0,\quad\text{and}\quad  0\leq q<1 . 
\end{equation}
We denote by $\tau=(\tau_1,\tau_2,\dots,\tau_N) \in \{0,1\}^N$ the configurations of the open ASEP. Here, $\tau_i$ indicates whether the $i^{\text{th}}$ site in the configuration $\tau$ is occupied by particle. For all $a<b$, we use the notation  $\llbracket a,b\rrbracket := [a,b] \cap \mathbb Z$. The open ASEP evolves according to the following dynamics: on each site $\llbracket 2,N\rrbracket$ particles attempt independent jumps to the left according to rate $q$ Poisson clocks, and on the sites $\llbracket 1,N-1\rrbracket$ to the right according to rate $1$ Poisson clocks. However, the particle performs the jump if and only if the target site is not already occupied by another particle. In addition to the bulk dynamics, particles can enter and exit at the endpoints of the segment. More precisely, they enter at the left boundary at rate $\alpha$ and exit at rate $\gamma$. Similarly, they enter at the right boundary at rate $\delta$ and exit at rate $\beta$. We refer to Figure~\ref{fig:openASEP} for a visualization of the parameters. Note that under our assumptions in \eqref{eq:conditions open ASEP}, the open ASEP has always a unique stationary measure. In order to describe the boundary effects for the open ASEP, it is convenient to define
\begin{equation}\label{eq:defining ABCD}
\a=\phi_+(\alpha,\gamma),\quad\b=\phi_+(\beta,\delta),\quad \mathsf{c}=\phi_-(\alpha,\gamma), \quad \mathsf{d}=\phi_-(\beta,\delta),
\end{equation}
where we let
\begin{equation} \label{eq:phi}
\phi_{\pm}(x,y):=\frac{1}{2x}\left(1-q-x+y\pm\sqrt{(1-q-x+y)^2+4xy}\right), \quad \mbox{for }\; x>0\mbox{ and }y\geq 0.
\end{equation}
In particular, note that under our assumptions \eqref{eq:conditions open ASEP}, 
\begin{equation}\label{eq:conditions qABCD}
\a,\b\geq0,\quad -1<\mathsf{c},\mathsf{d}\leq 0 . 
\end{equation}
Throughout this article, we will work under \textit{Liggett's condition} that
\begin{equation} \label{eq:Liggett's condition}
\gamma=q(1-\alpha),\quad\delta=q(1-\beta),\quad \alpha,\beta\in(0,1] .
\end{equation}
Note that under \eqref{eq:Liggett's condition}, the parameters $\a,\b,\mathsf{c},\mathsf{d}$ then simplify to
\begin{equation}\label{eq:parameters under Liggett condition}
\a=\frac{1-\alpha}{\alpha},\quad\b=\frac{1-\beta}{\beta},\quad \mathsf{c}=-q,\quad\mathsf{d}=-q.
\end{equation}
We define $\rho_{\textup{L}}:=(1+\a)^{-1}$, respectively $\rho_{\textup{R}}:=\b(1+\b)^{-1}$ as the left side, respectively right side \textit{effective densities}, and denote the unique stationary measure by $\mu_N=\mu^{(q,\a,\b)}_N$ for the open ASEP on a segment of length $N$. The parameters $\a$ and $\b$ allow for a simple partition of the open ASEP into different regimes. When $\a > \max(\b,1)$, we say that the open ASEP belongs to the \textit{low density phase}, for $\b > \max(\a,1)$ the open ASEP belongs to the \textit{high density phase}, and for $\a,\b \leq 1$ to the \textit{maximal current phase}. Intuitively, in the low density phase, the stationary measure of the open ASEP is locally given by a Bernoulli-$\rho_{\textup{L}}$-product measure, in the high density phase by a Bernoulli-$\rho_{\textup{R}}$-product measure, and in the maximal current phase by a Bernoulli-$\frac{1}{2}$-product measure; see Figure~\ref{fig:Phase} for an illustration. 
Let us mention that the parameter space of the open ASEP allows also for a second partition: we say that the open ASEP belongs to the \textit{fan region} if $\a \b < 1$, and to the \textit{shock region} if $\a \b >1$.

In the present paper, we focus on the special case $\a=\b>1$, called the \textit{coexistence line}, where the stationary measure is locally a non-trivial convex combination of Bernoulli product measures; see \cite{SY:openLight}. The coexistence line always belongs to the shock region of the phase diagram and our main goal is investigate the structure of the invariant measure $\mu^{(q,\a,\b)}_N$ in this regime. 

\begin{figure}
    \centering
    \begin{tikzpicture}[scale=0.75]
 \draw[scale = 1,domain=6.7:11,smooth,variable=\x,dotted,thick] plot ({\x},{1/((\x-7)*1/3+2/3)*3+5});
 \draw[->] (5,5) to (5,10.2);
 \draw[->] (5.,5) to (11,5);
   \draw[dashed] (5,8) to (8,8);
  \draw[dashed] (8,8) to (8,5);
   \draw[dashed] (8,8) to (10.2,10.2);
   \node [left] at (5,8) {$1$};
   \node[below] at (8,5) {$1$};
     \node [below] at (11,5) {$\a$};
   \node [left] at (5,10) {$\b$};
 \draw[dashed] (8,4.9) to (8,5.1);
  \draw[dashed] (4.9,8) to (5.1,8);
 \node [below] at (5,5) {$(0,0)$};
    \node [above] at (6.5,8.5) {LD};\node [above] at (8,8.7) {LD};
    %\node[above] at (8,9) {LD};
    \node at (10.6,10) {\small coexistence line }; 
    \node [below] at (10,6) {HD};  \node [below] at (10,8) {HD}; 
      %\node [above] at (10,9) {HD};
 \node [below] at (6.5,6) {MC}; 
 \node at (6.4,10) {\small{fan}};
 \node at (7.6,10) {\small{shock}};
 \end{tikzpicture} 
        \caption{    \label{fig:Phase} Visualization of the different regimes: Low density (LD), high density (HD), and maximal current (MC) phase. Moreover, the curve $\a \b =1$ separates the fan and shock region. In this article, we focus on $\a=\b>1$ called the coexistence line. }
\end{figure}

\subsection{Statement of the main result} 

Assume Liggett's condition \eqref{eq:Liggett's condition} holds and that the open ASEP is on the coexistence line $\a=\b>1$.  For each $\tau=(\tau_1,\dots,\tau_N)\in\{0,1\}^N$, we let
\[
    s_0 (\tau)=0,
    \quad
    s_j (\tau)=\sum_{i=1}^j\left(2\tau_i -1\right)\quad\mbox{for }j\in\llbracket1,N\rrbracket,
\]
and define the discrete shock location
\begin{equation}\label{eq:Mtau}
    \mathsf{M}_N =\mathsf{M}_N (\tau)
    :=
    \min\left\{
        j\in\llbracket0,N\rrbracket:
        s_j (\tau)=\min_{\ell\in\llbracket0,N\rrbracket}s_{\ell} (\tau)
    \right\}.
\end{equation}
The following is our main approximation result for the stationary measure of the open ASEP on the coexistence line.

\begin{theorem}\label{thm:ApproxCoex}
Assume $N\geq2$.
Consider the open ASEP on the lattice $\llbracket1,N\rrbracket$ with jump rates $\alpha,\beta,\gamma,\delta$ and $q$ satisfying \eqref{eq:conditions open ASEP}. Assume Liggett's condition \eqref{eq:Liggett's condition} holds and that the open ASEP is on the coexistence line $\a=\b>1$.    
Then there exist constants $c,C,c',C'>0$ depending only on $q$ and $\b$, such that for each $N\geq m\geq1$ and $k\in\llbracket0,N\rrbracket$, 
\begin{equation*}
        \TV{
    \mathbb{P} 
    \left(
        (\tau_i)_{i\in \llbracket1,k-m\rrbracket\cup \llbracket k+m,N\rrbracket}\in\cdot
        \,\middle|\,\mathsf{M}_N=k
    \right)
    -
    \ber_{\llbracket1,k-m\rrbracket} \left(\frac{1}{1+\b}\right)
    \otimes
    \ber_{\llbracket k+m,N\rrbracket} \left(\frac{\b}{1+\b}\right) 
}
\le
Ce^{-cm},
\end{equation*} as well as
\begin{equation*}
\qquad\frac{c'}{N}\leq  \TV{
        \mathbb{P}  (\mathsf M_N\in\cdot)
        -
        \mathrm{Unif}_{\llbracket0,N\rrbracket}
    }
    \leq \frac{C'}{N} , 
\end{equation*} 
where we write $\mathrm{Unif}_{\llbracket0,N\rrbracket}$ to denote the uniform measure on the lattice $\llbracket0,N\rrbracket$. In particular, as $N\to\infty$, $\mathsf M_N/N$ converge weakly to the uniform random variable on the interval $[0,1]$.
\end{theorem}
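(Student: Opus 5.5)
The plan is to start from Bryc's two-layer Gibbs representation of $\mu_N$ under Liggett's condition. On the coexistence line $\a=\b>1$, the stationary measure can be written as a tilted law of the height path $s=(s_j)_{j\le N}$:
\[
\mu_N(\tau)\propto \Big(\tfrac{1}{1+\b}\Big)^{\#\{\text{holes}\}}\cdots,
\]
that is, a simple-random-walk-type measure on the path $s$ weighted by $\b^{-s_N}$-type boundary factors times a Bryc polynomial correction. By the first paper \cite{SchmidYangShock}, this correction equals an expectation over an inhomogeneous birth--death chain. After the change of measure $\b^{\pm}$, the path $s$ is, up to this correction, a two-sided object with the following structure. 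To the left of its minimum, read backwards, it is a walk with drift $\frac{\b-1}{\b+1}$ conditioned to stay above its running minimum. To the right, it is a walk with drift $+\frac{\b-1}{\b+1}$ conditioned to stay above the minimum. Concretely, I would write
\[
\mathbb P(\mathsf M_N=k,\tau\in\cdot)=\frac{1}{Z_N}\, W_k(\cdot),
\]
where $W_k$ factorizes into a left piece on $\llbracket1,k\rrbracket$ and a right piece on $\llbracket k+1,N\rrbracket$. Both pieces are biased walks with drift away from the minimum, conditioned to stay strictly above it on one side and weakly above it on the other, and each is multiplied by a Bryc-polynomial factor $\Phi$ that depends on the path only through local data near the minimum and near the endpoints.

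The first key step is to use the regeneration structure of biased walks: the ladder or renewal times at which the walk reaches a new record distance from the minimum. These show that, conditionally on $\mathsf M_N=k$, the left path on $\llbracket1,k-m\rrbracket$ and the right path on $\llbracket k+m,N\rrbracket$ are within $Ce^{-cm}$ in total variation of unconditioned Bernoulli$(1/(1+\b))$ and Bernoulli$(\b/(1+\b))$ increments. The conditioning to stay above the minimum is only felt until the first regeneration time, and that time has exponential tails uniformly in $k$ and $N$. I would realize this through a coupling: run the conditioned walk and the free walk, and couple them from the first regeneration after distance $m$. The same argument must absorb the Bryc polynomial factor. For this I need the uniform estimate from \cite{SchmidYangShock} that $\Phi$ is bounded above and below and depends on the configuration far from the shock only up to exponentially small errors, so that it is asymptotically a constant times a function of the near-shock window. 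This gives the first bound of Theorem~\ref{thm:ApproxCoex}.

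The second key step concerns the law of $\mathsf M_N$. Write $\mathbb P(\mathsf M_N=k)=Z_N^{-1}L_k R_{N-k}$, where $L_k$ and $R_{N-k}$ are the total masses of the left and right pieces (including the $\Phi$ factors). Renewal theory for the conditioned biased walk gives $L_k=L_\infty(1+O(e^{-ck}))$ and $R_{N-k}=R_\infty(1+O(e^{-c(N-k)}))$. Since $\a=\b$, the exponential growth rates on the two sides coincide and cancel against $Z_N$, so
\[
\mathbb P(\mathsf M_N=k)=\frac{1+\epsilon_k}{\sum_j(1+\epsilon_j)},\qquad |\epsilon_k|\le C(e^{-ck}+e^{-c(N-k)}).
\]
Because $\sum_k|\epsilon_k|=O(1)$, the total variation distance to uniform is $O(1/N)$. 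For the lower bound, I would show that $\epsilon_0\neq 0$ with a nonzero limit, for instance by computing explicitly $L_0/L_\infty\neq1$: the boundary site $k=0$ only requires the right walk to stay weakly above $0$, which is a different constraint from the bulk. This gives a mass discrepancy of order $1/N$ at $k=0$. Weak convergence of $\mathsf M_N/N$ then follows immediately.

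I expect the main obstacle to be the uniform-in-$k$ control of the Bryc polynomial correction. It couples the two sides and the boundaries, so I must show that it factorizes up to exponentially small errors and that its limiting constants are the same on both sides. Without the latter, $\mathsf M_N$ would not be asymptotically uniform. My first attempt would use the birth--death chain representation: couple the chain started near the left end with the one near the right end, showing that both forget their start exponentially fast, and use the symmetry $\a=\b$ (particle-hole plus reflection) to identify the constants.
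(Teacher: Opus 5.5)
\textbf{Main gap: the factorization sits at $T_N$, not at $\mathsf M_N$.} In Bryc's two-layer representation, the path with the Markov structure is the auxiliary lazy path $\bg$. The splitting into two independent conditioned $\nu_\b$-walks (Lemma~\ref{lem:decomposition TASEP} for $q=0$, reweighted by $W_{\bs(\bg)}(\b^2)$ for $q>0$) happens at the first minimum $T_N$ of $\bg$. The configuration $\tau$ is obtained from $\bg$ by resolving each horizontal step with an independent fair coin. Hence $\mu_N(\tau)\propto\sum_{\bg}\b^{\gamma_N-2\min(\bg)}W_{\bs(\bg)}(\b^2)$, summed over all lazy paths compatible with $\tau$. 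The height function $s(\tau)$ is a different path, and its first minimum $\mathsf M_N$ generally differs from $T_N$. So there is no formula $\mathbb P(\mathsf M_N=k,\tau\in\cdot)=Z_N^{-1}W_k(\cdot)$ with $W_k$ a product of left and right pieces. Your Bernoulli approximation given $\mathsf M_N=k$ and your expansion $\mathbb P(\mathsf M_N=k)=(1+\epsilon_k)/\sum_j(1+\epsilon_j)$ are therefore at best statements about $T_N$. Transferring them to $\mathsf M_N$ is a substantial part of the proof that you omit. You need:
\begin{enumerate}
\item $\mathsf M_N-T_N$ has exponential tails, uniformly given $T_N=k$.
\item Given $T_N=k$, the configuration outside a window is asymptotically independent of $\mathsf M_N-k$, i.e.\ a joint total variation bound against $B_{k,m}\otimes\mathbf K_k$.
\item The conditional law of $\mathsf M_N-k$ converges, at rate $e^{-c\min(k,N-k)}$, to a single law $\mathscr p$ independent of $N$ and $k$. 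Exponential tails alone do not give $O(1/N)$, since a $k$-dependent displacement law can create order-one local non-uniformity. With this, the law of $\mathsf M_N$ is within $O(1/N)$ of the law of $T_N$ convolved with $\mathscr p$.
\item A pointwise bound $\mathbb P(\mathsf M_N=j)\ge c/(N+1)$, so that you can divide when conditioning on $\{\mathsf M_N=j\}$ and average over $T_N$.
\end{enumerate}

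\textbf{The Bryc polynomial.} Your treatment fails as stated. $W_{\bs(\bg)}(\b^2)=\mathbb E[\b^{2X(N+1)}]$ is at least $1$ but is not uniformly bounded; for the flat path it grows polynomially in $N$. It also does not factorize, even approximately, into left and right pieces. It depends on $\bg$ through the level counts $\sigma_h$, and each of these aggregates visits from both sides of the minimum. So there are no separate left and right constants to match by symmetry. What you actually need is:
\begin{enumerate}
\item $\sup_{N,k}\mathbb E^0_{N,k}[W^2]<\infty$;
\item decorrelation of $W$ from events measurable with respect to increments at distance $\ge m$ from $T_N$;
\item $\mathbb E^0_{N,k}[W]\to\Lambda_{q,\b}$ at rate $e^{-c\min(k-1,N-k)}$, with one constant coming from the bi-infinite $V$-shaped walk.
\end{enumerate}
The companion paper does not provide these uniformly in the location of the minimum; it treats phases where the minimum stays near an endpoint. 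The paper obtains them through doubled regeneration times of the $V$-shaped walk and a simultaneous renewal theorem. This is the main new technical input, and your plan does not supply it.

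\textbf{The lower bound.} Computing $L_0/L_\infty\neq1$ explicitly works only for $T_N$ at $q=0$, where $(N+1)\mathbb P(T_N=0)\to(\b+1)/(\b-1)$. For $q>0$, the factor $\mathbb E^0_{N,0}[W]/\Lambda_{q,\b}$ is not explicit and could a priori cancel the discrepancy; for $\mathsf M_N$ nothing is explicit. The missing idea is the exact symmetry $\tau_i\mapsto 1-\tau_{N+1-i}$ (path reversal), which preserves $\mu_N$ when $\a=\b$. It maps $\{\mathsf M_N=N\}$ to $B_N=\{s_j>0 \text{ for } j\ge1\}$, while $\{\mathsf M_N=0\}=A_N=\{s_j\ge0\text{ for all } j\}$. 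Since $\mathsf M_N$ is the \emph{first} minimum, $\mathbb P(\mathsf M_N=0)-\mathbb P(\mathsf M_N=N)=\mathbb P(A_N\setminus B_N)\ge c/N$. This bounds the total variation distance from below without identifying any constant.
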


Let us mention that \cite{bryc2024limitCoex} considers the special case of the open TASEP where $\gamma=\delta=q=0$. For the open ASEP, the macroscopic uniformity of the shock location was shown in \cite{wang2023askey}, and the microscopic approximation in finite intervals away from the shock in \cite{SY:openLight}. Let us mention that combined with Theorem~1.6 from our companion paper \cite{SchmidYangShock}, the above theorem provides sharp approximation results in the entire shock region of the open ASEP.

\subsection{Fluctuations and correlations for the open ASEP on the coexistence line}

As a consequence of our characterization results for the open ASEP on the coexistence line, we obtain a detailed understanding of its stationary measure.
We start with a moderate deviation bound on the number of particles.  

\begin{theorem}\label{thm:moderate deviation Coex}
 Assume Liggett's condition \eqref{eq:Liggett's condition} and that the open ASEP is on the coexistence line $\a=\b>1$. Let $\tau =(\tau_1,\ldots,\tau_N)\sim\mu_N^{(q,\b,\b)}$ and $\mathsf{M}_N =\mathsf{M} (\tau)$ be given by \eqref{eq:Mtau}. Then  
there exist constants \(C,c\in(0,\infty)\), depending only on \(q\) and \(\b\), such that, for every \(N\in\mathbb N\), every nonempty set \(I\subseteq\llbracket1,N\rrbracket\), and every \(x\geq0\),
\[
\mu_N^{(q,\b,\b)}\left(
\left|
\sum_{i\in I}\tau_i
-\rho_-\left|I\cap\llbracket1,\mathsf M_N\rrbracket\right|
-\rho_+\left|I\cap\llbracket\mathsf M_N+1,N\rrbracket\right|
\right|
\geq x\sqrt{|I|}
\right)
\leq Ce^{-cx^2}
\]
where we let $\rho_-=1/(1+\b)$ and $\rho_+=\b/(1+\b)$.
\end{theorem}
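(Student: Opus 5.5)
We derive Theorem~\ref{thm:moderate deviation Coex} from Theorem~\ref{thm:ApproxCoex} by conditioning on the shock location. Writing $S_I:=\sum_{i\in I}\tau_i$ and $D_I:=S_I-\rho_-|I\cap\llbracket1,\mathsf M_N\rrbracket|-\rho_+|I\cap\llbracket\mathsf M_N+1,N\rrbracket|$, we have
\[
\mu_N(|D_I|\geq x\sqrt{|I|})=\sum_{k=0}^N\mathbb P(\mathsf M_N=k)\,\mathbb P\bigl(|D_I|\geq x\sqrt{|I|}\,\big|\,\mathsf M_N=k\bigr),
\]
so it suffices to bound each conditional probability by $Ce^{-cx^2}$ uniformly in $k$. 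We may assume $x\sqrt{|I|}\leq |I|$, since otherwise the event is empty, and $x\geq x_0$ for a fixed large $x_0$, since small $x$ are absorbed by enlarging $C$.

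Fix $k$ and choose a window radius $m=\lceil Ax\sqrt{|I|}/4\rceil$, where $A\le 1$ is a small constant. Sites of $I$ in the window $\llbracket k-m+1,k+m-1\rrbracket$ contribute at most $2m$ to $|D_I|$, which is at most $x\sqrt{|I|}/2$ for $A$ small. For the remaining sites, Theorem~\ref{thm:ApproxCoex} replaces the conditional law by $\ber(\rho_-)$ to the left of the window and $\ber(\rho_+)$ to the right, at total variation cost $Ce^{-cm}$. Under the product measure, the centred sum over at most $|I|$ independent Bernoulli variables exceeds $x\sqrt{|I|}/2$ with probability at most $2e^{-x^2/2}$ by Hoeffding's inequality. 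Hence the conditional probability is at most
\[
2e^{-x^2/2}+Ce^{-cAx\sqrt{|I|}/4}.
\]

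The main obstacle is the second term: $e^{-cm}$ with $m\asymp x\sqrt{|I|}$ only gives $e^{-c'x\sqrt{|I|}}$, and when $x>\sqrt{|I|}$ this is weaker than $e^{-cx^2}$. But $x\le\sqrt{|I|}$ always holds (the event is empty otherwise), so $x\sqrt{|I|}\geq x^2$ and the term is at most $Ce^{-c''x^2}$.

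One technical point remains: Theorem~\ref{thm:ApproxCoex} requires $m\le N$ and controls the sites $\llbracket1,k-m\rrbracket\cup\llbracket k+m,N\rrbracket$. This is consistent with the centering, since $k\in\llbracket0,N\rrbracket$ and sites $\le k$ are assigned $\rho_-$. Near the boundaries the window may be truncated, which only helps. If $m>N$, the window argument is not needed: the trivial bound $|D_I|\le |I|$ together with $x\sqrt{|I|}\le|I|$ and the choice of $A$ keeps the estimate valid, or the event is empty. Averaging over $k$ concludes the proof.
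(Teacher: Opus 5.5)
Your argument is correct, and it follows a genuinely different and somewhat shorter route than the paper. The paper conditions on the path-minimum location $T_N$ of the two-layer representation rather than on $\mathsf M_N$. It applies the $T_N$-conditioned Bernoulli approximation (Proposition~\ref{prop:first part proof Thm}) with window radius $n=\lfloor x^2/16\rfloor$. It then has to move the centering from $T_N$ to $\mathsf M_N$, which costs two extra steps:
\begin{itemize}
\item the localization estimate $\mathbb P(|T_N-\mathsf M_N|\ge n)\le Ce^{-cn}$ (Corollary~\ref{cor:particle-minimum-localization});
\item a Lipschitz bound $|P_I(k)-P_I(\ell)|\le|k-\ell|$ for the profile $P_I(k)=\rho_-|I\cap\llbracket1,k\rrbracket|+\rho_+|I\cap\llbracket k+1,N\rrbracket|$.
\end{itemize}

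You condition directly on $\mathsf M_N=k$ through the first estimate of Theorem~\ref{thm:ApproxCoex}. Your centering then coincides exactly with the Bernoulli means outside the window, so neither the localization step nor the Lipschitz comparison is needed. You also get the slightly stronger statement that the bound holds conditionally on $\mathsf M_N=k$, uniformly in $k$.

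Your larger window $m\asymp x\sqrt{|I|}$ is harmless for the reason you give: $x^2<x\sqrt{|I|}$ whenever the event is nonempty. The paper's choice $n\asymp x^2$ works equally well.

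The trade-off lies in the inputs. The paper's route uses only the $T_N$-conditioned estimates, which are the more primitive intermediate results. The $\mathsf M_N$-conditioned estimate you invoke is derived from those same estimates plus the pointwise lower bound $\mathbb P(\mathsf M_N=j)\ge c/(N+1)$ of Lemma~\ref{asepm:lem:pointwise}.

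Finally, you can drop your discussion of the case $m>N$. In the nontrivial regime $|I|>x^2\ge x_0^2$, so automatically $m<A|I|/4+1\le|I|\le N$ and $N\ge2$.
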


Next, we characterize the fluctuations of the number of particles in the stationary open ASEP at the coexistence line, extending the results from \cite{bryc2024limitCoex} to the open ASEP.

\begin{theorem} 
\label{thm:ApproxCoex Fluctuations}
 Assume Liggett's condition \eqref{eq:Liggett's condition} and that the open ASEP is on the coexistence line $\a=\b>1$. Let  $\mathsf{M}_N =\mathsf{M}_N (\tau)$ be given by \eqref{eq:Mtau} and set
\[
h(x)=\sum_{i=1}^{\lfloor Nx\rfloor}\tau_i\quad\mbox{ for }x\in[0,1],
\]
where $\tau =(\tau_1,\ldots,\tau_N)\sim\mu_N^{(q,\b,\b)}$.
Let \(U\sim\mathrm{Unif}[0,1]\), and let
\(B=\{B_x\}_{0\le x\le1}\) and \(B'=\{B'_x\}_{0\le x\le1}\) be two
standard Brownian motions, with \(U,B,B'\) independent.  Then, as $N\to\infty$, 
\[
\left(
    \frac{ \mathsf{M}  }{N},
    \left\{
    \frac{1}{\sqrt N}\left(
        h(x) 
        -\rho_-\left(\lfloor Nx\rfloor\wedge  \mathsf{M}  \right)
        -\rho_+\left(\lfloor Nx\rfloor-  \mathsf{M}  \right)_+\right)
    \right\}_{0\le x\le1}
\right)
\Rightarrow
\left(
    U,
    \left\{
        \sigma
        \left(B_{x\wedge U}+B'_{(x-U)_+}\right)
    \right\}_{0\le x\le1}
\right),
\] 
where $\rho_-=1/(1+\b)$, $\rho_+=\b/(1+\b)$, and $\sigma= \sqrt{\b}/(1+\b)$. Here, the convergence is in the space \([0,1]\times D[0,1]\), where \(D[0,1]\) is equipped with the uniform topology induced by the supremum norm $\|f\|_\infty=\sup_{0\le t\le1}|f(t)|$.
\end{theorem}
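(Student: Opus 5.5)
The plan is to deduce the statement from Theorem~\ref{thm:ApproxCoex} by conditioning on the shock location and then applying Donsker's invariance principle on each side of it.

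\textbf{Step 1 (conditioning and coupling).} Fix $m=m_N\to\infty$ with $m_N=o(\sqrt N)$, for instance $m_N=\lceil \log^2 N\rceil$, so that $Ce^{-cm_N}\to0$. By the second part of Theorem~\ref{thm:ApproxCoex}, $\mathsf M_N/N$ converges weakly to $U$. More precisely, the law of $\mathsf M_N$ is within $C'/N$ in total variation of $\mathrm{Unif}_{\llbracket0,N\rrbracket}$. By the first part, for every $k$ we can couple $\tau$ given $\mathsf M_N=k$ with a configuration $\eta$ as follows. The coordinates $\eta_i$, $i\le k-m$, are i.i.d.\ $\ber(\rho_-)$, the coordinates $\eta_i$, $i\ge k+m$, are i.i.d.\ $\ber(\rho_+)$, the two families are independent, and $\tau_i=\eta_i$ outside the window $\llbracket k-m+1,k+m-1\rrbracket$ except on an event of probability at most $Ce^{-cm}$, uniformly in $k$. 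Inside the window we simply set $\eta_i=\tau_i$.

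Gluing these conditional couplings over $k$ gives a single probability space carrying $\mathsf M_N$ and $\eta$. On this space, given $\mathsf M_N=k$, the process $\eta$ has the product law just described, and $\mathbb P(\tau\neq\eta \text{ outside the window})\le Ce^{-cm_N}\to0$.

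\textbf{Step 2 (the window and the centering).} Write $X_N(x)$ for the centered, rescaled height process appearing in the statement. Compare it with the analogous process $Y_N(x)$ built from $\eta$ with centering at $\mathsf M_N$. The contribution of the $2m$ sites in the window to the difference is at most $2m_N/\sqrt N\to0$, uniformly in $x$. Therefore, off an event of vanishing probability, $\|X_N-Y_N\|_\infty\to0$, and it suffices to prove the limit for $(\mathsf M_N/N, Y_N)$.

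\textbf{Step 3 (Donsker for the model process).} Since only boundedly many window sites matter, we may replace the window coordinates of $\eta$ by fresh Bernoulli variables at cost $O(m_N/\sqrt N)$. Realize the result as follows. Take independent i.i.d.\ sequences $(\xi_i)\sim\ber(\rho_-)$ and $(\xi'_i)\sim\ber(\rho_+)$, independent of $\mathsf M_N$. Set $\eta_i=\xi_i$ for $i\le \mathsf M_N$ and $\eta_i=\xi'_{i-\mathsf M_N}$ for $i>\mathsf M_N$. Then
\[
Y_N(x)=W_N\bigl(x\wedge \tfrac{\mathsf M_N}{N}\bigr)+W'_N\bigl((x-\tfrac{\mathsf M_N}{N})_+\bigr)+o(1),
\]
where $W_N$ and $W'_N$ are the rescaled centered partial sums of $\xi$ and $\xi'$. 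The variances are $\rho_\pm(1-\rho_\pm)=\b/(1+\b)^2=\sigma^2$, equal on both sides. By Donsker's theorem, $(W_N,W'_N)\Rightarrow(\sigma B,\sigma B')$ in $D[0,1]^2$ with the uniform topology; this is legitimate because the limit is continuous. The convergence holds jointly with $\mathsf M_N/N\Rightarrow U$, by independence.

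The map $(u,f,g)\mapsto f(\cdot\wedge u)+g((\cdot-u)_+)$ is continuous at every point where $f$ and $g$ are continuous. The continuous mapping theorem then yields the claimed limit.

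\textbf{Main obstacle.} The main obstacle is the rigorous gluing in Step~1. Theorem~\ref{thm:ApproxCoex} gives total variation bounds only for each fixed $k$, and they must be assembled into one coupling whose error is uniform in $k$. This works because the bound $Ce^{-cm}$ does not depend on $k$: we choose a maximal coupling for each $k$ and mix these couplings according to the law of $\mathsf M_N$. Boundary cases such as $k<m$ or $k>N-m$ are harmless, since the corresponding product factor is simply empty.
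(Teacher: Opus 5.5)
Your argument is correct and is essentially the paper's proof: couple $\tau$ with independent Bernoulli sequences outside a window of size $o(\sqrt N)$ around the shock, apply Donsker on each side independently of the shock location, and conclude through continuity of $(u,f,g)\mapsto f(\cdot\wedge u)+g((\cdot-u)_+)$ at continuous $f,g$. The paper differs in two places. It performs the coupling conditionally on the path minimum $T_N$, and then absorbs the recentering error $|\rho_+-\rho_-|\,|\mathsf M_N-T_N|/\sqrt N$ using the exponential localization of $\mathsf M_N-T_N$. It also uses linear interpolations, so that Donsker is applied in $C[0,1]$. You instead condition directly on $\mathsf M_N$ via the first part of Theorem~\ref{thm:ApproxCoex}, which removes the recentering step.
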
 

As a third application, we have the following characterization of the correlations in the stationary open ASEP on the coexistence line.

\begin{theorem}\label{thm:asymptotics of correlation functions}
 Assume Liggett's condition \eqref{eq:Liggett's condition} and that the open ASEP is on the coexistence line $\a=\b>1$. Suppose $\tau^{(N)}=(\tau_1^{(N)},\dots,\tau_N^{(N)})\sim\mu_N^{(q,\b,\b)}$.
Fix $m\in\mathbb N$ and \(0\le x_1\le\cdots\le x_m\le1\).
Suppose that for each $N\geq m$, there exist integers  
\(j_{1,N},\ldots,j_{m,N}\) satisfying $1\le j_{1,N}<\cdots<j_{m,N}\le N$ and 
$\lim_{N\to\infty}j_{r,N}/N =x_r$ for each $r\in\llbracket1,m\rrbracket$. 
Then
\[
\lim_{N\to\infty}
\mathbb E 
\left[
    \prod_{r=1}^m \tau_{j_{r,N}}^{(N)}
\right]
=
\sum_{\ell=0}^m
    (x_{\ell+1}-x_\ell)\rho_-^\ell\rho_+^{m-\ell},
\]
where we set \(x_0=0\), \(x_{m+1}=1\), $\rho_-=1/(1+\b)$, and $\rho_+=\b/(1+\b)$.
\end{theorem}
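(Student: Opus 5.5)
We would deduce Theorem~\ref{thm:asymptotics of correlation functions} from Theorem~\ref{thm:ApproxCoex} by conditioning on the shock location $\mathsf M_N$. Write
\[
\mathbb E\Big[\prod_{r=1}^m\tau_{j_{r,N}}\Big]=\sum_{k=0}^N \mathbb P(\mathsf M_N=k)\,\mathbb E\Big[\prod_{r=1}^m\tau_{j_{r,N}}\,\Big|\,\mathsf M_N=k\Big].
\]
Fix a radius $m_N\to\infty$ with $m_N/N\to0$, for instance $m_N=\lfloor\log^2 N\rfloor$ (for large $N$ we have $N\ge m_N\ge1$). Call $k$ \emph{good} if $|k-j_{r,N}|\ge m_N$ for every $r\in\llbracket1,m\rrbracket$.

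For good $k$, every observed site $j_{r,N}$ lies in $\llbracket1,k-m_N\rrbracket\cup\llbracket k+m_N,N\rrbracket$. The product $\prod_r\tau_{j_{r,N}}$ is a $[0,1]$-valued function of the occupation variables outside the window. The first bound of Theorem~\ref{thm:ApproxCoex} therefore gives
\[
\Big|\mathbb E\Big[\prod_r\tau_{j_{r,N}}\,\Big|\,\mathsf M_N=k\Big]-\rho_-^{\ell(k)}\rho_+^{m-\ell(k)}\Big|\le Ce^{-cm_N},
\]
where $\ell(k)=\#\{r:j_{r,N}<k\}$ and we use the product structure of the Bernoulli measure. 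This holds uniformly in good $k$, and the right-hand side tends to $0$.

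For bad $k$, the conditional expectation is bounded by $1$. The number of bad $k$ is at most $m(2m_N+1)$. By the second bound of Theorem~\ref{thm:ApproxCoex}, $\mathbb P(\mathsf M_N=k)\le \frac1{N+1}+\frac{2C'}{N}$ for every $k$. Hence the bad $k$ contribute $O(m\,m_N/N)\to0$.

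The good contribution is $\sum_{k\text{ good}}\mathbb P(\mathsf M_N=k)\rho_-^{\ell(k)}\rho_+^{m-\ell(k)}$ up to $o(1)$. Using the total variation bound again, we replace $\mathbb P(\mathsf M_N=k)$ by $1/(N+1)$ at cost $O(1/N)$, and then re-add the bad $k$ at cost $O(m\,m_N/N)$. This leaves
\[
\sum_{\ell=0}^m\frac{\#\{k\in\llbracket0,N\rrbracket:\ell(k)=\ell\}}{N+1}\,\rho_-^\ell\rho_+^{m-\ell}.
\]
The set $\{k:\ell(k)=\ell\}$ is the integer interval $\llbracket j_{\ell,N}+1,j_{\ell+1,N}\rrbracket$, with conventions $j_0=-1$ and $j_{m+1}=N$. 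Its size divided by $N+1$ converges to $x_{\ell+1}-x_\ell$, which yields the claimed limit.

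The main point requiring care is the uniformity needed when the $x_r$ coincide or lie at the endpoints. In that case some intervals are empty in the limit and the observed sites may be closer than $m_N$ to each other. This is harmless, because the argument only needs each bad set to have size $O(m_N)$ and each good $k$ to be at distance at least $m_N$ from every $j_{r,N}$. The window in Theorem~\ref{thm:ApproxCoex} excludes only the sites $k-m_N+1,\dots,k+m_N-1$, so no further interaction between observed sites needs to be controlled.
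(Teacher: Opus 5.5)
Your proposal is correct and follows essentially the same route as the paper. Both arguments remove shock locations within a growing window of the observed sites (the paper uses radius $\lfloor\sqrt N\rfloor$ where you use $\lfloor\log^2 N\rfloor$), apply the Bernoulli product approximation of Theorem~\ref{thm:ApproxCoex} elsewhere, replace the law of the shock location by the uniform law at cost $O(1/N)$ plus the mass of the removed set, and then count the intervals $\{k:\ell(k)=\ell\}$. The only cosmetic difference is that you condition directly on $\mathsf M_N$, exactly as Theorem~\ref{thm:ApproxCoex} is stated, whereas the paper writes the same argument in terms of the path minimum $T_N$ and uses a layer-cake identity for the uniform replacement.
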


\subsection{Related works}

Shocks for exclusion processes can be studied from several perspectives. Fluctuations of shocks for the ASEP on the integer lattice were investigated by Ferrari and Fontes in \cite{FF94current}; see also
\cite{liggett1999stochastic} for a more comprehensive discussion. As shown in \cite{FF94current}, shocks in the asymmetric simple exclusion process can be identified by second class particles, and hence be studied through the environment seen from a
second-class particle; see also 
\cite{FFK94,martin2025convergence} for similar results for homogeneous Bernoulli-product initial conditions for the ASEP on the integers.

For the open ASEP, the matrix product representation by 
Derrida, Evans, Hakim, and Pasquier
\cite{derrida1993exact} provides an algebraic description of the
stationary measure; see also \cite{uchiyama2004asymmetric} for a solution for general parameters by Uchiyama, Sasamoto and Wadati, which is defined using Askey--Wilson orthogonal polynomials from \cite{askey1985some}.   Using Askey--Wilson signed measures,
Wang et al.~establish in \cite{wang2023askey} 
the macroscopic density profile in the shock region, including
the coexistence line. Corresponding local equilibrium results on the 
coexistence line  can be found in \cite{SY:openLight}.
For special choices of the parameters, Bernoulli shock measures
give particularly explicit descriptions, and their evolution
is related to a reverse duality discovered by
Sch{\"u}tz \cite{schuetz2022reverse}, based on special finite dimensional representations from \cite{mallick1997finite} found by Mallick and Sandow. Moreover, a complementary dynamical
perspective for the open TASEP is provided by the last passage
percolation approach from \cite{schmid2023mixing}, and studied in \cite{ES24} to establish a mixing time of order $N^2$ for the open TASEP on a segment of length $N$ on the coexistence line.

A particularly relevant related work is \cite{bryc2024limitCoex} by Bryc,
Najnudel, and Wang on the stationary measure of the
open TASEP on the coexistence line. This is based on the two-layer representation of the open TASEP by Bryc and Zatitskii in \cite{bryc2024two}, introduced first for integrable models on the strip in \cite{barraquand2024stationary}.
The work \cite{bryc2024limitCoex} identifies the shock location from the
particle configuration through the (first) minimum of its
 height function. The authors prove that this location is
asymptotically uniform and establish Gaussian fluctuation of a stationary path after centering around the shock location. Let us point out that their proof uses the two-layer representation of the stationary
measure from \cite{bryc2024two} together with an extension of Denisov's decomposition, which
splits an auxiliary path at its minimum into two conditioned
random walks, and which is also a crucial input for the approximation results in the present work. 

Complementing the aforementioned results on the macroscopic properties of the stationary measure of the open ASEP, local approximation results for the stationary measure of the open ASEP were achieved in \cite{nestoridi2024approximating,yang2025limits}. While an approximation in finite intervals is immediate by Liggett's characterization of the invariant measure for the ASEP on the integers \cite{liggett1975ergodic}, an approximate of the stationary measure of the open ASEP in the bulk of the segment using homogeneous Bernoulli-product measures first appeared in \cite{nestoridi2024approximating} for the special case of the three parameter ASEP, i.e.~for $\gamma=\delta=0$, relying on the representation from \cite{brak2006combinatorial}. Similarly, in \cite{yang2025limits} the law of the open ASEP is near the boundaries approximated using Askey--Wilson half-line measures. 
%Let us also mention that at the microscopic level, the decay of particle density away from the boundary was determined by Sasamoto in \cite{sasamoto2000density} and correlation studied in \cite{uchiyama2005correlation}.   

\subsection{Results in the shock and fan region}

The present manuscript is the second in a series of three articles on sharp approximations of the stationary measure of the open ASEP. For convenience of the reader, and to distinguish the results in this paper from the approximation results on the stationary measure of the open ASEP in the high density, low density and maximal current phase, let us summaries the main results from \cite{SchmidYangShock} and \cite{SchmidYangFan}.

In \cite{SchmidYangShock}, we characterize the stationary measure of the open ASEP on a segment of length $N$ using its two-layer representation. The main results are two-folded. 
First, we introduce a general framework in order to compare the stationary measure of the open TASEP to the stationary measure of the open ASEP. We consider the polynomial factors from \cite{bryc2024stationary} in the Radon-Nikodym derivative with respect to the two-layer representation of the open TASEP, to which we refer as Bryc polynomials, indexed by compositions of $N$. We provide a probabilistic characterization of the Bryc polynomials. More precisely, we show that the Bryc polynomials can be expressed by the moment generating function of a certain inhomogeneous birth-death chain evaluated after $N$ steps. In return, we study this birth-death chain in depth and control its exponential moments, and hence the coefficients in the Bryc polynomials, for typical compositions. 

Second, we show that the aforementioned characterization of the Bryc polynomials can be used to obtain a sharp quantitative approximation of the stationary measure in terms of homogeneous Bernoulli-product measures for the open ASEP in the high and low density phase of the shock region; see also \cite{SchmidYangOpenKPZ} for further applications to the open KPZ equation.

In \cite{SchmidYangFan}, we provide sharp approximations for the stationary measure of the open ASEP in the fan region. We rely on the Enaud-Derrida solution for the matrix product ansatz from \cite{enaud2004large}, and its two-layer formulation from \cite{hegde2024large}, in order to express the height function of a stationary open ASEP in terms of suitable measures on the space of bi-colored Motzkin paths. This allows us to achieve sharp approximation results on the stationary measure of the open ASEP in the fan region via suitable families of lazy simple random walk path measures.

\subsection{Overview of the proof ideas}

Our main task in the analysis of the stationary measure of the open ASEP on the coexistence line is to control the location of the shock and the distribution of the
particles away from it simultaneously and uniformly in the system size. Our key tool is the two-layer representation for the stationary open ASEP from \cite{bryc2024stationary}, and the probabilistic characterization of its polynomial correction factors from \cite{SchmidYangShock} in terms of the exponential moments of an auxiliary birth-death
chain. While in the high and low density phase the first location $T_N$ of the path minimum can be directly controlled, the analysis on the coexistence line is more delicate as the location $T_N$ will be approximately uniformly distributed on the segment. To analyze the underlying birth-death chain, we consider doubled regeneration times, which account for the $v$-shaped random walk paths. Heuristically, a doubled regeneration level is
visited once by each outward arm of the walk. We prove exponential tail bounds
for the gaps between successive doubled regeneration times and
levels. Our main input at this part is a simultaneous renewal argument  that synchronizes the regeneration levels of independent walks, which is of independent interest. These
estimates allow us to provide moment bounds on the Bryc polynomials uniformly in the length $N$ and the path minimum location $T_N$.

We then pass from the auxiliary path minimum $T_N$ to the particle minimum
$\mathsf M_N$ by showing that $\mathsf M_N-T_N$ has 
exponential tails, uniformly in the position $T_N$. Moreover, conditioning on $T_N=k$, we obtain a total variation approximation for the particle
configuration away from the path minimum. Now in order to show that $\mathsf M_N$ is well-approximated by the uniform distribution on the segment, we prove that sufficiently far away from the boundaries, the conditional distribution of
$\mathsf M_N-T_N$ approaches a probability measure on $\mathbb Z$ that
does not depend on $N$ or $T_N$. The law of $\mathsf M_N$ is therefore well-approximated by the
convolution of the law of $T_N$ with this fixed displacement law.

Finally, our sharp approximation estimates reduce the
fluctuation and correlation results to corresponding statements
for independent Bernoulli variables. In particular, after removing a suitable window
around the shock, the invariant measure is close to a Bernoulli product measure up to an
exponentially small error. We then obtain the applications on the fluctuations, moderate deviations and correlations by suitable concentration estimates, and an invariance principle for 
the two exterior regions around the shock location.

\subsection{Outline of the paper}

In Section~\ref{sec:TwoLayer}, we recall the two-layer representation of the stationary measure of the open ASEP from \cite{bryc2024stationary} and its probabilistic characterization from \cite{SchmidYangShock}. In Section~\ref{sec:DoubleRegen}, we study the doubled regeneration times for bi-infinite random walk paths, and verify exponential tail bounds.  In Section~\ref{sec:PathMeasure}, we use the insights on doubled regeneration times in order to bound the moments of the Bryc polynomials when conditioning on the location of the path minimum. In Section~\ref{sec:macroscopicTN}, we verify that the minimum in the two-layer representation for the open ASEP on the coexistence line is asymptotically uniformly distributed, and provide a sharp approximation in total variation. In Section~\ref{sec: proof of main thm}, we show Theorem~\ref{thm:ApproxCoex} on approximating the stationary measure and the shock location for the open ASEP on the coexistence line. In Section~\ref{sec:Applications}, we prove Theorems~\ref{thm:moderate deviation Coex}, \ref{thm:ApproxCoex Fluctuations}, and~\ref{thm:asymptotics of correlation functions} as applications of our main approximation result.

\subsection*{Acknowledgments} 
D.S. was partially supported by the Packard Foundation via Amol Aggarwal's Packard Fellowships for Science and Engineering.
Z.Y. was partially supported by the Miller Institute for Basic Research in Science at the University of California, Berkeley, Ivan Corwin's NSF grants DMS-1811143, DMS-2246576, and the Fernholz Foundation's ``Summer Minerva Fellows'' program. Both authors were supported by Ivan Corwin's Simons Foundation Grant 929852.

\subsection*{AI declaration} 

This project was initiated in fall 2024, and the proofs were written down by the end of July 2026. The authors used AI to assist with editing the manuscript, performing some computations, and checking arguments. All proof ideas in this article were developed by the authors. 
%---------------------------------------------------------------------------------------------------------------
% Section 2
%
%----------------------------------------------------------------------------------------------------------------
\section{Characterization of the stationary measure of open ASEP}  \label{sec:TwoLayer}

In this section, we review the background of the characterization of the stationary measure of the open ASEP developed in \cite{bryc2024stationary} and our companion paper \cite{SchmidYangShock}. 

\subsection{Basic notations}
A \emph{lazy random walk path} is $\bg=(\gamma_0,\dots,\gamma_N)$ such that $\gamma_i\in\mathbb{Z}$ for all $i\in\llbracket0,N\rrbracket$ and $\gamma_i-\gamma_{i-1}\in\{-1,0,1\}$ for all $i\in\llbracket1,N\rrbracket$. We denote by $\lrw_N$ the space of lazy random walk paths $\bg$ with length $N$ started at $\gamma_0=0$, i.e.
\begin{equation*}
    \lrw_N := \left\{ (\gamma_i)_{i \in \llbracket N\rrbracket} \, \colon \, \gamma_0=0 \text{ and } |\gamma_{i}-\gamma_{i-1}| \in \{0,1\} \text{ for all } i \in \llbracket N\rrbracket\right\} .
\end{equation*} 
For each $\bg\in\lrw_N$, we write
\begin{equation*}
    \min(\bg) = \min\left\{\gamma_i \colon i \in \llbracket 0,N \rrbracket\right\} \quad \text{ and } \quad \max(\bg) = \max\left\{\gamma_i \colon i \in \llbracket 0,N \rrbracket\right\} . 
\end{equation*}
A sequence $\bs=(\sigma_0,\dots,\sigma_r)$, where $r\in\mathbb{Z}_{\geq0}$, is called a composition of $N+1$ if $\sigma_i\in\mathbb{N}$ for $i\in\llbracket0,r\rrbracket$ and $\sigma_0+\dots+\sigma_r=N+1$.  The order in which the elements of $\bs$ appear is significant, which distinguishes it from the notion of a partition.  
For a path $\bg\in\lrw_N$, we define the composition associated to $\bg$, denoted by $\bs(\bg)$, to be the composition $\bs(\bg)=(\sigma_0,\dots,\sigma_r)$ of $N+1$ given by $r=\max(\bg)-\min(\bg)\in\mathbb{Z}_{\geq0}$ and
\begin{equation} \label{eq:Signature}
\sigma_i=\#\{j:\gamma_j=\min(\bg)+i\}\in\mathbb{N}\quad\mbox{ for } i\in\llbracket0,r\rrbracket.
\end{equation}
We note that $\bs(\bg)$ is indeed a composition of $N+1$: since $\bg$ is integer valued and has increments $\{-1,0,1\}$, every integer value in $\llbracket\min(\bg), \max(\bg)\rrbracket$ is visited at least once, and hence $\sigma_i\in\mathbb{N}$ for $i\in\llbracket0,r\rrbracket$; since the level sets $\{j:\gamma_j=\min(\bg)+i\}$ in \eqref{eq:Signature} partition the time indices  $\llbracket0,N\rrbracket$, we have $\sigma_0+\dots+\sigma_r=N+1$.

\subsection{The Bryc polynomial} 

% We call $f(z)$ a rational function (over $\mathbb{R}$) if it can be written as $f(z)=g(z)/h(z)$, where $g(z)$ and $h(z)$ are polynomials in $z$ with coefficients in $\mathbb{R}$, and $h\neq0$.  
% For $q\in(0,1)$ and a rational function $f(z)$, we write $\mathcal{D}_qf$ and $\mathcal{E}_qf $ to denote 
% \[
% (\mathcal{D}_qf)(z) =\frac{f(z)-f(qz)}{z(1-q)}\quad\mbox{ and }\quad (\mathcal{E}_qf)(z) =\frac{f(z)-qf(qz)}{ 1-q } .
% \]
% We view $\mathcal{D}_q$ and $\mathcal{E}_q$ as operators acting on the space of rational functions.
% Note that $\mathcal{D}_q$ is known as the Jackson derivative or the $q$-differentiation operator; see Jackson \cite{jackson1909xi}, and, for example, monographs \cite[Chapter 1]{koekoek2010hypergeometric} and \cite{gasper2011basic}. The operator $\mathcal{E}_q$ can be seen as $\mathcal{D}_q\circ z$, where $z$ denotes the operator to multiply $z$ to a rational function $f(z)$.

The work \cite{bryc2024stationary} defines for every composition $\bs$ a polynomial $z \mapsto W_{\bs}(z)$, to which we refer as the Bryc polynomial associated to $\bs$. It allows to characterize the stationary measure of the open ASEP in terms of the two-layer characterization of the open TASEP from \cite{bryc2024two}; see Theorem~\ref{thm:characterization stationary measure ASEP} for a precise statement. 
% \begin{definition}\label{def:Bryc poly}
% For $q\in[0,1)$ and a composition $\bs=(\sigma_0,\dots,\sigma_r)$ of $N+1$, where $N\in\mathbb{Z}_{\geq0}$, we define a rational function $W_{\bs}(z)$ as follows.
% When $q=0$ we set $W_{\bs}(z)=1$, and when $q\in(0,1)$ we set
%     \begin{equation}\label{eq:original def of Bryc polynomial}
%        W_{\bs}(z)=  \frac{(1-q)^{N+1} }{ (q;q)_{N+1}}  (z;q)_{N+2}  \cdot \left( \underbrace{\mathcal{E}_q\circ\dots \circ\mathcal{E}_q}_{\sigma_0-1}\circ\mathcal{D}_q \right)\circ\dots\circ \left( \underbrace{\mathcal{E}_q\circ\dots \circ\mathcal{E}_q}_{\sigma_r-1}\circ\mathcal{D}_q\right)   \left[\frac{1}{(z;q)_1 }\right].
%     \end{equation}  
% Here, we apply $\sigma_r+\dots+\sigma_0=N+1$ many operators $\mathcal{D}_q$ or $\mathcal{E}_q$ sequentially on $1/(z;q)_1=1/(1-z)$ and finally multiply with the prefactors $(z;q)_{N+2}$ and $(1-q)^{N+1}/ (q;q)_{N+1}$.
% Here and below, for $n\in\mathbb{Z}_{\geq0}\cup\{\infty\}$, we use the $q$-Pochhammer symbol $(z;q)_n=\prod_{j=0}^{n-1}\,(1-z q^j)$. 
% An empty composition of operators is the identity.
% \end{definition}
We now state the probabilistic characterization of the Bryc polynomial from \cite{SchmidYangShock}.

\begin{definition}\label{def:characterization measure}
Let $q\in[0,1)$, $N\in\mathbb{Z}_{\geq0}$, and $\boldsymbol{\mathsf{S}}=(\mathsf{S}(0),\dots,\mathsf{S}(N))\in\{0,1\}^{N+1}$ be a $0$-$1$ sequence. We define a time-inhomogeneous birth-death Markov chain $X^{\boldsymbol{\mathsf{S}}}=\{X^{\boldsymbol{\mathsf{S}}}(i)\}_{i=0}^{N+1}$ indexed by this $0$-$1$ sequence $\boldsymbol{\mathsf{S}}$ through $X^{\boldsymbol{\mathsf{S}}}(0)=X^{\boldsymbol{\mathsf{S}}}(1)=0$ and that for each $i\in\llbracket1,N\rrbracket$ and $j\in\llbracket0,i-1\rrbracket$,
\begin{enumerate}
    \item [(\romannum{1})] if $\mathsf{S}(i)=1$, we let $X^{\boldsymbol{\mathsf{S}}}(i+1)-X^{\boldsymbol{\mathsf{S}}}(i)\in\{-1,0\}$ with 
    \begin{equation}\label{eq:etget1}
    \mathbb{P}\left(X^{\boldsymbol{\mathsf{S}}}(i+1)=j-1 \Big{|}X^{\boldsymbol{\mathsf{S}}}(i)=j\right)=\frac{1-q^{j}}{1-q^{i+1}},
    \end{equation}
    \item [(\romannum{2})] if $\mathsf{S}(i)=0$, we let $X^{\boldsymbol{\mathsf{S}}}(i+1)-X^{\boldsymbol{\mathsf{S}}}(i)\in\{0,1\}$ with 
    \begin{equation}\label{eq:etget2}
    \mathbb{P}\left(X^{\boldsymbol{\mathsf{S}}}(i+1)=j+1 \Big{|}X^{\boldsymbol{\mathsf{S}}}(i)=j\right)=\frac{q^{j+1}-q^{i+1}}{1-q^{i+1}}.
    \end{equation}
\end{enumerate} 
We mention that the right sides of \eqref{eq:etget1} and \eqref{eq:etget2} lie in $[0,1]$ since $j\in\llbracket0,i-1\rrbracket$ and $q\in[0,1)$. 
By an induction argument, we have $X^{\boldsymbol{\mathsf{S}}}(i)\in\llbracket0,i-1\rrbracket$ for all $i\in\llbracket1,N+1\rrbracket$, and hence this Markov chain is well-defined. When $q=0$, we use the convention $0^0=1$.
\end{definition}

\begin{definition}\label{def:associated 01 seq}
    Let $N\in\mathbb{Z}_{\geq0}$ and $\bs=(\sigma_0,\dots,\sigma_r)$, $r\in\mathbb{Z}_{\geq0}$ be a composition of $N+1$. 
We define the $0$-$1$ sequence associated to composition $\bs$ to be $\boldsymbol{\mathsf{S}}^{\bs}=(\mathsf{S}^{\bs}(0),\dots,\mathsf{S}^{\bs}(N))\in\{0,1\}^{N+1}$ such that,  for  $i\in\llbracket0,N\rrbracket$,
\begin{equation}\label{eq:01seq}
  \mathsf{S}^{\bs}(i)=\begin{cases}
        1 &\quad\mbox{ if }i\in\{\sigma_r,\sigma_r+\sigma_{r-1},\dots,\sigma_r+\dots+\sigma_1\},\\  
        0 &\quad\mbox{ otherwise}.
    \end{cases} 
\end{equation}
\end{definition}

The following probabilistic characterization of the Bryc polynomial is taken from \cite{SchmidYangShock}.

\begin{theorem}[c.f. Theorem 1.4 in \cite{SchmidYangShock}]\label{thm:prob interpretation of Bryc polynomials}
Let $q\in[0,1)$, $N\in\mathbb{Z}_{\geq0}$ and $\bs=(\sigma_0,\dots,\sigma_r)$, $r\in\mathbb{Z}_{\geq0}$ be a composition of $N+1$. Then the Bryc polynomial $z \mapsto W_{\bs}(z)$ satisfies (up to a deterministic rescaling)
    \begin{equation}\label{eq:rep of Bryc polynomial}
    W_{\bs}(z)=\mathbb{E}\left[ z^{X^{\boldsymbol{\mathsf{S}}^{\bs}}(N+1)}\right] , 
    \end{equation}
where
$\boldsymbol{\mathsf{S}}^{\bs}$ is the $0$-$1$ sequence associated to the composition $\bs$ as in Definition \ref{def:associated 01 seq}, and $X^{\boldsymbol{\mathsf{S}}^{\bs}}=\{X^{\boldsymbol{\mathsf{S}^{\bs}}}(i)\}_{i=0}^{N+1}$ is the birth-death Markov chain indexed by this $0$-$1$ sequence $\boldsymbol{\mathsf{S}}^{\bs}$ as in Definition \ref{def:characterization measure}. 
\end{theorem}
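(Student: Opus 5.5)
The plan is to prove \eqref{eq:rep of Bryc polynomial} directly from the operator definition of the Bryc polynomial in \cite{bryc2024stationary}. There, with the Jackson derivative $(\mathcal D_q f)(z)=\frac{f(z)-f(qz)}{z(1-q)}$ and $(\mathcal E_q f)(z)=\frac{f(z)-qf(qz)}{1-q}$,
\[
W_{\bs}(z)=\frac{(1-q)^{N+1}}{(q;q)_{N+1}}\,(z;q)_{N+2}\,\Big(\mathcal E_q^{\sigma_0-1}\mathcal D_q\Big)\circ\cdots\circ\Big(\mathcal E_q^{\sigma_r-1}\mathcal D_q\Big)\Big[\tfrac{1}{1-z}\Big].
\]
The idea is that every intermediate function stays a mixture of the ``basis'' functions $z^j/(z;q)_{i+1}$. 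Each application of $\mathcal D_q$ or $\mathcal E_q$ then acts on the mixture weights as one step of a birth--death kernel, and that kernel is exactly the one in Definition~\ref{def:characterization measure}.

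\textbf{Step 1 (one-step identities).} For $f(z)=z^j/(z;q)_{i+1}$ use $(z;q)_{i+2}=(z;q)_{i+1}(1-zq^{i+1})$ and $(qz;q)_{i+1}(1-z)=(z;q)_{i+2}$ to put $f(z)$ and $f(qz)$ over the common denominator $(z;q)_{i+2}$. A direct computation then gives
\[
\mathcal E_q f=\frac{1-q^{i+1}}{1-q}\cdot\frac{z^j}{(z;q)_{i+2}}\Big[\tfrac{1-q^{j+1}}{1-q^{i+1}}+\tfrac{q^{j+1}-q^{i+1}}{1-q^{i+1}}\,z\Big],
\]
\[
\mathcal D_q f=\frac{1-q^{i+1}}{1-q}\cdot\frac{z^{j-1}}{(z;q)_{i+2}}\Big[\tfrac{1-q^{j}}{1-q^{i+1}}+\tfrac{q^{j}-q^{i+1}}{1-q^{i+1}}\,z\Big].
\]
So, up to the scalar $\frac{1-q^{i+1}}{1-q}$, $\mathcal E_q$ moves the exponent $j\mapsto j+1$ with probability \eqref{eq:etget2} and otherwise leaves it unchanged. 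Likewise, $\mathcal D_q$ moves $j\mapsto j-1$ with probability \eqref{eq:etget1} and otherwise leaves it unchanged.

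\textbf{Step 2 (induction and bookkeeping).} Label the $N+1$ operators in the order they are applied, starting from $1/(1-z)=z^0/(z;q)_1$, by steps $i=0,1,\dots,N$. Step $i$ acts on a combination of terms $z^j/(z;q)_{i+1}$. The operators are applied right to left: first the $\mathcal D_q$ of the $\sigma_r$ block, then its $\sigma_r-1$ copies of $\mathcal E_q$, then the block of $\sigma_{r-1}$, and so on. Hence step $i$ is a $\mathcal D_q$ exactly when $i\in\{0,\sigma_r,\sigma_r+\sigma_{r-1},\dots,\sigma_r+\dots+\sigma_1\}$, which is the $0$-$1$ sequence $\boldsymbol{\mathsf S}^{\bs}$ of Definition~\ref{def:associated 01 seq}; the entry at $i=0$ is irrelevant. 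At step $0$ with $j=0$ the down-probability $1-q^0$ vanishes, and at step $0$ either operator gives $X(1)=0$, consistent with $X(0)=X(1)=0$. By induction on $i$, using linearity and Step~1, after step $i$ the function equals
\[
\prod_{l=0}^{i}\frac{1-q^{l+1}}{1-q}\;\sum_j \mathbb P\big(X^{\boldsymbol{\mathsf S}^{\bs}}(i+1)=j\big)\frac{z^j}{(z;q)_{i+2}} .
\]

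\textbf{Step 3 (conclusion) and main obstacle.} At $i=N$ the product of scalars is $(q;q)_{N+1}/(1-q)^{N+1}$ and the denominator is $(z;q)_{N+2}$. Both are cancelled exactly by the prefactors in the definition of $W_{\bs}$, which leaves $\mathbb E[z^{X(N+1)}]$. For $q=0$ the convention $0^0=1$ forces $X\equiv0$, matching $W_{\bs}\equiv1$. The only delicate point is the off-by-one bookkeeping: the indices $i$ versus $i+1$ in the denominators, the fact that the first-applied $\mathcal D_q$ corresponds to step $0$, and checking that $j\in\llbracket0,i-1\rrbracket$ keeps all weights in $[0,1]$. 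The algebra of Step~1 itself is routine.
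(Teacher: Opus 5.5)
Your proposal is correct. The one-step identities for $\mathcal E_q$ and $\mathcal D_q$ on $z^j/(z;q)_{i+1}$ check out, including the point that the $z^{j-1}$ coefficient $\frac{1-q^j}{1-q^{i+1}}$ vanishes at $j=0$, so no negative powers ever appear. The right-to-left order places the $\mathcal D_q$'s exactly at steps $0,\sigma_r,\sigma_r+\sigma_{r-1},\dots,\sigma_r+\dots+\sigma_1$. Both operators send $1/(1-z)$ to $1/(z;q)_2$, so the mismatch with $\mathsf S^{\bs}(0)=0$ is harmless. Finally, the accumulated scalar $(q;q)_{N+1}/(1-q)^{N+1}$ and the denominator $(z;q)_{N+2}$ cancel the prefactor exactly.

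The paper gives no proof of its own here. It imports the statement from \cite{SchmidYangShock} as the equivalence with Bryc's operator definition in \cite{bryc2024stationary}. Your induction over the basis $z^j/(z;q)_{i+1}$ is the natural self-contained verification of that equivalence. It also shows that the normalizing constant depends only on $q$ and $N$. Any such rescaling of $W_{\bs}$ therefore cancels in the normalization of \eqref{eq:PNab}, which is why the phrase ``up to a deterministic rescaling'' costs nothing later in the paper.
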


We refer to  \cite{SchmidYangShock} for a proof of equivalence to the original formulation by Bryc in \cite{bryc2024stationary}.

\subsection{Path characterization of the stationary measure of the open ASEP}

The key role of the Bryc polynomials for the open ASEP will become evident from the following path characterization for the stationary measure of the open ASEP. 
Let $\a,\b\geq0$.  We define the probability measure $\mathbb{P}^{(q,\a,\b)}$ on $\lrw_N$ by 
\begin{equation}\label{eq:PNab}
\mathbb{P}^{(q,\a,\b)}(\bg)=\frac{1}{Z_N^{(q,\a,\b)}}\frac{2^{H(\bg)}\b^{\gamma_N}}{(\a\b)^{\min(\bg)}} W_{\bs(\bg)}(\a\b)\quad\mbox{ for }\bg\in\lrw_N,
\end{equation}
where  
\begin{equation}\label{eq:normalization}
 Z_N^{(q,\a,\b)}=\sum_{\bg\in\lrw_N} \frac{2^{H(\bg)}\b^{\gamma_N}}{(\a\b)^{\min(\bg)}}W_{\bs(\bg)}(\a\b).
\end{equation}
Here, we write $H(\bg)$ to denote the number of horizontal steps in a lazy random walk path $\bg$,
\[
H(\bg)=\#\{i\in\llbracket1,N\rrbracket, \gamma_i=\gamma_{i-1}\} \quad\mbox{ for }\bg\in\lrw_N.
\]
Let $W_{\bs(\bg)}(\a\b)$ be the Bryc polynomial indexed by the composition $\bs(\bg)$ of $N+1$ associated to $\bg$ (as in \eqref{eq:Signature}), evaluated at $z=\a\b$. Since $\min(\bg)\leq\min(0,\gamma_N)$, we know that 
$ 2^{H(\bg)}\b^{\gamma_N} (\a\b)^{-\min(\bg)}  W_{\bs(\bg)}(\a\b)$ (and hence $Z_N^{(q,\a,\b)}$ given by \eqref{eq:normalization})
is a polynomial in $\a$ and $\b$ with non-negative coefficients depending on $q$.  When $\a=0$ or $\b=0$, \eqref{eq:PNab} is well-defined with the convention that $0^0=1$.  We have the following relation to the stationary measure of the open ASEP.

\begin{theorem}[c.f. Theorem 1.1 and (3.2), (3.3) in \cite{bryc2024stationary}]\label{thm:characterization stationary measure ASEP}
Consider the open ASEP on the lattice $\llbracket1,N\rrbracket$ with jump rates $\alpha,\beta,\gamma,\delta$ and $q$ satisfying \eqref{eq:conditions open ASEP}.  Assume that Liggett's condition \eqref{eq:Liggett's condition} holds, and recall that $\a=(1-\alpha)/\alpha$ and $\b=(1-\beta)/\beta$.  We denote the stationary measure of this model by $\mu_N^{(q,\a,\b)}$.
Consider the random lazy random walk path $\bg\in\lrw_N$ with law $\mathbb{P}^{(q,\a,\b)}$ defined as in \eqref{eq:PNab}. Suppose $\kappa_i$, $i\in\llbracket1,N\rrbracket$ are i.i.d. Bernoulli random variables with mean $1/2$ that are independent of $\bg$. 
We define random variables $\tau_i\in\{0,1\}$, $i\in\llbracket1,N\rrbracket$ by  
\[
    \tau_i=\begin{cases}
        1 & \mbox{ if } \gamma_i-\gamma_{i-1}=1,\\
        \kappa_i & \mbox{ if } \gamma_i-\gamma_{i-1}=0 ,\\ 
        0 & \mbox{ if } \gamma_i-\gamma_{i-1}=-1.
    \end{cases}
\]
Then $(\tau_1,\dots,\tau_N)\in\{0,1\}^N$ has the law of the stationary measure $\mu_N^{(q,\a,\b)}$ of the open ASEP.
\end{theorem}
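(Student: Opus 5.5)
The plan is to verify that the law of $\tau$ constructed from $\bg\sim\mathbb{P}^{(q,\a,\b)}$ coincides with the matrix product ansatz of \cite{derrida1993exact}, rather than to check stationarity against the generator directly. By the matrix product ansatz, $\mu_N(\tau)$ is proportional to $\langle W|\prod_{i=1}^N(\tau_iD+(1-\tau_i)E)|V\rangle$, where $D,E,\langle W|,|V\rangle$ satisfy $DE-qED=D+E$, $\langle W|(\alpha E-\gamma D)=\langle W|$ and $(\beta D-\delta E)|V\rangle=|V\rangle$.

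\textbf{Step 1: choice of representation and path expansion.} Write $D=\mathsf 1+\mathsf d$ and $E=\mathsf 1+\mathsf e$, so the bulk relation becomes $\mathsf d\mathsf e-q\mathsf e\mathsf d=1-q$. Expanding the product, each site contributes either $\mathsf 1$ or a nontrivial letter:
\begin{itemize}
\item $D$ contributes $\mathsf 1$ or $\mathsf d$, and $\mathsf d$ is read as an up step;
\item $E$ contributes $\mathsf 1$ or $\mathsf e$, and $\mathsf e$ is read as a down step;
\item the identity term is read as a flat step.
\end{itemize}
A flat step is produced by either letter with the same weight. Summing over $\tau_i\in\{0,1\}$ at flat steps therefore produces the factor $2^{H(\bg)}$ together with independent fair coins $\kappa_i$, which is exactly the randomization in Theorem~\ref{thm:characterization stationary measure ASEP}. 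It thus suffices to show that, for each lazy path $\bg$, the word in $\mathsf d,\mathsf e$ read off from $\bg$ has matrix element proportional to $\b^{\gamma_N}(\a\b)^{-\min(\bg)}W_{\bs(\bg)}(\a\b)$.

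\textbf{Step 2: boundary vectors.} Under Liggett's condition \eqref{eq:Liggett's condition} one has $\mathsf c=\mathsf d=-q$. The boundary equations then reduce to eigenvector relations of the form $\langle W|\mathsf e=\a\langle W|$ and $\mathsf d|V\rangle=\b|V\rangle$, up to $q$-corrections. To use them, I would reorder the word by moving $\mathsf e$'s to the left and $\mathsf d$'s to the right with the $q$-commutation relation. The leading normally ordered term then contributes $\a^{\#\text{unmatched downs}}\b^{\#\text{unmatched ups}}$. For a path, the unmatched downs number $-\min(\bg)$ and the unmatched ups number $\gamma_N-\min(\bg)$. After dividing by $\langle W|V\rangle$ and using $\a>0$ in the normalization, this gives the factor $\b^{\gamma_N}(\a\b)^{-\min(\bg)}$ up to inversion conventions.

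\textbf{Step 3: commutation corrections, the main obstacle.} Each use of $\mathsf d\mathsf e=q\mathsf e\mathsf d+(1-q)$ either pays a factor $q$ or deletes a matched pair, which contributes one more factor $\a\b$. The total correction must be identified with $W_{\bs(\bg)}(\a\b)$, and it depends only on the level-occupation composition $\bs(\bg)$. I would organize the reordering level by level, processing levels $\max(\bg),\dots,\min(\bg)$. Passing through level $i$ should act on a generating function in $z=\a\b$ by $\sigma_i-1$ applications of $\mathcal E_q f(z)=(f(z)-qf(qz))/(1-q)$ followed by one application of $\mathcal D_q f(z)=(f(z)-f(qz))/(z(1-q))$, starting from $1/(1-z)$. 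This recovers Bryc's original operator formula for $W_{\bs}$. The identification is the delicate bookkeeping step, and for it I would follow \cite{bryc2024stationary}.

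Finally, Theorem~\ref{thm:prob interpretation of Bryc polynomials} converts this operator formula into the birth--death chain expectation used in the definition. Normalizing the path weights by $Z_N^{(q,\a,\b)}$ then yields $\mu_N^{(q,\a,\b)}$.
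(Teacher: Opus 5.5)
The paper does not prove this statement itself; it quotes it from Bryc's work and rewrites the Bryc polynomials via Theorem~\ref{thm:prob interpretation of Bryc polynomials}. Your matrix-product route has a genuine gap. The reduction at the end of Step~1 claims it suffices to show that, for each \emph{single} lazy path $\bg$, the matrix element of its word is proportional to $\b^{\gamma_N}(\a\b)^{-\min(\bg)}W_{\bs(\bg)}(\a\b)$. This is false for $q>0$.

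A small correction first: $\mathsf d\mathsf e-q\mathsf e\mathsf d=1-q$ follows from $DE-qED=D+E$ only if $D=(\mathsf 1+\mathsf d)/(1-q)$ and $E=(\mathsf 1+\mathsf e)/(1-q)$. This is a harmless global factor. With this normalization, Liggett's condition turns the boundary equations into
\[
\langle W|(\mathsf e-q\a\,\mathsf d)=(\a-q)\langle W|,\qquad (\mathsf d-q\b\,\mathsf e)|V\rangle=(\b-q)|V\rangle .
\]
These are not eigenvector relations perturbed by negligible $q$-corrections. The parameters $\phi_-(\alpha,\gamma)=\phi_-(\beta,\delta)=-q$ from \eqref{eq:defining ABCD} enter at order one, and your Steps~2--3 only track bulk commutations.

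The path weights in your expansion do not depend on the representation chosen. For $N=1$, applying the two relations to $|V\rangle$ and $\langle W|$ gives two linear equations whose solution is
\[
\frac{\langle W|\mathsf d|V\rangle}{\langle W|V\rangle}=\frac{\b(1-q^2)+q(\a\b-1)}{1-q^2\a\b}.
\]
This is the ratio of the up-path weight to the flat-path weight, for a fixed value of $\tau_1$. The two-layer measure needs this ratio to be $\b/W_{(2)}(\a\b)=(1+q)\b/(1+q\a\b)$, since $W_{(1,1)}\equiv1$ and $W_{(2)}(z)=(1+qz)/(1+q)$.

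Take $q=\tfrac12$ and $\a=\b=3$, a point on the coexistence line. Your expansion gives flat:up:down weights $1:-5:-5$, whereas the two-layer weights are $\tfrac{11}{3}:3:3$. Both yield $\mu_1(1)=\mu_1(0)$, as they must. But they are not proportional, and the matrix-product path weights do not even share a sign.

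So the two representations agree only after summing, for each fixed $\tau$, over all lazy paths compatible with $\tau$; weight is moved between flat and non-flat steps. No bookkeeping of normal orderings of individual path words can produce $W_{\bs(\bg)}$. Your Step~3 claim that the correction depends only on $\bs(\bg)$ therefore has no basis at the level of words.

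Your argument is correct for $q=0$. There the boundary relations are exact eigenvector relations, $\mathsf d\mathsf e=1$, and normal ordering reproduces the TASEP two-layer formula with $W\equiv1$. For $q>0$ the missing ingredient is a $\tau$-level identity, which is exactly the part you defer to Bryc, and it cannot be organized the way Steps~2--3 propose.
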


 Next, we record the following useful decomposition of the path measure for the open TASEP from \cite{bryc2024limitCoex}, which we will use at multiple points throughout this article.

\begin{lemma}[cf. Section 3 in \cite{bryc2024limitCoex}, especially Proposition 3.2 and (3.1), (3.2)]\label{lem:decomposition TASEP}
We define the observable $T_N$ on the space $\lrw_N$ by
\[T_N(\bg)=\min\{i\in\llbracket0,N\rrbracket:\gamma_i=\min(\bg)\}\quad\mbox{ for } \bg\in\lrw_N.\]
Assume $\a,\b>0$ and consider the probability measure $\mathbb{P}^{(0,\a,\b)}$ on $\lrw_N$ as in \eqref{eq:PNab}. Then 
\begin{equation}\label{eq: dist of TN}
    \mathbb{P}^{(0,\a,\b)}(T_N = m) = \begin{cases}
\displaystyle\frac{\a}{4C_{N,\a,\b}}w_{\a}^{m-1}w_{\b}^{N-m}p^{(\a)}_{m-1}p^{(\b)}_{N-m}  & \quad\mbox{ if }
m\in\llbracket1,N\rrbracket,\\ 
\displaystyle\frac{1}{C_{N,\a,\b}}w_{\b}^Np^{(\b)}_N  &\quad\mbox{ if } m = 0.
\end{cases}
\end{equation}
Here, we write
\begin{align}
       C_{N,\a,\b} &= \sum_{m=1}^{N}\frac{\a}{4} w_{\a}^{m-1}w_{\b}^{N-m}p_{m-1}^{(\a)}p_{N-m}^{(\b)} + w_{\b}^Np^{(\b)}_N,  \label{eq:C N a b}\\
       w_x &= \frac{1}{4}\left(x+x^{-1}+2\right)\quad\mbox{ for }x>0,\label{eq:def of wx} 
\end{align} 
and we denote by $p_{\ell}^{(x)}$ the probability that a $\nu_x$-random walk started at $0$ does not reach level $-1$ during the first $\ell$ steps, where $x>0$ and $\ell\in\mathbb{Z}_{\geq0}$.
In particular, \(\mathbb{P}^{(0,\a,\b)}(T_N=m)>0\) for every $m\in\llbracket0,N\rrbracket$.

Furthermore, for every $m\in\llbracket0,N\rrbracket$, conditionally on \(\{T_N=m\}\), the path $(\gamma_{m+i}-\gamma_m )_{i\in\llbracket0,N-m\rrbracket}$ has the law of a \(\nu_{\b}\)-random walk started at \(0\), conditioned to stay non-negative up to step \(N-m\). 
If \(m\geq1\), then conditionally on \(\{T_N=m\}\) one also has $\gamma_{m-1}-\gamma_m=1$,
and the reversed left path $(\gamma_{m-1-i}-\gamma_{m-1} )_{i\in\llbracket0,m-1\rrbracket}$
has the law of a \(\nu_{\a}\)-random walk started at \(0\), conditioned to stay non-negative up to step \(m-1\).  For \(m\geq1\), this reversed left path and the right path $(\gamma_{m+i}-\gamma_m )_{i\in\llbracket0,N-m\rrbracket}$ are conditionally independent given \(\{T_N=m\}\).  
\end{lemma}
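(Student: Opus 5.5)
At $q=0$ the Bryc polynomial is identically $1$, because the chain $X^{\boldsymbol{\mathsf S}}$ has no down-steps (the probability in \eqref{eq:etget1} vanishes at $q=0$ for $j\ge 1$, and $j=0$ gives $0$). So the plan is a direct computation with the explicit weight
\[
\mathbb P^{(0,\a,\b)}(\bg)\propto 2^{H(\bg)}\,\b^{\gamma_N}\,(\a\b)^{-\min(\bg)}.
\]
We decompose every path $\bg\in\lrw_N$ at $m=T_N(\bg)$. The event $\{T_N=m\}$ is in bijection with pairs (left piece, right piece) of the following form.
\begin{itemize}
\item The right piece $(\gamma_{m+i}-\gamma_m)_{i\in\llbracket0,N-m\rrbracket}$ is any lazy path of length $N-m$ that stays $\ge 0$.
\item For $m\ge1$, the left piece must satisfy $\gamma_i>\gamma_m$ for all $i<m$, since $m$ is the \emph{first} minimum. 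This forces $\gamma_{m-1}-\gamma_m=1$. The reversed path $(\gamma_{m-1-i}-\gamma_{m-1})_{i\in\llbracket0,m-1\rrbracket}$ is then any lazy path of length $m-1$ that stays $\ge0$, with the additional constraint that it ends at $\gamma_0-\gamma_{m-1}=-\gamma_{m-1}$. This constraint is automatically compatible, because $\gamma_0=0$ is simply the terminal value of the reversed walk.
\end{itemize}

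Next we factor the weight. Since $\min(\bg)=\gamma_m$, we write $\gamma_N=\gamma_m+(\gamma_N-\gamma_m)$. Then
\[
\b^{\gamma_N}(\a\b)^{-\gamma_m}=\b^{\gamma_N-\gamma_m}\,\a^{-\gamma_m},\qquad -\gamma_m=\gamma_0-\gamma_m=1+(\gamma_0-\gamma_{m-1}).
\]
The factor $2^{H}$ splits as a product over steps. Hence the right piece contributes $\prod(\text{step weight})$, with weights $\b$ for an up step, $2$ for a flat step and $\b^{-1}$ for a down step. Normalizing each step by $\b+2+\b^{-1}=4w_\b$ gives exactly the $\nu_\b$-walk, with step probabilities $\b/(4w_\b)$, $2/(4w_\b)$ and $\b^{-1}/(4w_\b)$. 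Likewise the reversed left piece contributes a factor $\a$ from its forced first up-step, times step weights $\a$, $2$ and $\a^{-1}$ per step, i.e. the $\nu_\a$-walk with normalization $4w_\a$.

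Summing over the constrained pieces, we obtain for $m\ge1$
\[
\sum_{T_N=m}2^{H}\b^{\gamma_N}(\a\b)^{-\min}
=\a\,(4w_\a)^{m-1}p^{(\a)}_{m-1}\,(4w_\b)^{N-m}p^{(\b)}_{N-m},
\]
and for $m=0$ the sum equals $(4w_\b)^Np^{(\b)}_N$. Dividing both by $4^N$ gives exactly the expressions in \eqref{eq: dist of TN}, with normalization $C_{N,\a,\b}$ as in \eqref{eq:C N a b}. Positivity follows since $p^{(x)}_\ell>0$: the all-flat path is always admissible.

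The conditional statements follow from the same factorization. Given $T_N=m$, the weight is a product of a function of the left piece and a function of the right piece, over a product set of admissible pairs. This yields conditional independence, and each factor is the $\nu$-walk law restricted to staying non-negative, i.e. the conditioned walk. The only delicate step is the bookkeeping at the junction: the forced step $\gamma_{m-1}-\gamma_m=1$ must be accounted for correctly (it produces the factor $\a/4$), and the strict versus non-strict inequalities must be placed correctly (strict on the left, non-strict on the right). Beyond this, I expect no real obstacle.
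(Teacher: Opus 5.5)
Your proof is correct and follows essentially the same route as the argument behind the cited result. The paper does not reprove this lemma; it quotes Bryc--Najnudel--Wang \cite{bryc2024limitCoex}, whose argument is the same splitting of the $q=0$ weight $2^{H(\bg)}\b^{\gamma_N}(\a\b)^{-\min(\bg)}$ at the first minimum into a forced up-step plus two independent non-negative $\nu_\a$- and $\nu_\b$-paths. Your bookkeeping checks out: the factor $\a$ from $-\gamma_m=1+(\gamma_0-\gamma_{m-1})$, strict inequalities on the left and non-strict on the right, and the bijection with pairs of non-negative paths, where the terminal value only fixes the vertical offset.

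One justification is wrong and needs fixing. At $q=0$ the down-probability in \eqref{eq:etget1} is $(1-0)/(1-0)=1$ for $j\ge1$, not $0$. The actual reason that $W_{\bs}\equiv1$ is different. The up-probability in \eqref{eq:etget2} is $(q^{j+1}-q^{i+1})/(1-q^{i+1})=0$, so the chain started from $X^{\boldsymbol{\mathsf S}}(1)=0$ can never move up. From $0$, a down-step has probability $1-q^0=0$. Hence $X^{\boldsymbol{\mathsf S}}(N+1)=0$ and $W_{\bs}\equiv 1$.
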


\section{Doubled regeneration times}\label{sec:DoubleRegen}

As a main input for the approximation results in the coexistence line, we will use the notion of doubled regeneration times for bi-infinite random walk paths. Intuitively, doubled regeneration times ask for a bi-infinite v-shaped path to have a regeneration time at the same level; see Figure~\ref{fig:doubled-regeneration}. We will require sharp exponential tail bounds, similar to the analysis in \cite{DPZ96}, suitably adjusted for doubled regeneration times.  

\subsection{Definition of doubled regeneration times}

We first recall the notion of regeneration times and then define the doubled regeneration times for an arbitrary path. 
 
  \begin{definition}\label{def:regeneration}
Assume $N\in\mathbb{N}\cup\{\infty\}$. Let $\{Y_i\}_{i\in\llbracket0,N\rrbracket}$ be a path on $\mathbb{Z}$ with
$Y_i-Y_{i-1}\in\{-1,0,1\}$ for all $i\in\llbracket1,N\rrbracket$.
We call $n\in\llbracket0,N\rrbracket$ a \emph{regeneration time} for this path if
$Y_i<Y_n$ for all $i<n$ and $Y_i>Y_n$ for all $i>n$.
When $n\in\llbracket0,N\rrbracket$ is a regeneration time, we call $Y_n$ a \emph{regeneration level}.
\end{definition}

\begin{figure}[]
    \centering
    \begin{tikzpicture}[scale=0.75, >=stealth]

        \coordinate (A) at (-6,5);
        \coordinate (B) at (-5,4);
        \coordinate (C) at (-4,5);
        \coordinate (D) at (-3,4);
        \coordinate (E) at (-2,3);
        \coordinate (F) at (-1,2);
        \coordinate (G) at (0,1);
        \coordinate (H) at (1,2);
        \coordinate (I) at (2,1);
        \coordinate (J) at (3,1);
        \coordinate (K) at (4,2);
        \coordinate (L) at (5,3);
        \coordinate (M) at (6,4);
        \coordinate (N) at (7,5);
        \coordinate (O) at (8,4);
        \coordinate (P) at (9,5);

        \draw[->] (-6.5,-0.5) -- (9.8,-0.5)
            node[right] {};
        \draw[->] (-6.5,-0.5) -- (-6.5,6)
            node[above] {};

        \draw[red, dashed, thick] (-6.3,3) -- (9.5,3);
        \node[red, left] at (-6.5,3) {$h$};

        \draw[thick]
            (A) -- (B) -- (C) -- (D) -- (E) --
            (F) -- (G) -- (H) -- (I) -- (J) --
            (K) -- (L) -- (M) -- (N) -- (O) -- (P);

        \fill[red] (E) circle (3pt);
        \fill[red] (L) circle (3pt);

        \draw[dotted] (-2,3) -- (-2,-0.5);
        \draw[dotted] (5,3) -- (5,-0.5);

        \node[below] at (-2,-0.5) {$-m$};
        \node[below] at (5,-0.5) {$n$};
        \node[below] at (0,-0.5) {$0$};

        \node[red, above right] at (5.5,3)
            {$Y_{-m}=Y_n=h$};

    \end{tikzpicture}
    \caption{Visualization of a doubled regeneration time $(m,n)$.
    The path stays strictly below the doubled regeneration
    level $h=Y_{-m}=Y_n$ for $-m<i<n$, and strictly above
    this level for $i<-m$ and $i>n$.}
    \label{fig:doubled-regeneration}
\end{figure}

\begin{definition}\label{def:doubled regeneration time}
Assume $M,N\in\mathbb{Z}_{\geq0}\cup\{\infty\}$ and $\{Y_i\}_{i=-M}^{N}$ be an arbitrary path on $\mathbb{Z}$ with each step in $\{-1,0,1\}$. A pair $(m,n)\in\mathbb{N}^2$ is called a \emph{doubled regeneration time} if $Y_{-m}=Y_n$ and
\[
Y_{i}<Y_{-m}=Y_n<Y_j \quad\text{for all } -m<i<n\mbox{ and } j\in\mathbb{Z}\setminus\llbracket-m,n\rrbracket.
\] In this case, we call $Y_{-m}=Y_n$ a \emph{doubled regeneration level}.
\end{definition}

Next, we define the positive and nonnegative $\nu_{\b}$-random walk as well as the bi-infinite $V$-shaped $\nu_{\b}$ random walk. 

We will write $\nu_{\b}$ for $\b>0$ to denote the probability measure on $\{-1,0,1\}$ given by
    \[
    \nu_{\b}(1)=\frac{\b}{\b+\b^{-1}+2},\quad\nu_{\b}(0)=\frac{2}{\b+\b^{-1}+2},\quad \nu_{\b}(-1)=\frac{\b^{-1}}{\b+\b^{-1}+2},
    \]
and $\nu_{\b}^{\otimes k}$ for $k\in\mathbb{N}$ to denote the product measure (of $k$ independent copies of $\nu_{\b}$) on $\{-1,0,1\}^k$. 
A \(\nu_{\b}\)-random walk means a random walk with i.i.d. increments of law \(\nu_{\b}\).

\begin{definition}\label{def:positive lazy random walk}
Assume $\b>1$ and $N\in\mathbb{N}\cup\{\infty\}$. Consider a $\nu_{\b}$-random walk $\{S_i\}_{i=0}^{N}$ started at $S_0=0$. Let
\[ 
E_{+}=\{S_1=1\text{ and }S_i\geq 1\text{ for all }i\in\llbracket1,N\rrbracket\},\qquad
E_{\geq0}=\{ S_i\geq 0\text{ for all }i\in\llbracket0,N\rrbracket\}
\]
Observe that  $E_+$, $E_{\geq0}$ have strictly positive probabilities. 
We call the process $\{S_i\}_{i=0}^{N}$ conditioned on $E_+$ (respectively, $E_{\geq0}$) the \emph{positive (respectively, nonnegative) $\nu_{\b}$-random walk with $N$ steps}. If $N=\infty$, we call the conditioned processes the \emph{positive (respectively, nonnegative) $\nu_{\b}$-random walk with infinitely many steps}.  
\end{definition} 

\begin{definition}\label{def:V shaped RW}
    Assume $\b>1$.
Let $\{R_i\}_{i=0}^{\infty}$ be a nonnegative $\nu_{\b}$-random walk with infinitely many steps started at $R_0=0$. Let $\{L_i\}_{i=0}^{\infty}$ be a positive $\nu_{\b}$-random walk with infinitely many steps started at $L_0=0$. Assume that \(\{L_i\}_{i=0}^{\infty}\) and \(\{R_i\}_{i=0}^{\infty}\) are independent.
We define a bi-infinite process $\{Y_i\}_{i=-\infty}^{\infty}$ by letting $Y_i=R_i$ for each $i\geq0$ and $Y_i=L_{-i}$ for each $i\leq0$. We call $\{Y_i\}_{i=-\infty}^{\infty}$ a (bi-infinite) $V$-shaped $\nu_{\b}$ random walk started at $Y_0=0$. Note that $Y_0=0$, $Y_{-1}=1$, $Y_i\geq0$ for all $i\geq0$, and $Y_i\geq1$ for all $i\leq-1$.
\end{definition}

In the following, a probability measure $\nu$ on $\mathbb{R}_{\geq0}$ is called subexponential if there exist constants $C,c>0$ such that $\nu([y,\infty))\leq Ce^{-cy}$ for all $y\in\mathbb{R}_{\geq0}$. Similarly,  a random variable $X$ on $\mathbb{R}_{\geq0}$ is subexponential if its distribution is subexponential, and for $C,c>0$, we write $X\in\mathsf{Subexp}(C,c)$ if $\mathbb{P}(X\geq y)\leq Ce^{-cy}$ for all $y\in\mathbb{R}_{\geq0}$. The following is our main technical input on regeneration times. 

\begin{proposition}\label{thm:doubled regeneration}
Assume $\b>1$, and let $\{Y_i\}_{i=-\infty}^{\infty}$ be the (bi-infinite) $V$-shaped $\nu_{\b}$ random walk started at $Y_0=0$ as in Definition \ref{def:V shaped RW}. Then, almost surely, there exist infinitely many doubled regeneration times
$(m_i,n_i)$, $i\in\mathbb{N}$ with $0<m_1<m_2<\dots$ and $0<n_1<n_2<\dots$. 
Furthermore, the random variables
\[
    (m_1,n_1,Y_{n_1}),  \quad   (m_2-m_1,n_2-n_1,\,Y_{n_2}-Y_{n_{1}}),\quad       (m_3-m_2,n_3-n_2,\,Y_{n_3}-Y_{n_{2}}),\quad\dots              
\]
are mutually independent; and the random variables
\[
(m_i-m_{i-1},n_i-n_{i-1},\,Y_{n_i}-Y_{n_{i-1}}) \quad\mbox{for } i\geq2,
\]
are i.i.d. Moreover, there exist constants $C,c>0$ depending only on $\b$, such that for every $y\in\mathbb{N}$,
\begin{equation*}
    \begin{split}
        & \mathbb{P}(n_1\geq y)\leq Ce^{-cy},\quad \mathbb{P}(n_2-n_1\geq y)\leq Ce^{-cy}, \\
        &\mathbb{P}(m_1\geq y)\leq Ce^{-cy},  \quad \mathbb{P}(m_2-m_1\geq y)\leq Ce^{-cy},\\
        &\mathbb{P}(Y_{n_1}\geq y)\leq Ce^{-cy},\quad \mathbb{P}(Y_{n_2}-Y_{n_1}\geq y)\leq Ce^{-cy}.
    \end{split}
\end{equation*} 
In particular, the above differences are subexponential random variables.
\end{proposition}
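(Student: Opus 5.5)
The plan is to reduce the doubled regeneration structure to ordinary one-sided regeneration structures of the two arms $L$ and $R$, and then synchronize their regeneration levels by a renewal argument.

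\textbf{Step 1: one-sided regenerations.} A pair $(m,n)$ is a doubled regeneration time exactly when $n$ is a regeneration time of $R$, $m$ is a regeneration time of $L$ with the same level $h$, and the two arms do not interfere: for $-m<i<n$ we need $Y_i<h$, and outside we need $Y_j>h$. Since $L$ and $R$ are each transient upwards with positive drift (as $\b>1$), each arm separately has infinitely many regeneration times. The classical regeneration theory for a random walk conditioned to stay positive (as in \cite{DPZ96}) gives the following for each arm: the gaps between successive regeneration times are i.i.d. after the first one, the gaps between successive regeneration levels are i.i.d., and both have exponential tails. I will derive this from the Markov structure of the conditioned walk, which is an $h$-transform with $h(x)=1-\b^{-2(x+1)}$ or similar, together with the exponential tail of the overall minimum of the future path.

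\textbf{Step 2: synchronization of levels.} Let $\mathcal H_L$ and $\mathcal H_R$ be the random sets of regeneration levels of $L$ and $R$. Because the steps lie in $\{-1,0,1\}$, the levels are hit in increasing order. Each set is a delayed renewal process on $\mathbb Z$ with aperiodic, subexponential increments; aperiodicity holds because level gaps of size $1$ have positive probability. A common level $h\in\mathcal H_L\cap\mathcal H_R$ gives a doubled regeneration time. The intersection of two independent aperiodic renewal processes is again a renewal process. Its first point and its inter-point gaps have exponential tails, since at each renewal of the pair the chance of coincidence within a bounded number of further renewals is bounded below uniformly. This can be made quantitative by a coupling or geometric-trials argument: consider the Markov chain of the difference between the current $L$-level and the next $R$-level, and note that its return time to $0$ has exponential moments.

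\textbf{Step 3: independence, and from levels to times.} Independence and the i.i.d. property follow from the strong Markov property of the pair of arms at a common regeneration level $h$. The post-regeneration pieces above level $h$ are independent of everything before. Their laws do not depend on the past, since each arm after a regeneration is a walk conditioned to stay above $h$ started at $h$. The first block differs because of the initial conditioning near $0$. Finally, $n_{i}-n_{i-1}$ is bounded by the sum of the time gaps of $R$-regenerations between the two common levels. This is a random sum, with a subexponential number of terms, of subexponential variables, so it is subexponential; the same holds for $m$. Exponential tails for $Y_{n_1}$ follow in the same way.

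\textbf{Main obstacle.} I expect the main difficulty to be the exponential tail of the synchronization time in Step 2, in a form that also handles the delayed first renewal uniformly.
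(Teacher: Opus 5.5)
Your plan is sound and shares the paper's top-level architecture. A common level $h\geq1$ of the two arms' regeneration sets is exactly a doubled regeneration time; the ``non-interference'' conditions you list are automatic once $L_m=R_n=h$ with $m,n$ regeneration times. The time increments are then random sums of regeneration gaps with a subexponential number of terms.

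The routes differ in two places.

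First, the paper never treats $R$ as a delayed regenerative process. It cuts $R$ at its last zero $\zeta$, so that $Z_i=R_{\zeta+i}$ is a positive walk independent of $L$ and of $(\zeta,M)$, where $M=\max_{i\le\zeta}R_i$. It then applies Proposition~\ref{prop:regeneration} to $L$ and $Z$ (both undelayed) and synchronizes them with Theorem~\ref{thm:find reg times 2}. Common levels $\le M$ are discarded through $K=\min\{k:H_k>M\}$, which is a stopping time for $\sigma(\zeta,M,\Phi_1,\dots,\Phi_k)$. This gives the independence of the first block and its tails (a sum of $K\le M+1$ i.i.d.\ blocks plus $\zeta$) at no extra cost. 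It also removes exactly the ``delayed first renewal'' issue you single out as the main obstacle.

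Second, the paper proves the synchronization bound analytically. One has $\mathbb P(n\in\mathcal H^{(i)})\to\upsilon_i^{-1}$ at an exponential rate, so $U$ continues meromorphically past the unit circle with only a simple pole at $1$ and no zeros. Hence $F=1-1/U$ is analytic on a larger disc and $\tau$ has an exponential moment. This works for any number $d$ of sequences, with constants depending only on $(C_0,c_0,d,p)$.

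Your probabilistic route to Step 2 avoids complex analysis and can be made to work, but two justifications need repair.

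The claim that the chance of a coincidence within a bounded number of further renewals is bounded below uniformly is false when the current offset between the two sequences is large; it is then exponentially small in the offset. What you need instead is the following. For gaps with exponential tails, the overshoot of a renewal sequence over a fixed level has exponential tails uniformly in the level. So the offset returns to a bounded set at a geometric rate, and from an offset $\le K$ a coincidence occurs with probability $\ge p^K$ by using gaps equal to $1$. A clean way to package this is a geometric drift condition for the bivariate forward-recurrence chain indexed by level, with $V(a,b)=e^{\theta a}+e^{\theta b}$. That packaging also handles the dependence between the number of trials and the level gained, and an exponentially tailed initial offset.

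Regeneration times are also not stopping times, so the strong Markov property does not directly give your Step 3. The independence and i.i.d.\ structure should come, as in the proof of Theorem~\ref{thm:find reg times 2}, from the i.i.d.\ increment sequences of each arm's regeneration times and levels. One decomposes over the index vectors at which the $\ell$-th common level is attained. The delayed regeneration structure of $R$ you need in Step 1 is most easily obtained by the same last-zero decomposition the paper uses.
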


Proposition~\ref{thm:doubled regeneration} will follow from a general result on the simultaneous doubled regeneration times of random sequences with subexponential increments. Despite a comprehensive treatment of regeneration times in the literature, 
to our best knowledge, the precise form of the below statement (that the differences between the simultaneous regeneration times remain subexponential) was not previously shown.

\begin{theorem}\label{thm:find reg times 2}
Let $d\in\mathbb{N}$. For each $i\in\llbracket1,d\rrbracket$, let
$(\Upsilon^{(i)}_j,H^{(i)}_j)_{j=0}^{\infty}$ be a random sequence in
$\mathbb{Z}_{\geq0}\times\mathbb{Z}_{\geq0}$ satisfying
\[
0=\Upsilon^{(i)}_0<\Upsilon^{(i)}_1<\Upsilon^{(i)}_2<\cdots
\quad\mbox{and}\quad
0=H^{(i)}_0<H^{(i)}_1<H^{(i)}_2<\cdots .
\]
We make the following assumptions.
\begin{enumerate}
    \item[(1)] The random sequences
    $(\Upsilon^{(i)}_j,H^{(i)}_j)_{j=0}^{\infty}$, $i\in\llbracket1,d\rrbracket$, are independent.
    \item[(2)] For each fixed $i\in\llbracket1,d\rrbracket$, the increment random vectors
     $(\Upsilon^{(i)}_k-\Upsilon^{(i)}_{k-1},
    H^{(i)}_k-H^{(i)}_{k-1} )$, $k\in\mathbb{N}$ 
    are i.i.d. as $k$ varies. Moreover, $H^{(i)}_1$ has full support on
    $\mathbb{N}$.
    \item[(3)] There exist constants $C_0,c_0\in(0,\infty)$ such that for  
    $i\in\llbracket1,d\rrbracket$,  
    $\mathbb E [e^{c_0\Upsilon^{(i)}_1} ]\leq C_0$
    and
    $\mathbb E [e^{c_0H^{(i)}_1} ]\leq C_0 $. 
\end{enumerate} 
Then
$\cap_{i\in\llbracket1,d\rrbracket}\{H^{(i)}_k:k\in\mathbb{Z}_{\geq0}\}$
has infinitely many elements almost surely. On this almost sure event, write these elements as
$0=H_0<H_1<H_2<\cdots$. For each $i\in\llbracket1,d\rrbracket$ and
$\ell\in\mathbb{Z}_{\geq0}$, let $t_{i,\ell}$ be the unique element of
$\mathbb{Z}_{\geq0}$ such that $H^{(i)}_{t_{i,\ell}}=H_{\ell}$. Then
$0=t_{i,0}<t_{i,1}<t_{i,2}<\cdots$ for each $i$, and the random vectors
\[
\left(
\Upsilon^{(1)}_{t_{1,k}}-\Upsilon^{(1)}_{t_{1,k-1}},\dots,
\Upsilon^{(d)}_{t_{d,k}}-\Upsilon^{(d)}_{t_{d,k-1}},
H_k-H_{k-1}
\right),
\quad k\in\mathbb{N},
\]
are i.i.d. There exist $C,c\in(0,\infty)$ depending only on
$C_0,c_0,d$, and $p=\min_{i\in\llbracket1,d\rrbracket}\mathbb P(H^{(i)}_1=1)$,  such that
\[
\mathbb E\left[e^{c\Upsilon^{(i)}_{t_{i,1}}}\right]\leq C
\quad\mbox{for every }i\in\llbracket1,d\rrbracket,
\quad\mbox{and}\quad
\mathbb E\left[e^{cH_1}\right]\leq C .
\]
\end{theorem}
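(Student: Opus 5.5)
We view each level set $\mathcal H^{(i)}=\{H^{(i)}_k:k\geq0\}$ as a discrete renewal process on $\mathbb{Z}_{\geq0}$, each marked by its $\Upsilon$-increments. The plan has three steps: a renewal (regeneration) argument for the i.i.d.\ structure, a coupling argument for an exponential moment of $H_1$, and a transfer of that bound to the $\Upsilon^{(i)}_{t_{i,1}}$.

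\textbf{Step 1 (i.i.d.\ structure).} For a fixed level $h$, the event $\{h\in\mathcal H^{(i)}\}$ is a stopping-type event for the filtration generated by the increments. On this event, the increments after the index $t$ with $H^{(i)}_t=h$ are i.i.d.\ with the original law, independent of everything before, by the strong Markov property for random walks. By assumption (1) this holds jointly in $i$. Hence $H_1=\min\{h\geq1: h\in\bigcap_i\mathcal H^{(i)}\}$ is a joint stopping level. After it, all $d$ processes restart afresh and independently of the past. Iterating gives both that the vectors in the statement are i.i.d.\ and that $t_{i,\ell}$ is strictly increasing, provided $H_1<\infty$ a.s. That finiteness follows from the tail bound in Step 2.

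\textbf{Step 2 (exponential moment of $H_1$).} Let $h_0$ be the current candidate level, initially the maximum over $i$ of the first renewal of $\mathcal H^{(i)}$ in $[1,\infty)$. Each process $i$ has an overshoot $O_i$, namely its first renewal point at or beyond $h_0$. The key observation is that, conditional on the past up to level $h_0$, the following event has probability at least $p^{dK}$: every process whose last renewal point below $h_0$ is at distance at most $K$ takes unit steps ($H$-increment $=1$) until it reaches some common level. Full support together with $\mathbb P(H^{(i)}_1=1)\geq p$ makes this possible.

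To carry this out, we proceed in rounds. At each round set $h=\max_i O_i$, the largest current overshoot. Each process $i$ currently sits at a renewal point $O_i\leq h$ and takes $h-O_i$ unit steps, an event of conditional probability $\geq p^{\sum_i(h-O_i)}$. If this succeeds, $h$ is a common level. Otherwise we restart from the new maximum overshoot.

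The overshoots $O_i-\text{(previous level)}$ are dominated by increments $H^{(i)}_k-H^{(i)}_{k-1}$, whose exponential moments are controlled by assumption (3). Consequently, with probability bounded below by some $\eta=\eta(C_0,c_0,d,p)>0$ the spread $\sum_i(h-O_i)$ is at most a fixed $K$. Each round therefore succeeds with probability $\geq \eta p^{dK}=:\theta>0$, independently of the past by Step 1's strong Markov property.

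The level gained per round is a sum of at most $d$ dominated overshoots plus the spread. So $H_1$ is bounded by a geometric$(\theta)$ number of i.i.d.\ variables with uniform exponential moments, which yields $\mathbb E[e^{cH_1}]\leq C$ for small $c$. The main obstacle is making this precise. The overshoot of a renewal process over an arbitrary (random, history-dependent) level must have a tail bounded uniformly. We would use the standard bound
\[
\mathbb P(\text{overshoot}\geq y)\leq \sum_{h'}\mathbb P(h'\in\mathcal H)\,\mathbb P(H_1\geq y+\cdot)\lesssim e^{-c_0y},
\]
with the renewal measure at each site bounded by $1$. This is applied conditionally at stopping levels.

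\textbf{Step 3 (transfer to $\Upsilon$).} Since $H$ is strictly increasing, $t_{i,1}\leq H_1$, so
\[
\Upsilon^{(i)}_{t_{i,1}}\leq\sum_{k\leq H_1}\bigl(\Upsilon^{(i)}_k-\Upsilon^{(i)}_{k-1}\bigr).
\]
By Cauchy--Schwarz,
\[
\mathbb E\bigl[e^{c\Upsilon^{(i)}_{t_{i,1}}}\bigr]\leq\sum_{n}\mathbb P(H_1\geq n)^{1/2}\,\mathbb E\bigl[e^{2c\sum_{k\leq n}(\Upsilon^{(i)}_k-\Upsilon^{(i)}_{k-1})}\bigr]^{1/2}\leq\sum_n Ce^{-c'n/2}C_0^{n/2}.
\]
By Hölder, $\mathbb E[e^{2c\Upsilon_1}]\leq C_0^{2c/c_0}$, which tends to $1$ as $c\to0$. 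Choosing $c$ small therefore makes the series converge, with constants depending only on $C_0,c_0,d,p$.
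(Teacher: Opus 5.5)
Your proposal is correct, but your Step~2 takes a genuinely different route from the paper. Your Steps~1 and~3 match the paper's argument: the paper gets the i.i.d.\ structure by conditioning on the events $\{H_\ell=m,\,t_{i,\ell}=r_i\ \forall i\}$, and it transfers to $\Upsilon$ through $t_{i,1}\le H_1$ and Lemma~\ref{lem:basic exponential tails}~(iv), where you use Cauchy--Schwarz.

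For the tail of $H_1$, the paper builds no coupling and works analytically:
\begin{itemize}
\item It writes the renewal identity $U=1+FU$ for the generating functions of $\mathbb P(n\in\mathcal H)$ and of $\tau=H_1$.
\item It uses $\mathbb P(X^{(i)}_1=1)\ge p$ to show that $1-P_i$ has no zero other than $z=1$ in a disk of radius $1+\delta$.
\item Cauchy's coefficient formula then gives $\mathbb P(n\in\mathcal H^{(i)})\to\upsilon_i^{-1}$ exponentially fast. By independence across $i$, $U(z)=\prod_i\upsilon_i^{-1}/(1-z)+V(z)$ with $V$ analytic beyond the unit circle.
\item It shows quantitatively that $U$ does not vanish near the circle. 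Hence $F=1-1/U$ is analytic on a larger disk, and Cauchy estimates give the exponential tail of $H_1$.
\end{itemize}

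Your rounds argument replaces all of the complex analysis with an elementary estimate. Its ingredients are the uniform overshoot tail from the renewal-measure bound, a spread of at most $K$ with conditional probability at least $\eta$, and then $\sum_i(h-O_i)\le dK$ unit steps with probability at least $p^{dK}$. It only uses uniform conditional bounds, so it would survive weakening the i.i.d.\ hypothesis. The price is a strong Markov property for an interleaved, adapted examination of $d$ processes. The paper needs only the single conditioning identity \eqref{eq:proofanalytic3}. As a by-product, the paper also gets the exponential-rate renewal theorem for the joint renewal set.

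Two points in your write-up need tightening.

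First, the round gains are not independent of whether a round fails, since a large spread makes failure more likely. So ``a geometric number of i.i.d.\ variables'' is not literally right. Define the gain $G$ of a round unconditionally: take the new level to be the maximum over $i$ of the first renewal of process $i$ at or beyond $h$. This also covers processes that fail by jumping past $h$, and it is again a uniformly controlled overshoot, so $\mathbb E[e^{c'G}\mid\text{past}]\le M$. Then combine rounds through the contraction
$\mathbb E[e^{cG}\mathbf 1_{\{\text{fail}\}}\mid\text{past}]\le \mathbb E[e^{cG}\mid\text{past}]-\mathbb P(\text{success}\mid\text{past})\le M^{c/c'}-\theta$.
This is less than $1$ for small $c$, and it uses only $e^{cG}\ge1$.

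Second, in Step~3 the factor $C_0^{n/2}$ in your display must be $C_0^{cn/c_0}$ (with $2c\le c_0$), as your last line indicates. With that correction the series converges once $c\log C_0/c_0<c'/2$.
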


We will show in the Section~\ref{sec:SimplifyRenewal} that Proposition~\ref{thm:doubled regeneration} follows when assuming Theorem \ref{thm:find reg times 2}, and we give the proof of Theorem \ref{thm:find reg times 2} in Section~\ref{sec:RenewalThm}.

\subsection{Preliminaries on regeneration times for random walk paths.}\label{sec:PrelimsRegeneration}

In preparation for the proofs of Proposition~\ref{thm:doubled regeneration} and Theorem \ref{thm:find reg times 2}, we recall three results from \cite{SchmidYangShock} on (standard) regeneration times for random walk paths. The first one collects several standard properties of subexponential random variables.

\begin{lemma}[c.f. Lemma A.2 in \cite{SchmidYangShock}]\label{lem:basic exponential tails}
The following statements hold:
    \begin{enumerate}
        \item [\textup{(\romannum{1})}] A non-negative random variable $X$ is subexponential if and only if $\mathbb{E}[e^{\theta X}]<\infty$ for some $\theta>0$. Specifically, if $X\in\mathsf{Subexp}(C,c)$, then $\mathbb E[e^{\theta X}]\leq1+ \theta C/(c-\theta)<\infty$ for every $\theta\in(0,c)$; and conversely, if $\mathbb E[e^{\theta X}]\leq M$ for some $\theta,M\in(0,\infty)$, then $X\in\mathsf{Subexp}(M,\theta)$. 
        \item [\textup{(\romannum{2})}] If $0\leq X\leq Y$ and $Y$ is subexponential, then $X$ is also subexponential.
        Specifically, if $0\leq X\leq Y$ and $Y\in\mathsf{Subexp}(C,c)$, then $X\in\mathsf{Subexp}(C,c)$. 
        \item [\textup{(\romannum{3})}] If $X$ and $Y$ are both subexponential, then $X+Y$ is subexponential. Specifically, if $X\in\mathsf{Subexp}(C_1,c_1)$ and $Y\in\mathsf{Subexp}(C_2,c_2)$, then $X+Y\in\mathsf{Subexp}(C_1+C_2,\min(c_1,c_2)/2)$. 
        \item [\textup{(\romannum{4})}] If $X_1,X_2,\dots$ are i.i.d. subexponential variables, and $K$ is a subexponential variable with values in $\mathbb{Z}_{\geq0}$, not necessarily independent of $X_1,X_2,\dots$, then $X_1+\dots+X_K$ is also subexponential. Specifically, if $X_1,X_2,\dots$ are i.i.d., $X_1\in\mathsf{Subexp}(C_0,c_0)$, and $K$ is $\mathbb{Z}_{\geq0}$-valued with $K\in\mathsf{Subexp}(C_K,c_K)$, then there exist $C,c>0$ depending only on $C_0,c_0,C_K,c_K$, such that $X_1+\dots+X_K\in\mathsf{Subexp}(C,c)$.
    \end{enumerate} 
\end{lemma}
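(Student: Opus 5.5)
The four statements are standard, and I would prove each directly from the tail-integral formula for exponential moments, together with Markov's and Chernoff's inequalities.

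For (i), suppose $X\in\mathsf{Subexp}(C,c)$ and $\theta\in(0,c)$. I would use the layer-cake identity for nonnegative $X$:
\[
\mathbb E[e^{\theta X}]=1+\int_0^\infty \theta e^{\theta y}\,\mathbb P(X\geq y)\,dy .
\]
Inserting the tail bound gives at most $1+\theta C\int_0^\infty e^{(\theta-c)y}\,dy=1+\theta C/(c-\theta)$. For the converse, Markov's inequality applied to $e^{\theta X}$ gives $\mathbb P(X\geq y)\leq \mathbb E[e^{\theta X}]e^{-\theta y}\leq Me^{-\theta y}$, so $X\in\mathsf{Subexp}(M,\theta)$.

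Statement (ii) is immediate from the inclusion $\{X\geq y\}\subseteq\{Y\geq y\}$.

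For (iii), I would use a union bound: $\{X+Y\geq y\}\subseteq\{X\geq y/2\}\cup\{Y\geq y/2\}$. Hence
\[
\mathbb P(X+Y\geq y)\leq C_1e^{-c_1y/2}+C_2e^{-c_2y/2}\leq (C_1+C_2)e^{-\min(c_1,c_2)y/2}.
\]
No independence between $X$ and $Y$ is needed.

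For (iv), the point is to avoid any independence assumption between $K$ and the $X_j$. Fix $\varepsilon>0$ to be chosen, and set $n=\lceil\varepsilon y\rceil$. Since the $X_j$ are nonnegative, on the event $\{K\leq n\}$ we have $X_1+\dots+X_K\leq X_1+\dots+X_n$. Therefore
\[
\mathbb P(X_1+\dots+X_K\geq y)\leq \mathbb P(K> \varepsilon y)+\mathbb P(X_1+\dots+X_n\geq y).
\]
The first term is at most $C_Ke^{-c_K\varepsilon y}$.

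For the second term, I would apply a Chernoff bound with $\theta=c_0/2$. By (i), $A:=\mathbb E[e^{\theta X_1}]\leq 1+C_0$, and independence of the $X_j$ then gives
\[
\mathbb P(X_1+\dots+X_n\geq y)\leq A^{n}e^{-\theta y}\leq A\,e^{-(\theta-\varepsilon\log A)y}.
\]
Choosing $\varepsilon=\theta/(2\log(1+C_0))$, or any $\varepsilon$ if $A\leq1$, makes the exponent at most $-\theta y/2$.

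Combining the two terms gives constants $C=C_K+1+C_0$ and $c=\min(c_K\varepsilon,\theta/2)$, which depend only on $C_0,c_0,C_K,c_K$. The only mildly delicate step is this choice of $\varepsilon$, which must balance the tail of $K$ against the Chernoff rate; the rest is routine.
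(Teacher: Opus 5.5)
Your proof is correct and complete: (i) via the layer-cake formula and Markov's inequality, (ii) by monotonicity, (iii) by the union bound at level $y/2$, and (iv) by truncating $K$ at $\lceil \varepsilon y\rceil$, which uses only nonnegativity of the $X_j$ and so needs no independence between $K$ and the $X_j$, followed by a Chernoff bound with $A=\mathbb E[e^{c_0X_1/2}]\in[1,1+C_0]$ (so your $A\le 1$ caveat only covers $X_1=0$ almost surely). The paper does not prove this lemma itself but imports it from its companion paper, and your argument is the standard one that this citation stands in for.
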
 

Next, we require a subexponential bound on the differences between regeneration times in a positive $\nu_{\b}$-random walk. 

\begin{proposition}[c.f. Proposition A.4 in \cite{SchmidYangShock}]\label{prop:regeneration}
    Assume $\b>1$. Let $\{Y_i\}_{i=0}^{\infty}$ be the positive $\nu_{\b}$-random walk started at $Y_0=0$ with infinitely many steps as in Definition \ref{def:positive lazy random walk}. Then, almost surely, there exist infinitely many regeneration times for this path given by random numbers $0=n_0<n_1<n_2<\cdots$. Furthermore, the increments $(n_i-n_{i-1},Y_{n_i}-Y_{n_{i-1}})$ for $i\in\mathbb{N}$ are i.i.d., and both coordinates are subexponential. More precisely, there exist constants $C,c>0$, depending only on $\b$, such that for each $y\in\mathbb{N}$, we have
    $\mathbb{P}(n_1\geq y)\leq Ce^{-cy}$ and $\mathbb{P}(Y_{n_1}\geq y)\leq Ce^{-cy}$.
    Moreover, we know that $\mathbb{P}((n_1, Y_{n_1})=(1,1))=\nu_{\b}(1)>0$.
\end{proposition}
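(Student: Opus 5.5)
The statement to prove is Proposition~\ref{prop:regeneration}, which is cited from \cite{SchmidYangShock}. The plan is the standard regeneration construction for a transient walk with positive drift, conditioned to stay positive.

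First, the positive $\nu_{\b}$-random walk with infinitely many steps is well defined. Since $\b>1$, the drift $(\b-\b^{-1})/(\b+\b^{-1}+2)$ is positive. By gambler's ruin, $\mathbb{P}(S_i\geq1 \ \forall i\geq1 \mid S_1=1)=1-\b^{-2}>0$, because the ratio $\nu_{\b}(-1)/\nu_{\b}(1)=\b^{-2}$. So the conditioning on $E_+$ is on an event of positive probability. The conditioned process is then a Doob $h$-transform: a Markov chain with $h(y)=1-\b^{-2y}$, for $y\geq1$.

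Next, observe that $n$ is a regeneration time exactly when two things hold. The current level $Y_n$ is a new strict maximum, i.e.\ a ladder time. And the future path after $n$ never returns to $Y_n$; since the steps are in $\{-1,0,1\}$, this means $Y_{n+1}=Y_n+1$ and the shifted future stays $\geq1$.

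To produce the regenerations, I will run a sequential search.
\begin{itemize}
\item At each strict ladder time, test whether the future stays strictly above the current level. Under the unconditioned walk this has probability $\nu_{\b}(1)(1-\b^{-2})=:p_0>0$, independently of the past.
\item If the test fails, the walk returns to the current level. Conditionally on returning, the excursion length and overshoot back to a new strict maximum have exponential tails: the walk has positive drift with exponential moments, and the conditioned (returning) walk is again a $\b\leftrightarrow\b^{-1}$ dual walk with negative drift.
\item The number of failed trials is geometric with parameter $p_0$.
\end{itemize}
Lemma~\ref{lem:basic exponential tails}(iv) then gives subexponential tails for $n_1$ and $Y_{n_1}$, as a geometric sum of i.i.d.\ subexponential pieces.

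The conditioning on $E_+$ is handled by the same search. Write $E_+$ as ``the test succeeds at time $0$'' and use the $h$-transform, with $h$ bounded between $1-\b^{-2}$ and $1$. This changes the probabilities only by bounded factors, so the tail estimates persist.

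The i.i.d.\ structure comes from the strong Markov property at regeneration times. Given that $n_k$ is a regeneration time, the future $(Y_{n_k+i}-Y_{n_k})_{i\geq0}$ is a walk conditioned on $E_+$, i.e.\ again a positive $\nu_{\b}$-random walk, and it is independent of the past. One checks that ``regeneration'' is determined by the past up to $n_k$ together with the event that the future stays above. For this, the clean formulation is to write the regeneration times as successive ladder times at which the future-avoidance event occurs, and to verify the factorization of the joint law directly from the product structure of the increments.

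Finally, $\mathbb{P}((n_1,Y_{n_1})=(1,1))$ is the conditional probability that the path after time $1$ stays $\geq2$, given $E_+$. This equals
\[
\nu_{\b}(1)\,(1-\b^{-2})\Big/(1-\b^{-2})=\nu_{\b}(1).
\]
The main obstacle is the last point above: making the regeneration decomposition rigorous, since the regeneration event depends on the future. I would handle it by the future-avoidance factorization just described, which is the step that most needs care.
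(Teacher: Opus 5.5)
Your argument is correct. The paper itself gives no proof of this statement; it is imported from Proposition A.4 of \cite{SchmidYangShock}, so there is no in-paper argument to compare against. What you propose is the standard ladder-time regeneration argument, and each step goes through. Ladder times are stopping times, and each trial succeeds with probability $\nu_{\b}(1)(1-\b^{-2})$ independently of the past. A failed cycle, running up to the first hitting of one plus the running maximum, has exponential tails by the dual $\nu_{1/\b}$-walk and the positive drift. Lemma~\ref{lem:basic exponential tails}\textup{(\romannum{4})} then absorbs the geometric number of cycles.

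Two remarks tighten the write-up.

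First, the $h$-transform is unnecessary for the tails. Write $\tilde n_1(S)$ for the first positive regeneration time of the unconditioned walk. Then $\tilde n_1(S)=n_1$ on $E_+$, hence $\mathbb{P}(n_1\ge y)\le \mathbb{P}(\tilde n_1(S)\ge y)/\mathbb{P}(E_+)$, and the unconditioned search bounds the right side. The ladder times skipped after a failure fail automatically, so the search even finds $\tilde n_1(S)$ exactly, although an upper bound is all the tails need.

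Second, the factorization you single out as delicate is exactly the computation the paper carries out in \eqref{eq:jntyjuykyi}. For $m<n$, the requirement $Y_i>Y_m$ for $i>n$ follows from $Y_i>Y_n>Y_m$. So once $n$ is a regeneration time, whether an earlier time is one is decided by the prefix alone. Call a prefix $\gamma=(\gamma_0,\ldots,\gamma_n)$ admissible if $\gamma_0=0$, $\gamma_1=1$, $\gamma_i\ge1$ for $i\ge1$, $\gamma_i<\gamma_n$ for $i<n$, and $\gamma$ has no regeneration time in $\llbracket1,n-1\rrbracket$. Then for every $n\in\mathbb N$, every cylinder event $F$, and every admissible $\gamma$,
\[
\mathbb{P}\left(n_1=n,\ (Y_0,\ldots,Y_n)=\gamma,\ (Y_{n+i}-Y_n)_{i\ge0}\in F\right)
=\mathbb{P}\left((S_0,\ldots,S_n)=\gamma\right)\frac{\mathbb{P}(S\in E_+\cap F)}{\mathbb{P}(E_+)}.
\]
This gives at once that the post-$n_1$ path is independent of $(n_1,Y_0,\ldots,Y_{n_1})$, and it identifies its law as that of the positive walk. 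Iterating yields the i.i.d. increments and infinitely many regeneration times.
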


Finally, we will use the following lemma, which states a standard monotone coupling for $\nu_{\b}$-random walks. 
\begin{lemma}[c.f. Lemma 3.2 in \cite{SchmidYangShock}]\label{lem:basic hitting monotone coupling}
    Assume $\b>1$, $N\in\mathbb{N}\cup\{\infty\}$ and $h\in\mathbb{Z}_{\geq1}$. Let $\{S_i\}_{i=0}^N$ be the $\nu_{\b}$-random walk started at $S_0=0$. 
    Let $E_{N}^h=\{S_i\geq-h+1\mbox{ for } i\in\llbracket0,N\rrbracket\}$. Then 
    $\mathbb{P}(E_{N}^h)\downarrow\mathbb{P}(E_{\infty}^h)=1-\b^{-2h}$ as $N\to\infty$.
    Denote by $\{S_i'\}_{i=0}^N$ the process $\{S_i\}_{i=0}^N$ conditioned on $E_{N}^{h}$. There exists a coupling such that $S'_i\geq S_i$ almost surely for $i\in\llbracket0,N\rrbracket$.
    Furthermore, fix $\kappa\in(0,(\b-1)/(\b+1))$, then there exist constants $C,c>0$ depending only on $\b$ and $\kappa$ such that for each $N\geq m\geq1$, we have
    $\mathbb{P}(S'_{m}\leq\kappa m)\leq \mathbb{P}(S_{m}\leq\kappa m)\leq Ce^{-cm}$.
\end{lemma}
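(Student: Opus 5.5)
The proof has four parts, handled in this order: an exponential martingale for the hitting probability, a description of the conditioned walk as a time-inhomogeneous Markov chain, a monotone one-step coupling of the two chains, and a Chernoff bound.

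\textbf{Step 1: the limit $\mathbb{P}(E_\infty^h)=1-\b^{-2h}$.} The events $E_N^h$ decrease in $N$ and $\bigcap_N E_N^h=E_\infty^h$. Hence $\mathbb{P}(E_N^h)\downarrow\mathbb{P}(E_\infty^h)$ by continuity of measure. To identify the limit, I use that $\b^{-2S_i}$ is a martingale, because
\[
\nu_{\b}(1)\b^{-2}+\nu_{\b}(0)+\nu_{\b}(-1)\b^{2}=\frac{\b^{-1}+2+\b}{\b+\b^{-1}+2}=1 .
\]
The walk has positive drift since $\b>1$, so $S_i\to\infty$ almost surely. Let $\tau$ be the hitting time of $-h$. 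Optional stopping at $\tau\wedge\tau_K$, where $\tau_K$ is the hitting time of a high level $K$, gives the gambler's ruin identity. The steps are nearest-neighbour, so the walk hits $-h$ exactly, and there is no overshoot. Letting $K\to\infty$ yields $\mathbb{P}(\tau<\infty)=\b^{-2h}$, which is the claim.

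\textbf{Step 2: the conditioned process as a Markov chain.} Let $g_k(x)$ denote the probability that the walk started at $x$ stays $\geq -h+1$ for $k$ steps. Conditioned on $E_N^h$, the process $S'$ is a time-inhomogeneous Markov chain (a Doob $h$-transform) with transition probabilities
\[
K'_i(x,x+e)=\frac{\nu_{\b}(e)\,g_{N-i-1}(x+e)}{g_{N-i}(x)},\qquad e\in\{-1,0,1\}.
\]
Translating the starting point upward, path by path, shows that $g_k$ is nondecreasing in $x$. Consequently the tilted increment law $K'_i(x,x+\cdot)$ stochastically dominates $\nu_{\b}$. This follows because reweighting by a nondecreasing function increases every upper tail probability: both $K'_i(x,\{x+1\})\ge\nu_{\b}(1)$ and $K'_i(x,\{x,x+1\})\ge\nu_{\b}(\{0,1\})$ hold by a Chebyshev/FKG-type inequality on the three-point space.

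\textbf{Step 3: the monotone coupling.} I build the coupling step by step, using quantile coupling with a common uniform variable at each time. By Strassen's theorem, it suffices to show that $K'_i(x,\cdot)\succeq_{\rm st} K(y,\cdot)$ whenever $x\ge y$, where $K$ is the unconditioned kernel. There are three cases.
\begin{itemize}
\item If $x=y$, this is exactly the domination from Step 2.
\item If $x\ge y+2$, the supports are ordered: $K'_i(x,\cdot)$ lives on values $\ge y+1$, while $K(y,\cdot)$ lives on values $\le y+1$.
\item If $x=y+1$, the support of $K'_i(x,\cdot)$ is $\{y,y+1,y+2\}$ and that of $K(y,\cdot)$ is $\{y-1,y,y+1\}$. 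The only nontrivial check is
\[
K'_i(x,\{\ge y+1\})=K'_i(x,\{x,x+1\})\ge\nu_{\b}(\{0,1\})\ge\nu_{\b}(1)=K(y,\{\ge y+1\}),
\]
which holds by Step 2.
\end{itemize}
By induction from $S'_0=S_0=0$, this gives $S'_i\ge S_i$ for all $i\in\llbracket0,N\rrbracket$. The main obstacle is exactly this case analysis ensuring that the lazy nearest-neighbour steps cannot cross. The resolution is that quantile coupling for ordered kernels preserves order automatically.

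\textbf{Step 4: the tail bound.} By the coupling, $\mathbb{P}(S'_m\le\kappa m)\le\mathbb{P}(S_m\le\kappa m)$. The increments have mean $(\b-\b^{-1})/(\b+\b^{-1}+2)=(\b-1)/(\b+1)>\kappa$. Cramér's (Chernoff) bound for bounded i.i.d. increments then gives $\mathbb{P}(S_m\le\kappa m)\le Ce^{-cm}$, with constants depending only on $\b$ and $\kappa$.
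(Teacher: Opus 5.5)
Your proof is correct, and each step checks out: the martingale identity for $\b^{-2S_i}$, monotonicity of $g_k$ by translation, the Chebyshev-type tilting inequality, the quantile coupling, and the Hoeffding bound. The bound is explicitly $\mathbb P(S_m\le\kappa m)\le \exp\bigl(-m(\tfrac{\b-1}{\b+1}-\kappa)^2/2\bigr)$, since the increments lie in $[-1,1]$. This paper does not prove the lemma itself; it imports it from \cite{SchmidYangShock}, so there is no in-text argument to compare against. Your route (optional stopping, Doob transform, stepwise coupling, Chernoff bound) is the standard one.

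Two small remarks.

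First, the case analysis in Step~3 is not needed. The unconditioned kernel $K(x,\cdot)$ is the translate of $K(y,\cdot)$ by $x-y\ge 0$. Hence $K_i'(x,\cdot)\succeq K(x,\cdot)\succeq K(y,\cdot)$ follows directly from Step~2 and transitivity.

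Second, you should say explicitly how $N=\infty$ is covered. There $S'$ is the time-homogeneous transform with $g_\infty(x)=1-\b^{-2(x+h)}$ for $x\ge -h+1$ and $g_\infty(x)=0$ otherwise. This follows from Step~1 by translation, and $g_\infty$ is nondecreasing and harmonic on $\{x\ge -h+1\}$, so Steps~2--3 apply verbatim.

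For finite $N$ there is also a shorter coupling argument that gives more. The event $E_N^h$ is increasing for the product measure $\nu_{\b}^{\otimes N}$ on the increments. The Harris--FKG inequality and Strassen's theorem then give a coupling with $S'_i-S'_{i-1}\ge S_i-S_{i-1}$ for every $i$, which is stronger than the ordering of positions.
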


\subsection{Subexponential doubled regeneration times for bi-infinite  V-shaped random walks}\label{sec:SimplifyRenewal}

We will now show that the differences between doubled regeneration times for bi-infinite  V-shaped random walks are subexponential, assuming that Theorem \ref{thm:find reg times 2} holds.

\begin{proof}[Proof of Proposition~\ref{thm:doubled regeneration} assuming Theorem \ref{thm:find reg times 2}] 
Write $R_i=Y_i$ and $L_i=Y_{-i}$ for $i\in\mathbb Z_{\geq0}$. Then $R$ is a $\nu_\b$-random walk started at $0$ conditioned on
$R_i\geq0$ for all $i\geq0$, and $L$ is a positive $\nu_\b$-random walk with infinitely many steps as in Definition
\ref{def:positive lazy random walk}. The two one-sided paths are independent. Observe  that, for any $h, m,n\in\mathbb N$ satisfying $L_m=R_n=h$, $(m,n)$ is a doubled regeneration time for $Y$ if and only if $m$ is a regeneration time of $L$ and $n$ is a regeneration time of $R$, where regeneration time is as in Definition \ref{def:regeneration}. Let
\[
\zeta=\sup\{i\in\mathbb Z_{\geq0}:R_i=0\}\quad\mbox{and}\quad M=\max_{0\leq i\leq \zeta} R_i .
\]
We claim that $\zeta$ and $M$ are subexponential with constants depending only on $\b$. Indeed, $M\leq\zeta$ and hence it suffices to show $\zeta$ is subexponential. Let $\boldsymbol{S}=\{S_n\}_{n=0}^{\infty}$ be an ordinary $\nu_{\b}$-random walk started at $S_0=0$, and let $E_{\geq0}=\{S_n\geq0\mbox{ for all }n\geq0\}$. Then for $\kappa=(\b-1)/(2(\b+1))$ and $y\in\mathbb N$, by Lemma
\ref{lem:basic hitting monotone coupling},
\[ 
\mathbb{P}(\zeta\geq y)
 =
\frac{\mathbb{P}(E_{\geq0}\cap\{\exists k\geq y:S_k=0\}) }{\mathbb{P}(E_{\geq0})}  
\leq 
\frac{1}{\mathbb{P}(E_{\geq0})}\sum_{k\geq y}\mathbb{P}(S_k=0)    
 \leq
\frac{1}{\mathbb{P}(E_{\geq0})}
\sum_{k\geq y}\mathbb{P}(S_k\leq \kappa k)
\leq C_1e^{-c_1y}, 
\]
for constants $C_1,c_1>0$ depending only on $\b$. Define the shifted post-$\zeta$ path $Z_i=R_{\zeta+i}$ for $i\in\mathbb Z_{\geq0}$. We claim that $L$ and $Z$ are independent positive $\nu_\b$-random walks and are independent of $(\zeta,M)$. Indeed, $L$ is independent of $R$ and hence is independent of $Z,\zeta$, and $M$. Let $E_+=\{S_n\geq1\mbox{ for all }n\in\mathbb N\}$. For each $m\in\mathbb{Z}_{\geq0}$ and  $\gamma\in\lrw_m$ such that
$\mathbb P\left(\zeta=m,\ (R_0,\ldots,R_m)=\gamma\right)>0$, and each cylinder event $F$,
\begin{equation}  \label{eq:jntyjuykyi}
 \mathbb P\left(\zeta=m,\ (R_0,\ldots,R_m)=\gamma,\ Z\in F\right)  
 =
 \mathbb P\left((S_0,\ldots,S_m)=\gamma\right)
      \mathbb P\left(\boldsymbol{S}\in E_+\cap F\right)/ \mathbb P\left(E_{\geq0}\right)  . 
\end{equation}
Now conditioning on $\{\zeta=m,(R_0,\ldots,R_m)=\gamma\}$, the law of $Z$ is the law of $\boldsymbol{S}$ conditioned on $E_+$, and we conclude the claim. 

By Proposition \ref{prop:regeneration}, for the two positive walks $L$ and $Z$, there exist regeneration times
\[
0=\mathfrak m_0<\mathfrak m_1<\mathfrak m_2<\cdots
\quad\mbox{for }L,
\qquad
0=\mathfrak n_0<\mathfrak n_1<\mathfrak n_2<\cdots
\quad\mbox{for }Z,
\]
such that the increment random vectors $(\mathfrak m_j-\mathfrak m_{j-1},L_{\mathfrak m_j}-L_{\mathfrak m_{j-1}})$ and $(\mathfrak n_j-\mathfrak n_{j-1},Z_{\mathfrak n_j}-Z_{\mathfrak n_{j-1}})$ 
are i.i.d. in $j\in\mathbb N$ on each side, the two sides are independent, and both coordinates are subexponential. Let
$H^L_j=L_{\mathfrak m_j}$ and $H^Z_j=Z_{\mathfrak n_j}$ for $j\in\mathbb Z_{\geq0}$, we claim that  
\begin{equation}\label{eq:yfk,ik}
\left(\Upsilon^{(1)}_j,H^{(1)}_j\right)=\left(\mathfrak m_j,H^L_j\right) 
\quad\mbox{and}\quad
\left(\Upsilon^{(2)}_j,H^{(2)}_j\right)=\left(\mathfrak n_j,H^Z_j\right)
\end{equation}
satisfy the hypotheses of Theorem \ref{thm:find reg times 2} for $d=2$. The only hypothesis left to be verified is the full-support one. Let $W$ be a positive infinite $\nu_\b$-random walk and $H_1(W)$ be its first positive regeneration level, then
\[
\mathbb P(H_1(W)=r)\geq \mathbb P\left(W_{2k-1}=W_{2k}=k\mbox{ for } k\in\llbracket1, r-1\rrbracket, W_{2r-1}=r, 
W_i>r\mbox{ for }i>2r-1\right)>0
\quad\mbox{for }r\in\mathbb N ,
\]
where we used Lemma \ref{lem:basic hitting monotone coupling}.

We now apply Theorem \ref{thm:find reg times 2} to \eqref{eq:yfk,ik} and find $0=H_0<H_1<H_2<\cdots$ with
\[
\left\{H^L_j:j\in\mathbb Z_{\geq0}\right\}
\cap
\left\{H^Z_j:j\in\mathbb Z_{\geq0}\right\}=\left\{H_j:j\in\mathbb Z_{\geq0}\right\}.
\]
Let $u_k,v_k$, $k\geq0$ be the unique indices
such that $H^L_{u_k}=H_k=H^Z_{v_k}$. We also know from Theorem \ref{thm:find reg times 2} that 
\[
\Phi_k=
\left(
\mathfrak m_{u_k}-\mathfrak m_{u_{k-1}},
\mathfrak n_{v_k}-\mathfrak n_{v_{k-1}},
H_k-H_{k-1}
\right),
\quad k\in\mathbb N,
\]
are i.i.d. with each coordinate being subexponential with constants depending only on $\b$. Let $K=\min\{k\in\mathbb N:H_k>M\}$, which is subexponential since $K\le M+1$ and $M$ is subexponential. Let $h_i=H_{K+i-1}$, $m_i=\mathfrak m_{u_{K+i-1}}$, and $n_i=\zeta+\mathfrak n_{v_{K+i-1}}$ for $i\in\mathbb{N}$. The monotonicity of the indices $u_k,v_k$ and of the regeneration times $\mathfrak m_j,\mathfrak n_j$ gives $0<m_1<m_2<\cdots$ and $0<n_1<n_2<\cdots$. We claim that $(m_i,n_i)$ is a doubled regeneration time for the bi-infinite $V$-shaped walk $\{Y_i\}_{i=-\infty}^{\infty}$ with level $Y_{-m_i}=Y_{n_i}=h_i$.  
Indeed, $L_{m_i}=h_i$ and $Z_{\mathfrak n_{v_{K+i-1}}}=h_i$, so $R_{n_i}=h_i$. Since $m_i$ is a regeneration time of $L$, it remains only to check that $n_i$ is a regeneration time of $R$. If $0\leq r\leq\zeta$, then $R_r\leq M<h_i$; if $\zeta<r<n_i$, then $R_r=Z_{r-\zeta}<h_i$ because $\mathfrak n_{v_{K+i-1}}$ is a regeneration time of $Z$; and if $r>n_i$, then $R_r=Z_{r-\zeta}>h_i$ for the same reason.

It remains to prove the claim on independence and exponential tails.  The i.i.d.~sequence $(\Phi_k)_{k\in\mathbb N}$ is independent of $(\zeta,M)$.
Let $\mathcal F_k=\sigma(\zeta,M,\Phi_1,\ldots,\Phi_k)$ for $k\in\mathbb Z_{\geq0}$, then $K$ is a stopping time for this filtration, and  hence $(\Phi_{K+j})_{j\in\mathbb N}$ is independent of $\mathcal F_K$ and has the same law as $(\Phi_j)_{j\in\mathbb N}$.
Write $\Phi_k=(A_k,B_k,D_k)$ for $k\in\mathbb N$, then  
\begin{equation}\label{eq:ntujk}
(m_1,n_1,h_1)
=
\left(
\sum_{k=1}^{K}A_k,\,
\zeta+\sum_{k=1}^{K}B_k,\,
\sum_{k=1}^{K}D_k
\right)
\end{equation}
is $\mathcal F_K$-measurable, while for $i\geq2$,
$(m_i-m_{i-1},n_i-n_{i-1},h_i-h_{i-1})
=
(A_{K+i-1},B_{K+i-1},D_{K+i-1})$.
Hence
\[
(m_1,n_1,h_1),\quad
(m_2-m_1,n_2-n_1,h_2-h_1),\quad
(m_3-m_2,n_3-n_2,h_3-h_2),\quad\ldots
\]
are mutually independent, and the vectors after the first are i.i.d. Finally, since $A_1,B_1,D_1$, $\zeta$, and $K$ are subexponential, using Lemma \ref{lem:basic exponential tails} \textup{(\romannum{4})} and then \textup{(\romannum{3})} for $n_1$, we know that   $(m_1,n_1,h_1)$ given by \eqref{eq:ntujk} are subexponential. The increments $m_2-m_1\overset{d}{=}A_1$, $n_2-n_1\overset{d}{=}B_1$, and $h_2-h_1\overset{d}{=}D_1$ are also subexponential.  \end{proof}

\subsection{Simultaneous subexponential regeneration times: Proof of Theorem \ref{thm:find reg times 2}}\label{sec:RenewalThm}

In the following, our goal is to show Theorem \ref{thm:find reg times 2} using generating functions for regeneration times. 

%\begin{proof}[Proof of Theorem \ref{thm:find reg times 2}]
To this end, we denote $\mathcal H^{(i)}=\{H^{(i)}_k:k\in\mathbb{Z}_{\geq0}\}$ for $i\in\llbracket1,d\rrbracket$, and $\mathcal H=\cap_{i=1}^d\mathcal H^{(i)}$. Let $\tau=\inf\{n\in\mathbb{N}:n\in\mathcal H\}$ with the convention that $\tau=\infty$ if this set is empty. Define the power series
\begin{equation}\label{eq:proofanalytic1}
    F(z)=\sum_{n\geq1} \mathbb{P}(\tau=n)z^n,
    \quad
    U(z)=\sum_{n\geq0}\mathbb{P}(n\in\mathcal H)z^n , 
\end{equation}
which both converge absolutely on $\{|z|<1\}$.  
We denote 
\begin{equation}
    A^{(i)}_k=\Upsilon^{(i)}_k-\Upsilon^{(i)}_{k-1} \quad \text{ and } \quad X^{(i)}_k=H^{(i)}_k-H^{(i)}_{k-1}
\end{equation}
 for $i\in\llbracket1,d\rrbracket$ and $k\in\mathbb{N}$. We make the following observation about $F$ and $U$.

\begin{lemma}\label{lem:Generating1}
    We have that 
    \begin{equation}\label{eq:proofanalytic2}
U(z)=1+F(z)U(z)\quad\mbox{for }|z|<1.
\end{equation}
\end{lemma}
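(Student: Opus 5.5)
This is the classical renewal equation for the set $\mathcal H$ of common regeneration levels. I will prove it by a first-entrance decomposition at $\tau$. The decomposition needs a strong regeneration property: conditionally on $\{\tau=m\}$, the shifted set $(\mathcal H-m)\cap\mathbb Z_{\geq0}$ has the same law as $\mathcal H$.

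\emph{Regeneration at a common level.} For a fixed $m\in\mathbb N$ and each $i$, on the event $\{m\in\mathcal H^{(i)}\}$ let $k_i$ be the index with $H^{(i)}_{k_i}=m$. The event $\{m\in\mathcal H^{(i)}\}\cap\{\mathcal H^{(i)}\cap\llbracket1,m-1\rrbracket=B_i\}$, for a given $B_i\subseteq\llbracket1,m-1\rrbracket$, is determined by $X^{(i)}_1,\dots,X^{(i)}_{|B_i|+1}$. Indeed, the partial sums of these increments must enumerate $B_i\cup\{m\}$, so $k_i=|B_i|+1$ is deterministic on this event. By the i.i.d.\ structure in assumption (2), the increments $(X^{(i)}_{k_i+j})_{j\geq1}$ are independent of that event and have the law of $(X^{(i)}_j)_{j\geq1}$. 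Hence, conditionally on the event, $\{H^{(i)}_k-m:k\geq k_i\}=(\mathcal H^{(i)}-m)\cap\mathbb Z_{\geq0}$ is distributed as $\mathcal H^{(i)}$. By the independence across $i$ in assumption (1), the same holds jointly for all $d$ coordinates.

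\emph{First-entrance decomposition.} The event $\{\tau=m\}$ is a disjoint union, over tuples $(B_1,\dots,B_d)$ with $\bigcap_i B_i=\emptyset$, of the events $\bigcap_i\big(\{m\in\mathcal H^{(i)}\}\cap\{\mathcal H^{(i)}\cap\llbracket1,m-1\rrbracket=B_i\}\big)$. Summing the previous step over these tuples gives, for $n\geq m\geq1$,
\[
\mathbb P(\tau=m,\ n\in\mathcal H)=\mathbb P(\tau=m)\,\mathbb P(n-m\in\mathcal H).
\]
For $n\geq1$ we have $n\in\mathcal H$ if and only if $\tau\leq n$ and $n\in\mathcal H$, so $\mathbb P(n\in\mathcal H)=\sum_{m=1}^n\mathbb P(\tau=m)\,\mathbb P(n-m\in\mathcal H)$. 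Also $\mathbb P(0\in\mathcal H)=1$ since $H^{(i)}_0=0$ for every $i$.

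\emph{Generating functions.} Multiply by $z^n$ and sum over $n$. Both series have coefficients in $[0,1]$ and so converge absolutely for $|z|<1$, which justifies taking the Cauchy product. This yields $U(z)=1+F(z)U(z)$.

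The only delicate point is the first step, namely that the index $k_i$ is a stopping time for the increment sequence, so that the post-$k_i$ increments are fresh. This is handled by the explicit decomposition over the sets $B_i$ rather than by an abstract strong Markov argument.
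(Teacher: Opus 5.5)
Your proposal is correct and is essentially the paper's proof: a first-entrance decomposition of $\{\tau=m\}$ into events measurable with respect to the first few increments of each sequence, followed by the i.i.d.\ restart of the post-$m$ increments, giving $\mathbb P(n\in\mathcal H)=\sum_{m=1}^n\mathbb P(\tau=m)\,\mathbb P(n-m\in\mathcal H)$ and then the generating-function identity. The only difference is bookkeeping. The paper partitions $\{\tau=m\}$ by the index vector $\boldsymbol r$ with $H^{(i)}_{r_i}=m$, whereas you partition by the sets $B_i=\mathcal H^{(i)}\cap\llbracket1,m-1\rrbracket$; this is a finer partition that fixes $r_i=|B_i|+1$, so the same argument goes through.
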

\begin{proof}
From the definition in \eqref{eq:proofanalytic1}, it suffices to show
\begin{equation}\label{eq:proofanalytic3}
    \mathbb{P}\left(n\in\mathcal H\right)=\sum_{m=1}^n \mathbb{P}(\tau=m)\mathbb{P}\left(n-m\in\mathcal H\right) 
    \quad\mbox{for } n\in\mathbb{N}.
\end{equation}
Indeed, fix $m\in\llbracket1,n\rrbracket$. For $\boldsymbol{r}=(r_1,\ldots,r_d)\in\llbracket1,m\rrbracket^d$, set
\[
E_{\boldsymbol{r},m}
=
\left\{\tau=m,\ H^{(i)}_{r_i}=m\mbox{ for all }i\in\llbracket1,d\rrbracket\right\}.
\]
The events $E_{\boldsymbol{r},m}$ partition $\{\tau=m\}$, and each $E_{\boldsymbol{r},m}$ is measurable with respect to
$\sigma (X^{(i)}_1,\ldots,X^{(i)}_{r_i}: i\in\llbracket1,d\rrbracket )$.
Conditional on $E_{\boldsymbol{r},m}$, the post-increments
 $ (X^{(i)}_{r_i+j} )_{j\geq1}$, $i\in\llbracket1,d\rrbracket$,
are independent of this sigma-field, are independent across $i$, and have the same respective laws as the original increment sequences. Therefore,
\[
\mathbb{P}\left(n\in\mathcal H, E_{\boldsymbol{r},m}\right)
=
\mathbb{P}(E_{\boldsymbol{r},m})\mathbb{P}\left(n-m\in\mathcal H\right).
\]
Summing first over $\boldsymbol{r}\in\llbracket1,m\rrbracket^d$ and then over $m$ induces \eqref{eq:proofanalytic3}. We conclude the proof of \eqref{eq:proofanalytic2}.
\end{proof}
%this gives
%$\mathbb{P}\left(n\in\mathcal H, \tau=m\right)=\mathbb{P}(\tau=m)\mathbb{P}\left(n-m\in\mathcal H\right)$,
%and summing over $m$ induces \eqref{eq:proofanalytic3}. %The same conditioning argument also shows that, for every $m\in\mathbb N$ with $\mathbb P(\tau=m)>0$, conditionally on $\{\tau=m\}$ the shifted set $(\mathcal H-m)\cap\mathbb Z_{\geq0}$ is independent of $\tau$ and has the same law as $\mathcal H$.  
Next, we define the power series
\[
P_i(z)=\sum_{n\geq1}\mathbb{P}\left(X^{(i)}_1=n\right) z^n\quad\mbox{for } i\in\llbracket1,d\rrbracket,
\]
which converge absolutely on $\{|z|<e^{c_0/2}\}$ since $\sup_{i\in\llbracket1,d\rrbracket}\mathbb E [e^{c_0 X^{(i)}_1} ]\leq C_0$ by our assumptions. As the following lemma shows, the series $P_i$ has a radius of convergence strictly larger than $1$, and is bounded away from $0$ for $z$ sufficiently close to the unit circle.
\begin{lemma}\label{lem:Generating2}
    There exist some $\delta\in(0,1)$, $c_1>0$ depending only on $C_0,c_0,d,p$, such that
\begin{equation}\label{eq:proofanalytic6}
        1-P_i(z)\neq0 
    \mbox{ for }|z|\leq1+\delta,\, z\neq1, \quad\mbox{and}\quad |1-P_i(z)|\geq c_1\mbox{ for } |z|=1+\delta/2.
\end{equation}
\end{lemma}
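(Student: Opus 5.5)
The plan is to use two facts about $P_i$: aperiodicity, since $\mathbb P(X^{(i)}_1=1)\geq p>0$, and the exponential moment bound from assumption (3). Together with a compactness argument, these give uniform constants.

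\textbf{Analyticity and uniform bounds.} By assumption (3), for $|z|\leq e^{c_0/2}$ we have $|P_i(z)|\leq \mathbb E[|z|^{X^{(i)}_1}]\leq C_0$, and likewise for the derivatives. By Cauchy estimates on the disk of radius $e^{c_0/2}$, this gives
\[
|P_i'(z)|\leq C_2 \quad\text{for } |z|\leq e^{c_0/4},
\]
with $C_2$ depending only on $C_0,c_0$.

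\textbf{The unit circle.} For $|z|=1$ with $z\neq 1$, write $z=e^{i\theta}$. Since $X^{(i)}_1\geq1$ has mass $\mathbb P(X^{(i)}_1=1)\geq p$ at $1$ and mass at least $p$ is not enough alone, I use full support: $\mathbb P(X_1=1)>0$ and $\mathbb P(X_1=2)>0$. Then $\mathrm{Re}\,P_i(z)=1$ would force $z^{n}=1$ for all $n$ in the support, in particular $z=1$.

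To get quantitative bounds depending only on $p$, use only the atom at $1$:
\[
\mathrm{Re}(1-P_i(e^{i\theta}))=\sum_n \mathbb P(X_1=n)(1-\cos n\theta)\geq p(1-\cos\theta).
\]
This bounds $|1-P_i|$ away from zero for $\theta$ away from $0$.

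\textbf{Neighbourhood of $z=1$.} Here $1-P_i(z)=-\mu_i(z-1)+O(|z-1|^2)$, where $\mu_i=P_i'(1)=\mathbb E X^{(i)}_1\geq1$. The $O$-constant is uniform by the second-derivative Cauchy bound. So for $0<|z-1|\leq r_0$, with $r_0$ depending only on $C_0,c_0$, we get $|1-P_i(z)|\geq \tfrac12|z-1|>0$.

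\textbf{The annulus.} Outside this disk, on $|z|=1$ with $|z-1|\geq r_0$, we have $|1-P_i|\geq p(1-\cos\theta)\geq c(p,r_0)$. By the Lipschitz bound $C_2$, this extends to $1\leq|z|\leq 1+\delta$ whenever $\delta C_2\leq c/2$.

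For $|z|<1$, the bound $|P_i(z)|\leq \mathbb E|z|^{X_1}<1$ gives nonvanishing directly.

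\textbf{Main obstacle.} The hardest step is the circle $|z|=1+\delta/2$, where the nonvanishing and the lower bound $c_1$ must hold simultaneously near $z=1$. Near $z=1$ on that circle, $|z-1|\geq\delta/2$. Choosing $\delta\leq r_0$, the linearization gives $|1-P_i|\geq \delta/4$. Away from $z=1$, the Lipschitz estimate gives at least $c/2$. So $c_1=\min(\delta/4,c/2)$ works.

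Every constant depends only on $C_0,c_0,p$; the index $d$ enters only through taking the minimum over $i$. Choosing $\delta<\min(r_0,c/(2C_2),e^{c_0/4}-1,1)$ then finishes the argument.
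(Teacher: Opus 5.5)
Your proposal is correct and follows essentially the paper's argument. The shared steps are: a Taylor expansion at $z=1$ using $P_i'(1)=\mathbb E X^{(i)}_1\ge 1$; the bound $\Re(1-P_i(w))\ge p(1-\Re w)$ from the atom at $1$ on the unit circle; a radial Lipschitz transfer from $w=z/|z|$ to $1<|z|\le 1+\delta$; and $c_1=\min(\delta/4,\cdot)$ on $|z|=1+\delta/2$. Using Cauchy estimates for the derivative bounds, and $|P_i(z)|\le |z|<1$ inside the unit disk, are only cosmetic differences.

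Two small corrections. First, the atom at $1$ alone does suffice, as your own computation then shows, so the full-support detour is unnecessary. Second, $w=z/|z|$ only satisfies $|w-1|\ge r_0-\delta$, so you should take $\delta\le r_0/2$ and apply the unit-circle bound on $|w-1|\ge r_0/2$. This is exactly what the paper does with $c_2/2$.
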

\begin{proof}
Let $C_2=C_0\sup_{x\geq0} (x+x^2) e^{-c_0x/2}\in(0,\infty)$. Then,
\begin{equation}\label{eq:proofanalytic8}
    \left|P_i'(z)\right|+\left|P_i''(z)\right|
    \leq
    \sum_{n\geq1} \left(n+n^2\right)\cdot \mathbb{P}\left(X^{(i)}_1=n\right) e^{\frac{nc_0}{2}}
    =
    \mathbb{E}\left[\left(X^{(i)}_1+\left(X^{(i)}_1\right)^2\right)  e^{\frac{c_0X^{(i)}_1}{2}}\right]
    \leq C_2  
    \quad  \mbox{for }|z|<e^{\frac{c_0}{2}}.
\end{equation}
Denote $\upsilon_i=P_i'(1)$. Since $X^{(i)}_1\in\mathbb N$, we have $\upsilon_i=\mathbb E X^{(i)}_1\geq1$, and \eqref{eq:proofanalytic8} gives $\upsilon_i\leq C_2$. Choose $c_2\in(0,1)$ so small that $c_2\leq e^{c_0/2}-1$ and $c_2C_2\leq 1$. 
When $|z-1|\leq c_2$, Taylor's formula along the line segment from $1$ to $z$, which is contained in $\{|w|\leq e^{c_0/2}\}$, gives
\begin{equation} \label{eq:proofanalytic9}
        \left|1-P_i(z)\right|=\left|\upsilon_i(z-1)+(z-1)^2\int_0^1(1-t)P_i''(1+t(z-1))\,dt \right|\geq 
    \upsilon_i|z-1|-\frac{C_2}{2}|z-1|^2
    \geq
    \frac12 |z-1| . 
\end{equation}
When $|w|\leq1$ and $|w-1|\geq c_2/2$, we have
\begin{equation}\label{eq:proofanalytic10}
        |1-P_i(w)|
    \geq
    \Re\left(1-P_i(w)\right)                                      
    =
    \sum_{n\geq1}\mathbb{P}\left(X^{(i)}_1=n\right)\left(1-\Re(w^n)\right)                       
    \geq
    \mathbb{P}\left(X^{(i)}_1=1\right)\left(1-\Re(w)\right)                                    
    \geq
    p\frac{|w-1|^2}{2}
    \geq \frac{p c_2^2}{8} .
\end{equation}
Choose $\delta\in(0,1)$ so small that $\delta\leq e^{c_0/2}-1$, $\delta\leq c_2/2$, $C_2\delta\leq pc_2^2/16$, and let 
$c_1=\min(p c_2^2/16,\delta/4)$.  
We now prove \eqref{eq:proofanalytic6} for this choice of $\delta$ and $c_1$.
When $|z-1|\leq c_2$ and $z\neq1$, by \eqref{eq:proofanalytic9}, we have $1-P_i(z)\neq0$; and if additionally $|z|=1+\delta/2$, then $\left|1-P_i(z)\right|\geq \delta/4\geq c_1$.
When $|z-1|\geq c_2$ and $|z|\leq1$, \eqref{eq:proofanalytic10} gives $|1-P_i(z)|\geq 2c_1$. 
When $|z-1|\geq c_2$ and $1<|z|\leq1+\delta$, we let $w=z/|z|$, then $|w|=1$ and 
$|w-1|\geq |z-1|-|z-w|\geq c_2-\delta\geq c_2/2$. Therefore, by \eqref{eq:proofanalytic8} and \eqref{eq:proofanalytic10},
\begin{equation*}
    \begin{split}
        &|1-P_i(z)|\geq |1-P_i(w)|-|P_i(z)-P_i(w)|\geq|1-P_i(w)|- \sum_{n\geq1}\mathbb{P}\left(X^{(i)}_1=n\right)\left(|z|^n-1\right)  \\
        &=  |1-P_i(w)|- \left(P_i(|z|)-P_i(1)\right)\geq|1-P_i(w)|-C_2(|z|-1)\geq p c_2^2/8-C_2\delta\geq p c_2^2/16\geq c_1.
    \end{split}
\end{equation*}
Combining the three cases above, we conclude the proof of \eqref{eq:proofanalytic6}.
\end{proof}

Next, we control the probability that a given index $n$ is contained in $\mathcal{H}$. 

\begin{lemma}\label{lem:Generating3}
    Recall $\upsilon_i =P_i'(1)$.  Then we have that for all $n \in\mathbb{Z}_{\geq0}$, 
    \begin{equation}\label{eq:proofanalytic4}
    \left|\mathbb{P}(n\in\mathcal H)-\prod_{i=1}^d \upsilon_i^{-1} \right|\leq  d\left(\frac{1}{c_1}+ \frac{2}{\delta}\right)\left(1+\frac{\delta}{2}\right)^{-n} . 
\end{equation}
\end{lemma}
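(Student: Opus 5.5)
The key observation is that $\mathcal H$ is the intersection of $d$ independent renewal sets, so $\mathbb{P}(n\in\mathcal H)=\prod_{i=1}^d\mathbb{P}(n\in\mathcal H^{(i)})$ by assumption (1). It therefore suffices to prove the one-dimensional estimate
\[
\left|\mathbb{P}(n\in\mathcal H^{(i)})-\upsilon_i^{-1}\right|\leq \left(\frac{1}{c_1}+\frac{2}{\delta}\right)\left(1+\frac{\delta}{2}\right)^{-n},\qquad n\in\mathbb Z_{\geq0},
\]
for each $i$. The product bound then follows from the telescoping identity
\[
\prod_i a_i-\prod_i b_i=\sum_j \Big(\prod_{i<j}a_i\Big)(a_j-b_j)\Big(\prod_{i>j}b_i\Big),
\]
applied with $a_i=\mathbb{P}(n\in\mathcal H^{(i)})\in[0,1]$ and $b_i=\upsilon_i^{-1}\in(0,1]$. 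Each factor other than $a_j-b_j$ is bounded by $1$, which produces the factor $d$.

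For the one-dimensional estimate, set $U_i(z)=\sum_{n\geq0}\mathbb{P}(n\in\mathcal H^{(i)})z^n$. Since the $X^{(i)}_k$ are i.i.d. with generating function $P_i$, the standard renewal identity gives $U_i(z)=1/(1-P_i(z))$ for $|z|<1$. This is the same argument as Lemma~\ref{lem:Generating1} with $d=1$, where the first renewal time has generating function $P_i$.

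By Lemma~\ref{lem:Generating2}, $1/(1-P_i(z))$ extends meromorphically to $|z|\le 1+\delta$, and its only pole is at $z=1$. This pole is simple because $P_i'(1)=\upsilon_i\geq1\neq0$, and its residue is $-1/\upsilon_i$. Hence
\[
G_i(z)=\frac{1}{1-P_i(z)}-\frac{\upsilon_i^{-1}}{1-z}
\]
is analytic on $|z|<1+\delta$. Its coefficients are $g_n=\mathbb{P}(n\in\mathcal H^{(i)})-\upsilon_i^{-1}$.

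Apply Cauchy's estimate on the circle $|z|=1+\delta/2$, which gives $|g_n|\le \max_{|z|=1+\delta/2}|G_i(z)|\,(1+\delta/2)^{-n}$. On this circle, Lemma~\ref{lem:Generating2} gives $|1/(1-P_i(z))|\le 1/c_1$. Also $|1-z|\ge |z|-1=\delta/2$, so $\upsilon_i^{-1}/|1-z|\le 2/\delta$, using $\upsilon_i\geq1$. Together these give the bound $1/c_1+2/\delta$ on the maximum, which yields the stated constant.

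The main point requiring care is the analytic continuation. The identity $U_i=1/(1-P_i)$ holds initially only on the unit disk. One must check that $P_i$ is analytic on $|z|<e^{c_0/2}$, which contains $|z|\le 1+\delta$ since $\delta\le e^{c_0/2}-1$. Then $1-P_i$ vanishes in this closed disk only at $z=1$, by \eqref{eq:proofanalytic6}, so the continuation is meromorphic with the single pole identified above. After subtracting the polar part, $G_i$ is analytic on a disk of radius greater than $1$ with power series coefficients equal to $g_n$, and Cauchy's estimate applies. The case $n=0$ is trivial but is also covered by the same bound.
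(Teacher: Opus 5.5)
Your proposal is correct and follows essentially the same route as the paper. You factor $\mathbb{P}(n\in\mathcal H)$ by independence and reduce to the one-dimensional estimate via the product-difference bound. You then identify the renewal generating function as $1/(1-P_i(z))$, remove the simple pole at $z=1$ (residue $-\upsilon_i^{-1}$), and apply Cauchy's estimate on $|z|=1+\delta/2$ using Lemma~\ref{lem:Generating2}, which produces the constant $1/c_1+2/\delta$ exactly as in the paper.
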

\begin{proof}
By the independence of the sequences $\mathcal H^{(i)}$ across different $i$,   $\mathbb{P}(n\in\mathcal H)=\prod_{i=1}^d\mathbb{P}(n\in\mathcal H^{(i)})$. Using   that $|a_1\cdots a_d-b_1\cdots b_d|\leq\sum_{i=1}^d|a_i-b_i|$ for any $a_i,b_i\in[0,1]$, it suffices to show
\begin{equation}\label{eq:proofanalytic5}
    \left|\mathbb{P}\left(n\in\mathcal H^{(i)}\right)- \upsilon_i^{-1}\right|\leq \left(\frac{1}{c_1}+ \frac{2}{\delta}\right)\left(1+\frac{\delta}{2}\right)^{-n}  
    \quad\mbox{for all } n\in\mathbb{Z}_{\geq0},\,i\in\llbracket1,d\rrbracket.
\end{equation} 
Observe that, for $|z|<1$,
\[
\sum_{n\geq0}\mathbb{P}\left(n\in\mathcal H^{(i)}\right)z^n
=
\sum_{k\geq0}\mathbb E\left[z^{H^{(i)}_k}\right]
=
\sum_{k\geq0}P_i(z)^k
=
\frac{1}{1-P_i(z)}.
\]
Here the swap of sums follows from absolute convergence, since
$\sum_{k\geq0}\mathbb E [|z|^{H^{(i)}_k} ]
=
\sum_{k\geq0}P_i(|z|)^k<\infty $.
Let
\[
    G_i(z)=
    \frac{1}{1-P_i(z)}-\frac{1}{\upsilon_i(1-z)}\quad\mbox{for } i\in\llbracket1,d\rrbracket,
\]
By Lemma~\ref{lem:Generating2}, the only possible singularity of $G_i$ in $\{|z|<1+\delta\}$ is $z=1$.  Since $P_i'(1)=\upsilon_i$, by a Taylor expansion of $P_i(z)$ around $1$, $z=1$ is a removable singularity of $G_i(z)$, therefore $G_i(z)$ is analytic in $\{|z|<1+\delta\}$.
By Cauchy's coefficient formula; see \cite[Chapter 2]{stein2010complex}, we know that
\begin{equation}\label{eq:proofanalytic7}
\mathbb{P}\left(n\in\mathcal H^{(i)}\right) -\upsilon_i^{-1}=[z^n]G_i(z)=\frac{1}{2\pi \mathrm{i}}
    \int_{|z|=1+\delta/2}\frac{G_i(z)}{z^{n+1}}\,dz .
\end{equation}
On the circle $|z|=1+\delta/2$, by \eqref{eq:proofanalytic6} in Lemma~\ref{lem:Generating2}, $|G_i(z)|\leq1/c_1+ 2/\delta$. By \eqref{eq:proofanalytic7}, we conclude \eqref{eq:proofanalytic5}.    
\end{proof} 

Next, observe that for $|z|<1$,
\begin{equation}\label{eq:proofanalytic13}
U(z)=\sum_{n\geq0}\mathbb{P}(n\in\mathcal H)z^n=\frac{\prod_{i=1}^d \upsilon_i^{-1}}{1-z}+\sum_{n\geq0}\left(\mathbb{P}(n\in\mathcal H)-\prod_{i=1}^d \upsilon_i^{-1}\right)z^n=: \frac{\prod_{i=1}^d \upsilon_i^{-1}}{1-z}+V(z),
\end{equation}
where $V(z)$ is defined as the series with coefficients $\mathbb{P}(n\in\mathcal H)-\prod_{i=1}^d \upsilon_i^{-1}$ for all $n\in \mathbb Z_{\geq 0}$.
By the estimate \eqref{eq:proofanalytic4}, the power series $V(z)$ converges absolutely in $\{|z|<1+\delta/2\}$, hence the right side above extends $U(z)$ to a meromorphic function on $\{|z|<1+\delta/2\}$ which has a simple pole at $z=1$. Moreover, we have the following estimate on the radius of convergence for $U$, and its values close to the unit circle.  

\begin{lemma}\label{lem:Generating4}
There exist some $\delta_1\in(0,1)$, $c_3>0$ depending only on $C_0,c_0,d,p$, such that
\begin{equation}\label{eq:proofanalytic11}
        U(z)\neq0 
    \mbox{ for }|z| <1+\delta_1,\, z\neq1, \quad\mbox{and}\quad |U(z)|\geq c_3\mbox{ for } |z|=1+\delta_1/2.
\end{equation}     
\end{lemma}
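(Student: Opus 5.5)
The plan is to separate the region near $z=1$ from the region away from it, and in each region to use a different representation of $U$.

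Near $z=1$ we use the decomposition \eqref{eq:proofanalytic13}, $U(z)=\Pi/(1-z)+V(z)$ with $\Pi:=\prod_{i=1}^d\upsilon_i^{-1}$. Since $1\le\upsilon_i\le C_2$ (shown in the proof of Lemma~\ref{lem:Generating2}), we have $\Pi\ge C_2^{-d}$. By Lemma~\ref{lem:Generating3}, on $\{|z|\le 1+\delta/4\}$ the coefficients of $V$ are bounded by $K\,(1+\delta/2)^{-n}$ with $K=d(1/c_1+2/\delta)$. Hence $|V(z)|\le K_1$ there, for some $K_1$ depending only on $C_0,c_0,d,p$. Consequently, whenever $0<|z-1|\le \Pi/(2K_1)$, we get $|U(z)|\ge \Pi/|1-z|-K_1\ge K_1>0$. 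This gives the non-vanishing and a uniform lower bound in a punctured disc around $1$ of radius $r_0:=\min(C_2^{-d}/(2K_1),\delta/8)$.

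Away from $1$, inside the closed unit disc, we use the renewal identity from Lemma~\ref{lem:Generating1}: $U=1/(1-F)$ for $|z|<1$. This shows $U(z)\ne0$ for $|z|<1$, but we need a quantitative bound that survives up to and slightly beyond the unit circle. The cleaner route is to bound $|U|$ from below directly. For $|z|\le 1$ we have $|F(z)|\le F(1)\le 1$, so $|1-F(z)|\le 2$ and hence $|U(z)|\ge 1/2$ for $|z|<1$. By continuity of the meromorphic extension, $|U(z)|\ge1/2$ on $\{|z|\le1, z\neq 1\}$.

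It remains to push this bound slightly outside the unit disc. We need a uniform modulus of continuity for $U$ on the annulus $\{1\le|z|\le1+\delta/4\}$, restricted to $|z-1|\ge r_0$. Writing $U=\Pi/(1-z)+V$, the term $\Pi/(1-z)$ has derivative bounded by $r_0^{-2}/2$ when $|z-1|\ge r_0/2$. By Cauchy estimates, $V'$ is bounded by a constant $K_2$ on $\{|z|\le1+\delta/4\}$, again using the coefficient bound $n K(1+\delta/2)^{-n}$. Thus for $z$ with $1<|z|\le 1+\eta$ and $|z-1|\ge r_0$, set $w=z/|z|$. Then $|w-1|\ge r_0-\eta\ge r_0/2$, and
\[
|U(z)|\ge |U(w)|-(K_2+2r_0^{-2})\eta\ge 1/4,
\]
once $\eta$ is chosen small depending only on the constants. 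We then choose $\delta_1=\min(\eta,r_0,\delta/4)$ and $c_3=\min(1/4,K_1)$. With these choices, both statements in \eqref{eq:proofanalytic11} follow: on $|z|=1+\delta_1/2$ we are either within $r_0$ of $1$, or else the annulus estimate applies.

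The step I expect to need the most care is the lower bound $|U|\ge1/2$ on the unit circle away from $1$. It relies on $F(1)=\mathbb P(\tau<\infty)\le1$, which holds trivially, and on the extension of $U$ agreeing with $1/(1-F)$ up to the boundary. The latter is justified by continuity of $F$ on the closed disc, since $F$ has nonnegative coefficients summing to at most $1$. It is also consistent with $U$ having a pole only at $1$. Everything else is bookkeeping of constants depending only on $C_0,c_0,d,p$.
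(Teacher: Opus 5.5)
Your proposal is correct and follows essentially the same route as the paper: the pole term $\Pi/|1-z|$ dominates the bounded $V$ near $z=1$, the identity $U=1/(1-F)$ gives $|U|\ge 1/2$ on the closed unit disc away from $1$, and a radial derivative estimate pushes this bound slightly past the unit circle. The only differences are minor. You pass $|U|\ge 1/2$ to the unit circle by continuity of the meromorphic extension, which is valid and avoids the paper's real-part argument using $\mathbb P(\tau=1)\ge p^d$. Also, since $\Pi\le 1$, the derivative of $\Pi/(1-z)$ on $\{|z-1|\ge r_0/2\}$ is bounded by $4r_0^{-2}$, not by $r_0^{-2}/2$ or $2r_0^{-2}$; this slip is harmless, because $\eta$ only needs to be small relative to some such constant.
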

\begin{proof}
Using the estimate \eqref{eq:proofanalytic4}, 
\begin{equation}\label{eq:proofanalytic12}
|V(z)|+|V'(z)|\leq  d\left(\frac{1}{c_1}+ \frac{2}{\delta}\right) \left(\sum_{n\geq0} \left(\frac{1+\delta/4}{1+\delta/2}\right)^n+\sum_{n\geq1}n \frac{(1+\delta/4)^{n-1}}{(1+\delta/2)^n}  \right)
=: C_6     \quad\mbox{for }|z|\leq 1+\delta/4
\end{equation} and constant $C_6>0$.
Choose $c_6\in(0,1)$ so small that $c_6\leq\delta/8$ and $c_6 C_6\leq  (1+C_2)^{-d}/4$. We have 
\begin{equation}\label{eq:proofanalytic14}
        | U(z)|
    \geq
    \frac{\prod_{i=1}^d \upsilon_i^{-1}}{|1-z|}-C_6\geq\frac{(1+C_2)^{-d}}{|1-z|}-C_6
    \geq
    \frac{(1+C_2)^{-d}}{2|1-z|}\quad\mbox{for } 0<|z-1|\leq2c_6,
\end{equation}
where the first step uses \eqref{eq:proofanalytic13} and \eqref{eq:proofanalytic12}, the second step uses $\prod_{i=1}^d \upsilon_i^{-1}\geq(1+C_2)^{-d}$ which follows from $|\upsilon_i|=|P_i'(1)|\leq C_2$ by \eqref{eq:proofanalytic8} in the proof of Lemma~\ref{lem:Generating2}, and the third step uses $0<|z-1|\leq2c_6$ and the choice of $c_6$. 

We next prove the claim that 
\begin{equation}\label{eq:proofanalytic15}
    |U(z)|\geq1/2\quad\mbox{for }|z|\leq1,z\neq1.
\end{equation}
Recall that $F(z)=\sum_{n\geq1} \mathbb{P}(\tau=n)z^n$, which converges absolutely and $|F(z)|\leq1$ on $|z|\leq1$. 
We have 
\[ 
    \Re(1-F(z))                                   
     =
    1-\sum_{n\geq1}\mathbb{P}(\tau=n)\Re(z^n)                      
     =
    \left(1-\sum_{n\geq1}\mathbb{P}(\tau=n)\right) 
      +
      \sum_{n\geq1}\mathbb{P}(\tau=n)\left(1-\Re(z^n)\right)          
     \geq
    \mathbb{P}(\tau=1)\left(1-\Re z\right) .
\]
Note that $\mathbb{P}(\tau=1)\geq p^d>0$ since $\cap_{i=1}^d\{X^{(i)}_1=1\}\subseteq\{\tau=1\}$. Hence $F(z)\neq1$ for all $|z|\leq1$, $z\neq1$.
Recall from Lemma \ref{lem:Generating1} that  $U(z)=1/(1-F(z))$ for $|z|<1$. By \eqref{eq:proofanalytic13},  $U(z)$ also denotes the meromorphic continuation of this function to $\{|z|<1+\delta/2\}$, and $F$ is continuous on $\{|z|\leq1\}$. Hence, for $|z|=1$ and $z\neq1$, letting $r\uparrow1$ in $U(rz)=1/(1-F(rz))$ gives $U(z)=1/(1-F(z))$. Therefore, for all $|z|\leq1$ with $z\neq1$,  as $|F(z)|\leq 1$, we know that $|U(z)|=1/|1-F(z)|\geq1/2$, and we conclude the proof of  \eqref{eq:proofanalytic15}.

We choose $\delta_1\in(0,1)$ so small that $\delta_1\leq\delta/4$, $\delta_1\leq c_6$, and $ (c_6^{-2}+ C_6 )\delta_1\leq1/4$, and we let $c_3= \min(1/4,(1+C_2)^{-d}/(4c_6))$. We now prove \eqref{eq:proofanalytic11} for this $\delta_1$ and $c_3$. 
When $|z|<1$, $U(z)=1/(1-F(z))\neq0$.
When  $0<|z-1|\leq2c_6$, by \eqref{eq:proofanalytic14}, we have $U(z)\neq0$, and if additionally $|z|=1+\delta_1/2$, then $|U(z)|\geq c_3$ by the choice of $c_3$.  When $1\leq |z|<1+\delta_1$ and $|z-1|\geq2c_6$, 
Let $w=z/|z|$, then $|w|=1$ and $|w-1|\geq |z-1|-|z-w|\geq2c_6-\delta_1\geq c_6$.
Along the radial segment from $w$ to $z$, the distance to $1$ is at least $c_6$. Since \eqref{eq:proofanalytic13} gives
$U'(\zeta)= \prod_{i=1}^d\upsilon_i^{-1}\cdot (1-\zeta)^{-2} +V'(\zeta)$
and $\prod_{i=1}^d\upsilon_i^{-1}\leq1$, \eqref{eq:proofanalytic12} gives $|U'|\leq C_6+c_6^{-2}$ on this segment; combining with \eqref{eq:proofanalytic15},     
$ | U(z)|\geq| U(w)|-(C_6+c_6^{-2})|z-w|\geq1/2-(C_6+c_6^{-2})\delta_1\geq1/4\geq c_3 $.
Combining these three cases, we conclude the proof of \eqref{eq:proofanalytic11}.
\end{proof}

\begin{lemma}\label{lem:Generating5}
    We have that $\tau<\infty$ almost surely, and that there exist constants $C_7,c_7\in(0,\infty)$ depending only on $C_0,c_0,d,p$, such that 
$\mathbb{E} [e^{c_7\tau} ]\leq C_7$. 
\end{lemma}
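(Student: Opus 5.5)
The plan is to read off the analytic behaviour of $F$ from the renewal identity of Lemma~\ref{lem:Generating1}, $F(z)=1-1/U(z)$, and to turn the uniform lower bound on $|U|$ from Lemma~\ref{lem:Generating4} into exponential decay of the coefficients $\mathbb P(\tau=n)$ through Cauchy's coefficient formula, exactly as Lemma~\ref{lem:Generating3} was obtained from Lemma~\ref{lem:Generating2}.

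First I show that $\tau<\infty$ almost surely. By Lemma~\ref{lem:Generating3}, $\mathbb P(n\in\mathcal H)\to\prod_i\upsilon_i^{-1}>0$, so $U(r)=\sum_n\mathbb P(n\in\mathcal H)r^n\to\infty$ as $r\uparrow1$. Since $F(r)=1-1/U(r)$ for $r\in(0,1)$, monotone convergence gives $\mathbb P(\tau<\infty)=\lim_{r\uparrow1}F(r)=1$. The same argument shows that $\mathcal H$ is almost surely infinite, using the renewal decomposition from the proof of Lemma~\ref{lem:Generating1}, although that fact is not needed here.

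Next comes the analytic continuation. On $\{|z|<1+\delta_1\}$, \eqref{eq:proofanalytic13} shows that $U$ is meromorphic with only a simple pole at $z=1$, and Lemma~\ref{lem:Generating4} shows that it has no zeros there. Hence $\widetilde F(z):=1-1/U(z)$ is analytic on $\{|z|<1+\delta_1\}\setminus\{1\}$. At $z=1$ the pole of $U$ becomes a zero of $1/U$, so the singularity is removable and $\widetilde F$ is analytic on the whole disc. On $|z|<1$, $\widetilde F$ coincides with $F$, so its Taylor coefficients are $\mathbb P(\tau=n)$.

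Cauchy's formula on the circle $|z|=1+\delta_1/2$, where $|U|\ge c_3$ by Lemma~\ref{lem:Generating4}, then gives
\[
\mathbb P(\tau=n)\le (1+c_3^{-1})(1+\delta_1/2)^{-n}.
\]
Choosing $c_7=\tfrac12\log(1+\delta_1/2)$ and summing the geometric series yields
\[
\mathbb E\bigl[e^{c_7\tau}\bigr]\le (1+c_3^{-1})\sum_{n\ge1}(1+\delta_1/2)^{-n/2}=:C_7,
\]
and $C_7$ depends only on $C_0,c_0,d,p$.

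The only delicate point is that $1/U$ must be analytic across $z=1$ and bounded on the contour. Both follow directly from Lemma~\ref{lem:Generating4} together with the simple-pole structure in \eqref{eq:proofanalytic13}, so I expect no real obstacle.
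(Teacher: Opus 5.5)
Your proposal is correct and follows the paper's proof: continue $F=1-1/U$ analytically to $\{|z|<1+\delta_1\}$ using Lemma~\ref{lem:Generating4} and the simple pole in \eqref{eq:proofanalytic13}, then apply Cauchy's estimate on $|z|=1+\delta_1/2$ with $|F|\le 1+1/c_3$. The only difference is that you get $\tau<\infty$ from $U(r)\to\infty$ and monotone convergence as $r\uparrow1$, whereas the paper evaluates the continued $F$ at $z=1$ via absolute convergence of its Taylor series; both are valid.
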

\begin{proof}
By \eqref{eq:proofanalytic13}, we see that
\[\frac{1}{U(z)} =\frac{1-z}{ \prod_{i=1}^d \upsilon_i^{-1}+(1-z)V(z)},
\]
therefore $1/U(z)\to0$ as $z\to1$ since the denominator above converges to a nonzero value. By \eqref{eq:proofanalytic11}, $F(z)=1- 1/U(z)$ is  an analytic function on $\{|z| <1+\delta_1\}$,
since $z=1$ is its removable singularity. We also have $F(1)=1$.
Since $F(z)$ is analytic on $\{|z| <1+\delta_1\}$, its Taylor series at the origin converges absolutely at $z=1$, hence
$\sum_{n\geq1}\mathbb{P}(\tau=n)= F(1)=1$, which implies $\tau<\infty$ almost surely.
On the circle $\{|z|=1+\delta_1/2\}$, by \eqref{eq:proofanalytic11} in Lemma~\ref{lem:Generating4}, $|F(z)|\leq 1+1/c_3$.
By Cauchy's estimate,
\[
 \mathbb{P}(\tau=n)=|[z^n]F(z)|\leq (1+1/c_3)(1+\delta_1/2)^{-n}\quad\mbox{for }n\in\mathbb{N}, 
\] 
where we write $[z^n]F(z)$ for the coefficients in the power series.
 Hence,
\begin{equation*} 
    \mathbb{E}\left[(1+\delta_1/4)^{\tau}\right]
    =
    \sum_{n\geq1}\mathbb{P}(\tau=n)(1+\delta_1/4)^n
    \leq
    (1+1/c_3)\sum_{n\geq1} (1+\delta_1/4)^n (1+\delta_1/2)^{-n}
    <\infty .
\end{equation*}
Let $c_7=\log(1+\delta_1/4)>0$ and $C_7<\infty$ be the right side above, we have $\mathbb{E} [e^{c_7\tau} ]\leq C_7$. 
\end{proof}

We have now all tools to show Theorem~\ref{thm:find reg times 2}.

\begin{proof}[Proof of Theorem~\ref{thm:find reg times 2}]
We begin by constructing $0=H_0<H_1 <\cdots$ which enumerate $\mathcal H$.
Since $\tau=\inf\{n\in\mathbb{N}:n\in\mathcal H\}$ is almost surely finite, the first positive element of $\mathcal H$ is finite. 
The same conditioning argument as in Step 1 also shows that, for every $m\in\mathbb N$ with $\mathbb P(\tau=m)>0$, conditionally on $\{\tau=m\}$ the shifted set $(\mathcal H-m)\cap\mathbb Z_{\geq0}$ is independent of $\tau$ and has the same law as $\mathcal H$.  Repeating this argument inductively gives finite random variables $0=H_0<H_1<H_2<\cdots$  which enumerate $\mathcal H$, and whose gaps $H_k-H_{k-1}$, $k\in\mathbb{N}$ are i.i.d. with $H_1=\tau$. Additionally, 
$\mathbb{E} [e^{c_7H_1} ]=\mathbb{E} [e^{c_7\tau} ]\leq C_7$ using Lemma~\ref{lem:Generating5} for the last inequality. 

We next prove that the increment random vectors
\[
\Phi_k=
\left(
\Upsilon^{(1)}_{t_{1,k}}-\Upsilon^{(1)}_{t_{1,k-1}},\dots,
\Upsilon^{(d)}_{t_{d,k}}-\Upsilon^{(d)}_{t_{d,k-1}},
H_k-H_{k-1}
\right)
\quad\mbox{for } k\in\mathbb{N},
\]
are i.i.d., where we recall that the sequence $0=t_{i,0}<t_{i,1}<t_{i,2}<\cdots$ is determined by $t_{i,\ell}$ being the unique index such that $H^{(i)}_{t_{i,\ell}}=H_\ell$, for $\ell\in\mathbb{Z}_{\geq0}$.  
For
$\boldsymbol{r}=(r_1,\dots,r_d)\in\mathbb{Z}_{\geq0}^d$ and
$m\in\mathbb{Z}_{\geq0}$, denote
\[
D_{\ell,\boldsymbol{r},m}
=
\{H_\ell=m,\ t_{1,\ell}=r_1,\dots,t_{d,\ell}=r_d\},\quad\mbox{and}\quad
\mathcal F_{\boldsymbol{r}}
=
\sigma \left(A^{(i)}_s,X^{(i)}_s:
 i\in\llbracket1,d\rrbracket,\ 1\leq s\leq r_i \right),
\] 
where we recall $A^{(i)}_k=\Upsilon^{(i)}_k-\Upsilon^{(i)}_{k-1}$.
Then the event $D_{\ell,\boldsymbol{r},m}$ is $\mathcal F_{\boldsymbol{r}}$-measurable.  Indeed, this is the event that $H^{(i)}_{r_i}=m$ for every $i$ and that $\cap_{i=1}^d\{H^{(i)}_s:0\leq s\leq r_i\}$ has exactly $\ell+1$ elements and largest element $m$; this condition is fully determined by the increments appearing in $\mathcal F_{\boldsymbol{r}}$.
For this fixed $\boldsymbol{r}$, using the independence of the $d$ increment sequences and the i.i.d. property within each sequence, we know that the post-increment 
\[
\left(A^{(i)}_{r_i+s},X^{(i)}_{r_i+s}:s\in\mathbb{N},\ i\in\llbracket1,d\rrbracket\right)
\]
is independent of $\mathcal F_{\boldsymbol{r}}$ and has the same law as
$(A^{(i)}_s,X^{(i)}_s:s\in\mathbb{N},\ i\in\llbracket1,d\rrbracket)$.
On $D_{\ell,\boldsymbol{r},m}$, the future common renewal levels relative to $m$ and the future index increments relative to $\boldsymbol{r}$ are obtained by applying to this post-increment array exactly the same measurable construction that produces
$(H_k,t_{1,k},\dots,t_{d,k})_{k\geq0}$ from the original increment array. Hence, for every bounded Borel function $g$ on $\mathbb{Z}_{\geq0}^{d+1}$ and every bounded $\mathcal F_{\boldsymbol{r}}$-measurable $W$ which vanishes outside
$D_{\ell,\boldsymbol{r},m}$, we have $\mathbb E [W\,g( \Phi_{\ell+1} ) ]=\mathbb E[W]\,\mathbb E [g(\Phi_1 ) ]$.
We now apply this identity to a specifically chosen $W$. 
Let $f$ be any bounded Borel function of $(\Phi_1,\dots,\Phi_\ell)$, then $W=f(\Phi_1,\dots,\Phi_\ell)\mathbf{1}_{D_{\ell,\boldsymbol{r},m}}$ is $\mathcal F_{\boldsymbol{r}}$-measurable, since the first $\ell$ vectors are determined by the increments up to the indices $r_1,\dots,r_d$. 
Summing $\mathbb E [W\,g( \Phi_{\ell+1} ) ]=\mathbb E[W]\,\mathbb E [g(\Phi_1 ) ]$ over the countable partition $(D_{\ell,\boldsymbol{r},m})_{\boldsymbol{r},m}$ gives
\[
\mathbb E\left[
 f(\Phi_1,\dots,\Phi_\ell)
 g(\Phi_{\ell+1} )
\right]
=
\mathbb E\left[f(\Phi_1,\dots,\Phi_\ell)\right]
\mathbb E\left[g(\Phi_1 )\right].
\]
Iterating this identity shows that $(\Phi_k)_{k\in\mathbb{N}}$ are i.i.d.

Finally, we show that each component of $\Phi_1$ is subexponential. We have $t_{i,1}\leq H_1$ since $H^{(i)}_{t_{i,1}}=H_1$ and each $X^{(i)}_k\geq1$. Therefore, $\mathbb{E} [e^{c_7t_{i,1}} ] \leq \mathbb{E} [e^{c_7H_1} ] \leq C_7$. Since $\mathbb E [e^{c_0 A^{(i)}_1} ]=\mathbb E[ e^{c_0\Upsilon^{(i)}_1} ]\leq C_0$ by the assumption of the lemma,  
$\Upsilon^{(i)}_{t_{i,1}}=\sum_{s=1}^{t_{i,1}}A^{(i)}_s$ is the sum of subexponentially many i.i.d. subexponential variables. Using Lemma \ref{lem:basic exponential tails} \textup{(\romannum{4})} we conclude that $\mathbb E [e^{c_8\Upsilon^{(i)}_{t_{i,1}}} ]\leq C_8$ for some $C_8,c_8\in(0,\infty)$ depending only on $C_0,c_0,C_7,c_7$, and hence depend only on $C_0,c_0,d,p$. Let $c=\min(c_7,c_8)$ and $C=\max(C_7,C_8)$ we conclude the proof. 
\end{proof}

\section{Approximation of the reweighted path measure}\label{sec:PathMeasure}

In this section, we establish an approximation of the path measure for the open ASEP on the coexistence line when we condition on the location of the path minimum. More precisely, take $\a=\b>1$ in \eqref{eq:PNab} and note that
    \[ 
    \mathbb{P}^{(q,\b,\b)}(\bg)=\frac{1}{Z_N^{(q,\b,\b)}} 2^{H(\bg)}\b^{\gamma_N-2\min(\bg)}  W_{\bs(\bg)}\left(\b^2\right)\quad\mbox{ for }\bg\in\lrw_N.
    \]
The following proposition formalizes the approximation of the path measure, which will be our key step towards proving Theorem~\ref{thm:ApproxCoex} on the stationary distribution of the open ASEP on the coexistence line.
\begin{proposition}\label{prop:comparison path q earlier}
    Assume \(q\in(0,1)\) and \(\b>1\). For $N\in\mathbb N$, \(k\in\llbracket0,N\rrbracket\), let $\mathbb P_{N,k}^q(\,\cdot\,)=\mathbb P^{(q,\b,\b)}(\,\cdot\,\mid T_N=k)$
    and
    $\mathbb P_{N,k}^0(\,\cdot\,)=\mathbb P^{(0,\b,\b)}(\,\cdot\,\mid T_N=k)$.
    Then both conditional laws are well-defined.
Moreover, for \(k,m\in\llbracket0,N\rrbracket\), let $\mathsf I_{N,k,m}$ and $\Delta_{N,k,m}(\bg)$ for $\bg\in\lrw_N$ be given by 
\begin{equation}\label{eq:Ianddelta new}
    \begin{split}
         \mathsf I_{N,k,m}& =  
    \{i\in\llbracket0,N-1\rrbracket: |i-k|\ge m\},\\
     \Delta_{N,k,m}(\bg)&=\left(\gamma_{i+1}-\gamma_i\right)_{i\in\mathsf I_{N,k,m}}
    \in \{-1,0,1\}^{\mathsf I_{N,k,m}},
    \end{split}
\end{equation} 
 then there exist constants \(C,c\in(0,\infty)\), depending only on \(q\) and \(\b\), such that, for every \(N\ge1\) and every \(k,m\in\llbracket0,N\rrbracket\),
\[ 
    \TV{
    \mathbb P_{N,k}^q
    \left(
        \Delta_{N,k,m}(\bg)\in\cdot\, \right)
    -
    \mathbb P_{N,k}^0
    \left(
        \Delta_{N,k,m}(\bg)\in\cdot\, \right)
    }
    \le
    Ce^{-cm}.
\]
\end{proposition}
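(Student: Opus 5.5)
Since $W_{\bs}\equiv 1$ when $q=0$, the measure $\mathbb P^{q}_{N,k}$ is obtained from $\mathbb P^{0}_{N,k}$ by reweighting with $W_{\bs(\bg)}(\b^2)$. Concretely,
\[
\frac{d\mathbb P^q_{N,k}}{d\mathbb P^0_{N,k}}(\bg)=\frac{W_{\bs(\bg)}(\b^2)}{\mathbb E^0_{N,k}[W_{\bs(\bg)}(\b^2)]}.
\]
Well-definedness then follows from Lemma~\ref{lem:decomposition TASEP}, which gives $\mathbb P^{(0,\b,\b)}(T_N=k)>0$, together with $W_{\bs}(\b^2)>0$.

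By Lemma~\ref{lem:decomposition TASEP}, under $\mathbb P^0_{N,k}$ the path consists of two independent arms seen from the minimum. These are a reversed left arm, which is a $\nu_\b$-walk conditioned positive after the forced step $\gamma_{k-1}-\gamma_k=1$, and a right arm, which is a $\nu_\b$-walk conditioned nonnegative. For large arm lengths these are close to the $V$-shaped walk of Definition~\ref{def:V shaped RW}.

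The plan is to exhibit a coupling, or a conditional independence structure, under which $W_{\bs(\bg)}(\b^2)$ factorizes into three parts: a factor depending only on the portion of the path below a doubled regeneration level near the minimum, and two factors depending only on the outer parts. The key structural input is the birth–death representation of Theorem~\ref{thm:prob interpretation of Bryc polynomials}. The composition $\bs(\bg)$ records the level occupation counts $\sigma_i$. If $(m',n')$ is a doubled regeneration time with level $h$, then the levels below $h$ are visited only during $\llbracket k-m',k+n'\rrbracket$, and the levels above $h$ only outside this window. Hence $\bs(\bg)$ is the concatenation of an inner composition, determined by the window, and an outer composition, determined by the two outer arms. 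The $0$-$1$ sequence $\boldsymbol{\mathsf S}^{\bs}$ is read from the top level down, so the outer part of the path drives the chain $X$ first and the inner part drives it last.

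I would then use the one-step Markov structure of $X$ to show that the total $\b^2$-exponential moment is approximately multiplicative across the regeneration cut. By the companion paper's estimates, $W_{\bs}(z)$ is controlled by $\mathbb E[z^{X(N+1)}]$, and the chain forgets its state exponentially fast at times where $\mathsf S$ has long runs. Here the inner window near the minimum plays the role of an asymptotically forgetting block, so that
\[
W_{\bs}(\b^2)=W^{\rm out}\cdot W^{\rm in}\cdot\bigl(1+O(e^{-c\,\ell})\bigr),
\]
where $\ell$ is the length of the inner window.

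Next I take the first doubled regeneration time lying at distance at least $m/2$ from the minimum on both sides, so that the cut separates $\Delta_{N,k,m}$ from a neighborhood of $k$. By Proposition~\ref{thm:doubled regeneration}, the increments of successive doubled regeneration times have exponential tails. Therefore, except on an event of probability $Ce^{-cm}$, such a cut exists and lies within distance $m$ of $k$, even for the finite arms. For the finite arms I transfer via the monotone coupling of Lemma~\ref{lem:basic hitting monotone coupling}, together with the fact that conditioning to stay nonnegative for a long time is close to the infinite conditioning. Given the cut, the observed increments outside $\llbracket k-m,k+m\rrbracket$ lie in the outer part. Under both $\mathbb P^q$ and $\mathbb P^0$, the outer part is then reweighted only by a factor $W^{\rm out}$ whose dependence on the inner configuration is $O(e^{-cm})$. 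The total variation bound follows by integrating out the inner block, using the uniform moment bounds on $W^{\rm in}$.

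\textbf{Main obstacle.} The factor $W^{\rm out}$ also depends on the outer arms, so $\mathbb P^q_{N,k}$ genuinely tilts the law of the far path. This can only be reconciled with a comparison to $\mathbb P^0_{N,k}$ if the tilt on the outer part is asymptotically trivial. I expect that this holds because regions high above the minimum contribute levels with large $j$, where $q^j$ is negligible and the birth–death transitions become deterministic, so that $X$ does not move. Making this precise, uniformly in $k$ including $k$ near $0$ or $N$ where one arm is short, and with $\mathbb E^0_{N,k}[W\,|\,\text{outer}]$ controlled by exponential moments via the doubled regeneration tails, is the step I expect to be hardest.
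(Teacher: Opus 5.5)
Your reduction to the reweighting $d\mathbb P^q_{N,k}/d\mathbb P^0_{N,k}\propto W_{\bs(\bg)}(\b^2)$ and your well-definedness argument are correct. So is your observation that $\boldsymbol{\mathsf S}^{\bs}$ is read from the top level down, so that the levels near the minimum drive the birth--death chain last. These are the paper's starting point.

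\textbf{The gap.} The problem is the proposed factorization $W=W^{\rm out}W^{\rm in}(1+O(e^{-c\ell}))$ with outer factors that depend on the outer arms. By the Markov property at the cut time $t$, $W_{\bs}(\b^2)=\sum_x\mathbb P(X(t)=x)\,\mathbb E[\b^{2X(N+1)}\mid X(t)=x]$. This is a mixture over the state reached at the end of the outer block, not a product. If $W^{\rm out}$ really were a nonconstant function of the outer arms, $\mathbb P^q_{N,k}$ would tilt the far increments and the proposition would fail. That is exactly the step you leave open. Your heuristic for why the tilt vanishes does not work either. The transition probabilities in Definition~\ref{def:characterization measure} depend on the chain's own state $j$ and the time $i$, not on the height of the path. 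From state $0$ at a $0$-symbol the chain moves up with probability of order $q$, so $X$ does move during the outer block.

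\textbf{The missing idea.} No outer factor survives, because the endpoint forgets its state at time $N-\ell$. The paper argues this in Lemma~\ref{lem:conditioning does not change much endpoint}, taking $\ell=\lfloor\eta m\rfloor$.
\begin{itemize}
\item The uniform exponential moments of Proposition~\ref{prop:moment bound coex} give $X(N-\ell)\le\lfloor\varepsilon\ell\rfloor$ except with average probability $O(2^{-\varepsilon\ell})$.
\item Proposition~\ref{prop:expectation of TV distance coex} shows that, on average, the tail kernels $K_{N,\ell}(\boldsymbol{\mathsf S};x,\cdot)$ with $x\le\varepsilon\ell$ are $Ce^{-c\ell}$-close to $K_{N,\ell}(\boldsymbol{\mathsf S};0,\cdot)$.
\item The reason is that the dense $1$-symbols coming from the lowest doubled regeneration levels force the dominating chain $\mathscr X$ to hit $0$ (Lemmas~\ref{lem:bound the TV distance by hitting} and~\ref{lem:simpler chain hitting 0}). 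Forgetting comes from many distinct low levels being visited in a short time, not from long runs.
\end{itemize}
Hence the endpoint law is essentially $K_{N,\ell}(\boldsymbol{\mathsf S};0,\cdot)$, which is a function of $(\sigma_0,\dots,\sigma_{\ell+1})$ alone. These bottom-level counts are determined by the path in $|i-k|\le m/2$ up to an $e^{-cm}$ error, so they are nearly independent of $\Delta_{N,k,m}$. This is Lemma~\ref{lem:stability low compositions coex}, which is proved by replacing the conditioned arms beyond distance $m$ with free walks; no regeneration cut is needed there.

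\textbf{Rare events.} Because $W$ is unbounded, the TV bound over all $A\in\sigma(\Delta_{N,k,m})$ needs separate handling of rare events. When $\mathbb P^0_{N,k}(A)<e^{-\delta m}$, the paper uses Cauchy--Schwarz with $\mathbb E^0_{N,k}[W^2]\le M$. Otherwise it combines the conditional endpoint TV bound with $\b^{4X}$ moments.
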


The proof of Proposition \ref{prop:comparison path q earlier} is the main goal of this section, which will appear in Section \ref{subsec:proof of prop comparison of path}. While the overall strategy of proof follows   Sections~4 and 5 in \cite{SchmidYangShock} on  analyzing the probabilistic characterization of the Bryc polynomials, the individual arguments require several adjustments.

\subsection{Comparison with the bi-infinite path measure}

The following comparison lemma allows us to estimate key observables of the finite path measures from the bi-infinite $V$-shaped random walk measure.

\begin{lemma}\label{lem:comparision}
        Assume $\b>1$ and \(N\in\mathbb N\), and recall that for $\bg\in\lrw_N$,
\begin{equation}\label{eq:PN00 and TN} 
\mathbb{P}^{(0,\b,\b)}(\bg) =\frac{1}{Z_N^{(0,\b,\b)}} 2^{H(\bg)}\b^{\gamma_N-2\min(\bg)}  \quad\mbox{ and }\quad
 T_N(\bg)  =\min\{i\in\llbracket0,N\rrbracket:\gamma_i=\min(\bg)\} . 
 \end{equation}
 Let $\{Y_i\}_{i=-\infty}^{\infty}$ be a bi-infinite $V$-shaped $\nu_{\b}$-random walk started at $Y_0=0$ as in Definition \ref{def:V shaped RW}. Then for each $k\in\llbracket0,N\rrbracket$, the path $\boldsymbol{\lambda}$ given by $\lambda_i=Y_{i-k}-Y_{-k}$, $i\in\llbracket0,N\rrbracket$ is in $\lrw_N$, and for every non-negative function $F$ on $\lrw_N$, 
    \[
    \mathbb E^{(0,\b,\b)}\left[F(\bg)\mid T_N=k\right] 
    \le \left(1-\b^{-2}\right)^{-2} \cdot {\mathbb E}\left[F\left(\boldsymbol{\lambda} \right)\right] .
    \]
\end{lemma}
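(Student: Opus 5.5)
The plan is to reduce both sides to statements about $\nu_{\b}$-random walks conditioned to stay non-negative. On the left, I use the decomposition of Lemma~\ref{lem:decomposition TASEP} (with $\a=\b$). On the right, I use the structure of the $V$-shaped walk from Definition~\ref{def:V shaped RW}. The two are then compared through a one-sided Markov-property estimate, with constant $(1-\b^{-2})^{-1}$ per arm.

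\emph{Step 1 (matching the path).} Fix $k$ and set $\boldsymbol{\lambda}$ as in the statement, so $\lambda_0=0$ and the increments lie in $\{-1,0,1\}$; hence $\boldsymbol{\lambda}\in\lrw_N$. Since $F(\bg)$ depends on $\bg$ only through its increments, it suffices to compare the joint laws of the two arms. On the left side the arms are $(\gamma_{k+i}-\gamma_k)_{i\le N-k}$ and, for $k\ge1$, the fact $\gamma_{k-1}-\gamma_k=1$ together with $(\gamma_{k-1-i}-\gamma_{k-1})_{i\le k-1}$. For $\boldsymbol{\lambda}$ the corresponding objects are:
\begin{itemize}
\item the right arm $(R_i)_{i\le N-k}$;
\item $L_1=1$;
\item the reversed left arm $(L_{1+i}-1)_{i\le k-1}$.
\end{itemize}
Under the $V$-shaped law, $R$ is a $\nu_{\b}$-walk conditioned on $E_\infty:=\{S_i\ge0\ \forall i\}$. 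Also, $L$ is a positive walk, so $L_1=1$, and $(L_{1+i}-1)_{i\ge0}$ is again a $\nu_{\b}$-walk conditioned on $E_\infty$; this follows by the Markov property at time $1$ applied to $E_+$. The two arms are independent. By Lemma~\ref{lem:decomposition TASEP}, the left side has the same structure, except that the conditioning events are the finite-horizon events $E_n:=\{S_i\ge0,\ i\le n\}$ with $n=N-k$ and $n=k-1$ respectively. The arms are again conditionally independent.

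\emph{Step 2 (one-sided comparison).} The key estimate is the following. For a $\nu_{\b}$-walk $S$, every $n$, and every event $A\in\sigma(S_0,\dots,S_n)$,
\[
\mathbb P(A\mid E_n)\le (1-\b^{-2})^{-1}\,\mathbb P(A\mid E_\infty).
\]
To prove it, first note $E_\infty\subseteq E_n$, so $\mathbb P(E_\infty)\le\mathbb P(E_n)$. Next apply the Markov property at time $n$: on $E_n$ we have $S_n\ge0$, and a walk started at $S_n\ge0$ stays $\ge0$ forever with probability at least $\mathbb P(E^1_\infty)=1-\b^{-2}$. This uses Lemma~\ref{lem:basic hitting monotone coupling} with $h=1$, together with monotonicity in the starting point. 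Hence
\[
\mathbb P(A\cap E_\infty)\ge(1-\b^{-2})\,\mathbb P(A\cap E_n),
\]
and dividing by $\mathbb P(E_\infty)\le\mathbb P(E_n)$ gives the claim. By linearity and monotone convergence, the same bound holds for non-negative functionals of the first $n$ steps.

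\emph{Step 3 (combining).} Because the arms are independent on both sides, the joint conditional law is a product measure. I apply Step 2 to each factor by conditioning on one arm at a time, using Fubini for the non-negative function $F$. This gives the factor $(1-\b^{-2})^{-2}$ for $k\ge1$. For $k=0$ there is only the right arm, so the factor is $(1-\b^{-2})^{-1}\le(1-\b^{-2})^{-2}$. The only delicate point is the bookkeeping in Step 1: checking that the reversed left arm of $\bg$ and $(L_{1+i}-1)$ correspond exactly under $i\mapsto k-1-i$, and that the forced step $\gamma_{k-1}-\gamma_k=1$ matches $L_1=1$. I expect this to be routine, with Step 2 carrying the actual content.
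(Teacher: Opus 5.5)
Your proof is correct and follows essentially the same route as the paper. Both arguments split each law into two independent conditioned arms, via Lemma~\ref{lem:decomposition TASEP} and Definition~\ref{def:V shaped RW}, and bound each arm's density by $(1-\b^{-2})^{-1}$ using the same two facts: $\mathbb P(E_n)\ge\mathbb P(E_\infty)=1-\b^{-2}$, and $\mathbb P_x(E_\infty)\ge 1-\b^{-2}$ for $x\ge0$. The only difference is presentation: the paper writes the pointwise Radon--Nikodym ratio $(1-\b^{-2})/\bigl(\mathbb P(E_m)\,\mathbb P_{w_m}(E_\infty)\bigr)$ for each prefix, while you state the same estimate for events via the Markov property at time $n$.
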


\begin{proof}
Observe that \(\lambda_0=0\) and $\lambda_i-\lambda_{i-1}$ for $i\in\llbracket1,N\rrbracket$, hence \(\boldsymbol{\lambda}\in\lrw_N\).
By Lemma \ref{lem:decomposition TASEP} with \(\a=\b\), under $\mathbb P^{(0,\b,\b)}(\,\cdot\,\mid T_N=k)$, the right piece
$(\gamma_{k+j}-\gamma_k)_{j\in\llbracket0,N-k\rrbracket}$ has the law of a \(\nu_\b\)-random walk started from \(0\), conditioned to
stay non-negative up to time \(N-k\).  If \(k\ge1\), then the left piece $(\gamma_{k-j}-\gamma_k)_{j\in\llbracket0,k\rrbracket}$ has the law of a \(\nu_\b\)-random walk started from \(0\), conditioned on staying strictly positive during $\llbracket1,k\rrbracket$ steps.  The left and right pieces are conditionally independent.  When \(k=0\), there
is no left piece. Let \(\boldsymbol{S}\) be an ordinary \(\nu_\b\)-walk started at \(0\).  We claim that the law of each finite prefix conditioned to stay non-negative up to time \(m\) has Radon--Nikodym density at most \((1-\b^{-2})^{-1}\) with respect to
the law of the same prefix under conditioning to stay non-negative forever. Indeed, for each admissible prefix \(\boldsymbol{w}\), by Lemma \ref{lem:basic hitting monotone coupling},
\[
\frac{
    \mathbb P((S_i)_{i\in\llbracket0,m\rrbracket}=\boldsymbol{w}\mid S_i\ge0 \mbox{ for } i\in\llbracket0,m\rrbracket)
}{
    \mathbb P((S_i)_{i\in\llbracket0,m\rrbracket}=\boldsymbol{w}\mid S_i\ge0\mbox{ for } i\ge0)
}
=
\frac{1-\b^{-2}}{
    \mathbb P(S_i\ge0 \mbox{ for } i\in\llbracket0,m\rrbracket)\,
    \mathbb P_{w_m}(S_i\ge0\mbox{ for } i\ge0)
}
\le \left(1-\b^{-2}\right)^{-1}.
\]
The same argument applies to the left piece after shifting down by $1$.  Multiplying the right and left density bounds completes the proof.  If \(k\in\{0,N\}\), only one factor appear, and the upper bound remains valid.
\end{proof}

\subsection{Regeneration structure for the finite comparison path $\boldsymbol{\lambda}$}

Next, we control the regeneration times of the bi-infinite $V$-shaped $\nu_{\b}$-random walk, showing that their differences are subexponential.

\begin{lemma}\label{lem:regeneration lambda}
Assume $\b>1$ and let $\boldsymbol{Y}=\{Y_i\}_{i=-\infty}^{\infty}$ be a bi-infinite $V$-shaped $\nu_{\b}$-random walk started at $Y_0=0$ as in
Definition \ref{def:V shaped RW}. Fix \(N\in\mathbb Z_{\ge0}\) and \(k\in\llbracket0,N\rrbracket\). Define the path \(\boldsymbol{\lambda}=(\lambda_0,\ldots,\lambda_N)\in\lrw_N\) as in Lemma \ref{lem:comparision}, i.e., $\lambda_i=Y_{i-k}-Y_{-k}$ for $i\in\llbracket0,N\rrbracket$. Then there exist \(\mathbb Z_{\ge0}\)-valued random variables \(\upchi_j\) and \(\mathbb N\)-valued random variables \(G_j\) for \(j\in\mathbb N\), satisfying the following properties.
Set \(\upchi_0=N\).
\begin{enumerate}
    \item[\textup{(\romannum{1})}]
       Let $K=\max (\{j\in\mathbb Z_{\geq0}:\upchi_j>0\}\cup\{0\} )$. Then \(K<\infty\) 
    almost surely. On the event \(\{K\ge1\}\), we have $N\geq \upchi_1>\cdots>\upchi_K\geq1$ and $\mathsf S^{\bs(\boldsymbol{\lambda})}(\upchi_j)=1$ for $j\in\llbracket1,K\rrbracket$, where \(\bs(\boldsymbol{\lambda})\) is the composition associated to \(\boldsymbol{\lambda}\) by \eqref{eq:Signature}, and \(\boldsymbol{\mathsf S}^{\bs(\boldsymbol{\lambda})}\) is the associated \(0\)-\(1\) sequence from Definition \ref{def:associated 01 seq}.

    \item[\textup{(\romannum{2})}]
    The variables \(G_j\), \(j\in\mathbb N\), are independent.
    The variables \(G_j\), \(j\ge2\), are i.i.d. For every
    \(j\in\mathbb N\), \(G_j\) is independent of
    \(\sigma(\upchi_1,\ldots,\upchi_{j-1})\), with the convention that
    this sigma-field is trivial when \(j=1\). There exist constants
    \(C,c>0\), depending only on \(\b\), such that $\mathbb P(G_j\ge y)\le Ce^{-cy}$ for all $j,y\in\mathbb N$.

    \item[\textup{(\romannum{3})}]
    For \(j\in\mathbb N\), set $\mathrm R_j=\upchi_{j-1}-\upchi_j$.
    Then \(\mathrm R_j\in\mathbb Z_{\ge0}\) for all \(j\in\mathbb N\),
    \(\mathrm R_j=0\) for \(j\ge K+2\), and $\mathrm R_1+\cdots+\mathrm R_{K+1}=N$.
    Furthermore, $\mathrm R_j\le G_j$ for $j\in\mathbb N$.
\end{enumerate}
\end{lemma}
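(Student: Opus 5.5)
The plan is to build $\upchi_j$ and $G_j$ directly from the doubled regeneration times of Proposition~\ref{thm:doubled regeneration}. Let $(m_i,n_i)$, $i\in\mathbb N$, be the doubled regeneration times of $\boldsymbol Y$, with levels $h_i=Y_{-m_i}=Y_{n_i}$, so that $1\le h_1<h_2<\cdots$. Set $m_0=n_0=0$. The path $\boldsymbol\lambda$ is $\boldsymbol Y$ restricted to the window $\mathsf W=\llbracket -k,N-k\rrbracket$ and shifted by $-Y_{-k}$. Its minimum is attained at the time corresponding to $Y_0=0$, so $\min(\boldsymbol\lambda)=-Y_{-k}$, and a level $h$ of $\boldsymbol Y$ is level $\min(\boldsymbol\lambda)+h$ of $\boldsymbol\lambda$. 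Definition~\ref{def:doubled regeneration time} says that on the whole line $\{i:Y_i<h_j\}$ is exactly the open interval $I_j=\llbracket -m_j+1,n_j-1\rrbracket$. Define
\[
\upchi_j=\#\{i\in\mathsf W: Y_i\ge h_j\}=N+1-|\mathsf W\cap I_j|,\qquad j\in\mathbb N,
\]
and $G_1=m_1+n_1$, $G_j=(m_j-m_{j-1})+(n_j-n_{j-1})$ for $j\ge2$.

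\emph{Property (i).} The intervals $I_j$ increase strictly and exhaust $\mathbb Z$, because $m_j,n_j\uparrow\infty$. Hence $\upchi_j$ is non-increasing and eventually $0$, which gives $K<\infty$ almost surely. Since $0\in\mathsf W\cap I_j$, we have $\upchi_j\le N$.

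For $j\le K$ we have $\upchi_j\ge1$, so the window contains a point with $Y\ge h_j$. Thus $1\le h_j\le \max(\boldsymbol\lambda)-\min(\boldsymbol\lambda)=r$. By \eqref{eq:Signature},
\[
\upchi_j=\#\{i:\lambda_i\ge\min(\boldsymbol\lambda)+h_j\}=\sigma_r+\cdots+\sigma_{h_j}.
\]
Definition~\ref{def:associated 01 seq} then gives $\mathsf S^{\bs(\boldsymbol\lambda)}(\upchi_j)=1$. Strict monotonicity for $j\le K$ follows because $h_{j-1}$ is attained inside $\mathsf W$, at $-m_{j-1}$ or $n_{j-1}$, whenever $\upchi_{j}\ge1$. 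This is where I would be most careful. If monotonicity fails only weakly, I would instead pass to the subsequence of distinct values, which does not affect the remaining claims.

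\emph{Property (iii).} Set $\upchi_0=N$. Then $\mathrm R_1=N-\upchi_1=|\mathsf W\cap I_1|-1\le m_1+n_1-2\le G_1$. For $j\ge2$,
\[
\mathrm R_j=|\mathsf W\cap(I_j\setminus I_{j-1})|\le |I_j\setminus I_{j-1}|=G_j.
\]
Moreover $\mathrm R_j=0$ once $\upchi_{j-1}=0$, i.e.\ for $j\ge K+2$, and the sum telescopes to $\upchi_0-\upchi_{K+1}=N$.

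\emph{Property (ii).} Each $\upchi_1,\dots,\upchi_{j-1}$ is a deterministic function, given the fixed $N$ and $k$, of $(m_i,n_i)_{i<j}$. By Proposition~\ref{thm:doubled regeneration}, the increment $(m_j-m_{j-1},n_j-n_{j-1})$ is independent of $(m_i,n_i)_{i<j}$, the increments are mutually independent, and they are i.i.d.\ for $j\ge2$. This gives all the independence statements for the $G_j$. The tail bound follows from the subexponential bounds in that proposition together with Lemma~\ref{lem:basic exponential tails}(iii).

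The main obstacle is the bookkeeping in (i). One must check that the window-truncated counts really coincide with partial sums of the composition $\bs(\boldsymbol\lambda)$ from the top level down. One must also handle the edge cases $k\in\{0,N\}$ and the case where a regeneration level lies outside the window but still yields $\upchi_j\ge1$.
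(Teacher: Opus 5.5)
Your proposal is correct and matches the paper's proof: your $\upchi_j=N+1-|\mathsf W\cap I_j|$ is exactly the paper's $(k-m_j+1)_++(N-k-n_j+1)_+$, your $G_j$ are the same, and you verify (i)--(iii) the same way, with your bound $|\mathsf W\cap(I_j\setminus I_{j-1})|\le|I_j\setminus I_{j-1}|$ standing in for the paper's use of the $1$-Lipschitz property of $x\mapsto x_+$. Your strict-monotonicity argument is already complete, so drop the fallback of passing to a subsequence of distinct values; it is unnecessary, and a data-dependent thinning would break the independence structure needed in (ii).
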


\begin{proof}
Using Theorem \ref{thm:doubled regeneration}, we can choose doubled regeneration times $(m_1,n_1),(m_2,n_2),\ldots$ for \(\boldsymbol Y\) and constants $C_1,c_1>0$ depending only on $\b$, satisfying the following properties. We have $0<m_1<m_2<\cdots$ and $0<n_1<n_2<\cdots$. Denote $h_j=Y_{-m_j}=Y_{n_j}$, then 
\begin{equation}\label{eq:etyjneyjne}
(m_1,n_1,h_1),\quad
(m_2-m_1,n_2-n_1,h_2-h_1),\quad
(m_3-m_2,n_3-n_2,h_3-h_2),\quad\ldots
\end{equation}
are mutually independent, with the vectors after the first being i.i.d. and for every $y\in\mathbb{N}$,
\[
\mathbb{P}(n_1\geq y)\leq C_1e^{-c_1y},\quad \mathbb{P}(n_2-n_1\geq y)\leq C_1e^{-c_1y},\quad
\mathbb{P}(m_1\geq y)\leq C_1e^{-c_1y},  \quad \mathbb{P}(m_2-m_1\geq y)\leq C_1e^{-c_1y}.
\]  
Set \(m_0=n_0=0\), and for \(j\in\mathbb N\) set
\[ 
    \upchi_j=(k-m_j+1)_+ +(N-k-n_j+1)_+ \quad \mbox{ and }\quad  G_j=(m_j-m_{j-1})+(n_j-n_{j-1}).
\] 
The variables \(G_j\), \(j\in\mathbb N\), are independent, and the variables \(G_j\), \(j\ge2\), are i.i.d.
Since \(\upchi_i\) is a function of the first $i$ vectors in \eqref{eq:etyjneyjne}, \(G_j\) is independent of \(\sigma(\upchi_1,\ldots,\upchi_{j-1})\). 
Using that $n_1,m_1,n_2-n_1,m_2-m_1$ are subexponential with constants $C_1,c_1>0$ depending only on $\b$, by Lemma \ref{lem:basic exponential tails} \textup{(\romannum{2})} and \textup{(\romannum{3})}, we know that there exist \(C,c>0\), depending only on \(\b\), such that $\mathbb P(G_j\ge y)\le Ce^{-cy}$ for $j,y\in\mathbb N$ in Statement \textup{(\romannum{2})}.  

For Statement \textup{(\romannum{1})}, recall \(K=\max (\{j\in\mathbb Z_{\geq0}:\upchi_j>0\}\cup\{0\})\). Since $m_j,n_j\to\infty$, we have \(\upchi_j=0\) for all sufficiently large \(j\), and hence \(K<\infty\). 
We next show that on \(\{K\ge1\}\), \(1\le \upchi_K<\cdots<\upchi_1\le N\). Indeed, \(\upchi_j\le k+(N-k)=N\) for every \(j\in\mathbb N\) since \(m_j,n_j\ge1\). Since both \(m_j\) and \(n_j\) are strictly increasing, \((\upchi_j)_{j\in\mathbb N}\) is non-increasing. If \(\upchi_j>0\), then \(\upchi_{j+1}<\upchi_j\), because at least one of the two positive-part terms in \(\upchi_j\) is positive and that term strictly decreases when \(m_j,n_j\) are replaced by the strictly larger \(m_{j+1},n_{j+1}\), while the other term cannot increase. Thus, if \(K\ge1\), then \(\upchi_j>0\) for every \(j\in\llbracket1,K\rrbracket\), and \(1\le \upchi_K<\cdots<\upchi_1\le N\). 
We now prove that \(\mathsf S^{\bs(\boldsymbol{\lambda})}(\upchi_j)=1\) for \(j\in\llbracket1,K\rrbracket\).  Recall that $h_j=Y_{-m_j}=Y_{n_j}$ for $j\in\mathbb N$. By the definition of the doubled regeneration times, $h_j>0$ and $Y_{\ell}\ge h_j$ if and only if $\ell\le -m_j$ or $\ell\ge n_j$. Since \(Y_i\ge0\) for all \(i\in\mathbb Z\), \(Y_0=0\), and \(0\in\llbracket-k,N-k\rrbracket\), we have \(\min(\boldsymbol{\lambda})=-Y_{-k}\), and hence \(\lambda_t-\min(\boldsymbol{\lambda})=Y_{t-k}\) for \(t\in\llbracket0,N\rrbracket\). Therefore, for \(j\in\llbracket1,K\rrbracket\), the number of vertices of \(\boldsymbol{\lambda}\in\lrw_N\) whose relative height is at least \(h_j\) is
\begin{equation}\label{eq:nturuyjrtu}
    \begin{split}
        &\#\{t\in\llbracket0,N\rrbracket:Y_{t-k}\ge h_j\}
=
\#(\llbracket-k,N-k\rrbracket\cap(-\infty,-m_j])
+
\#(\llbracket-k,N-k\rrbracket\cap[n_j,\infty))\\
&=(k-m_j+1)_+ +(N-k-n_j+1)_+=\upchi_j .
    \end{split}
\end{equation} 
If \(\bs(\boldsymbol{\lambda})=(\sigma_0,\ldots,\sigma_r)\), then Definition \ref{def:associated 01 seq} gives \(\mathsf S^{\bs(\boldsymbol{\lambda})}(\sigma_u+\cdots+\sigma_r)=1\) for \(u\in\llbracket1,r\rrbracket\). Since \(h_j>0\) and \(\upchi_j>0\), we have \(h_j\in\llbracket1,r\rrbracket\), and by \eqref{eq:Signature} and \eqref{eq:nturuyjrtu},
\(\upchi_j=\sigma_{h_j}+\cdots+\sigma_r\). Hence \(\mathsf S^{\bs(\boldsymbol{\lambda})}(\upchi_j)=1\) for \(j\in\llbracket1,K\rrbracket\).

For Statement \textup{(\romannum{3})}, the bound \(\upchi_1\le N=\upchi_0\) and the monotonicity of \((\upchi_j)_{j\in\mathbb N}\) give \(\mathrm R_j=\upchi_{j-1}-\upchi_j\in\mathbb Z_{\ge0}\) for all \(j\in\mathbb N\). By the definition of \(K\), \(\upchi_j=0\) for \(j\ge K+1\), so \(\mathrm R_j=0\) for \(j\ge K+2\), and telescoping gives
$\mathrm R_1+\cdots+\mathrm R_{K+1}=\upchi_0-\upchi_{K+1}=N$.
It remains to prove \(\mathrm R_j\le G_j\). For \(j=1\),
\[
\begin{aligned}
&\mathrm R_1
 =N-\upchi_1 
 = (k-(k-m_1+1)_+
 + ((N-k)-(N-k-n_1+1)_+ 
 \le (m_1-1)+(n_1-1)\le G_1 .
\end{aligned}
\]
For \(j\ge2\), since the map \(x\mapsto(x)_+\) is \(1\)-Lipschitz and \(m_j\ge m_{j-1}\), \(n_j\ge n_{j-1}\), we have
\[
\mathrm R_{j}=\upchi_{j-1}-\upchi_{j }
\le (m_{j}-m_{j-1})+(n_{j}-n_{j-1})=G_{j},
\] allowing us to conclude.
\end{proof}

\subsection{Moment bounds for the Bryc polynomials}

In this part, our goal is to bound the moments of the involved Bryc polynomials, similar to Section 4 in \cite{SchmidYangShock}, but uniformly in the location of the minimum. 

\begin{proposition}
    \label{prop:moment bound coex}
    Assume $q\in(0,1)$, $\b>1$ and $z>1$. 
    For \(k\in\llbracket0,N\rrbracket\), write $\mathbb P_{N,k}^0(\,\cdot\,)=\mathbb P^{(0,\b,\b)}(\,\cdot\,\mid T_N=k)$ and $\mathbb E_{N,k}^0$ to denote the expectation under $\mathbb P_{N,k}^0$, where we recall that $\mathbb{P}^{(0,\b,\b)}$ and $T_N$ are as in \eqref{eq:PN00 and TN}. Then \[
    \sup_{N\in\mathbb{N}}
    \sup_{k\in\llbracket0,N\rrbracket}
    \sup_{i\in\llbracket0,N+1\rrbracket}
    \mathbb E_{N,k}^0\!\left[
        \mathbb E\!\left[
            z^{X^{\boldsymbol{\mathsf S}^{\bs(\bg)}}(i)}
            \,\middle|\,\bg
        \right]
    \right]
    <\infty
\]   
is a finite constant depending only on $q,\b$ and $z$. Here, the inner expectation is taken only with respect to the transition randomness of the birth-death chain from Definition \(\ref{def:characterization measure}\), conditionally on \(\bg\). 
As a corollary,  let $W_{\bs(\bg)}(z)$ be the Bryc polynomial indexed by the composition $\bs(\bg)$, then for each $m\in\mathbb{N}$, 
\[
    \sup_{N\in\mathbb{N}}
    \sup_{k\in\llbracket0,N\rrbracket} \mathbb E_{N,k}^0\left[W_{\bs(\bg)}(z)^m\right]<\infty
\] 
is a finite constant depending only on $q$, $\b$, $z$, and $m$. 
\end{proposition}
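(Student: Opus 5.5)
The proof has three steps. First we pass to the bi-infinite comparison path. Second, on that path, the chain is driven toward $0$ at regularly spaced times. Third, we convert this drift into a uniform exponential moment bound by a renewal-type recursion.

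\emph{Step 1 (comparison).} The function $\bg\mapsto \mathbb E[z^{X^{\boldsymbol{\mathsf S}^{\bs(\bg)}}(i)}\mid\bg]$ is nonnegative on $\lrw_N$. Lemma~\ref{lem:comparision} therefore bounds the quantity in question by $(1-\b^{-2})^{-2}\,\mathbb E[z^{X^{\boldsymbol{\mathsf S}^{\bs(\boldsymbol\lambda)}}(i)}]$, where $\boldsymbol\lambda$ is built from the $V$-shaped walk $\boldsymbol Y$. It thus suffices to bound this expectation uniformly in $N$, $k$ and $i$. The corollary for $W_{\bs(\bg)}(z)^m$ then follows. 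Indeed, by Theorem~\ref{thm:prob interpretation of Bryc polynomials} and Jensen, $W_{\bs(\bg)}(z)^m\le \mathbb E[z^{mX(N+1)}\mid\bg]$, and we apply the first claim with $z^m>1$ in place of $z$.

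\emph{Step 2 (drift at the marked times).} By Lemma~\ref{lem:regeneration lambda}, $\boldsymbol{\mathsf S}^{\bs(\boldsymbol\lambda)}(\upchi_j)=1$ for $j\le K$. The chain is read in increasing time, and the marked times $\upchi_K<\dots<\upchi_1\le N$ are separated by gaps $\mathrm R_j\le G_j$, which are subexponential and independent of the past. Between consecutive marked times the chain increases by at most the gap length. At a marked time $t=\upchi_j$ with $X(t)=x$, the chain steps down with probability $(1-q^x)/(1-q^{t+1})\ge 1-q^x$.

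A single marked time only gives one downward step with high probability. To get a genuine contraction we also need the other times with $\mathsf S=1$. These are the times $\sigma_u+\dots+\sigma_r$, one for each level $u\ge1$ of $\boldsymbol\lambda$. So the number of ones in a window is at least the number of levels crossed there. Between doubled regeneration levels $h_{j}$ and $h_{j+1}$ there are $h_{j+1}-h_j$ such times, while the window length is about $G_{j+1}$. By the drift of the $\nu_\b$ walk, a positive fraction of the time in a window consists of $\mathsf S=1$ steps. When $X$ is large, each such step is downward with probability close to $1$, and each $\mathsf S=0$ step is upward with probability at most $q^{x+1}$, which is small. So for large $x$ the chain has strongly negative drift over each window.

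\emph{Step 3 (uniform exponential moment).} Set $\Psi_j=z^{X(\upchi_{j})}$ along the decreasing-index sequence, i.e.\ moving forward in time. I would aim for a one-window bound of the form
\[
\mathbb E[\Psi_{j-1}\mid \mathcal F_j]\le \rho\,\Psi_j + C ,\qquad \rho<1 ,
\]
where $\mathcal F_j$ is the history up to time $\upchi_j$. The key mechanism is that between marked times an $\mathsf S=0$ step moves up only with probability at most $q^{x+1}$, so a large $X$ cannot grow. The gap lengths enter only through $\mathbb E[z^{G}]$-type quantities; if these are not finite for the given $z$, I would split windows into typical and atypical ones. Iterating the bound gives $\sup_j\mathbb E\Psi_j\le C/(1-\rho)$ uniformly in $N,k$. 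The initial segment before $\upchi_K$ and intermediate times $i$ are handled by one extra window factor.

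The main obstacle is making the one-window contraction uniform. Its constants must depend only on $q,\b,z$. It must also survive the fact that windows have random, correlated lengths, and that the number of $\mathsf S=1$ steps inside a window is controlled only through the regeneration structure. For this I would follow the corresponding moment argument of Section~4 in \cite{SchmidYangShock}, replacing its single regeneration sequence by the doubled regeneration times of Proposition~\ref{thm:doubled regeneration}.
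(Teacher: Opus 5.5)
Your reduction (Jensen for the corollary, Lemma~\ref{lem:comparision} to pass to $\boldsymbol\lambda$, then the marked times $\upchi_j$ of Lemma~\ref{lem:regeneration lambda}) is the paper's, and deferring to the moment argument of \cite{SchmidYangShock} is the right instinct. However, the mechanism you describe in Steps~2--3 would not go through as stated.

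\textbf{One marked time per window suffices.} In $\mathscr X$, a one-symbol multiplies $z^{x}$ in expectation by $q+(1-q)z^{-1}<1$ for $x\ge1$, and from a high state a zero-run almost never moves up. The paper therefore discards every one-symbol except the $\upchi_j$, via Lemma~\ref{lem:coupling birth death}\,\textup{(ii)} and \textup{(iii)}. Your ``positive fraction of $\mathsf S=1$ steps'' argument is an unnecessary detour, and it holds only on average, not window by window.

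\textbf{How a window of length $h$ enters.} With your crude bound (the chain rises by at most $h$), the error is of order $z^{h}$. The $G_j$ have exponential tails only at some rate $c=c(\b)$. A run of $y$ horizontal steps of $\boldsymbol Y$ already forces a gap of length at least $y$, so $\mathbb E[z^{G}]=\infty$ once $z\ge 1/\nu_{\b}(0)$. The corollary needs arbitrarily large $z$ (namely $z^m$). Splitting windows into typical and atypical ones does not help, because the atypical ones still contribute $\sum_h e^{-ch}z^h$.

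The missing quantitative idea is Lemma~\ref{lem:upward jumps U chain}. A zero-run of length $h$ started at $y$ adds $Q^y(h)$, with $\mathbb E[z^{Q^y(h)}]\le 1+Cq^yh^C$, because climbing to height $j$ takes time of order $q^{-j}$. Window lengths therefore enter only polynomially. The correct form of your one-window recursion is the quenched block bound of Lemma~\ref{lem:block moment bound simpler chain arbitrary start},
$\mathbb E[z^{Z(\ell)}]\le C(1+\rho^\ell z^x+\sum_{i=0}^{\ell-1}\rho^{\ell-i}h_i^{C})$. Its additive term is $h^C$, not a constant.

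\textbf{The filtration.} The averaging over the path cannot be done in the forward chain-time filtration you propose. The gaps are not independent of the chain-time past, contrary to your Step~2. Lemma~\ref{lem:regeneration lambda} only makes $G_j$ independent of $\sigma(\upchi_1,\dots,\upchi_{j-1})$, i.e.\ of marked times that come later in chain time. Meanwhile $\upchi_j$, which your $\mathcal F_j$ reveals, is a function of the first $j$ regeneration increments and is correlated with $\mathrm R_j$.

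The paper avoids this in two stages. It applies the block bound conditionally on $\mathcal G=\sigma(K,\mathrm p_1,\dots,\mathrm p_K)$. Only afterwards does it average polynomial functionals such as $\sum_{r\ge2}\rho^{r-1}\mathrm R_{A_s+r}^{C_0}$. For this it uses $\mathrm R_j\le G_j$ and the independence of $G_{a+r}$ from $\{A_s=a\}\in\sigma(\upchi_1,\dots,\upchi_{a+1})$ for $r\ge2$.

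\textbf{Size bias.} Your ``one extra window factor'' for an intermediate $i$, and for the initial segment before $\upchi_K$, hides a size-bias problem. That factor is $1+C(i-\mathrm p_J-1)^{C}$, where $i-\mathrm p_J-1\le \mathrm R_{A_{N-i}+1}$ is the gap straddling time $i$. This gap is not distributed like a typical $G_j$. The paper proves separately that $\mathbb P(\mathrm R_{A_s+1}\ge y)\le Ce^{-cy}$ uniformly in $s,N,k$; this is the first bound in \eqref{eq:moment bound coex proof2}. The proof uses that, for each $\ell$, at most two indices $a\le K$ satisfy $N-\upchi_a=s-\ell$. The initial block $\mathrm p_1=\mathrm R_{K+1}$ is the case $s=N-1$ of the same estimate.
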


In order to show Proposition~\ref{prop:moment bound coex}, we require some setup. As a first input, we define a Markov chain $\mathscr X$ that will  serve as a simple process in order to bound the birth-death chain $X$ from Theorem~\ref{thm:prob interpretation of Bryc polynomials}.

\begin{definition}\label{def:simpler chain}
    Assume $q\in(0,1)$, $N,x\in\mathbb{Z}_{\geq0}$, and $\boldsymbol{\mathsf{S}}=(\mathsf{S}(0),\dots,\mathsf{S}(N))\in\{0,1\}^{N+1}$ is a $0$-$1$ sequence. Define a Markov chain $\mathscr{X}^{\boldsymbol{\mathsf{S}},x}=\{\mathscr{X}^{\boldsymbol{\mathsf{S}},x}(i)\}_{i=0}^{N+1}$ by $\mathscr{X}^{\boldsymbol{\mathsf{S}},x}(0)=x$, and for each $i\in\llbracket0,N\rrbracket$ and $j\in\mathbb{Z}_{\geq0}$,
    \begin{enumerate}
    \item [\textup{(\romannum{1})}] if $\mathsf{S}(i)=1$, we let $\mathscr{X}^{\boldsymbol{\mathsf{S}},x}(i+1)-\mathscr{X}^{\boldsymbol{\mathsf{S}},x}(i)\in\{-1,0\}$ with 
    \begin{equation}\label{eq:etgets1}
    \mathbb{P}\left(\mathscr{X}^{\boldsymbol{\mathsf{S}},x}(i+1)=j-1 \Big{|}\mathscr{X}^{\boldsymbol{\mathsf{S}},x}(i)=j\right)=(1-q)\cdot {\bf1}_{j\geq1},
    \end{equation}
    \item [\textup{(\romannum{2})}] if $\mathsf{S}(i)=0$, we let $\mathscr{X}^{\boldsymbol{\mathsf{S}},x}(i+1)-\mathscr{X}^{\boldsymbol{\mathsf{S}},x}(i)\in\{0,1\}$ with 
    \begin{equation}\label{eq:etgets2}
    \mathbb{P}\left(\mathscr{X}^{\boldsymbol{\mathsf{S}},x}(i+1)=j+1 \Big{|}\mathscr{X}^{\boldsymbol{\mathsf{S}},x}(i)=j\right)=q^{j+1}.
    \end{equation}
\end{enumerate} 
When $x=0$, we denote the Markov chain $\mathscr{X}^{\boldsymbol{\mathsf{S}},0}$ simply as $\mathscr{X}^{\boldsymbol{\mathsf{S}}}$.
\end{definition}

The next three results are taken from \cite{SchmidYangShock}. We start with a comparision between the birth-death chains $X^{\boldsymbol{\mathsf{S}}}$ and $\mathscr{X}^{\boldsymbol{\mathsf{S}}}$. Here, we write $Y \succeq Y'$ for random variables $Y$ and $Y'$ whenever there exists a coupling $\mathbf P$ such that $\mathbf P (Y \geq Y')=1$.

\begin{lemma}[c.f.~Lemma~4.3 in \cite{SchmidYangShock}]\label{lem:coupling birth death}
    Assume $q\in(0,1)$ and $N\in\mathbb{Z}_{\geq0}$. Then the following statements hold.
    \begin{enumerate}
        \item [\textup{(\romannum{1})}] Let $\boldsymbol{\mathsf{S}},\boldsymbol{\mathsf{S}}'\in\{0,1\}^{N+1}$ satisfy $\mathsf{S}(i)\geq\mathsf{S}'(i)$ for each $i\in\llbracket0,N\rrbracket$. Then there exists a coupling between Markov chains $X^{\boldsymbol{\mathsf{S}}}$ and $X^{\boldsymbol{\mathsf{S}}'}$ such that 
        \[
        X^{\boldsymbol{\mathsf{S}}}(i)\leq X^{\boldsymbol{\mathsf{S}}'}(i)\quad\mbox{ for all }i\in\llbracket0,N+1\rrbracket.
        \]
        \item [\textup{(\romannum{2})}] Let $\boldsymbol{\mathsf{S}}\in\{0,1\}^{N+1}$. Then there exists a coupling between Markov chains $X^{\boldsymbol{\mathsf{S}}}$ and $\mathscr{X}^{\boldsymbol{\mathsf{S}} }$ such that 
        \[
        X^{\boldsymbol{\mathsf{S}}}(i)\leq \mathscr{X}^{\boldsymbol{\mathsf{S}} }(i)\quad\mbox{ for all }i\in\llbracket0,N+1\rrbracket.
        \]
        \item [\textup{(\romannum{3})}] Let $x,x'\in\mathbb{Z}_{\geq0}$ satisfy $x\leq x'$ and $\boldsymbol{\mathsf{S}},\boldsymbol{\mathsf{S}}'\in\{0,1\}^{N+1}$ satisfy $\mathsf{S}(i)\geq\mathsf{S}'(i)$ for each $i\in\llbracket0,N\rrbracket$. Then there exists a coupling between Markov chains $\mathscr{X}^{\boldsymbol{\mathsf{S}},x}$ and $\mathscr{X}^{\boldsymbol{\mathsf{S}}',x'}$ such that 
        \[
        \mathscr{X}^{\boldsymbol{\mathsf{S}},x}(i)\leq \mathscr{X}^{\boldsymbol{\mathsf{S}}',x'}(i)\quad\mbox{ for all }i\in\llbracket0,N+1\rrbracket.
        \]
        \item [\textup{(\romannum{4})}] Let $x,y\in\mathbb{Z}_{\geq0}$ and $\boldsymbol{\mathsf{S}}\in\{0,1\}^{N+1}$. Then there exists a coupling between Markov chains $\mathscr{X}^{\boldsymbol{\mathsf{S}},x}$ and $\mathscr{X}^{\boldsymbol{\mathsf{S}},x+y}$ such that 
        \[
        \mathscr{X}^{\boldsymbol{\mathsf{S}},x+y}(i)-y\leq \mathscr{X}^{\boldsymbol{\mathsf{S}},x}(i)\quad\mbox{ for all }i\in\llbracket0,N+1\rrbracket.
        \]
        As a corollary, let $M\in\llbracket0,N-1\rrbracket$ and $\boldsymbol{\mathsf{S}}'\in\{0,1\}^{N-M}$ be given by $\mathsf{S}'(i)=\mathsf{S}(i+M+1)$ for each $i\in\llbracket0,N-M-1\rrbracket$.
        Then we have
        \[
        \mathscr{X}^{\boldsymbol{\mathsf{S}}} (N+1)-\mathscr{X}^{\boldsymbol{\mathsf{S}}} (M+1)\preceq\mathscr{X}^{\boldsymbol{\mathsf{S}}'} (N -M).
        \]  
    \end{enumerate}
\end{lemma}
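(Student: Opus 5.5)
The plan is to build all four comparisons as monotone couplings that update the two chains step by step with a common uniform variable. At each time $i$ we draw one $U_i\sim\mathrm{Unif}[0,1]$ and use it for both chains: a down-move happens iff $U_i$ falls below the down-probability, and an up-move happens iff $U_i$ falls below the up-probability. Order preservation is then checked by induction on $i$. The only delicate configurations are those where the two chains sit at the same or adjacent levels, since the step sizes lie in $\{-1,0,1\}$.

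For (\romannum{1}), assume inductively that $X^{\boldsymbol{\mathsf S}}(i)=j\le j'=X^{\boldsymbol{\mathsf S}'}(i)$.
\begin{itemize}
\item If $j<j'$, each chain moves by at most one. The only way order could break is that the lower chain moves up while the upper chain moves down from $j'=j+1$. That requires $\mathsf S(i)=0$ and $\mathsf S'(i)=1$, which is excluded by $\mathsf S\ge\mathsf S'$.
\item If $j=j'$ and $\mathsf S(i)=\mathsf S'(i)$, both chains use the same transition law with the same $U_i$, so they move identically.
\item If $j=j'$ and $\mathsf S(i)=1>\mathsf S'(i)=0$, the first chain only moves down or stays and the second only moves up or stays, so order is preserved.
\end{itemize}

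For (\romannum{2}), the same scheme applies. When both chains are at level $j$ with $\mathsf S(i)=1$, we need the down-probability of $X$ to be at least that of $\mathscr X$. This is $\frac{1-q^j}{1-q^{i+1}}\ge(1-q)\mathbf 1_{j\ge1}$, which holds since $1-q^j\ge 1-q$ for $j\ge1$. When $\mathsf S(i)=0$, we need the up-probability of $X$ to be at most that of $\mathscr X$. This is $\frac{q^{j+1}-q^{i+1}}{1-q^{i+1}}\le q^{j+1}$, which is equivalent to $q^{j+1}-q^{i+1}\le q^{j+1}-q^{j+1}q^{i+1}$, i.e.\ $q^{j+1}\le 1$.

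For (\romannum{3}), we use the same coupling again. The new requirement is monotonicity of the transition probabilities of $\mathscr X$ in the current level: the down-probability $(1-q)\mathbf 1_{j\ge1}$ is nondecreasing in $j$, and the up-probability $q^{j+1}$ is nonincreasing in $j$. Together with the case analysis from (\romannum{1}), this gives the claim.

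For (\romannum{4}), we couple the difference $D(i)=\mathscr X^{\boldsymbol{\mathsf S},x+y}(i)-\mathscr X^{\boldsymbol{\mathsf S},x}(i)$ with common uniforms and show that $D$ is nonincreasing, starting from $D(0)=y$. Let $a\le b$ denote the current levels of the lower and upper chain.
\begin{itemize}
\item If $\mathsf S(i)=1$, the down-probability is $1-q$ whenever the level is positive. So if $a\ge1$, the two chains move together. If $a=0$, the lower chain stays put and the upper chain can only go down, so $D$ does not increase.
\item If $\mathsf S(i)=0$, the upper chain goes up iff $U_i<q^{b+1}$. Since $q^{b+1}\le q^{a+1}$, this event is contained in $\{U_i<q^{a+1}\}$, on which the lower chain also goes up. Hence $D$ never increases.
\end{itemize}
We keep $\mathscr X^{\boldsymbol{\mathsf S},x}\le\mathscr X^{\boldsymbol{\mathsf S},x+y}$ using (\romannum{3}).

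For the corollary, condition on $\mathscr X^{\boldsymbol{\mathsf S}}(M+1)=x$. By the Markov property, the remaining path is $\mathscr X^{\boldsymbol{\mathsf S}',x}$ run for $N-M$ steps, so the increment $\mathscr X^{\boldsymbol{\mathsf S}}(N+1)-\mathscr X^{\boldsymbol{\mathsf S}}(M+1)$ has the law of $\mathscr X^{\boldsymbol{\mathsf S}',x}(N-M)-x$. Applying (\romannum{4}) with starting value $0$ and $y=x$ gives $\mathscr X^{\boldsymbol{\mathsf S}',x}(N-M)-x\le\mathscr X^{\boldsymbol{\mathsf S}',0}(N-M)$. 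We then integrate over $x$.

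The main obstacle is the boundary handling in (\romannum{4}): at level $0$ the chain $\mathscr X$ has no down-move, and the transition rates depend on the level, so a naive shift argument fails. The fix is to use the monotone inclusion of the up-events described above and the fact that level $0$ can only stop the lower chain from moving down.
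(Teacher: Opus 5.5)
Your proposal is correct: the common-uniform monotone coupling proves all four parts. This rests on your case analysis at equal and adjacent levels, the rate comparisons $\frac{1-q^j}{1-q^{i+1}}\ge (1-q)\mathbf 1_{j\ge1}$ and $\frac{q^{j+1}-q^{i+1}}{1-q^{i+1}}\le q^{j+1}$ for (ii), the level-$0$ boundary together with the inclusion $\{U_i<q^{b+1}\}\subseteq\{U_i<q^{a+1}\}$ for (iv), and the time-homogeneity of $\mathscr X$ for the corollary.

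The paper imports this lemma from \cite{SchmidYangShock} without proof, but your argument is the same update-map coupling it uses for the analogous comparisons in Lemmas~\ref{lem:inserting zeros comparison} and~\ref{lem:coupling of chain tails}. One minor remark: in (iii) the level-monotonicity of the rates is not actually needed. Chains at equal levels with equal symbols move identically, and chains at distinct levels cannot cross once $\boldsymbol{\mathsf S}\ge\boldsymbol{\mathsf S}'$.
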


The next two statements concern another auxiliary Markov chain $Q^y=\{Q^y(i)\}_{i=0}^\infty$ on $\mathbb{Z}_{\geq0}$, where $Q^y(0)=0$, and for each
$i,j\in\mathbb{Z}_{\geq0}$, we have 
\[
\mathbb{P}\left(Q^y(i+1)=j+1\mid Q^y(i)=j\right)=q^{y+j+1},
\qquad
\mathbb{P}\left(Q^y(i+1)=j\mid Q^y(i)=j\right)=1-q^{y+j+1}.
\] 
This allows us to bound the simpler Markov chain $\mathscr{X}$ from the successive zero-run lengths of the $0$-$1$ index sequence.

\begin{lemma}[c.f. Lemma 4.6 in \cite{SchmidYangShock}]\label{lem:upward jumps U chain}
Assume $q\in(0,1)$. For each $y\in\mathbb{Z}_{\geq0}$, and each $z>1$, there exists a constant $C\in\mathbb{N}$ depending only on $q$ and $z$, such that 
\begin{equation}\label{eq:Q-poly-bound}
    \mathbb{E}\left[z^{Q^y(h)}\right]\leq 1+Cq^yh^{C} \quad\mbox{ for all }  y,h\in\mathbb{Z}_{\geq0}.
\end{equation} 
\end{lemma}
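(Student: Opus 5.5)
We must prove $\mathbb{E}[z^{Q^y(h)}]\le 1+Cq^y h^C$ for the pure-birth chain $Q^y$, in which the birth probability from $j$ is $q^{y+j+1}$.

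The plan is to decompose according to the number of upward jumps. Write
\[
\mathbb{E}\bigl[z^{Q^y(h)}\bigr]=1+\sum_{n\ge1}\bigl(z^n-1\bigr)\,\mathbb{P}\bigl(Q^y(h)\ge n\bigr)\,\bigl(1-z^{-1}\bigr)\cdot\frac{z^n}{z^n-1}\,,
\]
or, more simply, use the identity
\[
\mathbb{E}\bigl[z^{Q}\bigr]=1+(z-1)\sum_{n\ge1}z^{n-1}\,\mathbb{P}(Q\ge n).
\]
This reduces the claim to bounding $\mathbb{P}(Q^y(h)\ge n)$.

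The event $\{Q^y(h)\ge n\}$ requires $n$ jumps, at levels $0,1,\dots,n-1$, occurring at distinct times within $\llbracket 1,h\rrbracket$. By a union bound over the $\binom{h}{n}\le h^n$ choices of jump times, and since the jump from level $j$ has conditional probability $q^{y+j+1}$ regardless of the (nonjump) history, we get
\[
\mathbb{P}\bigl(Q^y(h)\ge n\bigr)\le h^n\prod_{j=0}^{n-1}q^{y+j+1}=h^n q^{ny+n(n+1)/2}\le q^y\,h^n q^{n(n+1)/2},
\]
using $q^{ny}\le q^y$ for $n\ge1$.

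Substituting this bound, we obtain
\[
\mathbb{E}\bigl[z^{Q^y(h)}\bigr]\le 1+(z-1)\,q^y\sum_{n\ge1}(zh)^n q^{n(n+1)/2}.
\]
The main (though easy) step is to show that the series $\Sigma(h)=\sum_{n\ge1}(zh)^nq^{n(n+1)/2}$ grows at most polynomially in $h$, with constants depending only on $q$ and $z$. We split the sum at $n_0=\lceil 2\log(zh+1)/\log(1/q)\rceil+1$.

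For $n\ge n_0$ we have $q^{n/2}\le q^{n_0/2}\le (zh+1)^{-1}$, hence
\[
(zh)^nq^{n(n+1)/2}\le \bigl(zh\,q^{n/2}\bigr)^n q^{n/2}\le q^{n/2},
\]
and the tail sums to at most $\bigl(1-q^{1/2}\bigr)^{-1}$.

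For $n<n_0$, each term satisfies $(zh)^n q^{n(n+1)/2}\le (zh+1)^{n_0}$. Since $(zh+1)^{n_0}\le \exp\bigl(C\log^2(zh+1)\bigr)$, this is superpolynomial, so this crude estimate is too weak and must be refined. We instead maximize the exponent $n\log(zh)-\tfrac{n^2}{2}\log(1/q)$ over $n$. Its maximum is $\log^2(zh)/\bigl(2\log(1/q)\bigr)$, which is again quasi-polynomial. So as stated the series really is $h^{\Theta(\log h)}$, and the bound $h^C$ cannot hold uniformly in $h$ by this route. The union bound itself is far too lossy: the expected time spent at level $j$ is $q^{-(y+j+1)}$, so reaching level $n$ by time $h$ typically requires $h\gtrsim q^{-n}$. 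The true tail is therefore $\mathbb{P}(Q^y(h)\ge n)\approx 0$ unless $n\lesssim \log h/\log(1/q)$.

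The refined bound replaces the combinatorial factor. For $Q\ge n$, the holding times $\tau_j$ at levels $j<n$ must satisfy $\sum_j\tau_j\le h$. The holding times are independent geometric random variables with success probability $p_j=q^{y+j+1}$, so
\[
\mathbb{P}\bigl(Q^y(h)\ge n\bigr)\le\prod_{j<n}\mathbb{P}(\tau_j\le h)\le\prod_{j<n}\min\bigl(1,\,h\,q^{y+j+1}\bigr).
\]
Set $J=\lfloor \log h/\log(1/q)\rfloor$. The factors with $j\ge J$ are at most $q^{j-J}$, and the $j=0$ factor is at most $hq^{y+1}$. This yields
\[
\mathbb{P}\bigl(Q^y(h)\ge n\bigr)\le h\,q^{y}\,q^{\binom{(n-J)_+}{2}}.
\]
Summing $z^n$ against this bound gives at most $h\,q^y\bigl(z^{J}C'\bigr)$, and $z^J=h^{\log z/\log(1/q)}$. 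Hence
\[
\mathbb{E}\bigl[z^{Q^y(h)}\bigr]\le 1+C q^y h^{C}
\]
with $C$ depending only on $q$ and $z$. The case $h=0$ is trivial, since then $Q^y(0)=0$ and the left side equals $1$.
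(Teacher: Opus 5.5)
Your final argument, the holding-time bound, is correct and gives a complete, self-contained proof. The paper does not prove this lemma itself; it quotes it from Lemma~4.6 of the companion paper, so there is no in-paper argument to compare it with.

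The essential steps all check out:
\begin{itemize}
\item $\{Q^y(h)\ge n\}=\{\tau_0+\cdots+\tau_{n-1}\le h\}$, where the $\tau_j$ are independent geometric holding times with parameter $q^{y+j+1}$. Hence $\mathbb{P}(Q^y(h)\ge n)\le\prod_{j<n}\min(1,hq^{y+j+1})$.
\item With $J=\lfloor\log h/\log(1/q)\rfloor$ one has $hq^{J+1}<1$. So the factors with $j\ge J$ are at most $q^{j-J}$, and the factor $j=0$ is at most $hq^{y}$.
\item Summing against $(z-1)z^{n-1}$ gives
\[
\mathbb{E}\bigl[z^{Q^y(h)}\bigr]\le 1+C'q^y h\,z^{J}\le 1+C'q^yh^{1+\log z/\log(1/q)},\qquad C'=1+(z-1)\sum_{m\ge1}z^{m-1}q^{m(m-1)/2}.
\]
\end{itemize}
Therefore any integer $C\ge\max\bigl(C',\,1+\log z/\log(1/q)\bigr)$ works for $h\ge1$, and $h=0$ is trivial.

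In a write-up, drop the union-bound detour. It is a discarded attempt, and your sentence that the bound ``cannot hold uniformly in $h$'' reads as though the lemma were false. Also drop your first displayed identity: it simplifies to the second one and adds nothing.

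Three small bookkeeping points:
\begin{itemize}
\item The correct relation is $z^J\le h^{\log z/\log(1/q)}$, not equality.
\item The definition of $J$ requires $h\ge1$.
\item When $J=0$, the $j=0$ factor is also the first of the factors with $j\ge J$. There is no double counting, because for $j=J$ the bound $q^{j-J}$ is the trivial bound $1$, but you should say so.
\end{itemize}
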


\begin{lemma}[c.f. Lemma 4.7 in \cite{SchmidYangShock}]\label{lem:block moment bound simpler chain arbitrary start}
 Assume $q\in(0,1)$. Let $x,\ell\in\mathbb{Z}_{\geq0}$ and $h_0,\dots,h_{\ell-1}\in\mathbb{Z}_{\geq0}$. Define $Z(0)=x$ and 
\[
Z(i+1)= Z(i) +Q_i^{Z(i)}(h_i)-B_i\,{\bf1}_{\left\{Z(i)+Q_i^{Z(i)}(h_i)>0\right\}}\quad \mbox{ for }i\in\llbracket0,\ell-1\rrbracket,
\]
where, conditionally on $Z(i)$, $Q_i^{Z(i)}$ is an independent copy of the Markov chain from Lemma \ref{lem:upward jumps U chain}, $B_i$, $i\in\mathbb{Z}_{\geq0}$, are i.i.d. $\ber(p)$ random variables independent of all the $Q_i$'s, and $p\in[1-q,1]$. 
Assume $z>1$. There exist constants $C\in\mathbb{N}$ and $\rho\in(0,1)$ depending only on $q$ and $z$, such that 
\begin{equation}\label{eq:block moment bound simpler chain}
\mathbb{E}\left[z^{Z(\ell)}\right]
\leq C\left(1+\rho^\ell z^x+\sum_{i=0}^{\ell-1}\rho^{\ell-i} h_{i} ^C\right).
\end{equation} 
\end{lemma}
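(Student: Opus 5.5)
The plan is to prove a one-step contraction inequality for the conditional exponential moment of $Z$, and then iterate it over the $\ell$ blocks. Specifically, I aim to show that there are $\rho\in(0,1)$ and $C'\in\mathbb N$, depending only on $q$ and $z$, such that for every $i\in\llbracket0,\ell-1\rrbracket$,
\begin{equation*}
\mathbb E\left[z^{Z(i+1)}\,\middle|\,Z(i)\right]\le \rho\, z^{Z(i)}+C'(1+h_i)^{C'} .
\end{equation*}
Taking expectations and inducting on $i$ then gives
\begin{equation*}
\mathbb E\left[z^{Z(\ell)}\right]\le \rho^\ell z^x+\sum_{i=0}^{\ell-1}\rho^{\ell-1-i}C'(1+h_i)^{C'} .
\end{equation*}
To reach the form \eqref{eq:block moment bound simpler chain}, I would use $(1+h)^{C'}\le 2^{C'}(1+h^{C'})$ and $\sum_{i}\rho^{\ell-1-i}\le(1-\rho)^{-1}$, and absorb $\rho^{-1}$ into the constant.

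\textbf{One-step bound.} Fix $Z(i)=y$ and write $h=h_i$. Since $B_i$ is independent of $Q_i^{y}$, on the event $\{y+Q_i^y(h)>0\}$ the factor coming from $B_i$ contributes
\begin{equation*}
\mathbb E\left[z^{-B_i}\right]=p z^{-1}+1-p\le (1-q)z^{-1}+q=:\theta<1,
\end{equation*}
where I use $p\ge1-q$ and $z>1$. I then split into two cases.
\begin{enumerate}
\item[(a)] If $y\ge1$, the indicator equals $1$. Lemma~\ref{lem:upward jumps U chain} gives
\begin{equation*}
\mathbb E\left[z^{Z(i+1)}\,\middle|\,Z(i)=y\right]\le \theta z^y\left(1+Cq^yh^C\right).
\end{equation*}
\item[(b)] If $y=0$, I simply drop the $-B_i$ term. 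This gives the bound $1+Ch^C$, which is already of polynomial type.
\end{enumerate}

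\textbf{Main obstacle.} The obstacle in case (a) is that $q^y z^y$ need not be small, since $qz$ may exceed $1$. So the term $Cq^yh^C z^y$ cannot be absorbed directly. I would handle this with a height threshold depending on $h$. Fix $\rho\in(\theta,1)$, for instance $\rho=(1+\theta)/2$. Let $y_0(h)$ be the least integer with $Cq^{y_0}h^C\le\rho/\theta-1$.
\begin{enumerate}
\item[(a1)] For $y\ge y_0(h)$, the right-hand side in (a) is at most $\rho z^y$.
\item[(a2)] For $y<y_0(h)$, the right-hand side is at most $z^{y_0(h)}(1+Ch^C)$.
\end{enumerate}
Since $q^{y_0-1}>(\rho/\theta-1)/(Ch^C)$, we get $y_0(h)\le 1+\log\big(Ch^C/(\rho/\theta-1)\big)/\log(1/q)$. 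Hence $z^{y_0(h)}\le C''(1+h)^{C\log z/\log(1/q)}$, which is polynomial in $h$ with exponent and constant depending only on $q$ and $z$. The case $h=0$ is trivial, because $Q^y(0)=0$.

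Combining cases (a1), (a2) and (b) yields the one-step inequality with a suitable $C'$. The iteration described at the start then completes the argument.
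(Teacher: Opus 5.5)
Your proposal is correct and gives a complete proof, assuming Lemma~\ref{lem:upward jumps U chain}. This paper does not prove the statement itself; it imports it from the companion paper \cite{SchmidYangShock}, so there is no in-paper argument to compare against.

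The one-step drift inequality $\mathbb{E}[z^{Z(i+1)}\mid Z(i)]\le \rho z^{Z(i)}+C'(1+h_i)^{C'}$ holds as you derive it, in each of its cases:
\begin{itemize}
\item For $Z(i)=y\ge1$ the indicator is identically $1$. Since $B_i$ is independent of $(Z(i),Q_i)$, you get the factor $\theta=pz^{-1}+1-p\le(1-q)z^{-1}+q<1$.
\item The $h$-dependent threshold $y_0(h)$ correctly handles the possible growth of $(qz)^y$. It costs a factor $z^{y_0(h)}$, which is polynomial in $h$ with exponent $C\log z/\log(1/q)$. When $y_0(h)=0$, case (a2) is simply vacuous.
\item The case $y=0$ is handled correctly by dropping $-B_i$.
\end{itemize}
Taking expectations and iterating the inequality is legitimate, since $Z(i)\le x+\sum_j h_j$ is bounded.

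The final repackaging needs only one piece of bookkeeping. The statement uses a single constant $C$ both as the prefactor and as the exponent of $h_i$. You obtain this by taking $C$ to be the maximum of your prefactor and $C'$, and then using $h^{C'}\le h^{C}$ for integers $h\ge0$ and $C\ge C'\ge1$.

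The additive term $1$ inside the parentheses is genuinely needed. Even when all $h_i=0$, the constants $C'$ accumulate to $C'/(1-\rho)$, and your bound $\sum_i\rho^{\ell-1-i}\le(1-\rho)^{-1}$ is exactly what absorbs them.
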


With these results at hand, we can now show Proposition \ref{prop:moment bound coex}. While the overall strategy is similar to the proof of Proposition~4.5 in \cite{SchmidYangShock}, the proof requires several adjustments.

\begin{proof}[Proof of Proposition \ref{prop:moment bound coex}] By Theorem \ref{thm:prob interpretation of Bryc polynomials} and conditional Jensen's inequality,
\[
 \mathbb E_{N,k}^0\left[W_{\bs(\bg)}(z)^m\right]= \mathbb E_{N,k}^0\left[\left(\mathbb{E}\left[ z^{X^{\boldsymbol{\mathsf{S}}^{\bs(\bg)}}(N+1)}\Big{|}\bg\right]\right)^m\right]\leq\mathbb E_{N,k}^0\left[ \mathbb{E}\left[z^{mX^{\boldsymbol{\mathsf{S}}^{\bs(\bg)}}(N+1)}\Big{|}\bg\right] \right],
\]
hence the second statement follows from the first statement by using it for $z^m$ in place of $z$.
Using Lemma~\ref{lem:comparision} with $F(\bg)=\mathbb E  [z^{X^{\boldsymbol{\mathsf S}^{\bs(\bg)}}(i)}\, |\,\bg ]$ and  Lemma ~\ref{lem:coupling birth death} \textup{(\romannum{2})}, to prove the first statement, it suffices to show 
that 
\begin{equation}\label{eq:moment bound coex proof1}
\sup_{N\in\mathbb{N}}
    \sup_{k\in\llbracket0,N\rrbracket}
    \sup_{i\in\llbracket0,N+1\rrbracket}{\mathbb E}\left[{\mathbb E}\left[ z^{{\mathscr{X}}^{\boldsymbol{\mathsf S}^{\bs(\boldsymbol{\lambda})}}(i)}\, \Big{|}\boldsymbol{Y} \right]   \right] <\infty
\end{equation}
is a finite constant depending only on $q,\b$ and $z$. Here, $\boldsymbol{\lambda}\in\lrw_N$ is given by $\lambda_j=Y_{j-k}-Y_{-k}$, $j\in\llbracket0,N\rrbracket$, $\boldsymbol{Y}=\{Y_j\}_{j=-\infty}^{\infty}$ is a bi-infinite $V$-shaped $\nu_{\b}$-random walk started at $Y_0=0$, and the inner expectation is taken over the transition randomness of the Markov chain \(\mathscr X\) from Definition \ref{def:simpler chain}, conditionally on \(\boldsymbol{Y}\). When $i=0$, the left side of \eqref{eq:moment bound coex proof1} is $1$. In the rest of the proof, we assume $i\in\llbracket1,N+1\rrbracket$.

Applying Lemma \ref{lem:regeneration lambda}, we choose \(\mathbb Z_{\ge0}\)-valued variables \(\upchi_j\) and \(\mathbb N\)-valued variables \(G_j\) for \(j\in\mathbb N\), and constants $C_1,c_1>0$ depending only on $\b$, satisfying the following conditions. The variables \(G_j\), \(j\in\mathbb N\), are independent, \(G_j\), \(j\ge2\), are i.i.d., and for \(j\geq2\), \(G_j\) is independent of \(\sigma(\upchi_1,\ldots,\upchi_{j-1})\). Let \(\upchi_0=N\); $K=\max\{j\in\mathbb Z_{\geq0}:\upchi_j>0\}<\infty$ almost surely; and $\mathrm R_j=\upchi_{j-1}-\upchi_j\in\mathbb Z_{\ge0}$ for \(j\in\mathbb N\). We also have 
\begin{align}
    & \label{eq:monotone upchi indices 1}\mbox{On }\{K\ge1\}, \mbox{ }N\geq \upchi_1>\cdots>\upchi_K\geq1\mbox{ and }\mathsf S^{\bs(\boldsymbol{\lambda})}(\upchi_j)=1,\,j\in\llbracket1,K\rrbracket;\\
    & \label{eq:exponential tail of Gj}\mathbb P(G_j\ge y)\le C_1e^{-c_1y}\mbox{ for all }j,y\in\mathbb N;\\
    & \label{eq:conditions for R}\mathrm R_1+\cdots+\mathrm R_{K+1}=N;\,\mathrm R_j=0\mbox{ for }j\ge K+2;\,\mathrm R_j\le G_j\mbox{ for }j\in\mathbb N.
\end{align}
Let $\mathrm{p}_j=\upchi_{K+1-j}$ for $j\in\llbracket1,K\rrbracket$, then $1\le\mathrm{p}_1< \cdots<\mathrm{p}_K\le N$ and $\mathsf S^{\bs(\boldsymbol{\lambda})}(\mathrm{p}_j)=1$, $j\in\llbracket1,K\rrbracket$. On \(\{K\ge1\}\), 
\[
    \mathrm{R}_1=N-\mathrm{p}_K,\qquad
    \mathrm{R}_j=\mathrm{p}_{K-j+2}-\mathrm{p}_{K-j+1} \mbox{  for }j\in\llbracket2,K\rrbracket,
    \qquad
    \mathrm{R}_{K+1}=\mathrm{p}_1.
\] 
For clarity, we split the rest of the proof into two steps. 
\\

\noindent\textbf{Step 1.} In this step, we conclude the proof of \eqref{eq:moment bound coex proof1} assuming the following claim:
for each $C_0\in\mathbb N$, $\rho\in(0,1)$, there exists a finite constant $C_2$ depending only on $\b,C_0,\rho$, such that for $N\in\mathbb N$, $k\in\llbracket0,N\rrbracket$, $s\in\llbracket0,N-1\rrbracket$, 
\begin{equation}\label{eq:moment bound coex proof2} 
    \mathbb E\!\left[\mathrm{R}_{A_s+1}^{C_0}\right]\leq C_2 ,
    \quad 
    \mathbb E\!\left[
        \sum_{r=2}^{K+1-A_s}
        \rho^{r-1}\mathrm{R}_{A_s+r}^{C_0}
    \right]\leq C_2 ,
    \quad 
    \mathbb E\!\left[
        \mathrm{R}_1^{C_0}+
        \sum_{r=2}^{K+1}
        \rho^{r-1}\mathrm{R}_r^{C_0}
    \right]\leq C_2 ,
\end{equation}
where $A_s=\max\{\ell\in\mathbb N:N-\upchi_{\ell}\le s\}$ for $s\in\llbracket0,N-1\rrbracket$, with the convention $A_s=0$ if the set is empty.

We begin by defining a random $\bar{\boldsymbol{\mathsf{S}}}\in\{0,1\}^{N+1}$.
On $\{K=0\}$, set $\bar{\mathsf{S}}(j)=0$, $j\in\llbracket0,N\rrbracket$. On $\{K\geq1\}$, set
\[
\bar{\mathsf{S}}(j)={\bf1}_{\{j\in\{\mathrm{p}_1,\dots,\mathrm{p}_K\}\}}\quad\mbox{ for }j\in\llbracket0,N\rrbracket.
\]
Since $\mathsf S^{\bs(\boldsymbol{\lambda})}(\mathrm{p}_{\ell})=1$, $\ell\in\llbracket1,K\rrbracket$, 
we have $\mathsf S^{\bs(\boldsymbol{\lambda})}(j)\ge \overline{\mathsf S}(j)$, $j\in\llbracket0,N\rrbracket$.
By Lemma \ref{lem:coupling birth death} \textup{(\romannum{3})}, \begin{equation}\label{eq:moment bound coex proof3} \mathscr{X}^{\boldsymbol{\mathsf{S}}^{\bs(\boldsymbol{\lambda})} }(i)\preceq \mathscr{X}^{\bar{\boldsymbol{\mathsf{S}}} }(i).\end{equation}
Let $J=\max\{\ell\in\llbracket1,K\rrbracket:\mathrm{p}_\ell<i\}$, with the convention that $J=0$ if the set is empty.
We define random variables $P_1$ and $P_2$. On the event $\{J=0\}$, set $P_1=P_2=0$. On the event $\{J\geq1\}$, set $\bar{\boldsymbol{\mathsf S}}^{(1)}\in\{0,1\}^{\mathrm{p}_J+1}$ by
\begin{equation}\label{eq:moment bound coex proof6}
\bar{\mathsf S}^{(1)}(j)={\bf1}_{\{j\in\{\mathrm{p}_1,\dots,\mathrm{p}_J\}\}} 
\quad
 \mbox{for } j\in\llbracket0,\mathrm{p}_J\rrbracket .
\end{equation}
If $i-\mathrm{p}_J-1=0$, let $\bar{\boldsymbol{\mathsf S}}^{(2)}=\emptyset$; if $i-\mathrm{p}_J-1\geq1$, define $\bar{\boldsymbol{\mathsf S}}^{(2)}\in\{0,1\}^{i-\mathrm{p}_J-1 }$ to be the all-zero sequence.
Then the \(\llbracket0,\mathrm{p}_J\rrbracket\) symbols of \(\bar{\boldsymbol{\mathsf S}}\) are \(\bar{\boldsymbol{\mathsf S}}^{(1)}\), and the \(\llbracket \mathrm{p}_J+1,i-1\rrbracket\) symbols of \(\bar{\boldsymbol{\mathsf S}}\) are those of \(\bar{\boldsymbol{\mathsf S}}^{(2)}\). 
We use the convention that $\mathscr X^{\emptyset}(0)=0$. Conditionally on $\mathcal{G}=\sigma(K,\mathrm{p}_1,\dots,\mathrm{p}_K)$, and on the event $\{J\geq1\}$, let 
\[
P_1\overset{d}{=}\mathscr{X}^{\bar{\boldsymbol{\mathsf S}}^{(1)}}(\mathrm{p}_J+1),\quad
P_2\overset{d}{=}\mathscr{X}^{\bar{\boldsymbol{\mathsf S}}^{(2)}}(i-\mathrm{p}_J-1 ) .
\] and couple these variables with $\mathscr{X}^{\bar{\boldsymbol{\mathsf S}}}(i)$ using the corollary of Lemma \ref{lem:coupling birth death} (\romannum{4}) so that
\begin{equation}\label{eq:moment bound coex proof4} 
\mathscr{X}^{\bar{\boldsymbol{\mathsf S}}}(i)
\leq
P_1+P_2\quad\mbox{ on }\{J\geq1\}.
\end{equation}
Therefore,
\begin{equation}\label{eq:moment bound coex proof5}  
\begin{split}
       &  \mathbb{E}\left[z^{\mathscr{X}^{\boldsymbol{\mathsf{S}}^{\bs(\boldsymbol{\lambda})} }(i)}\right] \leq
        \mathbb{E}\left[z^{\mathscr{X}^{\bar{\boldsymbol{\mathsf{S}}} }(i)}\right]
        \leq\mathbb{E}\left[z^{\mathscr{X}^{\bar{\boldsymbol{\mathsf{S}}} }(i)}{\bf1}_{\{J=0\}}\right]+\mathbb{E}\left[z^{P_1+P_2}{\bf1}_{\{J\geq1\}}\right]
        \\
       &    
          \leq \mathbb{E}\left[z^{\mathscr{X}^{\bar{\boldsymbol{\mathsf{S}}} }(i)}{\bf1}_{\{J=0\}}\right]
     +\mathbb{E}\left[\left(z^{2P_1}+z^{2P_2}\right){\bf1}_{\{J\geq 1\}}\right]  ,
\end{split}
\end{equation}
where the first step uses \eqref{eq:moment bound coex proof3}, the second step uses \eqref{eq:moment bound coex proof4}, and the third step uses $xy\leq x^2+y^2$.

It remains to bound the two summands on the right side of \eqref{eq:moment bound coex proof5}. 
On \(\{J=0\}\), the first \(i\) symbols of \(\overline{\boldsymbol{\mathsf S}}\) are all zero, hence \(\mathscr X^{\overline{\boldsymbol{\mathsf S}}}(i)\overset{d}{=}Q^0(i)\) where \(Q^0\) is from Lemma \ref{lem:upward jumps U chain}, therefore  for   \(C_3\in\mathbb N\) depending only on \(q,z\),
\[
    \mathbb E\!\left[
        z^{\mathscr X^{\overline{\boldsymbol{\mathsf S}}}(i)}{\bf1}_{\{J=0\}}
        \,\middle|\,\mathcal{G}\right]
    \le \left(1+ C_3  i^{C_3} \right){\bf1}_{\{J=0\}}\leq 1+C_3 (2\mathrm{R}_{A_{N-1}+1})^{C_3},
\]
where the last step uses: on \(\{J=0,K\ge1\}\), \(i\le\mathrm{p}_1=\mathrm{R}_{A_{N-1}+1}\); and on \(\{J=0,K=0\}\),   \(\mathrm{R}_{A_{N-1}+1}=R_1=N\),  \(i\le 2\mathrm{R}_{A_{N-1}+1}\). Taking $s=N-1$ in the first inequality of \eqref{eq:moment bound coex proof2}, we have $C_4<\infty$ depending only on $q,\b,z$,  
\begin{equation}\label{eq:moment bound coex proof7}  
    \mathbb E\!\left[
        z^{\mathscr X^{\overline{\boldsymbol{\mathsf S}}}(i)}{\bf1}_{\{J=0\}} 
    \right]
    \le C_4,
\end{equation}

For the part of the second summand which involves $\bar{\boldsymbol{\mathsf S}}^{(1)}$, in view of \eqref{eq:moment bound coex proof6}, the successive zero-run lengths before the ones are $g_0=\mathrm{p}_1,\, g_1=\mathrm{p}_2-\mathrm{p}_{1}-1,\dots,g_{J-1}=\mathrm{p}_J-\mathrm{p}_{J-1}-1$. A deterministic zero-run of length \(h\), started from state \(y\), has the same increment law as \(Q^y(h)\) from Lemma~\ref{lem:upward jumps U chain}, and a subsequent \(1\)-symbol subtracts an independent \(\ber(1-q)\) variable from positive states. Therefore, conditionally on \(\mathcal G\) and on \(\{J\ge1\}\), the endpoint of the sequence \(\bar{\boldsymbol{\mathsf S}}^{(1)}\) is represented by the chain \(Z\) in Lemma \ref{lem:block moment bound simpler chain arbitrary start} with initial state \(x=0\), parameter \(p=1-q\), block number \(J\), and zero-run lengths \(h_{\ell-1}=g_{\ell-1}\), \(\ell\in\llbracket1,J\rrbracket\). Applying Lemma~\ref{lem:block moment bound simpler chain arbitrary start} with \(z^2\) in place of \(z\), there exist constants
\(C_5\in\mathbb N\) and \(\rho\in(0,1)\), depending only on \(q\) and \(z\), such that
\begin{equation}\label{eq:moment bound coex proof8} 
    \mathbb{E}\left[z^{2P_1}{\bf1}_{\{J\geq 1\}}\,\middle|\,\mathcal{G}\right]
     \le
    C_5\left(
        1+\sum_{\ell=1}^{J}\rho^{J-\ell+1}g_{\ell-1}^{C_5}
    \right){\bf1}_{\{J\geq1\}}                                                    
     \le
    C_5\left(
        1+\sum_{r=2}^{J+1}\rho^{r-1}\mathrm{R}_{K-J+r}^{C_5}
    \right){\bf1}_{\{J\geq1\}},
\end{equation}
where the last step uses $g_{\ell-1}\le \mathrm{R}_{K+2-\ell}$ for $\ell\in\llbracket1,J\rrbracket$, together with the change of variables \(r=J+2-\ell\). For the part which involves $\bar{\boldsymbol{\mathsf S}}^{(2)}$, since it is an all-zero sequence, 
$\mathscr{X}^{\bar{\boldsymbol{\mathsf S}}^{(2)}}(i-\mathrm{p}_J-1 )\overset{d}{=}Q^0(i-\mathrm{p}_J-1)$.
Using Lemma \ref{lem:upward jumps U chain} with \(z^2\) in place of \(z\), we obtain \(C_6\in\mathbb N\) depending only on \(q,z\),  such that
\begin{equation}\label{eq:moment bound coex proof9} 
   \mathbb{E}\left[z^{2P_2}{\bf1}_{\{J\geq 1\}}\,\middle|\,\mathcal{G}\right]
    \leq \left(1+C_6\left(i-\mathrm{p}_J-1\right)^{C_6} \right){\bf1}_{\{J\geq1\}}\leq \left(1+C_6\mathrm{R}_{K-J+1}^{C_6}\right){\bf1}_{\{J\geq1\}},
\end{equation}
where we use $i-\mathrm{p}_J-1\leq \mathrm{R}_{K-J+1}$. 
Combining \eqref{eq:moment bound coex proof8}, \eqref{eq:moment bound coex proof9}, there exists \(C_7\in\mathbb N\) depending only on \(q,z\),
\begin{equation}\label{eq:moment bound coex proof10} 
    \mathbb{E}\left[\left(z^{2P_1}+z^{2P_2}\right){\bf1}_{\{J\geq 1\}}\,\middle|\,\mathcal{G}\right] \leq C_7\left(
        1+\mathrm{R}_{K-J+1}^{C_7}
        +
        \sum_{r=2}^{J+1}\rho^{r-1}\mathrm{R}_{K-J+r}^{C_7}
    \right){\bf1}_{\{J\geq1\}}.
\end{equation}
If \(i\le N\), then \(K-J=A_{N-i}\): indeed, \(J\) is the number of indices
\(\ell\in\llbracket1,K\rrbracket\) such that \(\mathrm p_\ell<i\), so
\(K-J\) is the largest \(a\in\mathbb N\) such that \(\upchi_a\ge i\), with
value \(0\) if no such \(a\) exists. This is equivalently the largest
\(a\in\mathbb N\) such that \(N-\upchi_a\le N-i\), with the same convention.
Hence the expectation of the right side of \eqref{eq:moment bound coex proof10}
is uniformly bounded by the first and second inequality in
\eqref{eq:moment bound coex proof2}. If \(i=N+1\), then \(K-J=0\), and the same
conclusion follows from the third inequality in \eqref{eq:moment bound coex proof2}.
Therefore, we obtain $C_8<\infty$ depending only on $q,\b,z$, such that
\begin{equation}\label{eq:moment bound coex proof11}  
    \mathbb{E}\left[\left(z^{2P_1}+z^{2P_2}\right){\bf1}_{\{J\geq 1\}}\right] \leq C_8.
\end{equation}
Plugging \eqref{eq:moment bound coex proof7} and \eqref{eq:moment bound coex proof11} into \eqref{eq:moment bound coex proof5}, we obtain
\[
 \mathbb{E}\left[z^{\mathscr{X}^{\boldsymbol{\mathsf{S}}^{\bs(\boldsymbol{\lambda})} }(i)}\right]
\leq C_4+C_8\quad\mbox{ for all }N\in\mathbb{N},\, k\in\llbracket0,N\rrbracket,  \, i\in\llbracket1,N+1\rrbracket,
\]
where the right side is a finite constant depending only on $q,\b$, and $z$. We conclude the proof of \eqref{eq:moment bound coex proof1}. \\

\noindent\textbf{Step 2.} In this step, we prove the claim \eqref{eq:moment bound coex proof2}, which we restate as follows: 
for each $C_0\in\mathbb N$, $\rho\in(0,1)$, there exists a finite constant $C_2$ depending only on $\b,C_0,\rho$, such that for $N\in\mathbb N$, $k\in\llbracket0,N\rrbracket$, $s\in\llbracket0,N-1\rrbracket$, 
\begin{equation}\label{eq:moment bound coex proof2 again} 
    \mathbb E\!\left[\mathrm{R}_{A_s+1}^{C_0}\right]\leq C_2 ,
    \quad 
    \mathbb E\!\left[
        \sum_{r=2}^{K+1-A_s}
        \rho^{r-1}\mathrm{R}_{A_s+r}^{C_0}
    \right]\leq C_2 ,
    \quad 
    \mathbb E\!\left[
        \mathrm{R}_1^{C_0}+
        \sum_{r=2}^{K+1}
        \rho^{r-1}\mathrm{R}_r^{C_0}
    \right]\leq C_2 ,
\end{equation}
where $A_s=\max\{\ell\in\mathbb N:N-\upchi_{\ell}\le s\}$ for $s\in\llbracket0,N-1\rrbracket$, with the convention $A_s=0$ if the set is empty. 

Set
\(\mathcal F_a=\sigma(\upchi_1,\ldots,\upchi_a)\) for \(a\in\mathbb Z_{\ge0}\), with
\(\mathcal F_0\) trivial.
For the first inequality in \eqref{eq:moment bound coex proof2 again}, for \(y\in\mathbb N\),  
\begin{equation}\label{eq:moment bound coex proof13}  
\begin{aligned}
     \mathbb P(\mathrm{R}_{A_s+1}\ge y)
    &\le
    \sum_{a=0}^{\infty}\sum_{\ell=0}^{\infty}
     \mathbb P \!\left(
        N-\upchi_a=s-\ell,\ a\le K,\ \mathrm{R}_{a+1}\ge y,\ N-\upchi_{a+1}>s
    \right)                                                   \\
    &\le
    \sum_{a=0}^{\infty}\sum_{\ell=0}^{\infty}
     \mathbb P \!\left(
         N-\upchi_a=s-\ell,\ a\le K,\ G_{a+1}\ge \max(y,\ell+1)
    \right)\\
    &\leq C_1\sum_{\ell=0}^{\infty}
    e^{-c_1\max (y,\ell+1)}
    \sum_{a=0}^{\infty}
     \mathbb P (N-\upchi_a=s-\ell,\ a\le K)             \\
    &\le
    2C_1\sum_{\ell=0}^{\infty}
    e^{-c_1\max(y,\ell+1)}
    \le C_8e^{-c_8y},
\end{aligned}
\end{equation}
for constants \(C_8,c_8>0\) depending only on \(\b\). 
Here, the first step follows from decomposition, the second step follows from \(\{N-\upchi_a=s-\ell,\,N-\upchi_{a+1}>s\}\subseteq \{G_{a+1}\geq \ell+1\}\) because of \eqref{eq:conditions for R}, and the third step uses the exponential tail estimate of $G_{a+1}$ and that \(\{N-\upchi_a=s-\ell,\ a\le K\}\in\mathcal F_a\) while \(G_{a+1}\) is independent of $\mathcal F_a$, the fourth step follows from \(\sum_{a=0}^{\infty}{\bf1}_{\{N-\upchi_a=s-\ell,\ a\le K\}}\le2\) for each fixed \(\ell\), since $N=\upchi_0\geq \upchi_1>\dots>\upchi_K$. 
The first inequality in \eqref{eq:moment bound coex proof2 again} then follows from the exponential tails \eqref{eq:moment bound coex proof13}.
For the second inequality in \eqref{eq:moment bound coex proof2 again},  
\[ 
     \mathbb E \!\left[
        \sum_{r=2}^{K+1-A_s}
        \rho^{r-1}\mathrm{R}_{A_s+r}^{C_0}
    \right]
    \le
    \sum_{r=2}^{\infty}\rho^{r-1}
    \sum_{a=0}^{\infty}
     \mathbb E \!\left[
        {\bf1}_{\{A_s=a\}}G_{a+r}^{C_0}
    \right]
    =
    \sum_{r=2}^{\infty}\rho^{r-1}\mathbb E \left[G_2^{C_0}\right]
     \leq C_2,
\]
where the first step uses \eqref{eq:conditions for R}, and the second step uses $\{A_s=a\}=\{N-\upchi_a\le s<N-\upchi_{a+1}\}\in\mathcal F_{a+1}$ while \(G_{a+r}\) is independent of \(\mathcal F_{a+1}\) and has the same law as \(G_2\) for \(r\ge2\).
For the third inequality in \eqref{eq:moment bound coex proof2 again},  
\[
\mathbb E\!\left[
        \mathrm{R}_1^{C_0}+
        \sum_{r=2}^{K+1}
        \rho^{r-1}\mathrm{R}_r^{C_0}
    \right]
\le
\mathbb E[G_1^{C_0}]
+
\sum_{r=2}^{\infty}\rho^{r-1}\mathbb E[G_2^{C_0}]
\leq C_2,
\]
where we use \eqref{eq:conditions for R}, the finite \(C_0\)-th moment of \(G_1\),
and that \(G_j\), \(j\geq2\), are i.i.d. and subexponential. 
\end{proof}

\subsection{Stability of the lower compositions}

Recall the composition $\bs(\bg)=(\sigma_0,\dots,\sigma_r)$ associated to a path $\bg$ given by \eqref{eq:Signature}, and  let $\sigma_i=0$ for all $i>r$. We control now the change of the composition at levels $0$ to $\kappa n$ for some sufficiently small constant $\kappa>0$ when we condition on a non-negligible event $A$ at distance at least $n$ away from the minimum of the path.

\begin{lemma}\label{lem:stability low compositions coex}
    Assume $\b>1$.
    For \(k\in\llbracket0,N\rrbracket\), write $\mathbb P_{N,k}^0(\,\cdot\,)=\mathbb P^{(0,\b,\b)}(\,\cdot\,\mid T_N=k)$ and $\mathbb E_{N,k}^0$ to denote the expectation under $\mathbb P_{N,k}^0$, where we recall that $\mathbb{P}^{(0,\b,\b)}$ and $T_N$ are as in \eqref{eq:PN00 and TN}.   
For \(k,m\in\llbracket0,N\rrbracket\), let
\begin{equation}\label{eq:Ianddelta}
    \begin{split}
         \mathsf I_{N,k,m}& =  
    \{i\in\llbracket0,N-1\rrbracket: |i-k|\ge m\},\\
     \Delta_{N,k,m}(\bg)&=\left(\gamma_{i+1}-\gamma_i\right)_{i\in\mathsf I_{N,k,m}}
    \in \{-1,0,1\}^{\mathsf I_{N,k,m}},
    \end{split}
\end{equation} 
where \(\{-1,0,1\}^{\emptyset}\) is interpreted as a one-point space. Then there exist constants \(\kappa,\delta,C,c>0\), depending only on \(\b\), such that for every \(N\ge1\), every \(k,m\in\llbracket0,N\rrbracket\) with \(m\ge1\), and every event
\[
    A\in\sigma(\Delta_{N,k,m})
    \quad\text{satisfying}\quad
    \mathbb P_{N,k}^0(A)\ge e^{-\delta m},
\]
we have
\[
    \TV{
    \mathbb P_{N,k}^0
    \left(
        (\sigma_0,\ldots,\sigma_{\lfloor\kappa m\rfloor})\in\cdot
        \,\middle|\,A
    \right)
    -
    \mathbb P_{N,k}^0
    \left(
        (\sigma_0,\ldots,\sigma_{\lfloor\kappa m\rfloor})\in\cdot
    \right)
    }
    \le Ce^{-cm}.
\]
\end{lemma}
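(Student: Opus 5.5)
The plan is to decouple the low part of the composition from the far part of the path. I would use the independent arm decomposition of Lemma~\ref{lem:decomposition TASEP} and pass to the infinite-horizon conditioned walks, where the strong Markov property is exact.

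\textbf{Step 1: arm decomposition and the "good" event.} Under $\mathbb P_{N,k}^0$, the right arm $(\gamma_{k+j}-\gamma_k)_{j\le N-k}$ and the reversed left arm are independent. They are $\nu_\b$-walks conditioned to stay $\ge 0$ (respectively $\ge1$ after the first step) up to their finite horizons. The vector $(\sigma_0,\dots,\sigma_{\lfloor\kappa m\rfloor})$ counts visits of $\gamma-\min(\bg)$ to levels $\le \kappa m$, where $\min(\bg)=\gamma_k$. For each arm, let $\theta$ be its hitting time of level $\lceil 2\kappa m\rceil$. Define the good event
\[
F=\{\theta_L,\theta_R\le m/2\}\cap\{\text{after }\theta\text{ neither arm returns to a level }\le\kappa m\}.
\]
On $F$, the low composition is a measurable function $\Psi$ of the two pre-$\theta$ pieces $(\gamma_{k+j}-\gamma_k)_{j\le\theta_R}$ and its left analogue. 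These pieces use only increments with index $|i-k|<m/2$, so they are disjoint from $\mathsf I_{N,k,m}$.

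\textbf{Step 2: $F$ is typical.} I would show $\mathbb P_{N,k}^0(F^c)\le C e^{-c_0 m}$ for suitable small $\kappa$. By Lemma~\ref{lem:basic hitting monotone coupling}, each conditioned arm dominates a free $\nu_\b$-walk, which exceeds $2\kappa m$ by time $m/2$ with exponentially high probability once $4\kappa<(\b-1)/(\b+1)$. From level $2\kappa m$, the probability of ever dropping below $\kappa m$ is at most $\b^{-2\kappa m}$ for the free walk. Conditioning to stay nonnegative only helps here, as in the density bound in the proof of Lemma~\ref{lem:comparision}. Arms shorter than $m/2$ (e.g.\ $k<m/2$) need separate care: then no left increments lie in the window, but also no left increments lie in $\mathsf I_{N,k,m}$ except those with $|i-k|\ge m$. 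So the short arm is entirely a "window" object and simply enters $\Psi$. Choosing $\delta<c_0/2$ then gives $\mathbb P_{N,k}^0(F^c\mid A)\le e^{\delta m}Ce^{-c_0m}\le Ce^{-c_0m/2}$. Hence it suffices to compare the laws of $\Psi(\text{pre-}\theta\text{ pieces})\mathbf 1_F$ under $\mathbb P^0_{N,k}(\cdot\mid A)$ and $\mathbb P^0_{N,k}$.

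\textbf{Step 3: decoupling at $\theta$ (main obstacle).} This is where the difficulty lies, since $A$ may bias the path globally. For an arm conditioned to stay $\ge0$ up to horizon $n$, I would apply the Markov property at the stopping time $\theta$. Given the pre-$\theta$ piece $w$ with $\theta=t$, the pre-piece has weight $\mathbb P(w)\,\mathbb P_{2\kappa m}(\text{stay}\ge0\text{ for }n-t\text{ steps})$. The post-$\theta$ path is a walk from $2\kappa m$ conditioned to stay $\ge0$ for the remaining $n-t$ steps. The stay probability equals $1-O(\b^{-4\kappa m})$ uniformly in $n-t$ by Lemma~\ref{lem:basic hitting monotone coupling}. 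So, up to a factor $1+O(e^{-cm})$, the pre-piece is independent of the post-piece, except for the residual horizon $n-t$ entering the post-law. I would control the latter by coupling post-laws with horizons differing by at most $m/2$. Concretely, the post-walk conditioned to stay $\ge0$ versus one conditioned to stay $\ge\kappa m$ (a shift-invariant law) differ in TV by $O(\b^{-2\kappa m})$. After the shift, the law of the post-increments no longer depends on $t$ or $w$, except through which indices they occupy. The event $A$ involves increments at $|i-k|\ge m$; these correspond to post-increments at positions $\ge m-t\ge m/2$, and a horizon/index shift of a conditioned-above-$\kappa m$ walk far from its conditioning barrier is nearly stationary. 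The cleanest route may be to pass to the infinite-horizon arms via the two-sided density bounds of Lemma~\ref{lem:comparision}. There the strong Markov property at $\theta$ makes the post-path exactly a walk conditioned to stay $\ge0$ from $2\kappa m$, and this is within $O(\b^{-2\kappa m})$ TV of a free walk started at $2\kappa m$. A free walk has increments i.i.d.\ and independent of $w$, so $A$ and $\Psi$ become independent up to $Ce^{-cm}$, and dividing by $\mathbb P(A)\ge e^{-\delta m}$ costs only a factor $e^{\delta m}$. Assembling Steps 1–3 with $\delta$ small relative to $\kappa\log\b$ gives the claimed $Ce^{-cm}$ bound.
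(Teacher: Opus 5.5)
Your Steps 1--2 follow the same plan as the paper. Your event $F$ plays the role of $D_m^{\complement}$, with a stopping time in place of the fixed window $|j-k|\le\lfloor m/2\rfloor$. Two small slips there: for $k<m/2$, \emph{all} left increments lie in the window (not none), and on arms shorter than $m/2$ the time $\theta$ need not exist, so $F$ must be relaxed on such arms.

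\textbf{The gap is in Step 3, in the route you settle on.} Lemma~\ref{lem:comparision}, together with its easy converse, only says that the density of $\mathbb P^0_{N,k}$ with respect to the infinite-horizon $V$-shaped law lies between two constants. Such a bound transfers probabilities of bad events. It cannot transfer an $e^{-cm}$ bound on $|\mathbb Q(\Psi\in B,A)-\mathbb Q(\Psi\in B)\mathbb Q(A)|$ back to $\mathbb P^0_{N,k}$. The density depends on the whole path, through the arm endpoints, which involve both window and far increments. A density bounded between constants can create correlations of order one: take two fair signs $\epsilon_1,\epsilon_2$ and density $1+\frac12\epsilon_1\epsilon_2$. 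To make that route work you would need a total variation statement. For example, the density is a constant times $1+O(\b^{-2R_{N-k}})$, and $R_{N-k}$ is large with exponentially high probability on arms of length at least $m$. Nothing like this is in your argument.

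\textbf{The detour is also unnecessary.} The ``residual horizon'' you identify as the main obstacle is not one. The finite-horizon conditioned arm is a time-inhomogeneous Markov chain, so the strong Markov property at a stopping time is already exact. Moreover, $\mathbb P_x(S_i\ge0,\ i\le t)\ge1-\b^{-2(x+1)}$ for every $t$. Hence the walk from $x$ conditioned to stay nonnegative for $t$ steps is within $C\b^{-2x}$ of the free $\nu_\b$-walk in total variation, uniformly in $t$.

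The fix is as follows. Keep the pre-$\theta'$ pieces with their law under $\mathbb P^0_{N,k}$, where $\theta'=\theta\wedge\lfloor m/2\rfloor$, and continue each arm by a free walk. A bounded stopping time is needed; otherwise, on $\{\theta\ge m\}$ the pre-piece overlaps $\mathsf I_{N,k,m}$. Under this measure:
\begin{itemize}
\item the increments at positions $|i-k|\ge m$ are i.i.d.\ $\nu_\b$, whatever $\theta'$ and the pre-piece are, so $\Psi$ and $A$ are exactly independent;
\item the law of $\Psi$ is unchanged;
\item the total variation cost is at most $C\,\mathbb E[\b^{-2R_{\theta'}}]$ plus the analogous left term. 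This is $\le Ce^{-cm}$, since $R_{\theta'}=\lceil2\kappa m\rceil$ unless $\theta>\lfloor m/2\rfloor$, which is exponentially unlikely.
\end{itemize}
The covariance is then at most twice this total variation distance.

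This is exactly the paper's Step~3. The only difference is that the paper cuts at the deterministic times $m$ (right arm) and $m-1$ (left arm). These match $\mathsf I_{N,k,m}$ exactly and make the stopping time superfluous.
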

\begin{proof}
For $k\in\llbracket0,N\rrbracket$, consider the bijection $\Phi:\lrw_k\times\lrw_{N-k}\to\lrw_N$ given by $\gamma_{k-i}=-L_k+L_i$, $i\in\llbracket0,k\rrbracket$ and $\gamma_{k+i}=-L_k+R_i$, $i\in\llbracket0,N-k\rrbracket$. Then the inverse of $\Phi$ is given by $L_j=\gamma_{k-j}-\gamma_k$, $j\in\llbracket0,k\rrbracket$ and $R_j=\gamma_{k+j}-\gamma_k$, $j\in\llbracket0,N-k\rrbracket$. 
Define \(\mathrm P_{N,k}^0\) to be the measure on $\lrw_k\times\lrw_{N-k}$ induced by $\Phi$ from the measure \(\mathbb P_{N,k}^0\) on $\lrw_N$. By Lemma \ref{lem:decomposition TASEP}, \(\mathrm P_{N,k}^0\) is the law of $(\boldsymbol{L}, \boldsymbol{R})$, where $\boldsymbol{R}=(R_j)_{j\in\llbracket0,N-k\rrbracket}$ is a \(\nu_\b\)-random walk started from \(0\), conditioned to stay non-negative up to time \(N-k\); and $\boldsymbol{L}=(L_j)_{j\in\llbracket0,k\rrbracket}$ is a \(\nu_\b\)-random walk started from \(0\), conditioned on staying strictly positive during $\llbracket1,k\rrbracket$ steps if $k\geq1$, $L_0=0$ if $k=0$; and $\boldsymbol{L}$ and $\boldsymbol{R}$ are independent.
We now define a measure \({\mathrm P}_{N,k,m}^0\) on \(\lrw_k\times\lrw_{N-k}\). First sample
\begin{equation}\label{eq:centralpieces}
    (L_i)_{ i\in\llbracket0, \min(k,m-1)\rrbracket}
    \quad\text{and}\quad
    (R_i)_{i\in\llbracket0, \min(N-k,m)\rrbracket}
\end{equation}
with their joint law under \(\mathrm P_{N,k}^0\), then conditionally on these sampled pieces, if \(k>m-1\), extend the left piece from
\(L_{m-1}\) to \(L_k\) using independent \(\nu_\b\)-increments; and if \(N-k>m\), extend the right piece from
\(R_{m}\) to \(R_{N-k}\) using independent \(\nu_\b\)-increments.  The two extensions are independent of each
other and independent of the sampled pieces. Let ${\mathsf P}_{N,k,m}^0$ be the measure on $\lrw_N$ induced from \({\mathrm P}_{N,k,m}^0\) via the bijection.

Fix $\kappa=(\b-1)/(40(\b+1))$. 
Recall that for $h\in\mathbb Z_{\ge0}$, $\sigma_h =\#\{j\in\llbracket0,N\rrbracket: \gamma_j=\min(\bg)+h\}$.
Let
\[
\vartheta_h =
    \#\left\{
        j\in
        \llbracket \max(0,k-\left\lfloor m/2\right\rfloor),\,\min(N,k+\left\lfloor m/2\right\rfloor)\rrbracket:
        \gamma_j=\gamma_k+h
    \right\}\quad\mbox{for }h\in\mathbb{Z}_{\geq0},
\]
and define the event 
\[
    D_m =
    \left\{
        (\sigma_0,\ldots,\sigma_{\lfloor\kappa m\rfloor})
        \ne
        (\vartheta_0,\ldots,\vartheta_{\lfloor\kappa m\rfloor})
    \right\}.
\]
For clarity, we split the rest of the proof into three steps. 
\\

\noindent\textbf{Step 1.} In this step, we conclude the proof of the lemma assuming the following claim: there exist constants $C_1,c_1,C_2,c_2>0$ depending only on $\b$, such that for each \(N\in\mathbb N\), \(k,m\in\llbracket0,N\rrbracket\) with \(m\ge1\),
\begin{equation}\label{eq:stability proof1}
    \mathbb P_{N,k}^0(D_m)\leq C_1e^{-c_1m}\quad\mbox{and}\quad\TV{\mathbb P_{N,k}^0-{\mathsf P}_{N,k,m}^0}\leq C_2e^{-c_2m}.
\end{equation}
Fix an arbitrary $B\subseteq \mathbb Z_{\ge0}^{\lfloor\kappa m\rfloor+1}$. Let 
\[
    E_B
    =
    \{\left(\sigma_0,\ldots,\sigma_{\lfloor\kappa m\rfloor}\right)\in B\}
    \quad\mbox{and}\quad
    F_B
    =
    \{\left(\vartheta_0,\ldots,\vartheta_{\lfloor\kappa m\rfloor}\right)\in B\}.
\]
On \(D_m^{\complement}\), the events \(E_B\) and \(F_B\) agree.  Hence, for every event \(A\) with \(\mathbb P_{N,k}^0(A)>0\),
\begin{equation}\label{eq:stability proof2}
    \left|
        \mathbb P_{N,k}^0(E_B\mid A)-\mathbb P_{N,k}^0(E_B)
    \right|
     \le
    \left|
        \mathbb P_{N,k}^0(F_B\mid A)
        -
        \mathbb P_{N,k}^0(F_B)
    \right|   +
    \mathbb P_{N,k}^0(D_m\mid A)
    +
    \mathbb P_{N,k}^0(D_m). 
\end{equation}
The event \(F_B\) depends only on $(L_i)_{ i\in\llbracket0, \min(k,\lfloor m/2\rfloor)\rrbracket}$ and  $(R_i)_{ i\in\llbracket0, \min(N-k,\lfloor m/2\rfloor)\rrbracket}$.
Indeed, for $h\in\llbracket0,\lfloor\kappa m\rfloor\rrbracket$, 
\[
    \vartheta_h
    =
    \#\{i\in\llbracket0,\min(N-k,\lfloor m/2\rfloor)\rrbracket:R_i=h\}
    +
    \#\{i\in\llbracket1,\min(k,\lfloor m/2\rfloor)\rrbracket:L_i=h\},
\] 
where the second set is empty if \(k=0\).  For \(m\ge2\), \(\lfloor m/2\rfloor\le  m-1\) and \(\lfloor m/2\rfloor\le m\).  For
\(m=1\), \(\lfloor m/2\rfloor=0\).  Thus \(F_B\) is measurable with respect to the central pieces \eqref{eq:centralpieces} sampled before the independent extensions in the definition of \({\mathrm P}_{N,k,m}^0\).
On the other hand, \(A\in\sigma(\Delta_{N,k,m})\) depends only on the outside increments 
\[
    (R_{s+1}-R_s)_{s\in\llbracket m,N-k-1\rrbracket} \quad\mbox{and}\quad (L_{s-1}-L_s)_{s\in\llbracket m,k\rrbracket},
\]
with empty intervals contributing no variables. Since these increments are functions only of the independent extensions under
\({\mathsf P}_{N,k,m}^0\), we know that \(A\) and \(F_B\) are independent under \({\mathsf P}_{N,k,m}^0\).  Hence,
\[
    {\mathsf P}_{N,k,m}^0(F_B\cap A)={\mathsf P}_{N,k,m}^0(F_B)\,
    {\mathsf P}_{N,k,m}^0(A)
    =
    \mathbb P_{N,k}^0(F_B)\,
    {\mathsf P}_{N,k,m}^0(A),
\] and therefore,
\begin{equation}\label{eq:stability proof3}
    \begin{split}
        &\left|
        \mathbb P_{N,k}^0(F_B\mid A)
        -
        \mathbb P_{N,k}^0(F_B)
    \right|= 
    \left|
        \mathbb P_{N,k}^0(F_B\cap A)
        -
        \mathbb P_{N,k}^0(F_B)\mathbb P_{N,k}^0(A)
    \right| \cdot \mathbb P_{N,k}^0(A)^{-1}\\
    & \le2\TV{
            \mathbb P_{N,k}^0
            -
            {\mathsf P}_{N,k,m}^0
        } \cdot \mathbb P_{N,k}^0(A)^{-1}
    \le
    2C_2 e^{-c_2m}\cdot \mathbb P_{N,k}^0(A)^{-1}.
    \end{split}
\end{equation}
Let $\delta= \min (c_1,c_2)/2$. 
If \(\mathbb P_{N,k}^0(A)\ge e^{-\delta m}\), then
 $\mathbb P_{N,k}^0(D_m\mid A)
    \le
    \mathbb P_{N,k}^0(A)^{-1}\mathbb P_{N,k}^0(D_m)
    \le
    C_1e^{-(c_1-\delta)m}$.
Plugging this bound,  the first inequality in \eqref{eq:stability proof1}, and \eqref{eq:stability proof3} together into \eqref{eq:stability proof2}, we obtain 
\[ 
    \left|
        \mathbb P_{N,k}^0(E_B\mid A)-\mathbb P_{N,k}^0(E_B)
    \right|
     \le
    2C_2 e^{-(c_2-\delta)m}
    +
    C_1 e^{-(c_1-\delta)m}
    +
    C_1 e^{-c_1m}   
     \le
    C e^{-cm}, 
\]
for constants \(C,c>0\) depending only on \(\b\).  Taking the supremum over all  $B\subseteq \mathbb Z_{\ge0}^{\lfloor\kappa m\rfloor+1}$ concludes the proof.\\

\noindent\textbf{Step 2.} In this step, we prove the first inequality in \eqref{eq:stability proof1}: there exist $C_1,c_1>0$ depending only on $\b$,  
\begin{equation}\label{eq:stability proof4}
    \mathbb P_{N,k}^0(D_m)\leq C_1e^{-c_1m}\quad\mbox{for each }N\geq m\geq1.
\end{equation}
On \(\{T_N=k\}\), we have \(\gamma_k=\min(\bg)\).  Hence \(D_m\) can occur only if $\bg$ visits one of the levels
$\llbracket\gamma_k,\gamma_k+\lfloor\kappa m\rfloor\rrbracket$ outside the window $\llbracket \max(0,k-\lfloor m/2 \rfloor),\,\min(N,k+\lfloor m/2 \rfloor)\rrbracket$. Therefore, $D_m\subseteq D_m^{\mathrm L}\cup D_m^{\mathrm R}$, where
\begin{equation*}
    \begin{split}
        D_m^{\mathrm L}
    &=
    \left\{
        \mbox{there exists } i\in\llbracket \lfloor m/2 \rfloor+1,k\rrbracket\mbox{ satisfying }
        L_i\le \lfloor\kappa m\rfloor
    \right\} ,\\
    D_m^{\mathrm R}
    &=
    \left\{\mbox{there exists }  i\in\llbracket \lfloor m/2 \rfloor+1,N-k\rrbracket\mbox{ satisfying }
        R_i\le \lfloor\kappa m\rfloor
    \right\}.
    \end{split}
\end{equation*}
Assume \(m\ge4\). If \(N-k\le \left\lfloor m/2\right\rfloor\), then \(D_m^{\mathrm R}=\emptyset\). If \(N-k>\left\lfloor m/2\right\rfloor\), then, conditionally on \(R_{\left\lfloor m/2\right\rfloor}=x\), the future
\((R_{\left\lfloor m/2\right\rfloor+j})_{j\in\llbracket0, N-k-\left\lfloor m/2\right\rfloor\rrbracket}\) is an ordinary \(\nu_\b\)-walk started from \(x\), conditioned to stay non-negative for the remaining time interval. By Lemma \ref{lem:basic hitting monotone coupling}, on \(\{x>\left\lfloor\kappa m\right\rfloor\}\), we have $\mathbb P_{N,k}^0(D_m^{\mathrm R}\, |\, R_{\left\lfloor m/2\right\rfloor}=x)
    \le \b^{-2(x-\left\lfloor\kappa m\right\rfloor)}$,
where the right side is the probability that an unconditioned \(\nu_\b\)-walk started from \(x\) ever hits \(\left\lfloor\kappa m\right\rfloor\). Also, since \(\kappa=(\b-1)/(40(\b+1))\), we have \(2\kappa m\le (\b-1)/(4(\b+1)) \cdot\left\lfloor m/2\right\rfloor\)  for all \(m\ge4\), and Lemma \ref{lem:basic hitting monotone coupling} gives
\[
    \mathbb P_{N,k}^0(R_{\left\lfloor m/2\right\rfloor}\le 2\kappa m)
    \le
    \mathbb P_{N,k}^0(R_{\left\lfloor m/2\right\rfloor}\le (\b-1)/(4(\b+1)) \cdot\left\lfloor m/2\right\rfloor)
    \le C_{3}e^{-c_{3}m}
\]
for constants \(C_{3},c_{3}>0\) depending only on \(\b\).  Hence
\begin{equation*} 
    \begin{split} 
        \mathbb P_{N,k}^0\left(D_m^{\mathrm R}\right)
    \le
    \mathbb P_{N,k}^0\left(R_{\left\lfloor m/2\right\rfloor}\le 2\kappa m\right)
    +
    \mathbb E_{N,k}^0\!\left[
        {\bf1}_{\{R_{\left\lfloor m/2\right\rfloor}>2\kappa m\}}
        \b^{-2\left(R_{\left\lfloor m/2\right\rfloor}-\left\lfloor\kappa m\right\rfloor\right)}
    \right]    \le
    C_{3}e^{-c_{3}m}
    +
    \b^{-2\kappa m} .
    \end{split}
\end{equation*}  
The left piece is treated in the same way with an additional shift. We have
    $\mathbb P_{N,k}^0(D_m^{\mathrm L})
    \le C_4e^{-c_4m}+
    \b^{-2\kappa m}$
for some \(C_4,c_4>0\) depending only on \(\b\).  Combining the left and right bounds and enlarging the constants to absorb the \(m\in\{1,2,3\}\) cases, we conclude the proof of \eqref{eq:stability proof4}.\\ 

\noindent\textbf{Step 3.} In this step, we prove the second inequality in \eqref{eq:stability proof1}: there exist $C_2,c_2>0$ depending only on $\b$,  
\begin{equation}\label{eq:stability proof6}
    \TV{\mathbb P_{N,k}^0-{\mathsf P}_{N,k,m}^0}\leq C_2e^{-c_2m}\quad\mbox{for each }N\geq m\geq1,\,k\in\llbracket0,N\rrbracket.
\end{equation}
In view of the bijection $\Phi$ as in the first paragraph of this proof, it suffices to prove the same estimate on $\lrw_k\times\lrw_{N-k}$. 
Let $t\in\mathbb{Z}_{\geq0}$ and $(S_n)_{n\in\llbracket0,t\rrbracket}$ be an ordinary \(\nu_\b\)-random walk started from \(S_0=x\), and denote its law by $\mathbb{P}_x$. Then 
Lemma \ref{lem:basic hitting monotone coupling} yields $\mathbb{P}_x(S_n\geq0,\,n\in\llbracket0,t\rrbracket)\ge1-\b^{-2(x+1)}$, and hence 
\begin{equation}\label{eq:stability proof7}
    \TV{\mathbb{P}_x(\,\cdot\,|S_n\geq0,\,n\in\llbracket0,t\rrbracket)-\mathbb{P}_x}
    \le C_5\b^{-2x}\quad\mbox{for }x\in\mathbb{Z}_{\geq0} ,
\end{equation}
where $C_5>0$ depending only on $\b$. By a similar argument, 
\begin{equation}\label{eq:stability proof8}
    \TV{
        \mathbb{P}_x(\,\cdot\,\mid S_n\geq1,\,n\in\llbracket1,t\rrbracket)
        -
        \mathbb{P}_x
    }
    \le C_6\b^{-2x}\quad\mbox{for }x\in\mathbb{N} ,
\end{equation}
where $C_6>0$ depending only on $\b$.
Now disintegrate both \(\mathrm P_{N,k}^0\) and \( {\mathrm P}_{N,k,m}^0\) with respect to the sampled central pieces.   The central marginal is the same, and given these central pieces, the existing left and right tails under \(\mathrm P_{N,k}^0\) are independent conditioned walks, while the existing left and right tails under \( {\mathrm P}_{N,k,m}^0\) are independent unconditioned walks. 
By Lemma \ref{lem:basic hitting monotone coupling}, when \(N-k>m\),
\begin{equation}\label{eq:stability proof9}
    \mathbb E_{N,k}^0\left[\b^{-2R_m}\right]
    \le
    \mathbb P_{N,k}^0(R_m\le (\b-1)/(4(\b+1)) m)+\b^{- (\b-1)/(2(\b+1)) m}
    \le C_7e^{-c_7m}
\end{equation}
for constants \(C_7,c_7>0\) depending only on \(\b\). Similarly, when \(k>m-1\),  
\begin{equation}\label{eq:stability proof10}
    \mathbb E_{N,k}^0\left[\b^{-2L_{m-1}}\right] 
    \le C_8e^{-c_8m},
\end{equation}
for constants \(C_8,c_8>0\) depending only on \(\b\). 
Combining \eqref{eq:stability proof7}, \eqref{eq:stability proof8}, \eqref{eq:stability proof9}, \eqref{eq:stability proof10},  we obtain
\[
    \TV{
        \mathbb P_{N,k}^0
        -
         {\mathsf P}_{N,k,m}^0
    }
    \le
    C_5\mathbb E_{N,k}^0\left[\b^{-2R_m}\right]\cdot{\bf1}_{\{m<N-k\}}+C_6\mathbb E_{N,k}^0\left[\b^{-2L_{m-1}}\right]\cdot{\bf1}_{\{2\leq m\leq k\}} 
    +
    {\bf1}_{\{m=1\}} \leq C_2e^{-c_2m},
\]
which concludes the proof of \eqref{eq:stability proof6}.
\end{proof}

\subsection{Total variation distance of the tail kernels}

In the following, our goal is to show that that the birth-death chain in the probabilistic characterization of the Bryc polynomial does  up to an exponentially small error in $m$ not change at location $N$ when we condition on a typical location after $N-m$ steps for some $m \in \llbracket N \rrbracket$. To this end, we require the following definition of the birth-death chain with a fixed starting location at step $N-m$.

\begin{definition}\label{def:chain tail}
Let $q\in(0,1)$, $N\in\mathbb{N}$, and $\boldsymbol{\mathsf{S}}=(\mathsf{S}(0),\dots,\mathsf{S}(N))\in\{0,1\}^{N+1}$ be a $0$-$1$ sequence. 
For $m\in\llbracket0,N-1\rrbracket$, denote $\boldsymbol{\mathsf{S}}_{N,m}=(\mathsf{S}(N-m),\dots,\mathsf{S}(N))\in\{0,1\}^{m+1}$. 
Assume $x\in\llbracket0, N-m-1 \rrbracket$. Consider the Markov chain $\{X^{\boldsymbol{\mathsf{S}},x}_{N,m}(i)\}_{i=N-m}^{N+1}$ defined by 
$X^{\boldsymbol{\mathsf{S}},x}_{N,m}(N-m)=x$ and for $i\in\llbracket N-m,N\rrbracket$ and $j\in\llbracket0,i-1\rrbracket$,
\begin{enumerate}
    \item [(\romannum{1})] if $\mathsf{S}(i)=1$, we let $X^{\boldsymbol{\mathsf{S}},x}_{N,m}(i+1)-X^{\boldsymbol{\mathsf{S}},x}_{N,m}(i)\in\{-1,0\}$ with 
    \begin{equation}\label{eq:etget11}
    \mathbb{P}\left(X^{\boldsymbol{\mathsf{S}},x}_{N,m}(i+1)=j-1 \Big{|}X^{\boldsymbol{\mathsf{S}},x}_{N,m}(i)=j\right)=\frac{1-q^{j}}{1-q^{i+1}},
    \end{equation}
    \item [(\romannum{2})] if $\mathsf{S}(i)=0$, we let $X^{\boldsymbol{\mathsf{S}},x}_{N,m}(i+1)-X^{\boldsymbol{\mathsf{S}},x}_{N,m}(i)\in\{0,1\}$ with 
    \begin{equation}\label{eq:etget21}
    \mathbb{P}\left(X^{\boldsymbol{\mathsf{S}},x}_{N,m}(i+1)=j+1 \Big{|}X^{\boldsymbol{\mathsf{S}},x}_{N,m}(i)=j\right)=\frac{q^{j+1}-q^{i+1}}{1-q^{i+1}}.
    \end{equation}
\end{enumerate} 
The right sides of \eqref{eq:etget11} and \eqref{eq:etget21} lie in $[0,1]$ since $j\in\llbracket0,i-1\rrbracket$ and $q\in[0,1)$. 
By an induction argument, we have $X^{\boldsymbol{\mathsf{S}},x}_{N,m}(i)\in\llbracket0,i-1\rrbracket$ for all $i\in\llbracket N-m,N+1\rrbracket$, and hence this Markov chain is well-defined.  We denote by $K_{N,m}(\boldsymbol{\mathsf{S}};x,\cdot)$ the law of $X^{\boldsymbol{\mathsf{S}},x}_{N,m}(N+1)$ started at $X^{\boldsymbol{\mathsf{S}},x}_{N,m}(N-m)=x$.
\end{definition}
We have the following comparison between the law of $K_{N,m}(\boldsymbol{\mathsf{S}};x,\cdot)$ and $K_{N,m}(\boldsymbol{\mathsf{S}};0,\cdot)$.

\begin{proposition}\label{prop:expectation of TV distance coex}
Assume \(q\in(0,1)\) and \(\b>1\). For \(N\in\mathbb N\) and \(\bg\in\lrw_N\), let \(\bs(\bg)\) be the composition associated to \(\bg\) as in \eqref{eq:Signature}, and let
\(\boldsymbol{\mathsf S}^{\bs(\bg)}\in\{0,1\}^{N+1}\) be the associated \(0\)-\(1\) sequence from Definition \ref{def:associated 01 seq}.  For \(m\in\llbracket0,N-1\rrbracket\), let
\(K_{N,m}(\boldsymbol{\mathsf S}^{\bs(\bg)};x,\cdot)\), \(x\in\llbracket0,N-m-1\rrbracket\), be the transition law from Definition \ref{def:chain tail}.  For \(k\in\llbracket0,N\rrbracket\), write $\mathbb P_{N,k}^0(\,\cdot\,)=\mathbb P^{(0,\b,\b)}(\,\cdot\,\mid T_N=k)$, and  $\mathbb E_{N,k}^0$ for the corresponding expectation, where we recall that $\mathbb{P}^{(0,\b,\b)}$ and $T_N$ are as in \eqref{eq:PN00 and TN}.  Then there exist constants \(\varepsilon,C,c>0\), depending only on \(q\) and \(\b\), such that, for every \(N\ge1\), \(k\in\llbracket0,N\rrbracket\), and \(m\in\llbracket1,N-1\rrbracket\), denote $h_{N,m}=\min\{\lfloor\varepsilon m\rfloor,N-m-1\}$, then
\[
\mathbb E_{N,k}^0\!\left[
\sup_{x\in\llbracket0,h_{N,m}\rrbracket}
\TV{
K_{N,m}\left(\boldsymbol{\mathsf{S}}^{\bs(\bg)};x,\cdot\right)
-
K_{N,m}\left(\boldsymbol{\mathsf{S}}^{\bs(\bg)};0,\cdot\right)
}
\right]
\leq Ce^{-cm}.
\] 
\end{proposition}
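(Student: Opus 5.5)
We prove the bound by coupling. For fixed $\bg$ we run the two tail chains $X^{\boldsymbol{\mathsf S},x}_{N,m}$ and $X^{\boldsymbol{\mathsf S},0}_{N,m}$ from time $N-m$ under a monotone coupling. The total variation distance at time $N+1$ is then at most the probability that the two chains have not merged by time $N+1$.

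\textbf{Step 1: monotone coupling.} As in Lemma~\ref{lem:coupling birth death}~(\romannum{1}), we couple the two chains so that their ordering is preserved and they coalesce once they meet. At a $1$-symbol the down-probability $(1-q^j)/(1-q^{i+1})$ is increasing in $j$, and at a $0$-symbol the up-probability $(q^{j+1}-q^{i+1})/(1-q^{i+1})$ is decreasing in $j$. Hence the gap $D=X^{x}-X^{0}\ge0$ is nonincreasing, up to the coupling keeping order. Since $X^x\preceq X^{x'}$ for $x\le x'$, it suffices to treat the single starting point $x=h_{N,m}$: the gap from any smaller $x$ is dominated by this one. This removes the supremum over $x$.

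\textbf{Step 2: each $1$-symbol shrinks the gap with positive probability.} At a $1$-symbol with $j'>j$, the upper chain steps down while the lower stays put with probability at least
\[
\frac{q^{j}-q^{j'}}{1-q^{i+1}}\ge q^{j}(1-q),
\]
which is bounded below as long as the lower chain $X^0$ is not large. At a $0$-symbol the gap can only shrink. So each $1$-symbol at a time when $X^0\le L$ decreases $D$ by one with probability at least $c_q q^{L}$. This gives
\[
\mathbb P(D(N+1)>0)\le \mathbb P\big(\text{fewer than } h \text{ successful decrements}\big),
\]
where $h=h_{N,m}\le\varepsilon m$.

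\textbf{Step 3: control the environment under $\mathbb E^0_{N,k}$.} We need, with probability $1-Ce^{-cm}$ under $\mathbb E^0_{N,k}$, two facts:
\begin{enumerate}
\item[(a)] the window $\llbracket N-m,N\rrbracket$ contains at least $c'm$ indices with $\mathsf S^{\bs(\bg)}(i)=1$;
\item[(b)] at most a small fraction of these indices have $X^0(i)\ge L$, for a fixed large $L$.
\end{enumerate}
For (a), compare via Lemma~\ref{lem:comparision} with $\boldsymbol\lambda$ and use Lemma~\ref{lem:regeneration lambda}. The ones include the points $\upchi_j$, whose gaps $\mathrm R_j\le G_j$ have independent subexponential tails. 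A large deviation bound for sums of the $G_j$ then shows that a window of length $m$ contains at least $c'm$ of the $\upchi_j$, except with probability $e^{-cm}$. The window near $N$ corresponds to the lowest levels and is hit by both arms, so no boundary issue arises.

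For (b), use Proposition~\ref{prop:moment bound coex} together with the domination $X\le\mathscr X$. Write the relevant quantity as a sum over $i$ of $\mathbf 1\{X^0(i)\ge L\}$. Its exponential moments are controlled through the block structure of Lemma~\ref{lem:block moment bound simpler chain arbitrary start}: the chain drifts down by $\ber(1-q)$ at the ones, and its excursions above $L$ are geometrically rare. So the number of bad ones exceeds $c'm/2$ only with exponentially small probability.

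On the good event there are at least $c'm/2$ trials, each succeeding with probability at least $p_L>0$. Choosing $\varepsilon<c'p_L/4$, a Chernoff bound gives failure probability $e^{-cm}$. Summing the bad-environment probability and this failure probability yields the claim.

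\textbf{Main obstacle.} The main obstacle is (b): a joint exponential estimate that the lower chain stays bounded at most $1$-times, uniformly in $k$. I would first obtain this by a supermartingale argument for $z^{\mathscr X}$ along the blocks between consecutive $\upchi_j$, reusing Step~2 of the proof of Proposition~\ref{prop:moment bound coex}.
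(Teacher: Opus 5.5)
Your argument is correct in outline, but it differs from the paper at the key step. Both proofs share the same setup: they pass to $\boldsymbol\lambda$ via Lemma~\ref{lem:comparision}, use Lemma~\ref{lem:regeneration lambda} to get, except with probability $e^{-cm}$, at least $\eta m$ one-symbols in the tail window at the positions $\upchi_j$, and couple the tail chains monotonically.

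The difference is how coalescence is forced. The paper uses Lemma~\ref{lem:bound the TV distance by hitting}. It bounds the supremum of the total variation distances by the probability that the dominating simple chain $\mathscr X^{\boldsymbol{\mathsf S}_{N,m},h}$ never reaches $0$; once it does, both coupled tail chains are squeezed to $0$ and agree afterwards. So only one chain has to be controlled, and nothing is needed about where the lower chain sits. The paper then passes to $\boldsymbol{\mathsf S}^{D_1,\ldots,D_A}$ by Lemma~\ref{lem:coupling birth death}~(\romannum{3}), and to i.i.d.\ copies of $H$ by Lemma~\ref{lem:inserting zeros comparison} together with the pointwise bound on $\{A=a\}$. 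Lemma~\ref{lem:simpler chain hitting 0} then gives $Ce^{-ca}v^{h}$, and the factor $v^h$ is absorbed by taking $\varepsilon\log v\le\eta c_2/4$.

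You instead shrink the gap one unit at a time. This is finer, since the upper chain need not come down to $0$. But it forces you to prove an occupation-time estimate for the lower chain, your (b). That estimate is not among the imported tools, and the paper's route avoids it entirely. In your route the cost of the starting height is paid by taking $\varepsilon$ small relative to $(1-q)q^L$ rather than relative to $\log v$.

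If you carry out (b), three remarks apply.

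First, you do not need the bad fraction to be at most $1/2$. Any fixed fraction $\theta>0$ of good one-times suffices, with $\varepsilon<\theta\eta(1-q)q^L/2$ in the Chernoff step. This matters because $\theta$ is what the natural supermartingale delivers. Let $Y_j$ be the value of $\mathscr X$ at the $j$-th one and take $z\in(1,1/q)$. Lemma~\ref{lem:upward jumps U chain} gives $\mathbb E[z^{Y_{j+1}}\mid Y_j=y]\le\rho_0z^y+b$ with $\rho_0=q+(1-q)/z<1$. The resulting multiplicative weights are $\rho_0+bz^{-L}$ above $L$ and $\rho_0z^L+b$ at or below $L$. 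They give exponential concentration only for a small fraction $\theta$, of order $1/L$. Reaching $1/2$ would need a finer argument, such as regeneration at $0$.

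Second, Proposition~\ref{prop:moment bound coex} does not help here. It concerns the chain started at time $0$ and controls one time at a time, not the occupation count.

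Third, that supermartingale needs independent gaps. The gaps $D_1,\ldots,D_A$ are only bounded by $G_{A+1},\ldots,G_2$, in reversed order, and $A$ is random. Use the paper's device to handle this. Dominate the lower tail chain by $\mathscr X$ on the sequence whose ones sit only at the $\upchi_j$. The coupling in the proof of Lemma~\ref{lem:inserting zeros comparison} compares chain values at corresponding ones. So the probability of having fewer than $\theta a$ good one-times is coordinatewise nondecreasing in the gap lengths. On $\{A=a\}$ you may then replace the gaps by $(G_{a+1},\ldots,G_2)$, then by i.i.d.\ copies of $H$, and sum the resulting $Ce^{-ca}$ over $a\ge\eta m$.
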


 In order to show Proposition~\ref{prop:expectation of TV distance coex}, we will import the following two results from \cite{SchmidYangShock} on comparing $K_{N,m}(\boldsymbol{\mathsf{S}};x,\cdot)$ to the chain $\mathscr{X}$. 

\begin{lemma}[c.f.~Lemma 5.9 in \cite{SchmidYangShock}]\label{lem:bound the TV distance by hitting}
Let $q\in(0,1)$, $N\in\mathbb{N}$, and $m\in\llbracket0,N-1\rrbracket$. Let $\boldsymbol{\mathsf{S}}\in\{0,1\}^{N+1}$ be a $0$-$1$ sequence. Let  $\boldsymbol{\mathsf{S}}_{N,m}\in\{0,1\}^{m+1}$ and $K_{N,m}(\boldsymbol{\mathsf{S}};x,\cdot)$ for $x\in\llbracket0, N-m-1 \rrbracket$ be as in Definition \ref{def:chain tail}. Let the Markov chain $\mathscr{X}^{\boldsymbol{\mathsf{S}}_{N,m},x}=\{\mathscr{X}^{\boldsymbol{\mathsf{S}}_{N,m},x}(i)\}_{i=0}^{m+1}$ be as in Definition \ref{def:simpler chain}. Then for each $h\in\llbracket0,N-m-1\rrbracket$,
\begin{equation*}
    \sup_{x\in\llbracket0, h\rrbracket}
\TV{
K_{N,m}\left(\boldsymbol{\mathsf{S}} ;x,\cdot\right)
-
K_{N,m}\left(\boldsymbol{\mathsf{S}} ;0,\cdot\right)
} 
\leq
\mathbb{P}\left( \mathscr{X}^{\boldsymbol{\mathsf{S}}_{N,m},h}(i)>0\mbox{ for all }i\in\llbracket0,m+1\rrbracket \right).
\end{equation*} 
\end{lemma}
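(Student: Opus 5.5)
The natural route is a coupling argument comparing the true tail chain with the simpler chain $\mathscr X$. The statement to prove is Lemma~\ref{lem:bound the TV distance by hitting}: the supremum over starting points $x\in\llbracket0,h\rrbracket$ of the total variation distance between $K_{N,m}(\boldsymbol{\mathsf S};x,\cdot)$ and $K_{N,m}(\boldsymbol{\mathsf S};0,\cdot)$ is at most the probability that $\mathscr X^{\boldsymbol{\mathsf S}_{N,m},h}$ stays strictly positive throughout.

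Step one is a monotone coupling of the tail chains $X^{\boldsymbol{\mathsf S},x}_{N,m}$ for all starting points $x\in\llbracket0,h\rrbracket$ simultaneously, together with the chain $X^{\boldsymbol{\mathsf S},0}_{N,m}$. We use a single uniform variable $U_i$ at each step $i$. If $\mathsf S(i)=1$, the chain moves down iff $U_i\le (1-q^j)/(1-q^{i+1})$; if $\mathsf S(i)=0$, it moves up iff $U_i\le (q^{j+1}-q^{i+1})/(1-q^{i+1})$. The down-probability is increasing in $j$ and the up-probability is decreasing in $j$, with increments in $\{-1,0\}$ or $\{0,1\}$. Hence the order $X^{x}\ge X^{0}$ is preserved: two chains at distance $\ge1$ cannot cross, and chains at equal height move together. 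Moreover, once the top chain started from $h$ meets the bottom chain started from $0$, all intermediate chains are squeezed and coincide from then on.

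Step two reduces the distance to a coalescence event. By the coupling inequality, for every $x\le h$,
\[
\TV{K_{N,m}(\boldsymbol{\mathsf S};x,\cdot)-K_{N,m}(\boldsymbol{\mathsf S};0,\cdot)}\le \mathbb P\bigl(X^{h}_{N,m}(N+1)\ne X^{0}_{N,m}(N+1)\bigr).
\]
The right side is the probability that $X^h$ and $X^0$ never meet.

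Step three is the main point: bounding this non-meeting probability by the hitting event for $\mathscr X$. The idea is to dominate the gap $D_i=X^h(i)-X^0(i)\ge0$ by a copy of $\mathscr X^{\boldsymbol{\mathsf S}_{N,m},h}$, time-shifted by $N-m$. Consider first a $1$-step. The gap decreases by one exactly when the top chain moves down and the bottom chain does not. With the common uniform, this probability is
\[
\frac{(1-q^{a})-(1-q^{b})}{1-q^{i+1}}=\frac{q^{b}-q^{a}}{1-q^{i+1}},
\]
where $a>b$ are the two heights. This quantity can be small, so the same-uniform coupling is the wrong coupling for this step. Instead, while $D_i>0$, we let the two chains evolve independently, and merge them identically once they meet. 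Under independence, the probability that the top chain moves down and the bottom stays is
\[
\frac{1-q^{a}}{1-q^{i+1}}\cdot\frac{q^{b}-q^{i+1}}{1-q^{i+1}}.
\]
This too is not obviously at least $1-q$.

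A more robust scheme couples $D$ to $\mathscr X$ in the two cases separately. On a $1$-step with $D>0$, we need a probability at least $1-q$ that $D$ decreases; on a $0$-step, the probability that $D$ increases must be at most $q^{D+1}$. For the $0$-step, under the same-uniform coupling the gap increases only if the lower chain moves up and the upper does not. That probability is $(q^{b+1}-q^{a+1})/(1-q^{i+1})\le q^{b+1}$, which is not obviously $\le q^{a-b+1}$ either.

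So the main obstacle is matching exactly the rates of $\mathscr X$. I expect the paper imports this from \cite{SchmidYangShock}. There, presumably, the comparison uses the specific structure: $j\le i-1$, and the rate $(1-q^j)/(1-q^{i+1})$ is at least $1-q$ whenever $j\ge1$, since $1-q^j\ge1-q$. This suggests comparing the top chain alone, rather than the gap, with $\mathscr X$, and using that the bottom chain is $\ge0$. Meeting occurs at the latest when the top chain reaches $0$, because then the bottom chain is also at $0$.

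Hence, under the monotone coupling,
\[
\{X^h\text{ never meets }X^0\}\subseteq\{X^h(i)>0\ \text{for all } i\}.
\]
By Lemma~\ref{lem:coupling birth death}(ii)-type domination with initial state $h$ (the down rate is at least $(1-q)\mathbf 1_{j\ge1}$ and the up rate is at most $q^{j+1}$), the top chain is stochastically below $\mathscr X^{\boldsymbol{\mathsf S}_{N,m},h}$. The event of staying positive is increasing in the path, so its probability is at most the probability that $\mathscr X^{\boldsymbol{\mathsf S}_{N,m},h}$ stays positive on $\llbracket0,m+1\rrbracket$. This gives the lemma.
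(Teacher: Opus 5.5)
Your final argument is correct and complete. It has three steps: a common-uniform monotone coupling of the tail chains, which coalesce once they meet. Then the inclusion $\{X^h(N+1)\neq X^0(N+1)\}\subseteq\{X^h(i)>0\ \text{for all } i\}$, since the lower chain is trapped at $0$ when the upper one hits $0$. Finally, pathwise domination of the tail chain started at $h$ by $\mathscr X^{\boldsymbol{\mathsf S}_{N,m},h}$, via the rate comparisons $(1-q^j)/(1-q^{i+1})\ge (1-q)\mathbf 1_{j\ge1}$ and $(q^{j+1}-q^{i+1})/(1-q^{i+1})\le q^{j+1}$. This is the natural proof of this imported lemma. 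The paper gives no proof here and cites \cite{SchmidYangShock}, but it uses the same monotone update-map couplings elsewhere, e.g.\ in Lemma~\ref{lem:coupling of chain tails}.

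Drop the exploratory attempts in your Step three to dominate the gap $D_i$; none of them is needed. Also note that Lemma~\ref{lem:coupling birth death}(ii) does not apply verbatim to a tail chain started at $h$, so spell out that last coupling directly. Use the monotone update map of $\mathscr X$ together with the two rate inequalities above.
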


\begin{lemma}[c.f.~Lemma 5.10 in \cite{SchmidYangShock}]\label{lem:simpler chain hitting 0}
 Let \(q\in(0,1)\), $C_0,c_0>0$ and \(H\) be a \(\mathbb N\)-valued random
variable, satisfying $\mathbb{P}(H\geq y)\leq C_0e^{-c_0y}$ for all $y\in\mathbb{R}_{\geq0}$.
Let \(H_1,H_2\ldots\) be i.i.d. copies of \(H\). For $\ell\in\mathbb{N}$, consider the $0$-$1$ sequence $\boldsymbol{\mathsf S}^{H,\ell}\in\{0,1\}^{H_1+\dots+H_{\ell}}$ given by 
\[
{\mathsf S}^{H,\ell}(i)=1\quad\mbox{if and only if } i=H_1+\dots+H_j-1\mbox{ for some }j\in\llbracket1,\ell\rrbracket,
\]
Consider the Markov chain $\mathscr{X}^{\boldsymbol{\mathsf S}^{H,\ell},x}=\{\mathscr{X}^{\boldsymbol{\mathsf S}^{H,\ell},x}(i)\}_{i=0}^{H_1+\dots+H_{\ell}}$ started at $x\in\mathbb{Z}_{\geq0}$ as in Definition \ref{def:simpler chain}, sampled using transition randomness independent of  \(H_1,H_2\ldots\). Then there exist constants $C,c>0$ and $v>1$
depending only on \(q\) and on the law of \(H\), such that
\begin{equation}\label{eq:nryrbsbtn4y}
\mathbb{P}\left(\mathscr{X}^{\boldsymbol{\mathsf S}^{H,\ell},x}(i)>0\mbox{ for all }i\in\llbracket0,H_1+\dots+H_{\ell}\rrbracket\right)
\leq C e^{-c\ell}v^x\quad \mbox{for all }\ell\in\mathbb N,\ x\in\mathbb Z_{\ge0}.
\end{equation}
\end{lemma}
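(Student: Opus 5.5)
The plan is a Foster--Lyapunov drift argument applied to the block-endpoint chain. Write $\mathsf T_j=H_1+\dots+H_j$ and $Z_j=\mathscr X^{\boldsymbol{\mathsf S}^{H,\ell},x}(\mathsf T_j)$ for $j\in\llbracket0,\ell\rrbracket$.

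\textbf{Reduction to the block chain.} The event in \eqref{eq:nryrbsbtn4y} is contained in $\{Z_j>0 \text{ for all } j\le\ell\}$, so it suffices to bound the latter. By Definition~\ref{def:simpler chain}, block $j$ consists of a zero-run of length $H_j-1$ followed by one $1$-symbol. So $(Z_j)$ is a time-homogeneous Markov chain on $\mathbb Z_{\ge0}$, because the $H_j$ are i.i.d. and independent of the transition randomness. Its transition is exactly the one in Lemma~\ref{lem:block moment bound simpler chain arbitrary start}: from state $y$, add $Q^{y}(H-1)$, then subtract an independent $\ber(1-q)$ if the resulting state is positive.

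\textbf{Step 1: drift above a level.} Fix any $v>1$ and set $\rho_0=q+(1-q)/v<1$. For $y\ge1$, combine Lemma~\ref{lem:upward jumps U chain} (with $z=v$) with the exponential tail of $H$, which gives $M:=\mathbb E[H^{C}]<\infty$. This yields
\[
\mathbb E\bigl[v^{Z_{j}}\,\big|\,Z_{j-1}=y\bigr]\le v^{y}\bigl(1+Cq^{y}M\bigr)\rho_0 .
\]
Choose $y_0$ so large that $\rho:=(1+Cq^{y_0}M)\rho_0<1$. Then $v^{Z_j}\rho^{-j}$ is a supermartingale up to the hitting time $\sigma$ of $\llbracket0,y_0\rrbracket$. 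Hence $\mathbb E[\rho^{-\sigma}]\le v^{x}$, and $\mathbb P(\sigma>n)\le\rho^{n}v^x$. This step is where the factor $v^x$ in \eqref{eq:nryrbsbtn4y} comes from.

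\textbf{Step 2: uniform chance of absorption from low states.} From any $y\le y_0$, I use the following event: in each of the next $y$ blocks the zero-run produces no upward step, and the $1$-symbol does decrease. Whatever the run length, the first has probability at least $\prod_{j\ge0}(1-q^{j+1})=(q;q)_\infty$, and the second has probability $1-q$. So the chain hits $0$ within $y_0$ blocks with probability at least $p_0:=((q;q)_\infty(1-q))^{y_0}>0$. If this attempt fails, the state after $y_0$ blocks is $Z'$ with $\mathbb E[v^{Z'}]\le C'$, by Lemma~\ref{lem:block moment bound simpler chain arbitrary start} with bounded $x\le y_0$, $\ell=y_0$, and $\mathbb E[H^{C}]<\infty$. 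Restarting Step 1 from $Z'$ then gives another return time with exponential moment $\mathbb E[\rho^{-\sigma'}]\le C'$.

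\textbf{Step 3: gluing, the main obstacle.} The number of attempts before absorption at $0$ is dominated by a geometric variable with parameter $p_0$. The time spent before absorption is then the sum of
\begin{itemize}
\item the first excursion time, with $\mathbb E[\rho^{-\sigma}]\le v^x$;
\item a geometric number of i.i.d.-dominated pieces, each consisting of $y_0$ blocks plus a return time with exponential moment bounded by $C'$.
\end{itemize}
A variant of Lemma~\ref{lem:basic exponential tails}~(\romannum{4}) gives an exponential moment $\mathbb E[e^{c\,\cdot}]\le Cv^x$ for this total absorption time. The strong Markov property at the attempt times justifies the domination, and the failure events are conditioned on; I will handle this by conditioning on failure, so the restart bound $C'$ is the one to use. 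Markov's inequality then yields $Ce^{-c\ell}v^x$. The part I expect to need care is keeping all constants dependent only on $q$ and the law of $H$, and not on $x$, when the geometric sum is concatenated after the first excursion. I will avoid this by taking $c$ small enough that $e^{c}\le\rho^{-1}$ and $e^{c y_0}(1-p_0)\cdot C'<1$.
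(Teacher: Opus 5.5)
Your reduction to the block chain $(Z_j)$ and the drift bound of Step~1 are correct, and Step~1 is indeed where the factor $v^x$ comes from. Step~2, however, rests on a false bound. From a state $y'\ge1$, a zero-run of length $h$ makes no upward step with probability exactly $(1-q^{y'+1})^{h}$, which tends to $0$ as $h\to\infty$. So $(q;q)_\infty$ is not a lower bound ``whatever the run length''. The repair is to average over the independent run length. By Jensen, $\mathbb E[(1-q^{y'+1})^{H-1}]\ge(1-q^{2})^{\mathbb E[H]-1}$, so you may take $p_0=\bigl((1-q)(1-q^{2})^{\mathbb E[H]-1}\bigr)^{y_0}$. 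This depends only on $q$ and the law of $H$, which the statement allows.

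The genuine gap is in Step~3, the step you flagged. Let $\tau_0$ be the first $j$ with $Z_j=0$, and let $F=\{\tau_0>y_0\}$ be the failure of an attempt.
\begin{itemize}
\item \textbf{Why your condition fails.} The condition $e^{cy_0}(1-p_0)C'<1$ cannot be reached by shrinking $c$, because $C'$ is a fixed bound at rate $\rho^{-1}$ and does not improve as $c\downarrow0$. You use $C'$ as the restart bound conditionally on failure, and $Z'\ge1$ on $F$, so necessarily $C'\ge v$. On the other hand, $p_0\le\mathbb P_{y_0}(\tau_0\le y_0)\le(1-q)^{y_0}$, since from $y_0$ every one of $y_0$ one-symbols must decrement. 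Hence $(1-p_0)C'\ge(1-(1-q)^{y_0})v$, which exceeds $1$ once $(1-q)^{y_0}<1-1/v$. Large $y_0$ is exactly the regime Step~1 forces, so the geometric series over restarts diverges for every $c>0$.
\item \textbf{What is missing.} The restart factor, computed at the small rate, must tend to $\mathbb P(F)\le1-p_0$ as the rate tends to $0$. Write $e^{c}=\rho^{-s}$ with $s\in(0,1]$. Jensen gives $\mathbb E_z[\rho^{-s\sigma}]\le v^{sz}$. H\"older gives $\mathbb E[\mathbf 1_F v^{sZ'}]\le\mathbb P(F)^{1-s}\,\mathbb E[v^{Z'}]^{s}\le(1-p_0)^{1-s}(C')^{s}$. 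So each restart contributes at most $\rho^{-sy_0}(1-p_0)^{1-s}(C')^{s}$, which is $<1$ for $s$ small, depending only on $q$ and the law of $H$.
\item \textbf{Conclusion.} Combine this with $\mathbb E_x[\rho^{-s\sigma}]\le v^{x}$ for the first excursion. The strong Markov property then gives $\mathbb E_x[e^{c\tau_0}]\le Cv^{x}$, and Markov's inequality finishes.
\item \textbf{Alternative.} You may keep the fixed-rate bounds and instead dominate the restart pieces by an i.i.d.\ exponentially tailed sequence. Then apply Lemma~\ref{lem:basic exponential tails}(iv), which tolerates dependence between the number of summands and the summands, and handle the first excursion by a union bound.
\end{itemize}
With these two repairs, your drift--regeneration argument does prove the lemma.
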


Apart from Lemma~\ref{lem:bound the TV distance by hitting} and Lemma~\ref{lem:simpler chain hitting 0}, we will also use the following comparison for the chain $\mathscr{X}$ for different $0$-$1$ sequences.

\begin{lemma}\label{lem:inserting zeros comparison}
Assume \(q\in(0,1)\). Let \(\ell\in\mathbb N\). For
\(a_1,\ldots,a_\ell\in\mathbb N\), define
\(\boldsymbol{\mathsf S}^{a_1,\ldots,a_\ell}
\in\{0,1\}^{a_1+\cdots+a_\ell}\) by
\[
\mathsf S^{a_1,\ldots,a_\ell}(i)=1
\quad\mbox{if and only if}\quad
i=a_1+\cdots+a_j-1
\mbox{ for some }j\in\llbracket1,\ell\rrbracket .
\] 
Let \(x,x'\in\mathbb Z_{\geq0}\) satisfy \(x\le x'\). Let
\(h_i,h_i'\in\mathbb N\), \(i\in\llbracket1,\ell\rrbracket\), satisfy
\(h_i\le h_i'\) for every \(i\in\llbracket1,\ell\rrbracket\). Then
\[
\mathbb P \left(
\mathscr X^{\boldsymbol{\mathsf S}^{h_1,\ldots,h_{\ell}},x}(i)>0
\mbox{ for all } i\in\llbracket0,h_1+\cdots+h_{\ell}\rrbracket
\right)
\le
\mathbb P \left(
\mathscr X^{\boldsymbol{\mathsf S}^{h_1',\ldots,h'_{\ell}},x'}(i)>0
\mbox{ for all } i\in\llbracket0,h'_1+\cdots+h'_{\ell}\rrbracket
\right).
\] 
\end{lemma}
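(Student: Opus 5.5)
We prove the inequality with a monotone coupling, in two steps: first enlarge each block one at a time, then raise the starting point. Write $h=(h_1,\ldots,h_\ell)$ and let $p(h,x)$ denote the probability that the chain $\mathscr X^{\boldsymbol{\mathsf S}^{h},x}$ stays strictly positive throughout its run. The chain restricted to one block of length $a$ behaves as follows: it runs $a-1$ steps under the zero rule (upward jumps with probability $q^{j+1}$, never downward), and then takes one step under the one rule (a downward jump with probability $1-q$ when $j\ge1$). Since a block contains no downward step before its final position, positivity can only fail at the final step of a block. Moreover, the start $x=0$ already violates positivity at time $0$, so we may assume $x\ge1$.

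For the first step, consider changing a single block length from $h_i$ to $h_i+1$. This inserts one extra zero symbol just before the terminating one symbol of block $i$. We couple the two chains identically up to the position of the inserted symbol. At that point, the modified chain takes one extra zero step, which can only increase its value by $0$ or $1$. After that, both chains continue under the same remaining symbol sequence, shifted by one index. Lemma~\ref{lem:coupling birth death}~(\romannum{3}), applied with the same sequence $\boldsymbol{\mathsf S}=\boldsymbol{\mathsf S}'$ and ordered initial states $y\le y'$, gives a coupling in which the modified chain dominates the original one at all subsequent times. We then check positivity on the two paths. Before the insertion the two paths coincide. The inserted step is a zero step, so it does not decrease the value and cannot be the first nonpositive time. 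Afterwards the modified chain is pointwise at least the original. Hence the event that the original chain stays positive is contained in the event that the modified chain stays positive. Iterating this over all indices $i$ and over all the unit increments needed to reach $h_i'$ gives $p(h,x)\le p(h',x)$.

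For the second step, we raise the starting point from $x$ to $x'$. Lemma~\ref{lem:coupling birth death}~(\romannum{3}) with $x\le x'$ and identical symbol sequences gives $\mathscr X^{\boldsymbol{\mathsf S}^{h'},x}\le\mathscr X^{\boldsymbol{\mathsf S}^{h'},x'}$ pointwise, hence $p(h',x)\le p(h',x')$. Combining the two steps yields the claimed inequality.

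The main obstacle is to justify the coupling across the inserted symbol with care. Once a zero symbol is inserted, the indices of all later symbols shift, so we must compare the two chains at times $t$ and $t+1$ respectively rather than at equal times. This is harmless here because the transition rules of $\mathscr X$ in Definition~\ref{def:simpler chain} depend only on the current symbol and the current state, not on the time index, unlike the original chain $X$. So after the insertion the shifted modified chain is again a $\mathscr X$-chain driven by the same symbol sequence, started from a state at least as large. The positivity event is preserved under this time shift, because the inserted time itself carries a value that is at least the previous value, which was positive.
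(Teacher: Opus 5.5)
Your proof is correct and is essentially the paper's argument. Both proofs couple the chains through common randomness on matched symbols. Both rely on three facts: the $\mathscr X$-transitions depend only on the current symbol and state, the updates are monotone in the state, and zero symbols never decrease the chain.

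The difference is only organizational. The paper inserts all extra zeros at once through an increasing injection $\iota$, placing them just before each block's terminating one exactly as you do. It absorbs the change $x\le x'$ into the same coupling and proves $Y(i+1)\le Y'(\iota(i)+1)$ by induction. You instead insert one zero at a time, invoke Lemma~\ref{lem:coupling birth death}~(\romannum{3}) for the shifted continuation, raise the starting point in a separate step, and chain the resulting probability inequalities. Your version also spells out why positivity persists at the inserted times, which the paper leaves implicit.
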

\begin{proof}
    For $s\in\{0,1\}$, $u\in(0,1)$, and $j\in\mathbb{Z}_{\geq0}$, let 
    \[
    \Psi^s(j,u)=j-{\bf1}_{s=1}{\bf1}_{\{j\geq1,u\leq 1-q\}}+{\bf1}_{s=0}{\bf1}_{\{u\leq q^{j+1}\}}.
    \]
    Note that if $U\sim\mathrm{Unif}(0,1)$ and $s\in\{0,1\}$, then $\Psi^s(j,U)$ has the transition law specified in Definition \ref{def:simpler chain} with current symbol $s$. Denote $M=h_1+\cdots+h_{\ell}$ and $M'=h'_1+\cdots+h'_{\ell}$, and set $\boldsymbol{\mathsf S}=\boldsymbol{\mathsf S}^{h_1,\ldots,h_\ell}$,  $\boldsymbol{\mathsf S}'=\boldsymbol{\mathsf S}^{h'_1,\ldots,h'_\ell}$.
    Let $H_0=H'_0=0$, $H_r=h_1+\cdots+h_r$, and $H'_r=h'_1+\cdots+h'_r$ for $r\in\llbracket1,\ell\rrbracket$. Define $\iota:\llbracket0,M-1\rrbracket\to\llbracket0,M'-1\rrbracket$ as follows. If $i=H_{r-1}+t$ with $r\in\llbracket1,\ell\rrbracket$ and $0\le t<h_r-1$, set $\iota(i)=H'_{r-1}+t$; if $i=H_r-1$, set $\iota(i)=H'_r-1$. Since \(h_r\le h'_r\) for every \(r\), this is an increasing injection satisfying
    \[
    \mathsf S'(\iota(i))=\mathsf S(i)\quad\mbox{for all }i\in\llbracket0,M-1\rrbracket,
    \qquad
    \mathsf S'(j)=0\quad\mbox{for all }j\in\llbracket0,M'-1\rrbracket\setminus\iota(\llbracket0,M-1\rrbracket).
    \]
    Let $U_0,\dots,U_{M'-1}$ be i.i.d. $\mathrm{Unif}(0,1)$ random variables. We define $Y$ and $Y'$ by $Y(0)=x$, $Y'(0)=x'$,
\[
Y(i+1)=\Psi^{\mathsf S(i)}\left(Y(i),U_{\iota(i)}\right)\mbox{ for }i\in\llbracket0,M-1\rrbracket,\quad\mbox{ and }\quad 
Y'(j+1)=\Psi^{\mathsf S'(j)}(Y'(j),U_j)\mbox{ for }j\in\llbracket0,M'-1\rrbracket.
\]
Then $Y\overset{d}{=}\mathscr X^{\boldsymbol{\mathsf S}^{h_1,\ldots,h_{\ell}},x}$ and $Y'\overset{d}{=}\mathscr X^{\boldsymbol{\mathsf S}^{h_1',\ldots,h'_{\ell}},x'}$. 
For each fixed $s\in\{0,1\}$ and $u\in(0,1)$, the map $j\mapsto\Psi^s(j,u)$ is nondecreasing on $\mathbb Z_{\geq0}$.
Also $\Psi^0(j,u)\ge j$ for all $j\in\mathbb Z_{\ge0}$ and $u\in(0,1)$.
By induction, we can show that $Y(i+1)\le Y'(\iota(i)+1)$ for all $i\in\llbracket0,M-1\rrbracket$, and we conclude the proof. 
\end{proof}

We have now all tools in order to show Proposition \ref{prop:expectation of TV distance coex}.

\begin{proof}[Proof of Proposition \ref{prop:expectation of TV distance coex}]
Fix \(N\ge1\), \(k\in\llbracket0,N\rrbracket\), \(m\in\llbracket1,N-1\rrbracket\).  If \(N=1\), there is no such \(m\).  In the rest of the proof, we assume \(N\ge2\). Recall   $h_{N,m}=\min (\lfloor\varepsilon m\rfloor,N-m-1)$, where $\varepsilon>0$ will be chosen later. Let
\[
F_{N,m}(\bg)
=
\sup_{x\in\llbracket0,h_{N,m}\rrbracket}
\TV{
K_{N,m}\left(\boldsymbol{\mathsf{S}}^{\bs(\bg)};x,\cdot\right)
-
K_{N,m}\left(\boldsymbol{\mathsf{S}}^{\bs(\bg)};0,\cdot\right)
} 
\quad\mbox{ for } \bg\in\lrw_N .
\] 
This is a nonnegative function on $\lrw_N$.
Let \(\boldsymbol Y=\{Y_i\}_{i=-\infty}^{\infty}\) be a bi-infinite \(V\)-shaped \(\nu_\b\)-random walk started at \(Y_0=0\), and define $\boldsymbol{\lambda}\in\lrw_N$ by $\lambda_i=Y_{i-k}-Y_{-k}$ for $i\in\llbracket0,N\rrbracket$. 
By Lemma \ref{lem:comparision},  
\begin{equation}\label{eq:tail-TV-proof-comparison}
\mathbb E_{N,k}^0\!\left[F_{N,m}(\bg)\right]
\le
(1-\b^{-2})^{-2}\,
\mathbb E\!\left[F_{N,m}(\boldsymbol\lambda)\right].
\end{equation}
To analyze the path $\boldsymbol{\lambda}$, by Lemma \ref{lem:regeneration lambda} we choose \(\mathbb Z_{\ge0}\)-valued variables \(\upchi_j\) and \(\mathbb N\)-valued variables \(G_j\) for \(j\in\mathbb N\), and constants $C_1,c_1>0$ depending only on $\b$, satisfying the following conditions. The variables \(G_j\), \(j\in\mathbb N\), are independent, \(G_j\), \(j\ge2\), are i.i.d., and for \(j\geq2\), \(G_j\) is independent of \(\sigma(\upchi_1,\ldots,\upchi_{j-1})\). Let \(\upchi_0=N\); $K=\max\{j\in\mathbb Z_{\geq0}:\upchi_j>0\}<\infty$ almost surely; and $\mathrm R_j=\upchi_{j-1}-\upchi_j\in\mathbb Z_{\ge0}$ for \(j\in\mathbb N\). We have 
\begin{align}
    & \label{eq:monotone upchi indices 1 again}\mbox{On }\{K\ge1\}, \mbox{ }N\geq \upchi_1>\cdots>\upchi_K\geq1\mbox{ and }\mathsf S^{\bs(\boldsymbol{\lambda})}(\upchi_j)=1,\,j\in\llbracket1,K\rrbracket;\\
    & \label{eq:exponential tail of Gj again}\mathbb P(G_j\ge y)\le C_1e^{-c_1y}\mbox{ for all }j,y\in\mathbb N;\\
    & \label{eq:conditions for R again}\mathrm R_1+\cdots+\mathrm R_{K+1}=N;\,\mathrm R_j=0\mbox{ for }j\ge K+2;\,\mathrm R_j\le G_j\mbox{ for }j\in\mathbb N.
\end{align}
Let $A=\max\left\{a\in\llbracket0,K\rrbracket:\mathrm R_1+\cdots+\mathrm R_a\le m\right\}$, where the empty sum is \(0\).  Since $m\le N-1<N=\mathrm R_1+\cdots+\mathrm R_{K+1}$, we have \(A\in\llbracket0,K\rrbracket\). Note that, on the event $\{A\geq1\}$, the  $\upchi_j$s lying in the tail interval \(\llbracket N-m,N\rrbracket\) are exactly
$\upchi_A< \dots<\upchi_1$.
On  $\{A\geq1\}$, we define
\begin{equation}\label{eq:netuyu}
    D_1=m+1-(\mathrm R_1+\cdots+\mathrm R_A) 
    \quad\mbox{ and }\quad
    D_j=\mathrm R_{A-j+2}\quad\mbox{ for } j\in\llbracket2,A\rrbracket .
\end{equation}
Then each \(D_j\in\mathbb N\), 
\begin{align}
    &\label{eq:distance upchi}\upchi_A-(N-m)=D_1-1,\quad \upchi_{A-1}-\upchi_A=D_2,\quad\dots\quad  \upchi_1-\upchi_2=D_A,\\
    &\label{eq:tail-TV-proof-D-bound}D_j\le \mathrm R_{A-j+2}\le G_{A-j+2} 
    \quad\mbox{ for } j\in\llbracket1,A\rrbracket.
\end{align} 
Indeed, the second inequality follows from \eqref{eq:conditions for R again}, and the first inequality for $j\in\llbracket2,A\rrbracket$ follows from \eqref{eq:netuyu}. When $j=1$, the definition of $A$ gives $\mathrm R_1+\cdots+\mathrm R_{A+1}>m$, and hence $D_1\leq \mathrm R_{A+1}$.   
By \eqref{eq:distance upchi} and \eqref{eq:monotone upchi indices 1 again}, on the event  $\{A\geq1\}$, $\mathsf S_{N,m}^{\bs(\boldsymbol\lambda)}(D_1+\cdots+D_j-1)=1$
 for every $j\in\llbracket1,A\rrbracket$, and hence 
\begin{equation}\label{eq:netuyrthrtu}
    \mathsf S_{N,m}^{\bs(\boldsymbol\lambda)}(i)\ge \mathsf S^{D_1,\ldots,D_A}(i) 
    \quad\mbox{ for } i\in\llbracket0,D_1+\cdots+D_A-1\rrbracket .
\end{equation}
Therefore, on $\{A\geq1\}$,
\begin{equation}\label{eq:ntdynerynj}
    \begin{split}
 F_{N,m}(\boldsymbol\lambda)
&\le
\mathbb P \left(
\mathscr X^{\boldsymbol{\mathsf S}^{\bs(\boldsymbol\lambda)}_{N,m},h_{N,m}}(i)>0
\mbox{ for all }i\in\llbracket0,m+1\rrbracket
\,\middle|\,\boldsymbol Y
\right)\\
&\leq \mathbb P \left(
\mathscr X^{\boldsymbol{\mathsf S}^{D_1,\ldots,D_A},h_{N,m}}(i)>0
\mbox{ for all }i\in\llbracket0,D_1+\cdots+D_A\rrbracket
\,\middle|\,\boldsymbol Y
\right),
    \end{split}
\end{equation}
where the first step follows from Lemma \ref{lem:bound the TV distance by hitting}; for the second step, we first restrict the event on the first line to the time interval \(\llbracket0,D_1+\cdots+D_A\rrbracket\), and then apply Lemma \ref{lem:coupling birth death} \textup{(\romannum{3})}, using \eqref{eq:netuyrthrtu}.

By \eqref{eq:exponential tail of Gj again}, after increasing \(C_1\) if necessary, we may assume \(C_1e^{-c_1}\ge1\). Let \(H\) be the \(\mathbb N\)-valued random variable defined by
$\mathbb P(H\ge r)=\min (1,C_1e^{-c_1r} )$ for $r\in\mathbb N$.
Then \(H\) depends only on \(\b\) and stochastically dominates every \(G_j\), \(j\in\mathbb N\). Let $H_1,H_2,\dots$ be i.i.d. copies of $H$. For \(a\in\mathbb N\) and \(d_1,\ldots,d_a\in\mathbb N\), write
\[
    \Phi_a(d_1,\ldots,d_a)
    =
    \mathbb P \left(
    \mathscr X^{\boldsymbol{\mathsf S}^{d_1,\ldots,d_a},h_{N,m}}(i)>0
    \mbox{ for all }i\in\llbracket0,d_1+\cdots+d_a\rrbracket
    \right).
\]
By Lemma \ref{lem:inserting zeros comparison}, \(\Phi_a\) is coordinatewise nondecreasing. Since \(G_2,\ldots,G_{a+1}\) are independent and each is stochastically dominated by \(H\), the law of \((G_{a+1},G_a,\ldots,G_2)\) is stochastically dominated, for coordinatewise nondecreasing functions, by the law of \((H_1,\ldots,H_a)\). Hence, using \eqref{eq:tail-TV-proof-D-bound} on \(\{A=a\}\),
\[
\begin{split}
        &\mathbb E\!\left[
{\bf1}_{\{A=a\}}
\mathbb P \left(
\mathscr X^{\boldsymbol{\mathsf S}^{D_1,\ldots,D_a},h_{N,m}}(i)>0
\mbox{ for all }i\in\llbracket0,D_1+\cdots+D_a\rrbracket
\,\middle|\,\boldsymbol Y
\right)
\right]        \\
&=
\mathbb E\!\left[{\bf1}_{\{A=a\}}\Phi_a(D_1,\ldots,D_a)\right]
\le
\mathbb E\!\left[\Phi_a(G_{a+1},G_a,\ldots,G_2)\right]
\le
\mathbb E\!\left[\Phi_a(H_1,\ldots,H_a)\right].
\end{split}
\]
Lemma \ref{lem:simpler chain hitting 0} gives constants $C_2,c_2>0$ and $v>1$ depending only on $q$ and $\b$, such that for every $a\in\mathbb N$,
\begin{equation}\label{eq:hwtrhwyhjwn}
    \begin{split}
        &\mathbb E\!\left[
{\bf1}_{\{A=a\}}
\mathbb P \left(
\mathscr X^{\boldsymbol{\mathsf S}^{D_1,\ldots,D_a},h_{N,m}}(i)>0
\mbox{ for all }i\in\llbracket0,D_1+\cdots+D_a\rrbracket
\,\middle|\,\boldsymbol Y
\right)
\right]        \\
&\leq 
\mathbb P\left(
\mathscr X^{\boldsymbol{\mathsf S}^{H_1,\dots,H_a},h_{N,m}}(i)>0
\mbox{ for all }i\in\llbracket0,H_1+\cdots+H_a\rrbracket
\right)
\le
C_2e^{-c_2a}v^{h_{N,m}}.
    \end{split}
\end{equation}

We now choose the constant $\varepsilon>0$ in the statement of the lemma. First choose $\theta>0$ such that $\mathbb E[e^{\theta H}]<\infty$. 
Next choose \(\eta\in(0,1)\) and then \(\varepsilon>0\) such that 
\begin{equation}\label{eq:etavarepsilon}
    \eta\log \mathbb E\left[e^{\theta H}\right]\le \theta/2,\qquad \varepsilon\log v \le  \eta c_2/4. 
\end{equation} 
These choices depend only on \(q\) and \(\b\). Note that, when $\lfloor\eta m\rfloor\geq1$, 
\begin{equation}\label{eq:Alesthanetam}
    \mathbb P(A<\lfloor\eta m\rfloor)\leq \mathbb P\left(G_1+\cdots+G_{\lfloor\eta m\rfloor}>m\right)     \leq e^{-\theta m}\prod_{j=1}^{\lfloor\eta m\rfloor}\mathbb E\left[e^{\theta G_j}\right] \leq e^{-\theta m}\mathbb E\left[e^{\theta H}\right]^{\lfloor\eta m\rfloor}\leq e^{-\theta m/2},
\end{equation}
where the first step uses \(\{A<\lfloor\eta m\rfloor\}\subseteq \{\mathrm R_1+\cdots+\mathrm R_{\lfloor\eta m\rfloor}>m\}\) and \eqref{eq:conditions for R again}, the second step uses Markov inequality and the independence of $G_1, G_2,\dots$, the third step uses that $H$ stochastically dominates every $G_j$, and the last step uses \eqref{eq:etavarepsilon}.
By Lemma \ref{lem:bound the TV distance by hitting}, \(F_{N,m}\le1\). Hence, when \(\lfloor\eta m\rfloor\ge1\), \eqref{eq:ntdynerynj} gives
\[
\begin{split}
\mathbb E\!\left[F_{N,m}(\boldsymbol\lambda)\right]
&\le
\mathbb P(A<\lfloor\eta m\rfloor)
+
\sum_{a=\lfloor\eta m\rfloor}^{\infty}
\mathbb E\!\left[
{\bf1}_{\{A=a\}}
\mathbb P \left(
\mathscr X^{\boldsymbol{\mathsf S}^{D_1,\ldots,D_a},h_{N,m}}(i)>0
\mbox{ for }i\in\llbracket0,D_1+\cdots+D_a\rrbracket
\,\middle|\,\boldsymbol Y
\right)
\right].
\end{split}
\]
Combining this bound with \eqref{eq:tail-TV-proof-comparison}, \eqref{eq:hwtrhwyhjwn}, and \eqref{eq:Alesthanetam}, we get
\begin{equation}
    (1-\b^{-2})^{ 2}\cdot\mathbb E_{N,k}^0\!\left[F_{N,m}(\bg)\right]
\le
 e^{-\theta m/2}
+
\sum_{a=\lfloor\eta m\rfloor}^{\infty}C_2e^{-c_2a}v^{h_{N,m}} 
\leq 
 e^{-\theta m/2}
+
\frac{C_2e^{c_2}}{1-e^{-c_2}}
    e^{- 3\eta m c_2/4}  ,
\end{equation}
where the second step uses \(h_{N,m}\le  \varepsilon m\), \(\lfloor\eta m\rfloor\ge \eta m-1\), and \(\varepsilon\log v\le \eta c_2/4\).
Thus there exist constants \(C,c>0\), depending only on \(q\) and \(\b\), such that
 $\mathbb E_{N,k}^0\!\left[F_{N,m}(\bg)\right]\le Ce^{-cm}$
when \(\lfloor\eta m\rfloor\ge1\). For the remaining finitely many \(m\ge1\) with \(\lfloor\eta m\rfloor=0\), since \(F_{N,m}\le1\), increasing \(C\) if necessary gives the same bound.  
\end{proof}

\subsection{Stability of the endpoint of the birth-death chain}

Using the previous bounds on the measure $K_{N,\ell}\left(\boldsymbol{\mathsf S}^{\bs(\bg)};x,\cdot\right)$ from Proposition \ref{prop:expectation of TV distance coex}, we can now control the endpoint of the birth-chain $X$ when conditioning on non-negligible events $A$ that depend only on $\Delta_{N,k,m}(\bg)$, and  for $\bg\in\lrw_N$ given by \eqref{eq:Ianddelta}.

\begin{lemma}\label{lem:conditioning does not change much endpoint}
Assume \(q\in(0,1)\) and \(\b>1\). For $N\in\mathbb N$ and \(k\in\llbracket0,N\rrbracket\), let $\mathbb P_{N,k}^0(\,\cdot\,)=\mathbb P^{(0,\b,\b)}(\,\cdot\,\mid T_N=k)$, where we recall that $\mathbb{P}^{(0,\b,\b)}$ and $T_N$ are as in \eqref{eq:PN00 and TN}.  
For \(k,m\in\llbracket0,N\rrbracket\), let $\mathsf I_{N,k,m}$ and $\Delta_{N,k,m}(\bg)$ for $\bg\in\lrw_N$ be given by \eqref{eq:Ianddelta}. Then there exist constants \(\delta,C,c>0\), depending only on $q$ and $\b$, such that the following holds. For every \(N\in\mathbb N\), every \(k\in\llbracket0,N\rrbracket\), every \(m\in\llbracket1,N\rrbracket\), and every
\[
A\in\sigma(\Delta_{N,k,m})
\quad\mbox{ satisfying }\quad
\mathbb P_{N,k}^{0}(A)\ge e^{-\delta m},
\] 
one has
\begin{equation*} 
\TV{
\mathbb P_{N,k}^{0}
\left(
    X^{\boldsymbol{\mathsf S}^{\bs(\bg)}}(N+1)\in\cdot
    \,\middle|\,A
\right)
-
\mathbb P_{N,k}^{0}
\left(
    X^{\boldsymbol{\mathsf S}^{\bs(\bg)}}(N+1)\in\cdot
\right)
}
\le
C e^{-c m}.
\end{equation*}
Here, in both probabilities, one first samples \(\bg\in\lrw_N\) under the indicated law, forms the composition \(\bs(\bg)\) as in \eqref{eq:Signature}, forms the associated \(0\)-\(1\) sequence \(\boldsymbol{\mathsf S}^{\bs(\bg)}\) as in Definition \ref{def:associated 01 seq}, and then samples the Markov chain \(X^{\boldsymbol{\mathsf S}^{\bs(\bg)}}\) from Definition \ref{def:characterization measure} using transition randomness independent of \(\bg\).
\end{lemma}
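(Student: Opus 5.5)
We split the chain at time \(N-\ell+1\) for a suitable \(\ell\asymp\kappa m\), where \(\kappa\) is the constant from Lemma~\ref{lem:stability low compositions coex}. The point is that the tail of \(\boldsymbol{\mathsf S}^{\bs(\bg)}\) near the end \(N\) is determined by the low part of the composition. By Definition~\ref{def:associated 01 seq}, \(\mathsf S^{\bs}(i)=1\) exactly at the partial sums \(\sigma_r+\dots+\sigma_u\). Reading \(\mathsf S^{\bs}\) near \(N\) is therefore the same as reading \(N+1-(\sigma_0+\dots+\sigma_{u-1})\) for small \(u\). Consequently the segment \(\boldsymbol{\mathsf S}^{\bs(\bg)}_{N,\ell}=(\mathsf S(N-\ell),\dots,\mathsf S(N))\) is a measurable function of \((\sigma_0,\dots,\sigma_{\lfloor\kappa m\rfloor})\), provided \(\ell\le \sigma_0+\dots+\sigma_{\lfloor\kappa m\rfloor}\).

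\textbf{Decomposition of the endpoint law.} By the Markov property, conditionally on \(\bg\),
\[
\mathrm{Law}\bigl(X^{\boldsymbol{\mathsf S}^{\bs(\bg)}}(N+1)\bigr)=\sum_x \mathbb P\bigl(X(N-\ell)=x\mid\bg\bigr)\,K_{N,\ell}\bigl(\boldsymbol{\mathsf S}^{\bs(\bg)};x,\cdot\bigr).
\]
We replace \(K_{N,\ell}(\cdot;x,\cdot)\) by \(K_{N,\ell}(\cdot;0,\cdot)\) at a cost of
\[
\sup_{x\le h_{N,\ell}}\TV{K_{N,\ell}(\cdot;x,\cdot)-K_{N,\ell}(\cdot;0,\cdot)}+\mathbb P\bigl(X(N-\ell)>h_{N,\ell}\mid\bg\bigr).
\]
Under the conditioned measure, each term gets an extra factor \(\mathbb P^0_{N,k}(A)^{-1}\le e^{\delta m}\). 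The first term is then bounded by Proposition~\ref{prop:expectation of TV distance coex}, giving \(Ce^{-c\ell+\delta m}\). For the second term we use Markov's inequality together with Proposition~\ref{prop:moment bound coex} at a fixed \(z>1\), applied at time \(i=N-\ell\); this gives \(Cz^{-\varepsilon\ell}e^{\delta m}\). When \(N-\ell-1<\lfloor\varepsilon \ell\rfloor\) we simply have \(h_{N,\ell}=N-\ell-1\), which covers all states and makes the second term vanish. Choosing \(\delta\) small relative to \(c\kappa\) and \(\varepsilon\kappa\log z\) makes both errors \(Ce^{-c'm}\), whether or not we condition on \(A\).

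\textbf{Reduction to Lemma~\ref{lem:stability low compositions coex}.} After this replacement, both laws, with and without \(A\), take the form \(\mathbb E[\Psi(\boldsymbol{\mathsf S}^{\bs(\bg)}_{N,\ell})\mid\cdot\,]\), where \(\Psi(s)=K_{N,\ell}(s;0,\cdot)\) is a probability measure. Their total variation distance is therefore at most
\[
\TV{\mathbb P^0_{N,k}(\boldsymbol{\mathsf S}_{N,\ell}\in\cdot\mid A)-\mathbb P^0_{N,k}(\boldsymbol{\mathsf S}_{N,\ell}\in\cdot)}.
\]
To bring this within reach of Lemma~\ref{lem:stability low compositions coex}, we set
\[
\tilde\Psi=\Psi\bigl(f(\sigma_0,\dots,\sigma_{\lfloor\kappa m\rfloor})\bigr)\ \text{ on } \{\sigma_0+\dots+\sigma_{\lfloor\kappa m\rfloor}\ge\ell+1\},\qquad\text{arbitrary otherwise.}
\]
The bad event \(\{\sigma_0+\dots+\sigma_{\lfloor\kappa m\rfloor}\le\ell\}\) has probability at most \(Ce^{-cm}\) under \(\mathbb P^0_{N,k}\), even after conditioning on \(A\), which costs a factor \(e^{\delta m}\). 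This holds because the path is near its minimum, and \(\sum_{h\le\kappa m}\sigma_h\) counts the visits to the lowest \(\kappa m\) levels. By Lemma~\ref{lem:comparision} this count is comparable to the same count for the \(V\)-shaped walk. There it is at least of order \(\kappa m\), since the walk grows at a linear rate and so makes order \(\kappa m\) visits to these levels. We take \(\ell=\lfloor \kappa m/2\rfloor\) and obtain exponential tails from the regeneration structure in Proposition~\ref{thm:doubled regeneration}, or more crudely from the fact that each level visited produces at least one visit. Indeed, \(\sigma_h\ge1\) for every \(h\le r\), which already gives \(\sum_{h\le\kappa m}\sigma_h\ge\min(\kappa m,r)+1\). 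It then remains to show \(r=\max-\min\ge\kappa m\) with high probability, which for \(N\ge m\) follows from Lemma~\ref{lem:basic hitting monotone coupling}. The boundary cases where \(\sigma_0+\dots+\sigma_r=N+1\) is small are handled separately, as described below. Lemma~\ref{lem:stability low compositions coex} then bounds the remaining total variation distance by \(Ce^{-cm}\).

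\textbf{Main obstacle.} The main obstacle is this bookkeeping: guaranteeing that the final \(\ell\) symbols are determined by low levels, uniformly in \(k\). When \(k\) is near the boundary or \(N\) is close to \(m\), the range \(r\) may fail to reach \(\kappa m\). In that case, however, \(\sigma_0+\dots+\sigma_r=N+1\ge\ell+1\) holds anyway, so the whole sequence \(\boldsymbol{\mathsf S}\) is a function of the low composition and the argument above applies unchanged. Small \(m\), for which \(\ell=0\), are absorbed into the constant \(C\).
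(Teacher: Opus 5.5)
Your proposal is correct and follows the paper's proof essentially step by step. You split by the Markov property at time $N-\ell$, replace the kernel started at $x$ by the one started at $0$ via Proposition~\ref{prop:expectation of TV distance coex} plus Markov's inequality with the moment bound (paying $e^{\delta m}$ under conditioning), and then pass the tail sequence $\boldsymbol{\mathsf S}_{N,\ell}$, as a function of $(\sigma_0,\ldots,\sigma_{\lfloor\kappa m\rfloor})$, through Lemma~\ref{lem:stability low compositions coex}. Your ``bad event'' detour is unnecessary. Your own two observations ($\sigma_h\ge1$ for $h\le r$, and $\sigma_0+\cdots+\sigma_r=N+1$ otherwise) already show that the event is empty: a partial sum $\sigma_0+\cdots+\sigma_{j-1}=h+1\le\ell+1$ forces $j\le\ell+1$. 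Hence $\boldsymbol{\mathsf S}_{N,\ell}$ is deterministically a function of the low composition once $\ell+1\le\lfloor\kappa m\rfloor$, exactly as the paper argues, and no appeal to Lemma~\ref{lem:comparision} or to bounds on the range $r$ is needed.
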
 
\begin{proof}
In this proof, we will write $\mathbb{P}^0=\mathbb P_{N,k}^{0}$ and $\mathbb{E}^0=\mathbb E_{N,k}^{0}$.
Let $C_1,c_1,\kappa_1,\delta_1\in(0,\infty)$ be the constants from Lemma \ref{lem:stability low compositions coex}, and let $\varepsilon_2,C_2,c_2\in(0,\infty)$ be the constants from Proposition \ref{prop:expectation of TV distance coex}. These constants depend only on $q$ and $\b$. Let
\[
    \eta=\min\left(\kappa_1/4,1/(4(1+\varepsilon_{2}))\right),\qquad      
     \delta =\min\left( \delta_{1}, c_{2}\eta/4, \varepsilon_{2}\eta\log 2/8 \right).
\]
We choose \(m_0\in\mathbb N\), depending only on $q$ and $\b$, such that, when \(m\ge m_0\) and \(N\ge m\), \begin{equation}\label{eq:suff-coex-ell admissible}
    1\le \ell=\lfloor\eta m\rfloor\le N-1,\qquad
    \ell+1\le \lfloor\kappa_1 m\rfloor,\qquad
    \ell\ge \eta m/2,\qquad
    \lfloor\varepsilon_{2}\ell\rfloor\ge  \varepsilon_{2}\ell/2,\qquad
    \lfloor\varepsilon_{2}\ell\rfloor\le N-\ell-1 .
\end{equation}
We first prove the lemma for \(m\ge m_0\). Since \(m\le N\), \eqref{eq:suff-coex-ell admissible} holds for this part of the proof. Let $M\in(0,\infty)$ be the constant from Proposition \ref{prop:moment bound coex} with \(z=2\), depending only on $q$ and $\b$, such that
\begin{equation*} 
    \sup_{N\ge1}\sup_{k\in\llbracket0,N\rrbracket}
    \sup_{i\in\llbracket0,N+1\rrbracket}
    \mathbb E^{0}\!\left[
        \mathbb E\!\left[
            2^{X^{\boldsymbol{\mathsf S}^{\bs(\bg)}}(i)}
            \,\middle|\,\bg
        \right]
    \right]
    \le M .
\end{equation*}
By Markov's inequality,
\begin{equation}\label{eq:suff-coex-intermediate tail} 
 \mathbb E^{0}\!\left[
        \mathbb P\left(
            X^{\boldsymbol{\mathsf S}^{\bs(\bg)}}(N-\ell)>\lfloor\varepsilon_{2}\ell\rfloor
            \,\middle|\,\bg
        \right)
    \right]      \le
    2^{-\lfloor\varepsilon_{2}\ell\rfloor}
    \mathbb E^{0}\!\left[
        \mathbb E\left[
            2^{X^{\boldsymbol{\mathsf S}^{\bs(\bg)}}(N-\ell)}
            \,\middle|\,\bg
        \right]
    \right]
    \le
    M \,2^{-\lfloor\varepsilon_{2}\ell\rfloor}. 
\end{equation}
Denote
\[
    F_{\ell}(\bg)
     =
    \sup_{x\in\llbracket0,\lfloor\varepsilon_{2}\ell\rfloor\rrbracket}
    \TV{
        K_{N,\ell}\left(\boldsymbol{\mathsf S}^{\bs(\bg)};x,\cdot\right)
        -
        K_{N,\ell}\left(\boldsymbol{\mathsf S}^{\bs(\bg)};0,\cdot\right)
    }\quad\mbox{ for }\bg\in\lrw_N.
\]
Since \(\lfloor\varepsilon_{2}\ell\rfloor\le N-\ell-1\), Proposition \ref{prop:expectation of TV distance coex} with \(\ell\) in place of \(m\) gives
\begin{equation}\label{eq:suff-coex-F average}
    \mathbb E^{0}[F_{\ell}(\bg)]
    \le
    C_{2}e^{-c_{2}\ell}.
\end{equation}
Define two probability distributions on $\mathbb{Z}_{\geq0}$
\[
\mu^{\bg}=\mathbb{P}\left(X^{\boldsymbol{\mathsf S}^{\bs(\bg)}}(N+1)\in\,\cdot\,\right)\quad\mbox{and}\quad
\nu^{\bg} =K_{N,\ell}\left(\boldsymbol{\mathsf S}^{\bs(\bg)};0,\cdot\right),\quad\mbox{ for fixed }\bg\in\lrw_N.
\]
By the Markov property of \(X^{\boldsymbol{\mathsf S}^{\bs(\bg)}}\) at time \(N-\ell\),
\begin{equation}\label{eq:suff-coex-mu-nu-fixed path}
    \TV{\mu^{\bg}-\nu^{\bg}}
    \le
    F_{\ell}(\bg)
    +
    \mathbb P\left(
        X^{\boldsymbol{\mathsf S}^{\bs(\bg)}}(N-\ell)>\lfloor\varepsilon_{2}\ell\rfloor
        \,\middle|\,\bg
    \right)\quad
\mbox{ for each }\bg\in\lrw_N.
\end{equation}
When \(\mathbb P^{0}(A)\ge e^{-\delta m}\), averaging \eqref{eq:suff-coex-mu-nu-fixed path} under \(\mathbb P_{N,k}^{0}(\cdot\,|A)\) and \(\mathbb P_{N,k}^{0}\), respectively, combining with \eqref{eq:suff-coex-intermediate tail} and \eqref{eq:suff-coex-F average},  
\begin{align}
\TV{
    \mathbb E^{0}[\mu^{\bg}\mid A]
    -
    \mathbb E^{0}[\nu^{\bg}\mid A]
}
&\le
e^{\delta m}
\left(
    C_{2}e^{-c_{2}\ell}
    +
    M \,2^{-\lfloor\varepsilon_{2}\ell\rfloor}
\right),
\label{eq:suff-coex-conditioned-kernel error}
\\
\TV{
    \mathbb E^{0}[\mu^{\bg}]
    -
    \mathbb E^{0}[\nu^{\bg}]
}
&\le
C_{2}e^{-c_{2}\ell}
+
M \,2^{-\lfloor\varepsilon_{2}\ell\rfloor}.
\label{eq:suff-coex-unconditioned-kernel error}
\end{align}
Extend $\bs(\bg)=(\sigma_0,\ldots,\sigma_r)$ by setting \(\sigma_j=0\) for \(j>r\). We claim that the tail sequence
\[
\boldsymbol{\mathsf S}_{N,\ell}=\left(
\mathsf S^{\bs(\bg)}(N-\ell),\dots,\mathsf S^{\bs(\bg)}(N)
\right)\in\{0,1\}^{\ell+1}
\]
is a deterministic function of \((\sigma_0,\ldots,\sigma_{\lfloor\kappa_1 m\rfloor})\).  Indeed, 
\[
\mathsf S^{\bs(\bg)}(N-h)=1\quad\mbox{ if and only if }\sigma_0+\cdots+\sigma_{j-1}=h+1\mbox{ for some }j\in\llbracket1,r\rrbracket,
\quad\mbox{ for }h\in\llbracket0,\ell\rrbracket.
\]
If $\sigma_0+\cdots+\sigma_{j-1}=h+1$, then \(j\le h+1\le \ell+1\le\lfloor\kappa_1 m\rfloor \), since the nonzero composition parts are in $\mathbb{N}$. This proves the claim.
Consequently, for every \(B\subseteq\mathbb Z_{\ge0}\), there is a function \(f_B:\mathbb Z_{\ge0}^{\lfloor\kappa_1 m\rfloor+1}\to[0,1]\) such that
\[
    \nu^{\bg}(B)
    =
    f_B\left(\sigma_0,\ldots,\sigma_{\lfloor\kappa_1 m\rfloor}\right).
\] 
For $A\in\sigma(\Delta_{N,k,m})$ satisfying  $\mathbb P^{0}(A)\ge e^{-\delta m}$, since \(\delta \le\delta_{1}\), Lemma \ref{lem:stability low compositions coex} and the dual characterization of total variation distance applied to the \([0,1]\)-valued function \(f_B\) give
\[
\left|
    \mathbb E^{0}[\nu^{\bg}(B)\mid A]
    -
    \mathbb E^{0}[\nu^{\bg}(B)]
\right|
\le
C_{1}e^{-c_{1}m}\quad\mbox{ for each }B\subseteq\mathbb Z_{\ge0}.
\]
Taking the supremum over \(B\) gives
\begin{equation}\label{eq:suff-coex-nu-decorrelation}
\TV{
    \mathbb E^{0}[\nu^{\bg}\mid A]
    -
    \mathbb E^{0}[\nu^{\bg}]
}
\le
C_{1}e^{-c_{1}m}.
\end{equation}
Combining \eqref{eq:suff-coex-conditioned-kernel error}, \eqref{eq:suff-coex-unconditioned-kernel error}, and \eqref{eq:suff-coex-nu-decorrelation}, and then using \eqref{eq:suff-coex-ell admissible} and the definition of \(\delta\), yields
\[
\TV{
\mathbb P^{0}
\left(
    X^{\boldsymbol{\mathsf S}^{\bs(\bg)}}(N+1)\in\cdot
    \,\middle|\,A
\right)
-
\mathbb P^{0}
\left(
    X^{\boldsymbol{\mathsf S}^{\bs(\bg)}}(N+1)\in\cdot
\right)
}
\le
C e^{-cm}
\]
for all \(m\ge m_0\), with \(C,c\) depending only on \(q,\b\). Increasing \(C\) covers all \(m\in\llbracket1,m_0-1\rrbracket\), because total variation distance is at most \(1\). 
\end{proof}

\subsection{Proof of Proposition \ref{prop:comparison path q earlier}}\label{subsec:proof of prop comparison of path}
We have now all tools in order to prove our main approximation result Proposition \ref{prop:comparison path q earlier} on the path measure. For clarity, we restate it below. We recall that 
    \begin{equation}\label{eq:PNbb again again} 
    \mathbb{P}^{(q,\b,\b)}(\bg)=\frac{1}{Z_N^{(q,\b,\b)}} 2^{H(\bg)}\b^{\gamma_N-2\min(\bg)}  W_{\bs(\bg)}\left(\b^2\right)\quad\mbox{ for }\bg\in\lrw_N.
    \end{equation}
\begin{proposition}\label{prop:comparison path q}
    Assume \(q\in(0,1)\) and \(\b>1\). For $N\in\mathbb N$, \(k\in\llbracket0,N\rrbracket\), let $\mathbb P_{N,k}^q(\,\cdot\,)=\mathbb P^{(q,\b,\b)}(\,\cdot\,\mid T_N=k)$
    and
    $\mathbb P_{N,k}^0(\,\cdot\,)=\mathbb P^{(0,\b,\b)}(\,\cdot\,\mid T_N=k)$.
    Then both conditional laws are well-defined.
Moreover, for \(k,m\in\llbracket0,N\rrbracket\), let $\mathsf I_{N,k,m}$ and $\Delta_{N,k,m}(\bg)$ for $\bg\in\lrw_N$ be given by \eqref{eq:Ianddelta}, then there exist constants \(C,c\in(0,\infty)\), depending only on \(q\) and \(\b\), such that, for every \(N\ge1\) and every \(k,m\in\llbracket0,N\rrbracket\),
\[ 
    \TV{
    \mathbb P_{N,k}^q
    \left(
        \Delta_{N,k,m}(\bg)\in\cdot\, \right)
    -
    \mathbb P_{N,k}^0
    \left(
        \Delta_{N,k,m}(\bg)\in\cdot\, \right)
    }
    \le
    Ce^{-cm}.
\]
\end{proposition}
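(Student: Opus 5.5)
The starting point is that, conditionally on $\{T_N=k\}$, the two measures are absolutely continuous with respect to each other with an explicit density. Indeed, by \eqref{eq:PNbb again again},
\[
\frac{d\mathbb P_{N,k}^q}{d\mathbb P_{N,k}^0}(\bg)=\frac{W_{\bs(\bg)}(\b^2)}{\mathbb E_{N,k}^0\bigl[W_{\bs(\bg)}(\b^2)\bigr]}.
\]
The normalization is positive, since $W_{\bs}(z)=\mathbb E[z^{X}]\ge1$ for $z=\b^2>1$. It is also finite, by Proposition~\ref{prop:moment bound coex}. Moreover $\mathbb P^{(q,\b,\b)}(T_N=k)>0$ because the weights are positive and Lemma~\ref{lem:decomposition TASEP} gives $\mathbb P^{(0,\b,\b)}(T_N=k)>0$. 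This settles well-definedness of both conditional laws.

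For an event $A\in\sigma(\Delta_{N,k,m})$ we then have
\[
\mathbb P_{N,k}^q(A)=\mathbb P_{N,k}^0(A)\,\frac{\mathbb E_{N,k}^0[W\mid A]}{\mathbb E_{N,k}^0[W]},\qquad W=W_{\bs(\bg)}(\b^2)=\sum_{x\ge0}\b^{2x}\,\mathbb P\bigl(X^{\boldsymbol{\mathsf S}^{\bs(\bg)}}(N+1)=x\mid\bg\bigr).
\]
So it suffices to show that $|\mathbb E^0[W\mid A]-\mathbb E^0[W]|\le Ce^{-cm}$ whenever $\mathbb P^0(A)\ge e^{-\delta m}$, and to handle small $A$ separately. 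We split into two cases.

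\textbf{Case $\mathbb P_{N,k}^0(A)\ge e^{-\delta m}$.} Let $\mu_A$ and $\mu$ denote the laws of the endpoint $X^{\boldsymbol{\mathsf S}^{\bs(\bg)}}(N+1)$ under $\mathbb P^0(\cdot\mid A)$ and $\mathbb P^0$. Then $\mathbb E^0[W\mid A]-\mathbb E^0[W]=\sum_x\b^{2x}(\mu_A(x)-\mu(x))$. We truncate at level $L=\lfloor\epsilon' m\rfloor$. Below $L$, Lemma~\ref{lem:conditioning does not change much endpoint} bounds the contribution by $2\b^{2L}Ce^{-cm}$. Above $L$, we use Cauchy--Schwarz/Markov with Proposition~\ref{prop:moment bound coex} at $z=\b^4$. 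This gives
\[
\sum_{x>L}\b^{2x}\mu(x)\le \b^{-2L}\,\mathbb E^0\bigl[\b^{4X}\bigr]\le M\b^{-2L},
\]
and similarly
\[
\sum_{x>L}\b^{2x}\mu_A(x)\le \mathbb P^0(A)^{-1}M\b^{-2L}\le e^{\delta m}M\b^{-2L}.
\]
Choosing $\epsilon'$ small relative to $c$ and then shrinking $\delta$ so that $\delta<\epsilon'\log\b$ makes every term exponentially small in $m$. Because $\mathbb E^0[W]\ge1$, this yields $|\mathbb P^q(A)-\mathbb P^0(A)|\le \mathbb P^0(A)\,Ce^{-cm}\le Ce^{-cm}$.

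\textbf{Case $\mathbb P_{N,k}^0(A)<e^{-\delta m}$.} Here we bound $\mathbb P^q(A)\le \mathbb E^0[W\mathbf 1_A]$ directly, using Cauchy--Schwarz and the second moment bound $\mathbb E^0[W^2]<\infty$ from Proposition~\ref{prop:moment bound coex}. This gives $\mathbb P^q(A)\le C\,\mathbb P^0(A)^{1/2}\le Ce^{-\delta m/2}$, so both probabilities are exponentially small.

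It remains to pass from events to total variation. The total variation distance equals $\sup_A|\mathbb P^q(A)-\mathbb P^0(A)|$, and the sup can be taken over $A=\{\mathbb P^q>\mathbb P^0\}$, which lies in $\sigma(\Delta_{N,k,m})$. Each such $A$ falls into one of the two cases above, so both cases together give the claimed $Ce^{-cm}$ bound. Small $m$ (where $m_0$-type restrictions in the lemmas fail, or $m=0$) is absorbed by enlarging $C$, since the total variation distance is at most $1$.

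The main obstacle is the truncation in the first case. Lemma~\ref{lem:conditioning does not change much endpoint} gives only a total variation closeness of endpoint laws, whereas $W$ weights the endpoint by the unbounded factor $\b^{2x}$. One must therefore balance the truncation level against $\delta$ and use the uniform exponential moments from Proposition~\ref{prop:moment bound coex} with a larger $z$, so that the conditioning on $A$ (which costs a factor $e^{\delta m}$) does not destroy the tail bound.
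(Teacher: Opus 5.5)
Your proof is correct and is essentially the paper's argument. It uses the same density $W_{\bs(\bg)}(\b^2)/\mathbb E^0_{N,k}[W_{\bs(\bg)}(\b^2)]$ and the same split according to whether $\mathbb P^0_{N,k}(A)\ge e^{-\delta m}$. For small $A$ it applies Cauchy--Schwarz with the second moment of $W_{\bs(\bg)}(\b^2)$, and for large $A$ it combines Lemma~\ref{lem:conditioning does not change much endpoint} with the $z=\b^4$ moment bound.

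The only difference is technical. Instead of truncating at level $\lfloor\epsilon' m\rfloor$ and tuning $\delta$ against $\epsilon'$, the paper couples the two endpoint laws and applies Cauchy--Schwarz to $|\b^{2X_A}-\b^{2X}|\mathbf 1_{\{X_A\neq X\}}$. It keeps the prefactor $\mathbb P^0_{N,k}(A)$ throughout, which cancels the $1/\mathbb P^0_{N,k}(A)$ loss in $\mathbb E[\b^{4X_A}]$. So the balancing you flag as the main obstacle is not actually needed, and keeping that same prefactor would also remove the $e^{\delta m}$ factor from your truncation bound.
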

\begin{proof}
    For each $k\in\llbracket0,N\rrbracket$, by Lemma \ref{lem:decomposition TASEP} applied with \(\a=\b\), we have $\mathbb P^{(0,\b,\b)}(T_N=k)>0$. For each \(\bg\in\lrw_N\), by Theorem \ref{thm:prob interpretation of Bryc polynomials}, we know that
    $W_{\bs(\bg)}(\b^2)=\mathbb E\, [\b^{2X^{\boldsymbol{\mathsf S}^{\bs(\bg)}}(N+1)}  ]\ge 1$ since \(X^{\boldsymbol{\mathsf S}^{\bs(\bg)}}(N+1)\ge0\). Hence \(\mathbb P^{(q,\b,\b)}(T_N=k)>0\) in view of \eqref{eq:PNbb again again}, and the conditional law \(\mathbb P_{N,k}^{q}\) is well-defined.
    Note that  
\begin{equation}\label{eq:suff-coex-conditional-density}
    \mathbb P_{N,k}^{q}(B)
    = 
        \frac{\mathbb E_{N,k}^{0}\!\left[{\bf1}_{B}W_{\bs(\bg)}\left(\b^2\right)\right] }{
        \mathbb E_{N,k}^{0}\!\left[W_{\bs(\bg)}\left(\b^2\right)\right]}
     \quad\mbox{ for every event }B\subseteq\lrw_N.
\end{equation}

    In the rest of the proof, we will write $\mathbb{P}^q=\mathbb P_{N,k}^{q}$, $\mathbb{E}^q=\mathbb E_{N,k}^{q}$, $\mathbb{P}^0=\mathbb P_{N,k}^{0}$, and $\mathbb{E}^0=\mathbb E_{N,k}^{0}$. 
    By the corollary in Proposition \ref{prop:moment bound coex} with $m=2$, \(z=\b^2\) and $m=1$, \(z=\b^4\), there exists $M\in(0,\infty)$ depending only on $q,\b$, 
    \begin{equation}\label{eq:suff-coex-W L2}
    \mathbb E^{0}\left[W_{\bs(\bg)}\left(\b^2\right)^2\right]\le M \quad\mbox{ and }\quad \mathbb E^{0}\left[W_{\bs(\bg)}\left(\b^4\right) \right]\le M .
\end{equation}
Let the constants \(\delta_1,C_1,c_1\in(0,\infty)\) be from Lemma \ref{lem:conditioning does not change much endpoint}, which depend only on $q$ and $\b$.
We now prove the claim that there exist constants $C_2,c_2\in(0,\infty)$ depending only on $q$ and $\b$, such that for every $N\in\mathbb N$, \(k\in\llbracket0,N\rrbracket\), \(m\in\llbracket1,N-1\rrbracket\), 
and $A\in\sigma(\Delta_{N,k,m})$ with $\mathbb P^{0}(A)>0$,
\begin{equation}\label{eq:suff-coex-covariance bound}
    \mathbb P^{0}(A)
    \left|
        \mathbb E^{0}\left[W_{\bs(\bg)}\left(\b^2\right)\middle| A\right]
        -
        \mathbb E^{0}\left[W_{\bs(\bg)}\left(\b^2\right)\right]
    \right|
    \le
    C_2e^{-c_2 m}.
\end{equation} 
We split the proof of \eqref{eq:suff-coex-covariance bound} into two cases. When $\mathbb{P}^0(A)< e^{-\delta_1 m}$, by \eqref{eq:suff-coex-W L2} and the Cauchy--Schwarz inequality,
\begin{equation}\label{eq:suff-coex-covariance bound proof1}
    \begin{split}
        &\mathbb P^{0}(A)
    \left|
        \mathbb E^{0}\left[W_{\bs(\bg)}\left(\b^2\right)\middle| A\right]
        -
        \mathbb E^{0}\left[W_{\bs(\bg)}\left(\b^2\right)\right]
    \right|\leq  
        \mathbb E^{0}\left[W_{\bs(\bg)}\left(\b^2\right) {\bf1}_A\right]
        +
        \mathbb P^{0}(A) \cdot \mathbb E^{0}\left[W_{\bs(\bg)}\left(\b^2\right)\right] \\
        &\leq  \mathbb E^{0}\left[W_{\bs(\bg)}\left(\b^2\right)^2\right]^{1/2}\mathbb P^{0}(A)^{1/2}
    +
    \mathbb P^{0}(A) \cdot \mathbb E^{0}\left[W_{\bs(\bg)}\left(\b^2\right)\right]\le
    2M^{1/2}e^{-\delta_1 m/2}.
    \end{split}
\end{equation} 
When $\mathbb{P}^0(A)\geq e^{-\delta_1 m}$, Lemma \ref{lem:conditioning does not change much endpoint} and the coupling characterization of total variation distance give a coupling of endpoint random variables \(X_A\) and \(X\), whose respective marginal laws are
\begin{equation*} 
\mathbb P^{0}
\left(
    X^{\boldsymbol{\mathsf S}^{\bs(\bg)}}(N+1)\in\cdot
    \,\middle|\,A
\right)
\quad \mbox{ and }\quad
\mathbb P^{0}
\left(
    X^{\boldsymbol{\mathsf S}^{\bs(\bg)}}(N+1)\in\cdot
\right),
\end{equation*}
satisfying 
\begin{equation}\label{eq:rthrth}
    \mathbb{P}(X_A\neq X)\leq C_1e^{-c_1m}.
\end{equation}
In view of \eqref{eq:suff-coex-W L2},
\begin{equation}\label{eq:trhtyjnwrymn}
     \mathbb E\left[\b^{4X}\right]
    =
    \mathbb E^{0}\!\left[W_{\bs(\bg)}\left(\b^4\right)\right]
    \le M\quad\mbox{ and }\quad
     \mathbb E\left[\b^{4X_A}\right]
    =
    \mathbb E^{0}\!\left[W_{\bs(\bg)}\left(\b^4\right)\middle| A\right] 
    \le M/\mathbb{P}^0(A).
\end{equation}
Therefore, combining \eqref{eq:suff-coex-W L2}, \eqref{eq:rthrth}, \eqref{eq:trhtyjnwrymn}, by Cauchy's inequality,
\begin{equation}\label{eq:suff-coex-covariance bound proof2}
    \begin{split}
        &\mathbb P^{0}(A)
    \left|
        \mathbb E^{0}\left[W_{\bs(\bg)}\left(\b^2\right)\middle| A\right]
        -
        \mathbb E^{0}\left[W_{\bs(\bg)}\left(\b^2\right)\right]
    \right|=\mathbb P^{0}(A)\left|
        \mathbb E\left[\b^{2X_A}\right]-\mathbb E\left[\b^{2X}\right]
    \right|  \\
    &\le
    \mathbb P^{0}(A)\,
    \mathbb E\!\left[
        \left|\b^{2X_A}-\b^{2X}\right|{\bf1}_{\{X_A\ne X\}}
    \right]   \le
    \mathbb P^{0}(A)\,
    \mathbb E\!\left[
        \left|\b^{2X_A}-\b^{2X}\right|^2
    \right]^{1/2}
    \mathbb P(X_A\ne X)^{1/2}   \\
       &\le
    \mathbb P^{0}(A)\,
    \left(
        2\mathbb E\left[\b^{4X_A}\right]
        +
        2\mathbb E\left[\b^{4X}\right]
    \right)^{1/2}
    C_1^{1/2}e^{-c_1m/2}\le
    2M^{1/2}C_1^{1/2}e^{-c_1m/2} .
    \end{split}
\end{equation}
Combining the bounds \eqref{eq:suff-coex-covariance bound proof1} and \eqref{eq:suff-coex-covariance bound proof2} in the two cases, we conclude the proof of the claim \eqref{eq:suff-coex-covariance bound}. 

We return to the proof of the proposition. When $m\in\llbracket1,N-1\rrbracket$, using \eqref{eq:suff-coex-conditional-density}, \eqref{eq:suff-coex-covariance bound}, and \(\mathbb E^{0}[W_{\bs(\bg)} (\b^2 )]\ge1\), we obtain, for every
\(A\in\sigma(\Delta_{N,k,m})\) with \(\mathbb P^{0}(A)>0\),
\[ 
    \left|
        \mathbb P^{q}(A)-\mathbb P^{0}(A)
    \right|
     =
    \frac{\mathbb P^{0}(A)}{\mathbb E^{0}\left[W_{\bs(\bg)}\left(\b^2\right)\right]}
    \left|
        \mathbb E^{0}\left[W_{\bs(\bg)}\left(\b^2\right)  \big{|} A\right]
        -
        \mathbb E^{0}\left[W_{\bs(\bg)}\left(\b^2\right)\right]
    \right|                                                
     \le
    C_2e^{-c_2m}. 
\]
If \(\mathbb P^{0}(A)=0\), then \eqref{eq:suff-coex-conditional-density} gives \(\mathbb P^{q}(A)=0\).  Taking the supremum over \(A\in\sigma(\Delta_{N,k,m})\) concludes the proof for \(m\in\llbracket1,N-1\rrbracket\). If \(m=0\), or if \(m=N=1\), since the total variation distance is at most \(1\), the desired bound holds after increasing \(C\).   If \(m=N\) and \(N\ge2\), in view of the fact that $\mathsf I_{N,k,N}\subseteq \mathsf I_{N,k,N-1}$, using the already proved bound with $m=N-1$ and increasing \(C\) further proves the bound for \(m=N\).
\end{proof}
  
\section{Macroscopic uniformity of the location $T_N$}\label{sec:macroscopicTN}

For the approximation of the path measure in Section~\ref{sec:PathMeasure}, we conditioned on the path location $T_N$, where we recall that $T_N  =\min\{i\in\llbracket0,N\rrbracket:\gamma_i=\min(\bg)\} $. In the following, our goal is to approximate the location $T_N$ under the path measure
    \begin{equation}\label{eq:PNbb again again again} 
    \mathbb{P}^{(q,\b,\b)}(\bg)=\frac{1}{Z_N^{(q,\b,\b)}} 2^{H(\bg)}\b^{\gamma_N-2\min(\bg)}  W_{\bs(\bg)}\left(\b^2\right)\quad\mbox{ for }\bg\in\lrw_N.
    \end{equation}
More precisely, we show the following two propositions, which provide upper and lower bounds of order $\frac{1}{N}$ on the total variation distance of $T_N$ from a uniform location on $\llbracket 0,N \rrbracket$. 
\begin{proposition}\label{prop:macroscopic uniformity TN upper}
    Assume \(q\in[0,1)\) and \(\b>1\).  Then there exists a constant $C\in(0,\infty)$ depending only on $q$ and $\b$, such that for every $N\in\mathbb Z_{\geq0}$, we have 
    \[
    \TV{
        \mathbb P^{(q,\b,\b)}(T_N\in\cdot)
        -
        \mathrm{Unif}_{\llbracket0,N\rrbracket}
    }
    \leq
    \frac{C}{N+1}.
\]
\end{proposition}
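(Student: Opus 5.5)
Since $\mathbb P^{(q,\b,\b)}(\bg)\propto \mathbb P^{(0,\b,\b)}(\bg)\,W_{\bs(\bg)}(\b^2)$, I would write
\[
\mathbb P^{(q,\b,\b)}(T_N=k)=\frac{\pi_N(k)\,w_N(k)}{\sum_{j=0}^N\pi_N(j)\,w_N(j)},
\qquad \pi_N(k)=\mathbb P^{(0,\b,\b)}(T_N=k),\quad w_N(k)=\mathbb E_{N,k}^0\big[W_{\bs(\bg)}(\b^2)\big].
\]
It then suffices to show two things. First, $\pi_N(k)=\frac{1}{N+1}(1+O(e_N(k)))$. Second, $w_N(k)=w_\infty(1+O(e_N(k)))$ for a constant $w_\infty\ge1$ that does not depend on $k$ or $N$. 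Here $e_N(k)$ is an error that is summable uniformly in $N$, for example $e_N(k)\le C(\rho^{k}+\rho^{N-k})$ with $\rho<1$, or more modestly $C(k+1)^{-1}+C(N-k+1)^{-1}$ if only polynomial convergence is available. Combining the two gives $\sum_k|\mathbb P^{(q,\b,\b)}(T_N=k)-\frac1{N+1}|\le \frac{C}{N+1}\sum_k e_N(k)$. This yields $C/(N+1)$ provided $\sum_k e_N(k)$ is bounded, which holds for exponential errors. I will aim for exponential errors.

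\textbf{Step 1 ($q=0$ part).} I use the explicit formula \eqref{eq: dist of TN} with $\a=\b$. In that case $w_\a^{m-1}w_\b^{N-m}=w_\b^{N-1}$ does not depend on $m$, so $\pi_N(m)\propto \frac{\b}{4}p^{(\b)}_{m-1}p^{(\b)}_{N-m}$ for $m\ge1$ and $\pi_N(0)\propto w_\b p^{(\b)}_N$. By Lemma~\ref{lem:basic hitting monotone coupling}, $p^{(\b)}_\ell\downarrow 1-\b^{-2}$, and I expect the convergence to be exponentially fast, $p^{(\b)}_\ell-(1-\b^{-2})\le C\rho^\ell$. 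This follows from the drift: to hit $-1$ after time $\ell$, the walk must be at $\le 0$ at some time $\ge\ell$, which by Lemma~\ref{lem:basic hitting monotone coupling} has exponentially small probability. Hence $\pi_N(m)$ is a constant times $1+O(\rho^{m}+\rho^{N-m})$ for $m\ge1$. The single boundary point $m=0$, whose weight differs by a bounded factor, contributes only $O(1/N)$ to the total variation distance. The same remark covers $q=0$ in the proposition, since then $W\equiv1$.

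\textbf{Step 2 ($q>0$, weights independent of $k$).} This is the main obstacle. The idea is to use the doubled regeneration structure of Section~\ref{sec:DoubleRegen}: under the comparison path $\boldsymbol\lambda$ of Lemma~\ref{lem:comparision}, the law of $\bg$ given $T_N=k$ near the minimum is, up to $O(\rho^{\min(k,N-k)})$ in total variation, the bi-infinite $V$-shaped walk, because the conditioned finite walks converge to the infinite ones exponentially fast, exactly as in Step~3 of Lemma~\ref{lem:stability low compositions coex}. The Bryc polynomial $\mathbb E[z^{X(N+1)}]$ depends on the $0$-$1$ sequence read from the top level down. The last $\ell$ steps of the chain depend only on the low compositions $\sigma_0,\dots,\sigma_{\kappa\ell}$, i.e.\ on the path near the minimum, and by Proposition~\ref{prop:expectation of TV distance coex} the chain forgets its starting state at time $N-\ell$ up to $e^{-c\ell}$. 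So I would take $\ell=\lfloor\eta\min(k,N-k)\rfloor$ and define
\[
w_\infty=\lim \mathbb E\big[\b^{2X}\big],
\]
with $X$ the endpoint of the tail chain started from $0$ and driven by the low compositions of the full bi-infinite $V$-walk. Replacing the endpoint law by this limit costs $e^{-c\ell}$ in total variation. Converting total variation closeness into closeness of $\mathbb E[\b^{2X}]$ uses Cauchy--Schwarz together with the uniform moment bound of Proposition~\ref{prop:moment bound coex} with $z=\b^4$, exactly as in \eqref{eq:suff-coex-covariance bound proof2}. The points needing care are: the existence of the limit $w_\infty$, which follows because the law of the low compositions of the $V$-walk does not depend on $N$ or $k$, and the consistency of the tail chain in $\ell$; and the fact that the transition probabilities in Definition~\ref{def:chain tail} depend on the absolute time $i$ through $q^{i+1}$. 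The latter dependence is only $O(q^{N-\ell})$, which is negligible.

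\textbf{Step 3 (combining).} Since $w_N(k)\ge1$ and $w_N(k)\le M$, the relative error in $w_N(k)$ is $O(e^{-c\min(k,N-k)})$. Summing over $k$ gives the bound $C/(N+1)$, and small $N$ is handled by enlarging $C$.
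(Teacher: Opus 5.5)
Your proposal is correct and follows essentially the paper's route. The $q=0$ case is handled through the explicit law of $T_N$ from Lemma~\ref{lem:decomposition TASEP} and the exponential convergence $p_\ell\downarrow 1-\b^{-2}$ (Lemma~\ref{lem:macroscopic uniformity q0}). For $q\in(0,1)$ the paper reweights by $\mathbb E^0_{N,k}[W_{\bs(\bg)}(\b^2)]$, and Proposition~\ref{prop:constant Lambda q b} shows this factor is within $Ce^{-c\min(k-1,N-k)}$ of a constant $\Lambda_{q,\b}\ge1$, using the same tail-kernel forgetting, homogeneous limiting chain, and $V$-walk comparison of low compositions that you describe.

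The point you leave vague is the existence of $w_\infty$ with an exponential rate. The paper obtains it by comparing each $a_\ell=\mathbb E[G_\ell(\vartheta_0,\ldots,\vartheta_{\ell+1})]$ with the finite quantities $\mathbb E^0_{2n+1,n+1}[W_{\bs(\bg)}(\b^2)]$ as $n\to\infty$. This makes $(a_\ell)$ Cauchy with rate $Ce^{-c\ell}$ without redoing any estimates for the infinite walk.
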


\begin{proposition}\label{prop:macroscopic uniformity TN lower}
    Assume \(q\in[0,1)\) and \(\b>1\).  Then there exists a constant $c\in(0,\infty)$ depending only on $q$ and $\b$, such that for every $N\in\mathbb N$, we have 
    \[
    \TV{
        \mathbb P^{(q,\b,\b)}(T_N\in\cdot)
        -
        \mathrm{Unif}_{\llbracket0,N\rrbracket}
    }
    \geq
    \frac{c}{N+1}.
\]
\end{proposition}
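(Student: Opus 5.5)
The statement to prove is the lower bound, Proposition~\ref{prop:macroscopic uniformity TN lower}. I will exhibit a single test set on which $\mathbb P^{(q,\b,\b)}(T_N\in\cdot)$ and the uniform law differ by order $1/(N+1)$. The natural candidate is the singleton $\{0\}$, or more generally a boundary window $\llbracket0,L\rrbracket$ with $L$ a fixed constant depending only on $q,\b$.

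First I write
\[
\mathbb P^{(q,\b,\b)}(T_N=k)=\frac{\mathbb P^{(0,\b,\b)}(T_N=k)\,w_{N,k}}{\sum_{j}\mathbb P^{(0,\b,\b)}(T_N=j)\,w_{N,j}},
\qquad
w_{N,k}=\mathbb E^0_{N,k}\bigl[W_{\bs(\bg)}(\b^2)\bigr],
\]
which follows from \eqref{eq:suff-coex-conditional-density}. By Proposition~\ref{prop:moment bound coex}, $1\le w_{N,k}\le M$ uniformly in $N$ and $k$.

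Next, Lemma~\ref{lem:decomposition TASEP} with $\a=\b$ gives the TASEP weights explicitly:
\begin{itemize}
\item for $k\in\llbracket1,N\rrbracket$, the weight is proportional to $\frac{\b}{4}w_\b^{N-1}p^{(\b)}_{k-1}p^{(\b)}_{N-k}$;
\item for $k=0$, the weight is $w_\b^N p^{(\b)}_N$.
\end{itemize}
Since $p^{(\b)}_\ell\downarrow p_\infty=1-\b^{-2}$, exponentially fast by Lemma~\ref{lem:basic hitting monotone coupling}, and since $p_\ell>p_\infty$ for every finite $\ell$, the interior weights are $\frac{\b}{4w_\b}p_\infty^2(1+O(e^{-c\min(k,N-k)}))$ relative to $w_\b^N$. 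The boundary point $k=0$ has relative weight $p_\infty$ instead. A short computation gives $\frac{\b}{4w_\b}p_\infty=\frac{\b(\b-1)(\b+1)/\b^2}{\b+\b^{-1}+2}=\frac{\b-1}{\b+1}<1$. So the interior profile is asymptotically flat, while $k=0$ carries a multiplicatively different mass, by the factor $\frac{\b+1}{\b-1}\neq1$.

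The $q$-correction $w_{N,k}$ could in principle cancel this defect, and controlling that is the main obstacle. I would first show $w_{N,k}\to w_\infty$ with exponential rate in $\min(k,N-k)$. The route is Lemma~\ref{lem:comparision} together with the regeneration structure of Lemma~\ref{lem:regeneration lambda}: the endpoint $X(N+1)$ depends, up to exponentially small error, only on the composition near the bottom levels, i.e.\ on the path near the minimum, as in Proposition~\ref{prop:expectation of TV distance coex} and Lemma~\ref{lem:stability low compositions coex}. Near that minimum the path looks like the $V$-shaped walk, uniformly in $k$, once $k$ and $N-k$ are both large. For the boundary, $w_{N,0}\to w_0$, where $w_0$ is the corresponding constant for a one-sided nonnegative walk.

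Combining these, with $Z_N$ the normalizing sum,
\[
(N+1)\,\mathbb P^{(q,\b,\b)}(T_N\in\llbracket0,L\rrbracket)\;\approx\;\frac{(N+1)\sum_{k\le L}a_k}{(N+1)a_\infty+O(1)},
\]
where $a_k$ are the limiting relative weights and $a_\infty$ is the bulk value. It then suffices that $\sum_{k\le L}(a_k-a_\infty)\neq0$ for some fixed $L$, since the error terms are $O(1/N)$ times smaller. If the limit at $k=0$ alone cannot be shown to differ from $a_\infty$, I would use the whole boundary excess $\sum_{k\ge0}(a_k-a_\infty)$ over $L$ large. I would try to prove this excess is nonzero via a monotonicity argument: Lemma~\ref{lem:coupling birth death}(\romannum{1}) couples the $k=0$ composition, which is pinned at one side, below or above the bulk one.

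If this sign comparison fails, the fallback is a contradiction argument. The upper bound in Proposition~\ref{prop:macroscopic uniformity TN upper} is proved at rate $1/N$, so it suffices to show the boundary excess is not identically zero across $q$ and $\b$. Alternatively I would switch the test set to a symmetric comparison of $k$ near $0$ against $k$ near $N$, where the path asymmetry, nonnegative on the right versus strictly positive on the left, yields an $O(1)$ discrepancy in mass.
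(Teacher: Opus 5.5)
\textbf{There is a genuine gap in your main route.} It sits exactly at what you call the ``main obstacle''. Testing $\{0\}$ (or $\llbracket0,L\rrbracket$) requires knowing the \emph{value} of the $q$-correction at the boundary. Writing $w_0=\lim_N w_{N,0}$, you would need $w_0/\Lambda_{q,\b}\neq(\b-1)/(\b+1)$, or the analogous non-vanishing of the summed boundary excess. Otherwise the Bryc weights exactly cancel the TASEP excess $(\b+1)/(\b-1)$ at $k=0$.

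Nothing available supplies this:
\begin{itemize}
\item Proposition~\ref{prop:constant Lambda q b} controls $w_{N,k}$ only up to an error $e^{-c\min(k-1,N-k)}$, which is vacuous near the boundary.
\item The uniform bounds $1\le w_{N,k}\le M$ cannot exclude cancellation.
\item The monotonicity step does not work. Part (i) of Lemma~\ref{lem:coupling birth death} needs coordinatewise ordered $0$-$1$ sequences, and a pinned one-sided path and a $V$-shaped path do not produce them. Heuristically the comparison also points the unhelpful way. With the minimum at $0$, levels $h\ge1$ are visited by one arm only, so the one-symbols near the end of $\boldsymbol{\mathsf S}^{\bs(\bg)}$ are packed more densely. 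This pushes the chain down and suggests $w_0\le\Lambda_{q,\b}$, precisely the direction compatible with cancellation.
\end{itemize}
Your first fallback is not a valid argument. The claim is for each fixed $(q,\b)$, so showing the excess is ``not identically zero across $q$ and $\b$'' proves nothing. The upper bound of Proposition~\ref{prop:macroscopic uniformity TN upper} also plays no role in a lower bound.

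\textbf{Your last sentence has the right idea but misses the ingredient that makes it work.} That ingredient is exact reversal invariance, and it removes the need for any boundary asymptotics of the Bryc weights. Set $\mathrm{inv}(\bg)_i=\gamma_{N-i}-\gamma_N$. Then $\bs(\mathrm{inv}(\bg))=\bs(\bg)$, $H(\mathrm{inv}(\bg))=H(\bg)$, and $(\mathrm{inv}\,\bg)_N-2\min(\mathrm{inv}\,\bg)=\gamma_N-2\min(\bg)$. Because $\a=\b$, the measure $\mathbb P^{(q,\b,\b)}$ is therefore $\mathrm{inv}$-invariant. Let $Q_N$ be the \emph{last} minimum. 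Since $N-Q_N(\bg)=T_N(\mathrm{inv}(\bg))$, we get $\mathbb P^{(q,\b,\b)}(T_N=N)=\mathbb P^{(q,\b,\b)}(Q_N=0)$. Using $\{Q_N=0\}\subseteq\{T_N=0\}$ and bounding the total variation distance below by the point masses at $0$ and $N$,
\[
\TV{\mathbb P^{(q,\b,\b)}(T_N\in\cdot)-\mathrm{Unif}_{\llbracket0,N\rrbracket}}
\ge\tfrac12\bigl(\mathbb P^{(q,\b,\b)}(T_N=0)-\mathbb P^{(q,\b,\b)}(Q_N=0)\bigr)
=\tfrac12\,\mathbb P^{(q,\b,\b)}(T_N=0,\,Q_N>0)
\ge\tfrac12\,\mathbb P^{(q,\b,\b)}(T_N=0,\,\gamma_1=0).
\]
The last probability needs only three inputs: $W_{\bs(\bg)}(\b^2)\ge1$, $\mathbb E^{(0,\b,\b)}[W_{\bs(\bg)}(\b^2)]\le M$, and Lemma~\ref{lem:decomposition TASEP}. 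With these it is at least $\mathbb P^{(0,\b,\b)}(T_N=0,\gamma_1=0)/M\ge\nu_\b(0)\,p_\infty^2/(M(N+1))$, where $p_\infty=1-\b^{-2}$.

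This argument shows only that the two endpoint masses differ. It does not need a particular endpoint to deviate from $1/(N+1)$ in a known direction, so it is immune to whatever the Bryc weights do. Your single-endpoint test, by contrast, could in principle fail. Without the symmetry, comparing masses near $0$ and near $N$ would again require the boundary values of $w_{N,k}$ at both ends, which is the same obstacle. Note also that the discrepancy is of order $1/N$ in mass, not $O(1)$.
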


\subsection{Estimating the Radon–Nikodym derivatives}

Our key input to showing Propositions~\ref{prop:macroscopic uniformity TN upper} and~\ref{prop:macroscopic uniformity TN lower} is a sharp approximation of the expectation of the Bryc polynomial when the  location $T_N$ is bounded away from the endpoints of the segment $\llbracket 1,N \rrbracket$.

\begin{proposition}
    \label{prop:constant Lambda q b}
    Assume \(q\in(0,1)\) and \(\b>1\). For \(N\in\mathbb N\) and \(k\in\llbracket0,N\rrbracket\), we denote by $\mathbb{P}_{N,k}^0$ the measure \(\mathbb P^{(0,\b,\b)}\) conditioned on $T_N=k$, and write \(\mathbb E_{N,k}^0\) for the corresponding expectation. 
Then there exist constants $\Lambda_{q,\b}\in[1,\infty)$ and $C,c>0$ depending only on $q$ and $\b$ such that, for every $N\in\mathbb N$ and $k\in\llbracket1,N\rrbracket$,
\begin{equation*} 
 \left|\mathbb E^0_{N,k}\!\left[
        W_{\bs(\bg)}(\b^2)
    \right]-\Lambda_{q,\b}\right|
    \le
    Ce^{-c\min(k-1,N-k)}  .
\end{equation*} 
\end{proposition}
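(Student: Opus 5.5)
The plan is to identify $\Lambda_{q,\b}$ as the expectation of $\b^{2X}$ for a limiting birth--death chain driven by the bi-infinite $V$-shaped walk $\boldsymbol Y$ of Definition~\ref{def:V shaped RW}. We then show that $\mathbb E^0_{N,k}[W_{\bs(\bg)}(\b^2)]$ is exponentially close to it in $m:=\min(k-1,N-k)$.

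\textbf{The limiting chain.} For $h\ge 0$ let $\sigma^\infty_h=\#\{i\in\mathbb Z: Y_i=h\}$. These are finite, since $Y$ is transient upward on both sides. Let $\boldsymbol{\mathsf S}^\infty$ be the reversed $0$-$1$ sequence, read from the end: $\mathsf S^\infty(h')=1$ if and only if $h'+1=\sigma^\infty_0+\dots+\sigma^\infty_{j-1}$ for some $j\ge1$. For $\ell\in\mathbb N$, let $X^\infty_\ell$ be the endpoint after $\ell+1$ steps of the chain started at $0$, read along $\mathsf S^\infty(\ell),\dots,\mathsf S^\infty(0)$. Its transition probabilities are the $i\to\infty$ limits of \eqref{eq:etget11}--\eqref{eq:etget21}: down-probability $1-q^j$ and up-probability $q^{j+1}$. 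Set $\Lambda_\ell=\mathbb E[\b^{2X^\infty_\ell}]$.

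\textbf{Convergence of $\Lambda_\ell$.} I will show that $(\Lambda_\ell)$ is exponentially Cauchy and set $\Lambda_{q,\b}=\lim_\ell\Lambda_\ell$; then $\Lambda_{q,\b}\ge1$ because $X^\infty_\ell\ge0$. The Cauchy property follows the proof of Proposition~\ref{prop:expectation of TV distance coex} verbatim. The regeneration variables of Lemma~\ref{lem:regeneration lambda} produce many $1$-symbols at subexponential spacing, so a chain started from any $x\le\varepsilon\ell$ coalesces with the one started from $0$ except with probability $e^{-c\ell}$ (Lemmas~\ref{lem:bound the TV distance by hitting}, \ref{lem:simpler chain hitting 0}, \ref{lem:inserting zeros comparison}). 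The position at time $\ell'-\ell$ of the longer run exceeds $\varepsilon\ell$ only with probability $e^{-c\ell}$, by the $\mathscr X$-domination and the moment argument of Proposition~\ref{prop:moment bound coex}, Step~1, which applies directly to $\boldsymbol Y$. This gives TV closeness of $X^\infty_\ell$ and $X^\infty_{\ell'}$. I upgrade it to closeness of the $\b^2$-moments by the Cauchy--Schwarz device of \eqref{eq:suff-coex-covariance bound proof2}, using uniform bounds on $\b^4$-moments.

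\textbf{Comparison with the finite measure.} Fix $N$, $k$ and $\ell=\lfloor\eta m\rfloor$.
\begin{enumerate}
\item[(a)] By the Markov property at time $N-\ell$, Proposition~\ref{prop:expectation of TV distance coex}, and the moment bound \eqref{eq:suff-coex-intermediate tail}, the law of $X^{\boldsymbol{\mathsf S}^{\bs(\bg)}}(N+1)$ under $\mathbb P^0_{N,k}$ is $Ce^{-cm}$-close in TV to $\mathbb E^0_{N,k}[K_{N,\ell}(\boldsymbol{\mathsf S}^{\bs(\bg)};0,\cdot)]$.
\item[(b)] In $K_{N,\ell}$ the transition probabilities differ from the limiting ones by $O(q^{N-\ell})$ per step. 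Coupling step by step gives an error of $O(\ell q^{N-\ell})\le Ce^{-cm}$.
\item[(c)] The tail sequence is a function of $(\sigma_0,\dots,\sigma_{\ell+1})$, as in the proof of Lemma~\ref{lem:conditioning does not change much endpoint}. By Step~2 of Lemma~\ref{lem:stability low compositions coex}, these equal the window counts $\vartheta_h$ outside an event of probability $Ce^{-cm}$.
\item[(d)] By Step~3 of that lemma, the window pieces $(L,R)$ of radius $\lfloor m/2\rfloor$ under $\mathbb P^0_{N,k}$ are $Ce^{-cm}$-close in TV to the corresponding pieces of $\boldsymbol Y$. This uses $k-1\ge m$ and $N-k\ge m$, and compares conditioning to stay nonnegative up to a finite time with conditioning forever, via \eqref{eq:stability proof7}--\eqref{eq:stability proof8}.
\item[(e)] For $\boldsymbol Y$ itself, the counts $\sigma^\infty_h$ with $h\le\kappa m$ coincide with the window counts except with probability $Ce^{-cm}$, by the same hitting estimate.
\end{enumerate}
Combining (a)--(e), the law of $X^{\boldsymbol{\mathsf S}^{\bs(\bg)}}(N+1)$ under $\mathbb P^0_{N,k}$ is within $Ce^{-cm}$ in TV of the law of $X^\infty_\ell$.

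\textbf{Main obstacle.} The hard step is converting these TV estimates into estimates on $\mathbb E[\b^{2X}]$, since $\b^{2X}$ is unbounded. I would follow \eqref{eq:suff-coex-covariance bound proof2}: take an optimal coupling of the two endpoints, apply Cauchy--Schwarz, and control the $\b^4$-moments. On the finite side these come from Proposition~\ref{prop:moment bound coex} with $z=\b^4$. On the limiting side they come from the analogous bound for $X^\infty_\ell$ uniformly in $\ell$, proved by rerunning Step~1 of that proof with $\boldsymbol Y$ in place of $\boldsymbol\lambda$. This gives $|\mathbb E^0_{N,k}[W_{\bs(\bg)}(\b^2)]-\Lambda_\ell|\le Ce^{-cm}$. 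Together with $|\Lambda_\ell-\Lambda_{q,\b}|\le Ce^{-c\ell}$, this yields the claim. Small $m$ is absorbed into $C$, using the uniform bound $\mathbb E^0_{N,k}[W]\le M^{1/2}$ from \eqref{eq:suff-coex-W L2}.
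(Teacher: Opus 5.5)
Your proposal is correct. Steps (a)--(e) are the paper's reductions (Lemmas~\ref{lem:sum of K}, \ref{lem:convergence to limiting chain}, \ref{lem: first part proof limit of low level composition}, \ref{lem: second part proof limit of low level composition}, \ref{lem:limit of low level composition}). Your $\Lambda_\ell$ is exactly the paper's $a_\ell=\mathbb E[G_\ell(\vartheta_0,\ldots,\vartheta_{\ell+1})]$, with $\vartheta_h$ the level counts of $\boldsymbol Y$. The route differs from the paper's in two places.

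\textbf{How $(\Lambda_\ell)$ is shown to be Cauchy.} The paper never analyzes the limiting chain on $\boldsymbol Y$. It uses a sandwich: the finite-system estimates give $\limsup_{n\to\infty}|F_n-a_\ell|\le C_1e^{-c_1\ell}$ for $F_n=\mathbb E^0_{2n+1,n+1}[W_{\bs(\bg)}(\b^2)]$, and the triangle inequality then shows $(a_\ell)$ is Cauchy. You instead rerun Proposition~\ref{prop:expectation of TV distance coex} and Proposition~\ref{prop:moment bound coex} for $\mathfrak X$ driven by $\boldsymbol Y$. That works: the first block, from $\ell'$ down to the next $1$-symbol, is a renewal overshoot, handled like $\mathrm R_{A_s+1}$ in Step~2. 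It also gives an intrinsic proof that $\Lambda_\ell$ converges. But it is avoidable, because your (a)--(e) already hold for every $\ell\le\eta m$ with error $Ce^{-c\ell}+Ce^{-cm}$, so the sandwich is available to you.

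\textbf{Where the unbounded weight is handled.} The paper passes from the unbounded weight to a bounded one once, on the finite side, in Lemma~\ref{lem:sum of K}. There Lemma~\ref{lem:coupling of chain tails} bounds the $\b^4$-moment of the tail kernel from $0$ by that of $W$. After that only the bounded $G_\ell\le\b^{2(\ell+1)}$ appears, and this factor is absorbed by taking $\ell=\lfloor\eta m\rfloor$ with $\eta$ small. Your final Cauchy--Schwarz step needs $\sup_\ell\mathbb E[\b^{4X^\infty_\ell}]<\infty$. You do not need to rerun Step~1 on $\boldsymbol Y$ for this; Fatou suffices. For fixed $\ell$, your (b)--(e) show that the mixtures $\mathbb E^0_{2n+1,n+1}[K_{2n+1,\ell}(\boldsymbol{\mathsf S}^{\bs(\bg)};0,\cdot)]$ converge to the law of $X^\infty_\ell$. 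By \eqref{eq:nu bounded by mu}, their $\b^4$-moments are at most $M$.

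\textbf{A citation fix in (d).} Step~3 of Lemma~\ref{lem:stability low compositions coex} compares with \emph{unconditioned} tails, not with conditioning to stay nonnegative forever. What (d) needs is Lemma~\ref{lem:technical lem RW nonnegative}. There $\mathbb P(A_n\setminus A_\infty)/\mathbb P(A_n)$ bounds the total variation between the prefix laws directly.
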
 
 
In order to show Proposition~\ref{prop:constant Lambda q b}, we require some setup. The following lemma provides a simple comparison between the endpoint measures $K_{N,m}(\boldsymbol{\mathsf S};x,\cdot)$ from Definition \ref{def:chain tail} for different choices of $\mathsf S$.

\begin{lemma}\label{lem:coupling of chain tails}
    Let $q\in(0,1)$, $N\in\mathbb{N}$, and $\boldsymbol{\mathsf{S}} \in\{0,1\}^{N+1}$. 
For every $m\in\llbracket0,N-1\rrbracket$ and every $x,x'\in\llbracket0, N-m-1 \rrbracket$ satisfying $x\geq x'$, the probability measure $K_{N,m}(\boldsymbol{\mathsf S};x,\cdot)$ on $\mathbb Z_{\geq0}$ stochastically dominates the probability measure $K_{N,m}(\boldsymbol{\mathsf S};x',\cdot)$ on $\mathbb Z_{\geq0}$, where these kernels are from Definition \ref{def:chain tail}.
\end{lemma}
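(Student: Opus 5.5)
We will construct a monotone coupling of the two chains $X^{\boldsymbol{\mathsf S},x}_{N,m}$ and $X^{\boldsymbol{\mathsf S},x'}_{N,m}$, driven by common uniform variables, such that $X^{\boldsymbol{\mathsf S},x}_{N,m}(i)\ge X^{\boldsymbol{\mathsf S},x'}_{N,m}(i)$ for all $i\in\llbracket N-m,N+1\rrbracket$. Evaluating at $i=N+1$ then gives the claimed stochastic domination of $K_{N,m}(\boldsymbol{\mathsf S};x,\cdot)$ over $K_{N,m}(\boldsymbol{\mathsf S};x',\cdot)$.

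For the coupling, we mimic the map $\Psi^s$ from the proof of Lemma~\ref{lem:inserting zeros comparison}, now with time-dependent thresholds. For $i\in\llbracket N-m,N\rrbracket$, $j\in\llbracket0,i-1\rrbracket$ and $u\in(0,1)$, we set
\[
\Psi_i^{1}(j,u)=j-{\bf1}_{\{u\le (1-q^{j})/(1-q^{i+1})\}},\qquad
\Psi_i^{0}(j,u)=j+{\bf1}_{\{u\le (q^{j+1}-q^{i+1})/(1-q^{i+1})\}}.
\]
For $U\sim\mathrm{Unif}(0,1)$, $\Psi_i^{\mathsf S(i)}(j,U)$ has exactly the transition law \eqref{eq:etget11}--\eqref{eq:etget21}. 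We drive both chains with the same i.i.d.\ uniforms $U_{N-m},\dots,U_N$; both chains start in $\llbracket0,N-m-1\rrbracket$, so they stay in the admissible range $\llbracket0,i-1\rrbracket$ at each step, as noted in Definition~\ref{def:chain tail}.

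It remains to check that $j\mapsto\Psi_i^s(j,u)$ is nondecreasing on $\llbracket0,i-1\rrbracket$ for fixed $i,s,u$; an induction on $i$ then propagates the initial order $x\ge x'$. Since steps have size at most one, it suffices to compare $j$ and $j+1$, i.e.\ to show $\Psi_i^s(j+1,u)\ge\Psi_i^s(j,u)$. For $s=1$ we have $\Psi_i^1(j+1,u)\ge j\ge\Psi_i^1(j,u)$, because the down-step is at most one. For $s=0$ we have $\Psi_i^0(j+1,u)\ge j+1\ge\Psi_i^0(j,u)$, because the up-step is at most one. So the adjacent-level monotonicity follows from the unit step sizes alone, without using the monotonicity of the thresholds in $j$. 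If $x$ and $x'$ differ by more than one, the order is still maintained, since the nondecreasing property of $j\mapsto\Psi_i^s(j,u)$ is applied to arbitrary $j\ge j'$ by chaining the adjacent comparisons.

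There is no real obstacle here; the only care needed is to confirm that both coupled chains remain within the domain $\llbracket0,i-1\rrbracket$ where the transition probabilities lie in $[0,1]$, and this is already guaranteed by Definition~\ref{def:chain tail}.
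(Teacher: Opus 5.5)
Your proof is correct and is essentially the paper's argument: the paper defines the same update map $\Phi_i^s(j,u)$, observes that it is nondecreasing in $j$ on $\llbracket0,i-1\rrbracket$, and couples the two tail chains through common uniforms. Your adjacent-level check supplies the detail the paper leaves as ``one can observe''; it uses only the unit step sizes, which is what is needed, since the thresholds themselves vary in $j$ in the unfavorable direction.
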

\begin{proof}
     For \(i\in\llbracket N-m,N\rrbracket\), \(s\in\{0,1\}\), and \(u\in(0,1)\), define the update map
\[
\Phi_i^s(j,u)
=
j-{\bf1}_{\{s=1\}}
  {\bf1}_{\left\{u\le \frac{1-q^j}{1-q^{i+1}}\right\}}
+{\bf1}_{\{s=0\}}
  {\bf1}_{\left\{u\le \frac{q^{j+1}-q^{i+1}}{1-q^{i+1}}\right\}},
\]
for \(j\in\llbracket0,i-1\rrbracket\).  One can observe that \(\Phi_i^s(j,u)\le \Phi_i^s(j',u)\) if \(0\le j\le j'\le i-1\).
Coupling the two tail birth-death chains with the same uniform random variables gives the stochastic domination.
\end{proof}

The next lemma provides a sharp approximation of the expectation of the Bryc polynomial in terms of the kernel $K_{N,m}(\boldsymbol{\mathsf S};x,\cdot)$, which will serve as a starting point in order to show Proposition~\ref{prop:constant Lambda q b}.

\begin{lemma}\label{lem:sum of K}
        Assume \(q\in(0,1)\) and \(\b>1\). For \(N\in\mathbb N\) and each $k\in\llbracket0,N\rrbracket$, we denote by $\mathbb{P}_{N,k}^0$ the measure \(\mathbb P^{(0,\b,\b)}\) conditioned on $T_N=k$, and write \(\mathbb E_{N,k}^0\) for the corresponding expectation. 
    Let \(K_{N,\ell}(\boldsymbol{\mathsf S};x,\cdot)\) be the tail kernel from Definition \ref{def:chain tail}. For $\ell\in\mathbb N$ and \(N\in\mathbb N\) with \(N\ge2\ell+2\), let  
    \[
    \mathsf{K}_{N,\ell}(\bg)=\sum_{y\ge0}
        \b^{2y}
        K_{N,\ell}
        \left(
            \boldsymbol{\mathsf S}^{\bs(\bg)};0,\{y\}
        \right)\quad\mbox{ for }\bg\in\lrw_N.
\]
Then there exist constants \(C,c\in(0,\infty)\), depending only on \(q,\b\), such that for $\ell\in\mathbb N$, \(N\in\mathbb N\) with \(N\ge2\ell+2\),
\begin{equation*} 
    \sup_{k\in\llbracket0,N\rrbracket}
    \mathbb E^0_{N,k}
    \left[
        \left|W_{\bs(\bg)}\left(\b^2\right)-\mathsf{K}_{N,\ell}(\bg) \right|\right]
    \le
    C e^{-c\ell}.
\end{equation*}
\end{lemma}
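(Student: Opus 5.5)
We write $W=W_{\bs(\bg)}(\b^2)=\mathbb E[\b^{2X(N+1)}\mid\bg]$, where $X=X^{\boldsymbol{\mathsf S}^{\bs(\bg)}}$. By the Markov property of $X$ at time $N-\ell$,
\[
W=\sum_{x\ge0}\mathbb P\left(X(N-\ell)=x\,\middle|\,\bg\right)\sum_{y\ge0}\b^{2y}K_{N,\ell}\left(\boldsymbol{\mathsf S}^{\bs(\bg)};x,\{y\}\right).
\]
Since $\mathsf{K}_{N,\ell}(\bg)$ is the same expression with $x=0$, we get
\[
\left|W-\mathsf K_{N,\ell}(\bg)\right|\le\mathbb E\left[\left|g_x-g_0\right|\Big|_{x=X(N-\ell)}\,\middle|\,\bg\right],
\qquad g_x=\sum_y\b^{2y}K_{N,\ell}(\boldsymbol{\mathsf S}^{\bs(\bg)};x,\{y\}).
\]
By Lemma~\ref{lem:coupling of chain tails}, $K(x,\cdot)$ stochastically dominates $K(0,\cdot)$, so $g_x\ge g_0$ and all differences are nonnegative. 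We split according to whether $x\le h:=\lfloor\varepsilon\ell\rfloor$, with $\varepsilon$ from Proposition~\ref{prop:expectation of TV distance coex}. The assumption $N\ge2\ell+2$ guarantees $h\le N-\ell-1$ for $\varepsilon<1$, so Proposition~\ref{prop:expectation of TV distance coex} applies with $m=\ell$.

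\emph{Small $x$.} Couple the two tail chains started from $x$ and $0$ via the monotone coupling of Lemma~\ref{lem:coupling of chain tails}. They disagree at time $N+1$ only if they have not merged. By Lemma~\ref{lem:bound the TV distance by hitting} and its proof, the merging failure probability is at most $\mathbb P(\mathscr X^{\boldsymbol{\mathsf S}_{N,\ell},h}>0\text{ throughout})=:p_h(\bg)$, and the same monotone comparison controls the total variation distance. Cauchy--Schwarz then gives
\[
g_x-g_0\le \mathbb E\left[\b^{4Y_x}\right]^{1/2}p_h(\bg)^{1/2},
\]
where $Y_x$ is the endpoint of the tail chain from $x\le h$. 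Comparing with $\mathscr X$ by Lemma~\ref{lem:coupling birth death}(ii)--(iv) gives $Y_x\preceq h+\mathscr X^{\boldsymbol{\mathsf S}_{N,\ell}}(\ell+1)$, so
\[
\mathbb E\left[\b^{4Y_x}\right]\le \b^{4h}\,\mathbb E\left[\b^{4\mathscr X^{\boldsymbol{\mathsf S}_{N,\ell}}(\ell+1)}\right].
\]
Taking $\mathbb E^0_{N,k}$ and applying Cauchy--Schwarz again, we need two inputs. First, $\mathbb E^0_{N,k}[p_h]\le Ce^{-c\ell}$; this is exactly the bound established inside the proof of Proposition~\ref{prop:expectation of TV distance coex}, via Lemma~\ref{lem:comparision} and \eqref{eq:hwtrhwyhjwn}. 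Second, a uniform moment bound on $\mathbb E[\b^{4\mathscr X}(\ell+1)]$ for the tail sequence, which follows from the argument of Proposition~\ref{prop:moment bound coex}: that argument bounds $\mathscr X$-moments for arbitrary tail windows via Lemma~\ref{lem:block moment bound simpler chain arbitrary start}. Shrinking $\varepsilon$ so that $\b^{2\varepsilon\ell}$ is beaten by $e^{-c\ell/2}$ makes this contribution $Ce^{-c'\ell}$.

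\emph{Large $x$.} Bound the contribution by $\mathbb E[g_{X(N-\ell)}\mathbf 1_{X(N-\ell)>h}\mid\bg]$. By Cauchy--Schwarz in the joint law this is at most
\[
\mathbb E\left[\b^{4X(N+1)}\,\middle|\,\bg\right]^{1/2}\,\mathbb P\left(X(N-\ell)>h\,\middle|\,\bg\right)^{1/2}.
\]
After taking $\mathbb E^0_{N,k}$, Proposition~\ref{prop:moment bound coex} with $z=\b^4$ bounds the first factor. Markov's inequality with $z=2$ at time $N-\ell$ gives $\mathbb P(X(N-\ell)>h)\le M2^{-h}$, as in \eqref{eq:suff-coex-intermediate tail}. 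The $g_0$ term is handled identically, since $g_0\le g_x$.

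The main obstacle is the small-$x$ step. It needs moment control of the tail-window chain started from a nonzero state, together with the merging estimate, both averaged under $\mathbb P^0_{N,k}$ uniformly in $k$; this requires reusing the internals of Propositions~\ref{prop:moment bound coex} and~\ref{prop:expectation of TV distance coex} rather than only their statements. If quoting those internals is awkward, the fallback is to bound $g_x\le\b^{2x}\,\mathbb E[\b^{2\mathscr X^{\boldsymbol{\mathsf S}_{N,\ell}}}]$, using that $X$ is dominated by $x$ plus the $\mathscr X$-chain, and then rerun the Step~2 regeneration bounds from Proposition~\ref{prop:moment bound coex} on the window $\llbracket N-\ell,N\rrbracket$.
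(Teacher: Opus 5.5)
Your argument is correct, but it is organized differently from the paper's. You decompose $W-\mathsf K_{N,\ell}(\bg)=\mathbb E[g_{X(N-\ell)}-g_0\mid\bg]$ by the intermediate state and take $\sup_{x\le h}$ on the small-state event. This forces you to control the $\b^{4\cdot}$-moments of each kernel $K_{N,\ell}(\boldsymbol{\mathsf S};x,\cdot)$ separately, which leads to three extra steps:
\begin{itemize}
\item the detour through $Y_x\preceq h+\mathscr X^{\boldsymbol{\mathsf S}_{N,\ell}}(\ell+1)$, which needs a tail-chain version of Lemma~\ref{lem:coupling birth death}\,(ii) (not literally stated, but it follows from the same monotone coupling);
\item a moment bound for the tail-window $\mathscr X$ chain;
\item shrinking $\varepsilon$ to beat the factor $\b^{2h}$. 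This is legitimate, since the supremum over $\llbracket0,h\rrbracket$ and the hitting probability are nondecreasing in $h$, so the rate in Proposition~\ref{prop:expectation of TV distance coex} is unaffected.
\end{itemize}

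The paper first proves $\mathbb E^0_{N,k}\TV{\mu^{\bg}-\nu^{\bg}}\le Ce^{-c\ell}$, using the same two inputs you use: Proposition~\ref{prop:expectation of TV distance coex} and the Markov bound on $X(N-\ell)$. It then observes that Lemma~\ref{lem:coupling of chain tails}, averaged over the law of $X(N-\ell)$, gives $\nu^{\bg}\preceq\mu^{\bg}$. Hence $\nu^{\bg}(\b^{4\cdot})\le\mu^{\bg}(\b^{4\cdot})=W_{\bs(\bg)}(\b^4)$, and a single Cauchy--Schwarz estimate \eqref{eq:jntyujgdvbsdtb} finishes the proof. That route uses only the statements of Propositions~\ref{prop:moment bound coex} and~\ref{prop:expectation of TV distance coex}, with no comparison to $\mathscr X$ and no tuning of $\varepsilon$. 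You can recover it inside your own decomposition by applying Cauchy--Schwarz jointly in $x$ under the law of $X(N-\ell)$ instead of taking $\sup_{x\le h}$, because the $x$-averaged moment is exactly $\mu^{\bg}(\b^{4\cdot})$. Your version gains an explicit state-by-state picture and the pointwise sign $W\ge\mathsf K_{N,\ell}$; the paper also has this sign, via \eqref{eq:nu bounded by mu}, but does not need it.

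Two of your justifications should be tightened.
\begin{itemize}
\item The tail-window moment bound does not come from the proof of Proposition~\ref{prop:moment bound coex} in the way you describe, since that proof treats the chain started at time $0$. Instead, Lemma~\ref{lem:coupling birth death}\,(iii) and the Markov property at time $N-\ell$ give $\mathscr X^{\boldsymbol{\mathsf S}_{N,\ell}}(\ell+1)\preceq\mathscr X^{\boldsymbol{\mathsf S}^{\bs(\bg)}}(N+1)$. The $\mathbb E^0_{N,k}$-averaged exponential moment of the right side is bounded by \eqref{eq:moment bound coex proof1} together with Lemma~\ref{lem:comparision}.
\item The merging claim ``by the proof of Lemma~\ref{lem:bound the TV distance by hitting}'' can be dropped. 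A maximal coupling of $K_{N,\ell}(\boldsymbol{\mathsf S};x,\cdot)$ and $K_{N,\ell}(\boldsymbol{\mathsf S};0,\cdot)$, combined with the statement of Proposition~\ref{prop:expectation of TV distance coex}, gives the same Cauchy--Schwarz bound up to constants.
\end{itemize}
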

\begin{proof}
By Proposition \ref{prop:moment bound coex}, there exists a constant $M\in(0,\infty)$ depending only on $q$ and $\b$, such that 
\begin{equation} \label{eq:proof sum of K moment bound}
\mathbb E^0_{N,k}
    \left[
        \mathbb E\!\left[
            \b^{4X^{\boldsymbol{\mathsf S}^{\bs(\bg)}}(i)}
            \Big{|}\bg
        \right]  \right]
    \le M
    \quad\mbox{and } 
      \mathbb E_{N,k}^0\left[W_{\bs(\bg)}\left(\b^4\right) \right]\le M, 
    \quad
    \text{for }N\in\mathbb N,\ k\in\llbracket0,N\rrbracket,\,i\in\llbracket0,N+1\rrbracket .
\end{equation}
Let \(\varepsilon_1,C_1,c_1\in(0,\infty)\) be the constants from Proposition \ref{prop:expectation of TV distance coex}.  Decrease \(\varepsilon_1\), if necessary, so that \(\varepsilon_1\in(0,1]\).   Since \(N\ge2\ell+2\), we have
 $\lfloor\varepsilon_1\ell\rfloor\le \ell \le N-\ell-2$.
 As in the proof of Lemma \ref{lem:conditioning does not change much endpoint}, define two measures
\[
\mu^{\bg}=\mathbb{P}\left(X^{\boldsymbol{\mathsf S}^{\bs(\bg)}}(N+1)\in\,\cdot\,\right)\quad\mbox{and}\quad
\nu^{\bg} =K_{N,\ell}\left(\boldsymbol{\mathsf S}^{\bs(\bg)};0,\cdot\right),\quad\mbox{ for fixed }\bg\in\lrw_N.
\]

We  claim that there exist constants \(C_2,c_2\in(0,\infty)\), depending only on $q$ and $\b$, such that
\begin{equation}\label{eq:TN-unif-TV-tail-average fixed}
    \mathbb E^0_{N,k}
    \left[
        \TV{\mu^{\bg}-\nu^{\bg}} \right]
    \le
    C_2 e^{-c_2\ell}\quad\mbox{ for all }N\geq2\ell+2,\,k\in\llbracket0,N\rrbracket.
\end{equation}
Indeed, by the Markov property of \(X^{\boldsymbol{\mathsf S}^{\bs(\bg)}}\) at time \(N-\ell\), for each $\bg\in\lrw_N$,
\begin{equation*} 
    \TV{\mu^{\bg}-\nu^{\bg}}
    \le
    \sup_{x\in\llbracket0,\lfloor\varepsilon_1\ell\rfloor\rrbracket}
    \TV{
        K_{N,\ell}
        \left(
            \boldsymbol{\mathsf S}^{\bs(\bg)};x,\cdot
        \right)
        -
        K_{N,\ell}
        \left(
            \boldsymbol{\mathsf S}^{\bs(\bg)};0,\cdot
        \right)
    }   
    +
    \mathbb P\left(
        X^{\boldsymbol{\mathsf S}^{\bs(\bg)}}(N-\ell)>\lfloor\varepsilon_1 \ell\rfloor
        \,\middle|\,\bg
    \right) .
\end{equation*}
We take the expectation $\mathbb E^0_{N,k}$ on both sides. By Proposition \ref{prop:expectation of TV distance coex}, the expectation of the first summand on the right side is bounded by $C_1e^{-c_1\ell}$. For the second summand, Markov's inequality and \eqref{eq:proof sum of K moment bound} give
\[
\begin{aligned}
    &\mathbb E^0_{N,k}
    \left[
        \mathbb P\!\left(
            X^{\boldsymbol{\mathsf S}^{\bs(\bg)}}(N-\ell)>\lfloor\varepsilon_1 \ell\rfloor
            \,\middle|\,\bg
        \right) \right]                                     
     \le
    \b^{-4\lfloor\varepsilon_1 \ell\rfloor}\cdot
    \mathbb E^0_{N,k}
    \left[
        \mathbb E\!\left[
            \b^{4X^{\boldsymbol{\mathsf S}^{\bs(\bg)}}(N-\ell)}
            \,\middle|\,\bg
        \right]  \right]
    \le
    M \cdot\b^{-4\lfloor\varepsilon_1 \ell\rfloor} .
\end{aligned}
\]
Hence
\[
 \mathbb E^0_{N,k}
    \left[
        \TV{\mu^{\bg}-\nu^{\bg}} \right]\leq C_1e^{-c_1\ell}+ M \cdot\b^{-4\lfloor\varepsilon_1 \ell\rfloor} 
    \le
    C_2 e^{-c_2\ell}
\]
for some \(C_2,c_2\in(0,\infty)\), depending only on $q$ and $\b$, and we conclude the claim \eqref{eq:TN-unif-TV-tail-average fixed}.

We next claim that, for any increasing function $g(y)$ of $y\in\mathbb{Z}_{\geq0}$,
\begin{equation}\label{eq:nu bounded by mu}
    \nu^{\bg}(g)\leq \mu^{\bg}(g)\quad\mbox{ for fixed }\bg\in\lrw_N.
\end{equation}
Indeed, by Lemma \ref{lem:coupling of chain tails}, since $g$ is increasing, $\nu^{\bg}(g)\leq K_{N,\ell}  (\boldsymbol{\mathsf S}^{\bs(\bg)};x,g)$ for $x\in\llbracket0,N-\ell-1\rrbracket$. Averaging this inequality with respect to the conditional law of
\(X^{\boldsymbol{\mathsf S}^{\bs(\bg)}}(N-\ell)\) gives \eqref{eq:nu bounded by mu}. Taking $g(y)=\b^{4y}$, by \eqref{eq:proof sum of K moment bound},
\begin{equation}\label{eq:grtwhw}
        \mathbb E^0_{N,k}
    \left[
        \mu^{\bg}\left(g(\cdot)\right)+\nu^{\bg}\left(g(\cdot)\right) \right]
    \le 2M \quad\mbox{ for } k\in\llbracket0,N\rrbracket.
\end{equation}
For probability measures \(\rho,\rho'\) on \(\mathbb Z_{\ge0}\) and \(f:\mathbb{Z}_{\geq0}\to\mathbb R_{\geq0}\) satisfying
\(\sum_{y\geq0}f(y)^2(\rho(y)+\rho'(y))<\infty\), by Cauchy--Schwarz inequality,
\begin{equation}\label{eq:jntyujgdvbsdtb}
    \left|
        \sum_{y\geq0} f(y)\rho(y)-\sum_y f(y)\rho'(y)
    \right|
    \le
    \left(
       \sum_{y\geq0}  f(y)^2\left(\rho(y)+\rho'(y)\right)
    \right)^{1/2}
    \left(2\TV{\rho-\rho'}\right)^{1/2}.
\end{equation}
By Theorem \ref{thm:prob interpretation of Bryc polynomials} and the definitions of \(\mu^{\bg}\), \(\nu^{\bg}\), and \(\mathsf{K}_{N,\ell}\),
we have $W_{\bs(\bg)}(\b^2)=\mu^{\bg}(\b^{2\cdot})$ and  $\mathsf{K}_{N,\ell}(\bg)=\nu^{\bg}(\b^{2\cdot})$. Using \eqref{eq:jntyujgdvbsdtb} with \(\rho=\mu^{\bg}\), \(\rho'=\nu^{\bg}\), $f(y)=\b^{2y}$,     then using Cauchy--Schwarz again for the average over \(\bg\),
\[
\begin{aligned}
     \mathbb E^0_{N,k}
    \left[
        \left|W_{\bs(\bg)} \left(\b^2 \right)-\mathsf{K}_{N,\ell}(\bg)\right|  \right]                                            
     &\le
    \left(
        2\,
        \mathbb E^0_{N,k}
        \left[
            \TV{\mu^{\bg}-\nu^{\bg}} \right]
    \right)^{1/2} 
          \left(\mathbb E^0_{N,k}
       \left[
            \mu^{\bg}(\b^{4\cdot})+\nu^{\bg}(\b^{4\cdot})  \right]\right)^{1/2} \\
    &\leq  \left(2C_2 e^{-c_2\ell}\right)^{1/2} (2M)^{1/2}\leq C e^{-c\ell},
\end{aligned}
\]
for some $C,c\in(0,\infty)$ depending only on $q$ and $\b$, where we used \eqref{eq:TN-unif-TV-tail-average fixed} and \eqref{eq:grtwhw}. 
\end{proof}

 Similar to Definition~\ref{def:simpler chain}, we require the following auxiliary birth-death chain for the proof of Proposition~\ref{prop:constant Lambda q b}.  

\begin{definition} \label{def:limit chain}
Assume $q\in(0,1)$, $n\in\mathbb N$, and $\boldsymbol{\mathsf S}=(\mathsf S(0),\dots,\mathsf S(n))\in\{0,1\}^{n+1}$. We define a time-inhomogeneous Markov chain $\mathfrak{X}^{\boldsymbol{\mathsf{S}}}=\{\mathfrak{X}^{\boldsymbol{\mathsf{S}}}(i)\}_{i=0}^{n+1}$ indexed by this $0$-$1$ sequence $\boldsymbol{\mathsf{S}}$ through $\mathfrak{X}^{\boldsymbol{\mathsf{S}}}(0)=0$ and that for each $i\in\llbracket0,n\rrbracket$ and $j\in\mathbb{Z}_{\geq0}$,
\begin{enumerate}
    \item [(\romannum{1})] if $\mathsf{S}(i)=1$, we let $\mathfrak{X}^{\boldsymbol{\mathsf{S}}}(i+1)-\mathfrak{X}^{\boldsymbol{\mathsf{S}}}(i)\in\{-1,0\}$ with 
    \begin{equation}\label{eq:etget13}
    \mathbb{P}\left(\mathfrak{X}^{\boldsymbol{\mathsf{S}}}(i+1)=j-1 \Big{|}\mathfrak{X}^{\boldsymbol{\mathsf{S}}}(i)=j\right)= 1-q^{j} ,
    \end{equation}
    \item [(\romannum{2})] if $\mathsf{S}(i)=0$, we let $\mathfrak{X}^{\boldsymbol{\mathsf{S}}}(i+1)-\mathfrak{X}^{\boldsymbol{\mathsf{S}}}(i)\in\{0,1\}$ with 
    \begin{equation}\label{eq:etget23}
    \mathbb{P}\left(\mathfrak{X}^{\boldsymbol{\mathsf{S}}}(i+1)=j+1 \Big{|}\mathfrak{X}^{\boldsymbol{\mathsf{S}}}(i)=j\right)= q^{j+1} .
    \end{equation}
\end{enumerate} 
We mention that the right sides of \eqref{eq:etget13} and \eqref{eq:etget23} lie in $[0,1]$ since $j\in\mathbb{Z}_{\geq0}$ and $q\in(0,1)$. 
By an induction argument, $\mathfrak{X}^{\boldsymbol{\mathsf{S}}}(i)\in\llbracket0,i \rrbracket$ for all $i\in\llbracket0,n+1\rrbracket$, and hence this Markov chain is well-defined.
\end{definition}
\begin{definition}\label{def:01 seq u}
Assume $n\in\mathbb N$ and  \(\mathbf u=(u_0,\ldots,u_{n+1})\in\mathbb Z_{\ge0}^{n+2}\). We define a $0$-$1$ sequence \(\boldsymbol{\mathsf S}^{\mathbf u}=(\mathsf S^{\mathbf u}(0),\ldots,\mathsf S^{\mathbf u}(n))\) by, for each $i\in\llbracket0,n\rrbracket$,
\[
    \mathsf S^{\mathbf u}(i)=1
    \quad\mbox{ if and only if }\quad 
    u_j>0
    \mbox{ and }
    u_0+\cdots+u_{j-1}=n+1 -i \mbox{ for some }j\in\llbracket1,n+1\rrbracket .
\]
\end{definition}

The following lemma relates the kernel $\mathsf{K}_{N,\ell}(\bg)$ to the moment generating function of $\mathfrak{X}^{\boldsymbol{\mathsf S}^{\mathbf u}}(\ell+1)$.

\begin{lemma}\label{lem:convergence to limiting chain}
    Assume \(q\in(0,1)\), \(\b>1\).    For $\ell\in\mathbb N$ and \(N\in\mathbb N\) satisfying \(N\ge2\ell+2\), recall 
    \[
    \mathsf{K}_{N,\ell}(\bg)=\sum_{y\ge0}
        \b^{2y}
        K_{N,\ell}
        \left(
            \boldsymbol{\mathsf S}^{\bs(\bg)};0,\{y\}
        \right)\quad\mbox{ for }\bg\in\lrw_N.
\] 
Here, \(K_{N,\ell}(\boldsymbol{\mathsf S};x,\cdot)\) is the tail kernel from Definition \ref{def:chain tail}.
We define
\[
G_\ell(\mathbf u)=\mathbb E\left[\b^{2\mathfrak{X}^{\boldsymbol{\mathsf S}^{\mathbf u}}(\ell+1)}\right]\quad
\mbox{ for each }\mathbf u=(u_0,\ldots,u_{\ell+1})\in\mathbb{Z}_{\geq0}^{\ell+2},
\]
where the $0$-$1$ sequence $\boldsymbol{\mathsf S}^{\mathbf u}$ and the Markov chain $\mathfrak{X}^{\boldsymbol{\mathsf S}^{\mathbf u}}$ are from Definitions \ref{def:01 seq u} and \ref{def:limit chain}. 
For each \(\bg\in\lrw_N\), let \(\bs(\bg)=(\sigma_0,\dots,\sigma_r)\) be the composition associated to \(\bg\) as in \eqref{eq:Signature}, and extend it by  $\sigma_h=0$ for $h>r$.
Then 
\[ 
    \sup_{\bg\in\lrw_N}
    \left|
        \mathsf{K}_{N,\ell}(\bg)
        -
        G_\ell(\sigma_0,\ldots,\sigma_{\ell+1} )
    \right|
    \leq      \b^{2(\ell+1)}(\ell+1)\cdot \frac{q^{N-\ell+1}}{1-q^{N-\ell+1}} .  
\]
\end{lemma}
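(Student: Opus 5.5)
The plan is a direct coupling. First we check that the tail chain in $K_{N,\ell}(\boldsymbol{\mathsf S}^{\bs(\bg)};0,\cdot)$ and the limiting chain $\mathfrak X^{\boldsymbol{\mathsf S}^{\mathbf u}}$ with $\mathbf u=(\sigma_0,\ldots,\sigma_{\ell+1})$ run on the same $0$-$1$ sequence. Then we note that their transition probabilities differ by at most $q^{N-\ell+1}/(1-q^{N-\ell+1})$ per step. Finally, since both chains are bounded by $\ell+1$, the exponential moments differ by at most $\b^{2(\ell+1)}$ times the probability that the coupling breaks.

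\emph{Step 1 (matching the sequences).} Write $\boldsymbol{\mathsf S}_{N,\ell}=(\mathsf S^{\bs(\bg)}(N-\ell),\ldots,\mathsf S^{\bs(\bg)}(N))$ and re-index it by $i'=i-(N-\ell)\in\llbracket0,\ell\rrbracket$. As in the proof of Lemma~\ref{lem:conditioning does not change much endpoint}, Definition~\ref{def:associated 01 seq} gives the following: $\mathsf S^{\bs(\bg)}(N-h)=1$ if and only if $\sigma_0+\cdots+\sigma_{j-1}=h+1$ for some $j\in\llbracket1,r\rrbracket$. With the extension $\sigma_h=0$ for $h>r$, the condition $j\le r$ is equivalent to $\sigma_j>0$. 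Moreover, any such $j$ satisfies $j\le h+1\le\ell+1$, because the nonzero parts are at least $1$. Setting $h=\ell-i'$, the condition becomes $\sigma_j>0$ and $\sigma_0+\cdots+\sigma_{j-1}=\ell+1-i'$ for some $j\in\llbracket1,\ell+1\rrbracket$. This is exactly Definition~\ref{def:01 seq u} with $n=\ell$ and $\mathbf u=(\sigma_0,\ldots,\sigma_{\ell+1})$. Hence $\boldsymbol{\mathsf S}_{N,\ell}=\boldsymbol{\mathsf S}^{\mathbf u}$.

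\emph{Step 2 (coupling).} We drive both chains by the same i.i.d.\ uniforms $U_0,\ldots,U_\ell$, using monotone threshold update maps as in the proofs of Lemma~\ref{lem:inserting zeros comparison} and Lemma~\ref{lem:coupling of chain tails}. At tail step $i'$ the real time is $i=N-\ell+i'$; set $Q=q^{i+1}\le q^{N-\ell+1}$. Suppose both chains sit at a common state $j$. Since each step moves by at most $1$, $j\le i'\le\ell$, and therefore $j\le N-\ell-1\le i-1$ because $N\ge2\ell+2$. So the tail chain is in its admissible range (Definition~\ref{def:chain tail}). The thresholds then differ by
\[
\Bigl|\tfrac{1-q^j}{1-Q}-(1-q^j)\Bigr|=\tfrac{(1-q^j)Q}{1-Q}\le\tfrac{Q}{1-Q},\qquad
\Bigl|\tfrac{q^{j+1}-Q}{1-Q}-q^{j+1}\Bigr|=\tfrac{Q(1-q^{j+1})}{1-Q}\le\tfrac{Q}{1-Q}.
\]
Consequently, the probability that the two chains separate at that step is at most $q^{N-\ell+1}/(1-q^{N-\ell+1})$. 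A union bound over the $\ell+1$ steps bounds the probability that the endpoints disagree by $(\ell+1)\,q^{N-\ell+1}/(1-q^{N-\ell+1})$.

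\emph{Step 3 (conclusion).} Both endpoints lie in $\llbracket0,\ell+1\rrbracket$, so $\b^{2X}\in[1,\b^{2(\ell+1)}]$ for each of them. On the event that the endpoints disagree, the two values $\b^{2X}$ therefore differ by at most $\b^{2(\ell+1)}$. Hence
\[
|\mathsf K_{N,\ell}(\bg)-G_\ell(\sigma_0,\ldots,\sigma_{\ell+1})|\le \b^{2(\ell+1)}\,(\ell+1)\,\frac{q^{N-\ell+1}}{1-q^{N-\ell+1}},
\]
uniformly in $\bg$.

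The only delicate point is the index bookkeeping in Step 1, in particular checking that $j\le\ell+1$ so that only $\sigma_0,\ldots,\sigma_{\ell+1}$ enter. A secondary point is checking in Step 2 that the coupled states remain in $\llbracket0,i-1\rrbracket$, which is where the hypothesis $N\ge2\ell+2$ is used.
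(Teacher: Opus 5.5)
Your proposal is correct and matches the paper's proof step for step. The paper also identifies $\mathsf S^{\bs(\bg)}(N-\ell+a)$ with $\mathsf S^{\mathbf u}(a)$ through the one-symbols at $N+1-(\sigma_0+\cdots+\sigma_{h-1})$, noting that such $h$ must lie in $\llbracket1,\ell+1\rrbracket$. It then couples the tail chain with $\mathfrak X^{\boldsymbol{\mathsf S}^{\mathbf u}}$ using common uniforms, and uses $N\ge2\ell+2$ to keep the states in $\llbracket0,i-1\rrbracket$ while bounding each one-step discrepancy by $q^{i+1}/(1-q^{i+1})\le q^{N-\ell+1}/(1-q^{N-\ell+1})$. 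It concludes, as you do, with a union bound over the $\ell+1$ updates and the factor $\b^{2(\ell+1)}$; your bookkeeping ($j\le i'\le\ell$ and the explicit equivalence $j\le r\iff\sigma_j>0$) is, if anything, slightly more careful than the paper's.
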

\begin{proof} Since $\mathfrak{X}^{\boldsymbol{\mathsf S}^{\mathbf u}}(\ell+1)\in\llbracket0,\ell+1\rrbracket$, 
$G_\ell(\mathbf u)\in[1,\b^{2(\ell+1)}]$.
For fixed \(\ell \), set \(\mathbf u_{\bg}=(\sigma_0,\ldots,\sigma_{\ell+1})\). We claim that 
\begin{equation}\label{eq:ytuhrjnilo.,}
    \mathsf S^{\bs(\bg)}(N-\ell+a)
    =
    \mathsf S^{\mathbf u_{\bg}}(a)
    \quad \mbox{ for }a\in\llbracket0,\ell\rrbracket .
\end{equation}
Indeed, by Definition \ref{def:01 seq u}, it suffices to show that, for \(a\in\llbracket0,\ell\rrbracket\),
\[
\mathsf S^{\bs(\bg)}(N-\ell+a)=1\quad\mbox{ if and only if }\quad 
\sigma_h>0  \mbox{ and } \sigma_0+\cdots+\sigma_{h-1}=\ell-a+1    \mbox{ for some }h\in\llbracket1,\ell+1\rrbracket.
\]
By  Definition \ref{def:associated 01 seq}, the one-symbols of \(\boldsymbol{\mathsf S}^{\bs(\bg)}\) occur exactly at 
$i=\sum_{m=h}^{r}\sigma_m
    =
    N+1-\sum_{m=0}^{h-1}\sigma_m$ for some  $h\in\llbracket1,r\rrbracket$.
Thus \(i=N-\ell+a\) exactly when \(\sum_{m=0}^{h-1}\sigma_m=\ell-a+1\), and such an \(h\) must lie in \(\llbracket1,\ell+1\rrbracket\).  We conclude the claim \eqref{eq:ytuhrjnilo.,}. 
Let \(X^{\boldsymbol{\mathsf S}^{\bs(\bg)},0}_{N,\ell}\) be the tail chain from Definition \ref{def:chain tail}, and let 
\[
\Theta_{N,\ell}^{\bg}(a)
    =
    X^{\boldsymbol{\mathsf S}^{\bs(\bg)},0}_{N,\ell}(N-\ell+a)\quad\mbox{ and }\quad
    \Xi^{\bg}(a)=\mathfrak X^{\boldsymbol{\mathsf S}^{\mathbf u_{\bg}}}(a)\quad\mbox{ for }a\in\llbracket0,\ell+1\rrbracket.
\] 
Then, by the definitions of \(\mathsf{K}_{N,\ell}\), \(K_{N,\ell}\), and \(G_\ell\),
\[
    \mathsf{K}_{N,\ell}(\bg)=\mathbb E\!\left[\b^{2\Theta_{N,\ell}^{\bg}(\ell+1)}\right],
    \qquad
    G_\ell(\sigma_0,\ldots,\sigma_{\ell+1})
    =
    \mathbb E\!\left[\b^{2\Xi^{\bg}(\ell+1)}\right].
\]
All states reachable from \(0\) during \(\ell+1\) updates belong to \(\llbracket0,\ell+1\rrbracket\).  For
\(a\in\llbracket0,\ell\rrbracket\), \(j\in\llbracket0,\ell+1\rrbracket\), and \(i=N-\ell+a\), the assumption \(N\ge2\ell+2\) implies \(j\le i-1\), and
\[
    \left|
    \frac{1-q^j}{1-q^{i+1}}-(1-q^j)
    \right|
    \le
    \frac{q^{i+1}}{1-q^{i+1}},
    \qquad
    \left|
    \frac{q^{j+1}-q^{i+1}}{1-q^{i+1}}-q^{j+1}
    \right|
    \le
    \frac{q^{i+1}}{1-q^{i+1}}.
\]
Hence both one-step discrepancies are at most
 $\delta_N = q^{N-\ell+1}/(1-q^{N-\ell+1})$.
Couple \(\Theta_{N,\ell}^{\bg}\) and \(\Xi^{\bg}\) by using the same independent uniform variable at each of the \(\ell+1\) updates.  On the event that the chains agree up to time \(a\), the conditional probability of a first disagreement at the next update is at most \(\delta_N\). Therefore
\[
    \mathbb P\!\left(
        \Theta_{N,\ell}^{\bg}(\ell+1)\ne \Xi^{\bg}(\ell+1)
    \right)
    \le
    (\ell+1)\delta_N .
\]
Since both endpoints lie in \(\llbracket0,\ell+1\rrbracket\),
\[
\begin{aligned}
    \left|
        \mathsf{K}_{N,\ell}(\bg)
        -
        G_\ell(\sigma_0,\ldots,\sigma_{\ell+1})
    \right|
    &\le
    \b^{2(\ell+1)}
    \mathbb P\!\left(
        \Theta_{N,\ell}^{\bg}(\ell+1)\ne \Xi^{\bg}(\ell+1)
    \right)   
     \le
    \b^{2(\ell+1)}(\ell+1)\delta_N .
\end{aligned}
\]
The last bound is independent of \(\bg\).  Taking the supremum over \(\bg\in\lrw_N\) concludes the proof.
\end{proof}

\begin{lemma}\label{lem: first part proof limit of low level composition}
    Assume $\b>1$. For each $N\in\mathbb N$ and each $k\in\llbracket0,N\rrbracket$, we denote by $\mathbb{P}_{N,k}^0$ the measure \(\mathbb P^{(0,\b,\b)}\) conditioned on $T_N=k$. 
    For \(\bg\in\lrw_N\), let \(\bs(\bg)=(\sigma_0,\dots,\sigma_r)\) be the composition associated to \(\bg\) as in \eqref{eq:Signature}, and extend it by $\sigma_h=0$ for $h>r$. Then 
    \begin{equation}\label{eq:limit of low level composition0}
    \sigma_h =
    \#\{j\in\llbracket0,N\rrbracket:
        \gamma_j=\min(\bg)+h\}
    \quad \mbox{ for } h\in\mathbb{Z}_{\geq0}.
    \end{equation}
    Let \(\boldsymbol{Y}=(Y_i)_{i\in\mathbb Z}\) be the bi-infinite \(V\)-shaped \(\nu_\b\)-random walk from Definition \ref{def:V shaped RW}.  Define
\[
    \vartheta_h
    =
    \#\{j\in\mathbb Z:Y_j=h\}
    \quad \mbox{ for } h\in\mathbb Z_{\ge0}.
\]
These variables are finite almost surely.  Moreover, for $h\in\mathbb Z_{\geq0}$, $t\in\mathbb N$, we define the truncated compositions
\begin{equation*} 
\sigma_h^{[t]}(\bg) =    \#\{i\in\llbracket0,N\rrbracket:
        |i-k|\le t,\ \gamma_i=\gamma_k+h\} \quad\mbox{ and }\quad \vartheta_h^{[t]}=\#\{i\in\mathbb Z: |i|\le t,\ Y_i=h\} .
\end{equation*}
Then there exist $C,c>0$, depending only on $\b$, such that, for
$N\in\mathbb N$, $k\in\llbracket0,N\rrbracket$, \(d\in\mathbb Z_{\ge0}\), and $t\in\mathbb N$, 
\begin{equation}  \label{eq:comparing compositions with truncated new}
         \mathbb P^{0}_{N,k}
    \left(
        (\sigma_0,\ldots,\sigma_d)
        \ne
        \left(\sigma_0^{[t]},\ldots,\sigma_d^{[t]}\right) \right)
     \le C e^{Cd} e^{-c t},\qquad  
         \mathbb P 
    \left(
        (\vartheta_0,\ldots,\vartheta_d)
        \ne
        \left(\vartheta_0^{[t]},\ldots,\vartheta_d^{[t]}\right) \right)
     \le C e^{Cd} e^{-c t}. 
\end{equation}
\end{lemma}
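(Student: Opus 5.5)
The plan is to reduce both estimates to hitting-type events for the one-sided pieces, following Step 2 in the proof of Lemma~\ref{lem:stability low compositions coex}. I would first record \eqref{eq:limit of low level composition0}, which is just \eqref{eq:Signature} extended by zero. Finiteness of $\vartheta_h$ comes from transience: $R$ and $L$ are $\nu_\b$-walks conditioned to stay non-negative (resp.\ positive), and Lemma~\ref{lem:basic hitting monotone coupling} shows $R_i\geq \kappa i$ eventually almost surely. Under $\mathbb P^0_{N,k}$ we have $\gamma_k=\min(\bg)$ on $\{T_N=k\}$, so $\sigma_h=\#\{j:\gamma_j=\gamma_k+h\}$. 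Hence $(\sigma_0,\dots,\sigma_d)\neq(\sigma_0^{[t]},\dots,\sigma_d^{[t]})$ forces some $j$ with $|j-k|>t$ and $\gamma_j-\gamma_k\le d$. In the notation of Lemma~\ref{lem:decomposition TASEP}, this event is contained in $D^{\mathrm R}\cup D^{\mathrm L}$, where $D^{\mathrm R}=\{\exists i\in\llbracket t+1,N-k\rrbracket: R_i\le d\}$ and $D^{\mathrm L}$ is defined analogously for $L$. The same inclusion holds for $\vartheta$, with $R,L$ now the infinite one-sided walks of Definition~\ref{def:V shaped RW}.

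To bound $D^{\mathrm R}$, fix $\kappa=(\b-1)/(4(\b+1))$ and split on the value of $R_t$. By Lemma~\ref{lem:basic hitting monotone coupling}, applied through the monotone coupling to the conditioned walk, $\mathbb P(R_t\le\kappa t)\le Ce^{-ct}$. On $\{R_t=x>\kappa t\}$, the Markov property says the future is an ordinary $\nu_\b$-walk from $x$ conditioned to stay non-negative, either up to time $N-k-t$ or forever. The probability that such a walk ever reaches $\le d$ is at most $\b^{-2(x-d)}/(1-\b^{-2})$, since conditioning on staying non-negative costs at most a factor $(1-\b^{-2})^{-1}$ by Lemma~\ref{lem:basic hitting monotone coupling}. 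This gives
\[
\mathbb P(D^{\mathrm R})\le Ce^{-ct}+C\b^{2d}\b^{-2\kappa t}\le Ce^{Cd}e^{-ct},
\]
with $C=2\log\b$ absorbed into the constant. For the left piece I would shift the level by one and repeat the argument. When $t\ge N-k$ (resp.\ $t\ge k$) the event is empty, so the bounds hold trivially, and small $t$ is absorbed by enlarging $C$.

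The only step needing care is that the walk conditioned to stay non-negative up to a finite horizon still satisfies the linear-growth estimate and the hitting bound uniformly in the horizon. Lemma~\ref{lem:basic hitting monotone coupling} provides both the coupling $S'\ge S$ and the survival lower bound $1-\b^{-2h}$ uniformly in $N$, so the constants depend only on $\b$. The infinite V-shaped case follows by the identical argument with $N=\infty$.
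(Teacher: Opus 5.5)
Your proposal is correct and follows the paper's proof essentially step for step: you reduce both mismatch events to a late visit of levels $\le d$ by the one-sided conditioned pieces, split on $\{R_t\le\kappa t\}$ using Lemma~\ref{lem:basic hitting monotone coupling}, bound the remaining hitting probability by $\b^{-2(x-d)}/(1-\b^{-2})$, handle the left piece by a unit shift, and repeat with infinite-horizon conditioning for the $V$-shaped walk. The only difference is cosmetic: you skip the paper's separate treatment of the case $\kappa t\le d$, which is legitimate because your hitting bound exceeds $1$, and so holds trivially, whenever $x\le d$.
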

\begin{proof}
        The first assertion \eqref{eq:limit of low level composition0} follows from \eqref{eq:Signature} and the extension $\sigma_h=0$ for $h>r$. For the second assertion, both one-sided conditioned paths $R_i=Y_i$ and $L_i=Y_{-i}$ for $i\in\mathbb Z_{\geq0}$ escape linearly to $\infty$ almost surely, hence each fixed level is visited finitely many times. Therefore all \(\vartheta_h\), \(h\in\mathbb Z_{\ge0}\) are finite almost surely. 

By Lemma \ref{lem:decomposition TASEP}, under the measure $\mathbb P^{0}_{N,k}$, the right path $R_i^{N,k}=\gamma_{k+i}-\gamma_k$, $i\in\llbracket0,N-k\rrbracket$ is a \(\nu_\b\)-walk started from \(0\), conditioned to stay nonnegative up to time \(N-k\). If \(k\ge1\), then \(\gamma_{k-1}-\gamma_k=1\),
and the left path $Z_i^{N,k}=\gamma_{k-i-1}-\gamma_k-1$, $i\in\llbracket0,k-1\rrbracket$ is a \(\nu_\b\)-walk started from \(0\), conditioned to stay nonnegative up to time \(k-1\). The left and right paths are independent.

	We start to prove the first bound in \eqref{eq:comparing compositions with truncated new}.
	Under \(\mathbb P^0_{N,k}\), one has \(T_N=k\), and hence \(\gamma_k=\min(\bg)\). Thus \(\sigma_h^{[t]}(\bg)\le \sigma_h\) for every \(h\in\llbracket0,d\rrbracket\). We have
\begin{equation*}
    \begin{split}
        &\left\{\sigma_h\neq \sigma_h^{[t]}\mbox{ for some }h\in\llbracket0,d\rrbracket\right\}
 \subseteq\left\{\gamma_i-\gamma_k\in\llbracket0,d\rrbracket\mbox{ for some }i\in\llbracket0,N\rrbracket,\,|i-k|>t\right\}\\
&=\left\{R_i^{N,k}\in\llbracket0,d\rrbracket \mbox{ for some }  i\in\llbracket t+1,N-k\rrbracket \right\} \cup
\left\{Z_i^{N,k}\in\llbracket-1,d-1\rrbracket \mbox{ for some }  i\in\llbracket t,k-1\rrbracket\right\},
    \end{split}
\end{equation*} 
where the second event is interpreted as empty when \(k=0\).
In the rest of the proof, fix $\kappa= (\b-1)/(2(\b+1))$.
We first bound the probability of the first event on the right side, which concerns the right path. Let \(S\) be an ordinary \(\nu_\b\)-random walk started from \(0\), and let
$A_m=\{S_i\ge0\text{ for }i\in\llbracket0,m\rrbracket\}$.  By Lemma \ref{lem:basic hitting monotone coupling}, there exist  
\(C_1,c_1>0\), depending only on \(\b\), such that
\[
    \mathbb P(S_t\le \kappa t\mid A_m)\le C_1e^{-c_1t}\quad \mbox{ for } m\ge t .
\]
For each $m\geq t$ and \(x>d\), the conditional future after time \(t\), given \(S_t=x\) and \(A_m\), is the \(\nu_\b\)-random walk from \(x\) conditioned to stay nonnegative up to time \(m-t\). Let $B$ denote the event that the unconditioned walk ever hits $d$. By Lemma \ref{lem:basic hitting monotone coupling},
\[ 
    \mathbb P_x\left( S_i\le d\mbox{ for some }i\in\llbracket0,m-t\rrbracket
        \,\middle|\,A_{m-t}
    \right)
     \le
    \frac{
        \mathbb P_x(B)
    }{
        \mathbb P_x(A_{m-t})
    }                                                   
     \le
    \frac{\b^{-2(x-d)}}{1-\b^{-2}} . 
\] 
When $\kappa t>d$ and $m\geq t$, combining the last two displays, and distinguishing whether $S_t\le \kappa$ or $S_t> \kappa$ for the second inequality,
\[
\begin{aligned}
    \mathbb P\left( S_i\le d\mbox{ for some }i\in\llbracket t,m\rrbracket
        \,\middle|\,A_m
    \right)
    &\le
    \mathbb P(S_t\le \kappa t\mid A_m)                                      
    +
     \b^{2d} \left(1-\b^{-2} \right)^{-1}
    \mathbb E\left[\b^{-2S_t}\,\middle|\,A_m\right]                    \\
    &\le
    C_1e^{-c_1t}
    +
    \b^{2d} \left(1-\b^{-2} \right)^{-1}\left(
        \mathbb P(S_t\le \kappa t\mid A_m)+\b^{-2\kappa t}
    \right)                                                            
     \le C_2e^{C_2d}e^{-c_2t},
\end{aligned}
\]
for some $C_2,c_2>0$ depending only on $\b$. If $\kappa t\le d$, the same bound holds after increasing \(C_2\), since the probability is at most \(1\) and \(1\le e^{(c_2/\kappa)d}e^{-c_2t}\). If \(N-k<t\), the event concerning the right path is empty; otherwise, taking \(m=N-k\) in the last display shows that its probability is bounded by \(C_2e^{C_2d}e^{-c_2t}\).
Regarding the event concerning the left path, if \(k-1<t\) or \(d=0\), the event is empty. If \(d\ge1\) and \(k-1\ge t\), using the right side estimate just proved with \(d-1\) in place of \(d\), and enlarging constants to cover \(t=1\), gives the same bound with \(C_2e^{C_2d}e^{-c_2t}\). Taking a union bound proves the first inequality in \eqref{eq:comparing compositions with truncated new}.  

Regarding the second inequality in \eqref{eq:comparing compositions with truncated new}, note that the right path is the law of a \(\nu_\b\)-walk started from \(0\) conditioned on $A_\infty=\{S_i\ge0\text{ for all }i\ge0\}$. Repeating the preceding right side argument with \(A_m\) replaced by \(A_\infty\), since we still have \(\mathbb P_x(A_\infty)\ge1-\b^{-2}\), gives the bound \(C_3 e^{C_3d}e^{-c_3t}\) for the right path of the \(V\)-walk, with \(C_3,c_3>0\) depending only on \(\b\).  For the left path, if \(d=0\) the relevant event is empty, because \(Y_{-1-i}-1\ge0\). If \(d\ge1\), the shifted process \(Y_{-1-i}-1\), \(i\ge0\), is a \(\nu_\b\)-walk started from \(0\) conditioned to stay nonnegative, so the same argument, with \(d-1\) in place of \(d\), gives the bound for the left path. A union bound gives the second inequality in \eqref{eq:comparing compositions with truncated new}.
\end{proof}

The following lemma is a standard result for biased random walks. 
\begin{lemma}\label{lem:technical lem RW nonnegative}
      Assume \(\b>1\).  Let \(S\) be a \(\nu_\b\)-random walk started from $S_0=0$.  Let $A_n=\{S_i\ge0\text{ for }i\in\llbracket0,n\rrbracket\}$ and $A_\infty=\{S_i\ge0\text{ for all }i\ge0\}$. Then there exist constants \(C,c>0\), depending only on \(\b\), such that for each $n\in\mathbb Z_{\geq0}$ and $m\in\llbracket0,n\rrbracket$,
    \[
\TV{
\mathbb P\left((S_0,\ldots,S_m)\in\,\cdot\,\middle|\,A_n\right)
-
\mathbb P\left((S_0,\ldots,S_m)\in\,\cdot\,\middle|\,A_\infty\right)
}
\le Ce^{-c(n-m)} .
\]
\end{lemma}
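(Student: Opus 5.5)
The plan is to compute the Radon--Nikodym derivative of the prefix law under $A_n$ with respect to the prefix law under $A_\infty$ explicitly, and show it is uniformly close to $1$ outside an exponentially small set. For an admissible prefix $\boldsymbol w=(w_0,\ldots,w_m)$ with $w_i\ge0$, the Markov property gives
\[
\frac{\mathbb P((S_0,\ldots,S_m)=\boldsymbol w\mid A_n)}{\mathbb P((S_0,\ldots,S_m)=\boldsymbol w\mid A_\infty)}
=\frac{\mathbb P(A_\infty)}{\mathbb P(A_n)}\cdot\frac{\mathbb P_{w_m}(S_i\ge0,\ i\in\llbracket0,n-m\rrbracket)}{\mathbb P_{w_m}(S_i\ge0,\ i\ge0)} ,
\]
where $\mathbb P_x$ denotes the law of the walk started at $x$. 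Write $f(x,t)=\mathbb P_x(S_i\ge0,\ i\in\llbracket0,t\rrbracket)$ and $f(x,\infty)=1-\b^{-2(x+1)}$ (Lemma~\ref{lem:basic hitting monotone coupling}). Then the density is $\frac{f(0,\infty)}{f(0,n)}\cdot\frac{f(w_m,n-m)}{f(w_m,\infty)}$. Since $f(x,t)\ge f(x,\infty)\ge 1-\b^{-2}$, the total variation distance is at most one half of the $A_\infty$-expectation of $|\text{density}-1|$. It therefore suffices to show two things: that $f(0,n)-f(0,\infty)\le Ce^{-cn}$, and that $f(x,t)-f(x,\infty)\le Ce^{-ct}$ uniformly in $x\ge0$.

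The key estimate is the difference $f(x,t)-f(x,\infty)=\mathbb P_x(\text{the walk stays}\ge0\text{ up to }t\text{ but hits }-1\text{ after }t)$. I would bound it by
\[
\mathbb P_x(S_t\le x+\kappa t)+\sup_{y> x+\kappa t}\mathbb P_y(\text{hit }-1)\le \mathbb P_0(S_t\le\kappa t)+\b^{-2(\kappa t+1)},
\]
with $\kappa=(\b-1)/(2(\b+1))$. Lemma~\ref{lem:basic hitting monotone coupling} controls the first term by $Ce^{-ct}$, and the gambler's-ruin formula $\mathbb P_y(\text{hit }-1)=\b^{-2(y+1)}$ controls the second. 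Both bounds are uniform in $x$. Applying this with $x=0$, $t=n$ bounds the normalizing ratio: $\frac{f(0,\infty)}{f(0,n)}\in[1-Ce^{-cn},1]$.

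Combining the two factors, the density lies in $[1-Ce^{-cn},\,1+Ce^{-c(n-m)}/(1-\b^{-2})]$ pointwise for every admissible $\boldsymbol w$. Hence $\sup_{\boldsymbol w}|\text{density}-1|\le C'e^{-c(n-m)}$, since $n\ge n-m$. This gives the total variation bound directly, with no need to average over $w_m$. The degenerate cases $m=n$ or small $n-m$ are absorbed by enlarging $C$, because the total variation distance is at most $1$.

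I expect no real obstacle. The only care needed is the uniformity of $f(x,t)-f(x,\infty)$ in the starting point $x$, which the monotonicity (starting higher only helps) and the gambler's-ruin formula handle.
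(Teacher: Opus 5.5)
Your argument is correct, but it takes a different route from the paper's.

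\textbf{The paper's route.} The paper never decomposes at time $m$. It sets $T=\inf\{j\ge0:S_j<0\}$ and notes that $A_\infty=\{T=\infty\}\subseteq\{T>n\}=A_n$. For nested events, the total variation distance between the laws of the \emph{entire} path conditioned on $A_n$ and on $A_\infty$ equals $\mathbb P(A_n\setminus A_\infty)/\mathbb P(A_n)$. The paper then bounds
\[
\mathbb P(n<T<\infty)\le\sum_{j>n}\mathbb P(S_j<0)\le\sum_{j>n}\lambda^j,
\]
using the exponential Chebyshev inequality. Here $\lambda=\mathbb E[\mathsf b^{-\xi}]=4/(\mathsf b+\mathsf b^{-1}+2)<1$ for a $\nu_{\mathsf b}$-distributed increment $\xi$. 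This gives the stronger rate $Ce^{-cn}$, uniform in $m$. It needs no Markov decomposition and no uniformity in the starting point.

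\textbf{Your route.} You compute an explicit Radon--Nikodym density via the Markov property at time $m$. You then bound $f(x,t)-f(x,\infty)$ uniformly in $x$, using the drift estimate of Lemma~\ref{lem:basic hitting monotone coupling} and gambler's ruin. This is more work and only yields $e^{-c(n-m)}$. The loss comes from taking the supremum over $w_m$, which discards the fact that $w_m$ is typically of order $m$. In exchange, you get a two-sided multiplicative bound $\sup_{\boldsymbol w}|\text{density}-1|\le Ce^{-c(n-m)}$. That is stronger in form than a total variation estimate, though it is not needed here.
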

\begin{proof}
Let $T=\inf\{j\ge0:S_j<0\}$, 
so that $A_n=\{T>n\}$ and $A_\infty=\{T=\infty\}$. Using a biased Gambler's ruin estimate, we see that $P(A_\infty)\ge 1-\b^{-2}>0$. 
Now set
\[
\lambda=\mathbb E[\b^{-\xi}]
       =\frac{4}{\b+\b^{-1}+2}\in(0,1).
\]
By Markov's inequality and independence of the increments,
\[
\mathbb P(S_j<0)
\le \mathbb E[\b^{-S_j}]
=\lambda^j.
\]
Consequently,
\[
\mathbb P(A_n\setminus A_\infty)
=\mathbb P(n<T<\infty)
\le \sum_{j=n+1}^{\infty}\mathbb P(S_j<0)
\le \frac{\lambda^{n+1}}{1-\lambda}.
\]
Now because  $A_\infty\subseteq A_n$, we get that 
\begin{align*}
\TV{
\mathbb P\left((S_0,\ldots,S_m)\in\,\cdot\,\middle|\,A_n\right)
-
\mathbb P\left((S_0,\ldots,S_m)\in\,\cdot\,\middle|\,A_\infty\right)
}\le
\frac{\mathbb P(A_n\setminus A_\infty)}
     {\mathbb P(A_n)}
\le
\frac{\lambda^{n+1}}
     {(1-\lambda)(1-\b^{-2})}
\le Ce^{-c(n-m)},
\end{align*}
where $c=-\log\lambda>0$ and $C>0$ depends only on $\b$.
\end{proof}

Next, we compare the path measure conditioned on a fixed location $T_N=k$ to a bi-infinite $V$-shaped \(\nu_\b\)-random walk.

\begin{lemma}\label{lem: second part proof limit of low level composition}
     Assume $\b>1$. For $N\in\mathbb N$ and  $k\in\llbracket0,N\rrbracket$,   denote by $\mathbb{P}_{N,k}^0$ the measure \(\mathbb P^{(0,\b,\b)}\) conditioned on $T_N=k$. Let \(\boldsymbol{Y}=(Y_i)_{i\in\mathbb Z}\) be the bi-infinite \(V\)-shaped \(\nu_\b\)-random walk from Definition \ref{def:V shaped RW}.  
     Then there exist $C,c>0$, depending only on $\b$, such that, for
$N\in\mathbb N$, $k\in\llbracket0,N\rrbracket$,  $t\in\mathbb N$ with $\min(k-1,N-k)\geq2t+2$,
\[
\TV{
\mathbb P_{N,k}^0
\left((\gamma_{k+i}-\gamma_k)_{|i|\le t}\in\cdot\right)
-
\mathbb P
\left((Y_i)_{|i|\le t}\in\cdot\right)
}
\leq
Ce^{-c(\min(k-1,N-k)-t)}. 
\]
\end{lemma}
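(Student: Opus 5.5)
The plan is to use the decomposition of Lemma~\ref{lem:decomposition TASEP} under $\mathbb P^0_{N,k}$, which splits the path into two independent one-sided pieces. The right piece is $R^{N,k}_i=\gamma_{k+i}-\gamma_k$, $i\in\llbracket0,N-k\rrbracket$. It is a $\nu_\b$-walk from $0$ conditioned on $A_{N-k}$, the event of staying nonnegative up to time $N-k$. Since $k\geq1$, the left piece satisfies $\gamma_{k-1}-\gamma_k=1$, and $Z^{N,k}_i=\gamma_{k-i-1}-\gamma_k-1$, $i\in\llbracket0,k-1\rrbracket$, is a $\nu_\b$-walk from $0$ conditioned on $A_{k-1}$. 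The window $(\gamma_{k+i}-\gamma_k)_{|i|\le t}$ is a deterministic function $\Psi$ of the two prefixes $(R^{N,k}_i)_{i\in\llbracket0,t\rrbracket}$ and $(Z^{N,k}_i)_{i\in\llbracket0,t-1\rrbracket}$. Concretely, for $i\in\llbracket1,t\rrbracket$ it equals $R^{N,k}_i$ at position $i$ and $Z^{N,k}_{i-1}+1$ at position $-i$. Both prefixes are well defined because $\min(k-1,N-k)\ge 2t+2>t$.

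On the limiting side, Definition~\ref{def:V shaped RW} gives the analogous description. We have $Y_i=R_i$ for $i\ge0$, where $R$ is a $\nu_\b$-walk conditioned on $A_\infty$. For $i\ge1$ we have $Y_{-i}=L_i$, where $L$ is a positive walk. The shifted process $Y_{-1-i}-1=L_{i+1}-1$, $i\ge0$, is a $\nu_\b$-walk from $0$ conditioned on $A_\infty$; this is the observation already used at the end of the proof of Lemma~\ref{lem: first part proof limit of low level composition}. The two sides are independent. Consequently $(Y_i)_{|i|\le t}$ is the same map $\Psi$ applied to two independent prefixes, of lengths $t$ and $t-1$, of walks conditioned on $A_\infty$.

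The total variation distance does not increase under the measurable map $\Psi$. For product measures it is subadditive: $\TV{\mu_1\otimes\mu_2-\nu_1\otimes\nu_2}\le\TV{\mu_1-\nu_1}+\TV{\mu_2-\nu_2}$. I then apply Lemma~\ref{lem:technical lem RW nonnegative} to each factor. For the right factor I take $n=N-k$ and $m=t$, which gives the bound $Ce^{-c(N-k-t)}$. For the left factor I take $n=k-1$ and $m=t-1$, which gives $Ce^{-c(k-t)}\le Ce^{-c(k-1-t)}$. Summing the two bounds gives $2Ce^{-c(\min(k-1,N-k)-t)}$, which is the claim.

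The main point needing care is checking the left-piece identification in Lemma~\ref{lem:decomposition TASEP}. The reversed left path is conditioned to stay nonnegative starting from $\gamma_{k-1}$, and this matches exactly the conditioning of $L$ shifted down by one, including the forced first step $L_1=1$. Once the indices line up, the rest is routine, and the assumption $\min(k-1,N-k)\ge2t+2$ is more than needed.
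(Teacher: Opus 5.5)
Your proposal is correct and is essentially the paper's own proof. Both arguments use the decomposition of Lemma~\ref{lem:decomposition TASEP} into the right piece and the shifted reversed left piece $Z^{N,k}_i=\gamma_{k-i-1}-\gamma_k-1$, identify $Y_{-1-i}-1$ as a walk conditioned on $A_\infty$, apply Lemma~\ref{lem:technical lem RW nonnegative} twice with $(n,m)=(N-k,t)$ and $(k-1,t-1)$, and push forward through the same deterministic reassembly map. The only difference is that you state the subadditivity of total variation over independent factors explicitly, while the paper uses it implicitly.
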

\begin{proof}
    By Lemma \ref{lem:decomposition TASEP}, under the measure $\mathbb P^{0}_{N,k}$, the right path $R_i^{N,k}=\gamma_{k+i}-\gamma_k$, $i\in\llbracket0,N-k\rrbracket$ is a \(\nu_\b\)-walk started from \(0\), conditioned to stay nonnegative up to time \(N-k\). If \(k\ge1\), then \(\gamma_{k-1}-\gamma_k=1\),
and the left path $Z_i^{N,k}=\gamma_{k-i-1}-\gamma_k-1$, $i\in\llbracket0,k-1\rrbracket$ is a \(\nu_\b\)-walk started from \(0\), conditioned to stay nonnegative up to time \(k-1\). The left and right paths are independent.
For the limiting paths, by Definition \ref{def:V shaped RW}, $R_i=Y_i$ and $Z_i=Y_{-1-i}-1$, $i\in\mathbb Z_{\geq0}$ are independent \(\nu_\b\)-walks started from \(0\) and conditioned to stay nonnegative.

Apply Lemma \ref{lem:technical lem RW nonnegative} with \(m=t\) and \(n=N-k\) to the right path \(R_i^{N,k}\), and with \(m=t-1\) and \(n=k-1\) to the left path \(Z_i^{N,k}\). Since \(\min\left( k-1 ,\,N-k\right)\ge2t+2\), both applications are valid. We have
\begin{equation}\label{eq:nbr8loylty}
    \begin{split}
        &\TV{
\mathbb P^0_{N,k}\left(
(R_0^{N,k},\ldots,R_t^{N,k},Z_0^{N,k},\ldots,Z_{t-1}^{N,k})\in\,\cdot\,
\right)
-
\mathbb P\left(
(R_0,\ldots,R_t,Z_0,\ldots,Z_{t-1})\in\,\cdot\,
\right)
}\\
&\le  Ce^{-c(N-k-t)}+Ce^{-c(k-t)} \le 2 Ce^{-c(\min\left( k-1 ,\,N-k\right)-t)} .
    \end{split}
\end{equation}
Let \(F\) be the deterministic map defined by
\[
F(r_0,\ldots,r_t,z_0,\ldots,z_{t-1})=(w_i)_{|i|\le t},
\qquad
w_i=r_i\ \text{ for } i\in\llbracket0,t\rrbracket,\quad
w_{-j}=z_{j-1}+1\ \text{ for }j\in\llbracket1,t\rrbracket .
\]
Then
\[
F(R_0^{N,k},\ldots,R_t^{N,k},Z_0^{N,k},\ldots,Z_{t-1}^{N,k})
=
(\gamma_{k+i}-\gamma_k)_{|i|\le t},\qquad
F(R_0,\ldots,R_t,Z_0,\ldots,Z_{t-1})=(Y_i)_{|i|\le t}.
\] 
Applying this deterministic map to the total variation bound \eqref{eq:nbr8loylty} concludes the proof.
\end{proof}

 For the next statement, we require in the following only the special case where $\Uppsi=1$. The case of general $\Uppsi$ is required in the upcoming section.

\begin{lemma}\label{lem:limit of low level composition}
    Assume $\b>1$. We use the same notation as in Lemma \ref{lem: first part proof limit of low level composition}. 
        Then there exist constants $C,c\in(0,\infty)$ depending only on $\b$, such that for every \(N\in\mathbb N\), \(k\in\llbracket0,N\rrbracket\), \(d\in\mathbb Z_{\ge0}\), and \(t\in\mathbb N\) satisfying $\min\left(k-1,\,N-k\right)\geq 2t+2$, and for every bounded function \(G:\mathbb Z_{\ge0}^{d+1}\to\mathbb R\)
    and every $\Uppsi:\mathbb{Z}^{2t+1}\to[0,1]$,
\begin{equation*}
    \begin{split}
        &\left|
        \mathbb E^0_{N,k}
        \left[
            \Uppsi\left((\gamma_{k+i}-\gamma_k)_{|i|\le t}\right) G(\sigma_0,\ldots,\sigma_d) \right]
        -
        \mathbb E
        \left[
            \Uppsi\left((Y_i)_{|i|\le t}\right) G(\vartheta_0,\ldots,\vartheta_d)
        \right]
    \right|\\
    &\leq C\|G\|_\infty
    \left(
        e^{Cd}e^{-ct}
        + e^{-c\left(\min\left( k-1 ,\,N-k\right)-t\right)}
    \right).
    \end{split}
\end{equation*} 
\end{lemma}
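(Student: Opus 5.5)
The proof will be a two-step replacement: first swap the full compositions for their truncated versions (Lemma~\ref{lem: first part proof limit of low level composition}), then swap the finite path window for the $V$-shaped walk (Lemma~\ref{lem: second part proof limit of low level composition}).

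The observation that makes the second step work is that the truncated quantities are deterministic functions of the centered window. Under $\mathbb P^0_{N,k}$ we have $\gamma_k=\min(\bg)$, so
\[
\sigma_h^{[t]}(\bg)=\#\{|i|\le t:\gamma_{k+i}-\gamma_k=h\}.
\]
Since $\min(k-1,N-k)\ge 2t+2$, every index $k+i$ with $|i|\le t$ lies in $\llbracket0,N\rrbracket$. Likewise $\vartheta_h^{[t]}=\#\{|i|\le t: Y_i=h\}$. Hence there is a single map $\Gamma:\mathbb Z^{2t+1}\to\mathbb Z_{\ge0}^{d+1}$ with
\[
(\sigma_0^{[t]},\ldots,\sigma_d^{[t]})=\Gamma\bigl((\gamma_{k+i}-\gamma_k)_{|i|\le t}\bigr),\qquad (\vartheta_0^{[t]},\ldots,\vartheta_d^{[t]})=\Gamma\bigl((Y_i)_{|i|\le t}\bigr).
\]

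The first step bounds the truncation error. Because $0\le\Uppsi\le1$,
\[
\Bigl|\mathbb E^0_{N,k}\bigl[\Uppsi(\cdot)\,G(\sigma_0,\ldots,\sigma_d)\bigr]-\mathbb E^0_{N,k}\bigl[\Uppsi(\cdot)\,G(\sigma^{[t]}_0,\ldots,\sigma^{[t]}_d)\bigr]\Bigr|\le 2\|G\|_\infty\,\mathbb P^0_{N,k}\bigl((\sigma_h)_{h\le d}\ne(\sigma^{[t]}_h)_{h\le d}\bigr).
\]
By the first inequality in \eqref{eq:comparing compositions with truncated new}, the probability on the right is at most $Ce^{Cd}e^{-ct}$. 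The same argument with the second inequality in \eqref{eq:comparing compositions with truncated new} bounds the analogous error on the $\boldsymbol Y$ side by $2\|G\|_\infty Ce^{Cd}e^{-ct}$.

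The second step compares the two truncated expectations. The function $w\mapsto \Uppsi(w)\,G(\Gamma(w))$ on $\mathbb Z^{2t+1}$ is bounded in absolute value by $\|G\|_\infty$. Its expectations under the laws of $(\gamma_{k+i}-\gamma_k)_{|i|\le t}$ and $(Y_i)_{|i|\le t}$ therefore differ by at most $2\|G\|_\infty$ times their total variation distance. Lemma~\ref{lem: second part proof limit of low level composition} bounds that distance by $Ce^{-c(\min(k-1,N-k)-t)}$. Combining this with the first step via the triangle inequality gives the claim, after adjusting constants.

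There is no real obstacle. The only point to check is that $\Gamma$ is genuinely the same map on both sides, which uses $\gamma_k=\min(\bg)$ together with the window lying inside $\llbracket0,N\rrbracket$; it also matters that $\Uppsi$ takes values in $[0,1]$, so it does not inflate the error.
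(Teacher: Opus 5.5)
Your proposal is correct and is essentially the paper's proof. The paper also makes three moves:
\begin{itemize}
\item It bounds the two truncation errors by $2\|G\|_\infty$ times the mismatch probabilities from Lemma~\ref{lem: first part proof limit of low level composition}.
\item It writes the truncated counts as a common function $H_{d,t}$ (your $\Gamma$) of the centered window.
\item It controls the remaining term by the total variation bound of Lemma~\ref{lem: second part proof limit of low level composition}, applied to $\Uppsi\cdot G\circ H_{d,t}$.
\end{itemize}

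One minor point. Identifying $\sigma_h^{[t]}$ with counts in the centered window only needs the window to lie in $\llbracket0,N\rrbracket$, because $\sigma_h^{[t]}$ is already defined relative to $\gamma_k$. The fact that $\gamma_k=\min(\bg)$ is used only inside the truncation estimate.
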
 
\begin{proof}
    By Lemma \ref{lem: first part proof limit of low level composition}, there exist $C_1,c_1>0$ depending only on $\b$, such that for
$N\in\mathbb N$, $k\in\llbracket0,N\rrbracket$,   $t\in\mathbb N$, 
\begin{equation}\label{eq:bsrythnw0}
         \mathbb P^{0}_{N,k}
    \left(
        (\sigma_0,\ldots,\sigma_d)
        \ne
        \left(\sigma_0^{[t]},\ldots,\sigma_d^{[t]}\right) \right)
     \le C_1 e^{C_1d} e^{-c_1 t},\qquad  
         \mathbb P 
    \left(
        (\vartheta_0,\ldots,\vartheta_d)
        \ne
        \left(\vartheta_0^{[t]},\ldots,\vartheta_d^{[t]}\right) \right)
     \le C_1 e^{C_1d} e^{-c_1 t}. 
\end{equation}
By Lemma \ref{lem: second part proof limit of low level composition}, there exist $C_2,c_2>0$ depending only on $\b$, such that for $t\in\mathbb N$ with $\min(k-1,N-k)\geq2t+2$,
\begin{equation}\label{eq:bsrythnw}
\TV{
\mathbb P_{N,k}^0
\left((\gamma_{k+i}-\gamma_k)_{|i|\le t}\in\cdot\right)
-
\mathbb P
\left((Y_i)_{|i|\le t}\in\cdot\right)
}
\leq
C_2e^{-c_2(\min(k-1,N-k)-t)}. 
\end{equation}
We define, for $(x_{-t},\ldots,x_t)\in\mathbb Z^{2t+1}$,
\[
    H_{d,t}(x_{-t},\ldots,x_t)
    =
    \left(
        \#\{i\in\llbracket -t,t\rrbracket:x_i=h\}
    \right)_{h\in\llbracket0,d\rrbracket} .
\]
Since \(\min(k-1,N-k)\ge 2t+2\), all indices \(k+i\), \(|i|\le t\), lie in \(\llbracket0,N\rrbracket\), and hence
\[
    H_{d,t}\left((\gamma_{k+i}-\gamma_k)_{|i|\le t}\right)
    =
    \left(\sigma_0^{[t]},\ldots,\sigma_d^{[t]}\right) 
    \quad\mbox{ and }\quad
    H_{d,t}\left((Y_i)_{|i|\le t}\right)
    =
    \left(\vartheta_0^{[t]},\ldots,\vartheta_d^{[t]}\right).
\]
Therefore, using \( \Uppsi\in[0,1]\),
\begin{equation*}
    \begin{split}
        &\left|
        \mathbb E^0_{N,k}
        \left[
            \Uppsi\left((\gamma_{k+i}-\gamma_k)_{|i|\le t}\right) G(\sigma_0,\ldots,\sigma_d) \right]
        -
        \mathbb E
        \left[
            \Uppsi\left((Y_i)_{|i|\le t}\right) G(\vartheta_0,\ldots,\vartheta_d)
        \right]
    \right|                                                    \\
    &\le
    2\|G\|_\infty
    \mathbb P^0_{N,k}
    \left(
        (\sigma_0,\ldots,\sigma_d)
        \ne
        (\sigma_0^{[t]},\ldots,\sigma_d^{[t]})
    \right)                       +          2\|G\|_\infty
    \mathbb P
    \left(
        (\vartheta_0,\ldots,\vartheta_d)
        \ne
        (\vartheta_0^{[t]},\ldots,\vartheta_d^{[t]})
    \right)                       \\
    &\quad +\left|
        \mathbb E^0_{N,k}
        \left[
            \Uppsi\left((\gamma_{k+i}-\gamma_k)_{|i|\le t}\right)
            G\left(H_{d,t}\left((\gamma_{k+i}-\gamma_k)_{|i|\le t}\right)\right)
        \right]
        -
        \mathbb E
        \left[
            \Uppsi\left((Y_i)_{|i|\le t}\right)
            G\left(H_{d,t}\left((Y_i)_{|i|\le t}\right)\right)
        \right]
    \right|                           .
    \end{split}
\end{equation*}
Note that the function $x\mapsto \Uppsi(x)G(H_{d,t}(x))$ has sup norm at most \(\|G\|_\infty\).  Hence, by \eqref{eq:bsrythnw}, the last term on the right side above is at most
 $2C_2\|G\|_\infty
    e^{-c_2(\min(k-1,N-k)-t)}$.
Combining this with \eqref{eq:bsrythnw0},
\[
\begin{aligned}
&\left|
        \mathbb E^0_{N,k}
        \left[
            \Uppsi\left((\gamma_{k+i}-\gamma_k)_{|i|\le t}\right) G(\sigma_0,\ldots,\sigma_d) \right]
        -
        \mathbb E
        \left[
            \Uppsi\left((Y_i)_{|i|\le t}\right) G(\vartheta_0,\ldots,\vartheta_d)
        \right]
    \right|                                                    \\
&\le
    4C_1\|G\|_\infty e^{C_1d}e^{-c_1t}
    +
    2C_2\|G\|_\infty
    e^{-c_2(\min(k-1,N-k)-t)} .
\end{aligned}
\]
Choosing \(c\le \min(c_1,c_2)\) and then \(C\) large enough, depending only on \(\b\), we conclude the proof.
\end{proof}

We have now all tools in order to show the approximation result in Proposition~\ref{prop:constant Lambda q b}.

\begin{proof}[Proof of Proposition \ref{prop:constant Lambda q b}]
We want to show that there exist constants $\Lambda_{q,\b}\in[1,\infty)$ and $C,c>0$ depending only on $q$ and $\b$ such that, for every $N\in\mathbb N$ and $k\in\llbracket1,N\rrbracket$,
\begin{equation}\label{eq:constant Lambda q b again}
 \left|\mathbb E^0_{N,k}\!\left[
        W_{\bs(\bg)}(\b^2)
    \right]-\Lambda_{q,\b}\right|
    \le
    Ce^{-c\min(k-1,N-k)}  .
\end{equation}  
By Proposition \ref{prop:moment bound coex} applied with \(z=\b^2\), and by Theorem \ref{thm:prob interpretation of Bryc polynomials} for the lower bound, there exists a constant $M\in(0,\infty)$ depending only on $q$ and $\b$, such that 
\begin{equation} \label{eq:tyhjhneyjne}
    1\le \mathbb E_{N,k}^0
    \left[
        W_{\bs(\bg)}\left(\b^2\right) \right]\le M 
    \qquad
    \text{for all }N\in\mathbb N,\ k\in\llbracket0,N\rrbracket .
\end{equation}
As in Lemma \ref{lem:sum of K}, let \(K_{N,\ell}(\boldsymbol{\mathsf S};x,\cdot)\) be the tail kernel from Definition \ref{def:chain tail}. For $\ell\in\mathbb N$, \(N\ge2\ell+2\), let  
    \[
    \mathsf{K}_{N,\ell}(\bg)=\sum_{y\ge0}
        \b^{2y}
        K_{N,\ell}
        \left(
            \boldsymbol{\mathsf S}^{\bs(\bg)};0,\{y\}
        \right)\quad\mbox{ for }\bg\in\lrw_N.
\]
By Lemma \ref{lem:sum of K}, there exist \(C_1,c_1\in(0,\infty)\), depending only on \(q,\b\), such that for each $\ell\in\mathbb N$ and \(N\ge2\ell+2\),
\begin{equation} \label{eq:mryuik1}
    \sup_{k\in\llbracket0,N\rrbracket}
    \mathbb E^0_{N,k}
    \left[
        \left|W_{\bs(\bg)}\left(\b^2\right)-\mathsf{K}_{N,\ell}(\bg) \right|\right]
    \le
    C_1 e^{-c_1\ell}.
\end{equation}
As in Lemma \ref{lem:convergence to limiting chain}, there is a bounded function $G_\ell(\mathbf u)=\mathbb E [\b^{2\mathfrak{X}^{\boldsymbol{\mathsf S}^{\mathbf u}}(\ell+1)} ]\in[1, \b^{2(\ell+1)}]$ for $\mathbf u\in\mathbb{Z}_{\geq0}^{\ell+2}$,
where the $0$-$1$ sequence $\boldsymbol{\mathsf S}^{\mathbf u}$ and the chain $\mathfrak{X}^{\boldsymbol{\mathsf S}^{\mathbf u}}$ are from Definitions \ref{def:01 seq u} and \ref{def:limit chain}. 
For \(\bg\in\lrw_N\), let \(\bs(\bg)=(\sigma_0,\dots,\sigma_r)\) be the composition associated to \(\bg\) as in \eqref{eq:Signature}, and extend it by  $\sigma_h=0$ for $h>r$. By Lemma \ref{lem:convergence to limiting chain},  
\begin{equation} \label{eq:mryuik2} 
    \sup_{\bg\in\lrw_N}
    \left|
        \mathsf{K}_{N,\ell}(\bg)
        -
        G_\ell(\sigma_0,\ldots,\sigma_{\ell+1} )
    \right|
    \leq      \b^{2(\ell+1)}(\ell+1)\cdot \frac{q^{N-\ell+1}}{1-q^{N-\ell+1}} .   
\end{equation}
Let \(\boldsymbol{Y}=(Y_i)_{i\in\mathbb Z}\) be the bi-infinite \(V\)-shaped \(\nu_\b\)-random walk from Definition \ref{def:V shaped RW}, and let $  \vartheta_h
    =
    \#\{j\in\mathbb Z:Y_j=h\}$ for $h\in\mathbb Z_{\ge0}$.  By Lemma \ref{lem:limit of low level composition}  with  \(d=\ell+1\),   \(G=G_{\ell}\), and $\Uppsi\equiv1$, there exist constants $C_2,c_2\in(0,\infty)$ depending only on $\b$, such that for each $t\in\mathbb N$ satisfying $\min\left(k-1,\,N-k\right)\geq 2t+2$,  
\begin{equation} \label{eq:mryuik3} 
\left|
        \mathbb E^0_{N,k}
        \left[
            G_\ell(\sigma_0,\ldots,\sigma_{\ell+1}) \right]
        -
        \mathbb E
        \left[
            G_\ell(\vartheta_0,\ldots,\vartheta_{\ell+1})
        \right]
    \right|\leq C_2\|G_\ell\|_\infty
    \left(
        e^{C_2(\ell+1)}e^{-c_2t}
        +
        e^{C_2t}e^{-c_2\left(\min\left( k-1 ,\,N-k\right)-t\right)}
    \right). 
\end{equation}
We now define
\[
    a_\ell
    = \mathbb E 
    \left[
        G_\ell(\vartheta_0,\ldots,\vartheta_{\ell+1})
    \right]
    \quad\mbox{ for all } \ell\in\mathbb N .
\] 
For  \(\ell\in\mathbb N\), \(N\ge 2\ell+2\), \(k\in\llbracket1,N\rrbracket\), and \(t\in\mathbb N\) satisfying \(\min(k-1,N-k)\geq 2t+2\), by the triangle inequality,
\begin{multline*} 
    \left|
        \mathbb E_{N,k}^0
    \left[
        W_{\bs(\bg)}\left(\b^2\right) \right]-a_\ell
    \right|                                                     
 \le 
\mathbb E^0_{N,k}
    \left[
        \left|W_{\bs(\bg)}\left(\b^2\right)-\mathsf{K}_{N,\ell}(\bg) \right|\right] \\
        +
\sup_{\bg\in\lrw_N}
    \left|
        \mathsf{K}_{N,\ell}(\bg)
        -
        G_\ell(\sigma_0,\ldots,\sigma_{\ell+1})
    \right|                                                         
 + 
    \left|
        \mathbb E^0_{N,k}
        \left[
            G_{\ell}(\sigma_0,\ldots,\sigma_{\ell+1}) \right]
        -
        a_\ell
    \right|.
\end{multline*} 
By \eqref{eq:mryuik1}, \eqref{eq:mryuik2}, \eqref{eq:mryuik3}, and \(\|G_\ell\|_\infty\le \b^{2(\ell+1)}\), we obtain, under the same hypotheses,
\begin{equation}\label{eq:TN-unif-bulk-aell-fixed} 
    \begin{split}
        &\left|
        \mathbb E_{N,k}^0
    \left[
        W_{\bs(\bg)}\left(\b^2\right) \right]-a_\ell
    \right|\\
    &\le
    C_1 e^{-c_1\ell}
    +    \b^{2(\ell+1)}(\ell+1)\cdot \frac{q^{N-\ell+1}}{1-q^{N-\ell+1}}
    + C_2\b^{2(\ell+1)}
    \left(
        e^{C_2(\ell+1)}e^{-c_2t}
        +
        e^{C_2t}e^{-c_2\left(\min(k-1,N-k)-t\right)}
    \right). 
    \end{split}  
\end{equation} 
Choose \( \theta\in(0,1/2)\) satisfying \(C_2\theta<c_2(1-\theta)\).    In \eqref{eq:TN-unif-bulk-aell-fixed}, take \(N=2n+1\), \(k=n+1\), and \(t=\lfloor \theta n\rfloor\), and then let \(n\to\infty\).  For each fixed \(\ell\in\mathbb N\), all hypotheses of \eqref{eq:TN-unif-bulk-aell-fixed} hold for all sufficiently large \(n\), and all terms on the right side except \(C_1e^{-c_1\ell}\) tend to \(0\).   Write   $F_n=
    \mathbb E^0_{2n+1,n+1} [
        W_{\bs(\bg)} (\b^2 ) ]$. Then
        \begin{equation} \label{eq:CauchyF}
            \limsup_{n\to\infty}|F_n-a_\ell|
    \le C_1e^{-c_1\ell}
    \quad\mbox{ for each fixed }\ell\in\mathbb N .
        \end{equation}
Applying this with \(\ell,\ell'\in\mathbb N\), and using the triangle inequality, 
\begin{equation}\label{eq: qethbtqt} 
    |a_\ell-a_{\ell'}|
    \le
    \limsup_{n\to\infty}\left(|a_\ell-F_n|+|F_n-a_{\ell'}|\right)
    \le C_1e^{-c_1\ell}+C_1e^{-c_1\ell'} .
\end{equation} 
Hence \((a_\ell)_{\ell\ge1}\) is a Cauchy sequence.  Let
\[
    \Lambda_{q,\b}=\lim_{\ell\to\infty}a_\ell .
\]
Letting \(\ell'\to\infty\) in \eqref{eq: qethbtqt} gives
\begin{equation}\label{eq:aell-to-Lambda-rate}
    |a_\ell-\Lambda_{q,\b}|\le C_1e^{-c_1\ell}.
\end{equation}
Since \(F_n\in[1,M]\) by \eqref{eq:tyhjhneyjne}, the same $\limsup$ bound  as in \eqref{eq:CauchyF} gives
 $$ \operatorname{dist}(a_\ell,[1,M])
    \le
    \limsup_{n\to\infty}|a_\ell-F_n|
    \le C_1e^{-c_1\ell}.$$
Therefore $\Lambda_{q,\b}\in[1,M]\subset[1,\infty)$.
Denote $m=\min(k-1,N-k)$.  We now choose \( \eta\in(0,1)\) such that
\[
    (2\log \b+C_2)\eta<c_2\theta,\qquad
    2\eta\log \b<c_2(1-\theta)-C_2\theta,\qquad
    2\eta\log \b<(2-\eta)\log(1/q),
\]
 and let 
\[
\delta=
\min\left(
c_1\eta,\,
c_2\theta-(2\log\b+C_2)\eta,\,
c_2(1-\theta)-C_2\theta-2\eta\log\b,\,
(2-\eta)\log(1/q)-2\eta\log\b
\right)>0 .
\]
We then choose \(m_0\in\mathbb N\), depending only on \(q,\b\), such that if \(m\ge m_0\), \(\ell=\lfloor \eta m\rfloor\),   \(t=\lfloor \theta m\rfloor\), then \(\ell,t\in\mathbb N\), \(N\ge2\ell+2\), and \(m\ge2t+2\). 
 Since \(k-1\ge m\) and \(N-k\ge m\), we have \(N\ge 2m+1\).  Moreover,
\begin{align*}
    &\ell\le \eta m,\qquad \ell+1\le \eta m+1,\qquad
t\le \theta m,\qquad t\ge \theta m-1,\qquad
m-t\ge (1-\theta)m,\\
&  N-\ell+1\ge (2-\eta)m+2,\qquad N-\ell+1\ge (2-\eta)m+2,\qquad 1-q^{N-\ell+1}\ge 1-q.
\end{align*} 
By the choice of $\theta,\eta,\delta$, there exists $C_3>0$ depending only on $q,\b$, such that, when \(m\ge m_0\),
\begin{align*}
    \b^{2(\ell+1)}(\ell+1)\frac{q^{N-\ell+1}}{1-q^{N-\ell+1}}
&\le
C_3(m+1)
e^{
-\left((2-\eta)\log(1/q)-2\eta\log\b\right)m }
\le C_3(m+1)e^{-\delta m},\\
\b^{2(\ell+1)}e^{C_2(\ell+1)}e^{-c_2t}
&\le
C_3 e^{
-\left(c_2\theta-(2\log\b+C_2)\eta\right)m }
\le C_3e^{-\delta m},\\
\b^{2(\ell+1)}e^{C_2t}e^{-c_2(m-t)}
&\le
C_3e^{
-\left(c_2(1-\theta)-C_2\theta-2\eta\log\b\right)m }
\le C_3e^{-\delta m}.
\end{align*}
By \eqref{eq:TN-unif-bulk-aell-fixed} and  \eqref{eq:aell-to-Lambda-rate}, whenever \(m\ge m_0\),
\[
    \left|
        \mathbb E_{N,k}^0
    \left[
        W_{\bs(\bg)}\left(\b^2\right) \right]-\Lambda_{q,\b}
    \right|
    \le 2C_1e^{-c_1\ell}+ C_3(m+1+2C_2)e^{-\delta m}\leq C_4e^{-c_4 m},
\] 
for some constants $C_4,c_4>0$ depending only on $q,\b$, since \(\ell=\lfloor \eta m\rfloor\) and \(\delta\le c_1\eta\).
When \(m<m_0\), the bound follows from \eqref{eq:tyhjhneyjne} and \(\Lambda_{q,\b}\in[1,M]\), after possibly increasing \(C_4\).  This proves \eqref{eq:constant Lambda q b again}.
\end{proof}

\subsection{Upper bound on the distance from the uniform distribution: Proof of Proposition \ref{prop:macroscopic uniformity TN upper}} 

We first prove the upper bound on the total variation distance between the law of $T_N$ and the uniform distribution on the interval $\llbracket 0,N\rrbracket$ in the special case when $q=0$.
 
\begin{lemma}\label{lem:macroscopic uniformity q0}
    Assume \(\b>1\).  Then there exists a constant $C\in(0,\infty)$ depending only on $\b$, such that  
    \[  \TV{
        \mathbb P^{(0,\b,\b)}(T_N\in\cdot)
        -
        \mathrm{Unif}_{\llbracket0,N\rrbracket}
    }
    \leq
    \frac{C}{N+1}\quad\mbox{ for every }N\in\mathbb N.
\] 
\end{lemma}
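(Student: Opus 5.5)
The plan is to work directly with the explicit formula for $\mathbb P^{(0,\b,\b)}(T_N=m)$ from Lemma~\ref{lem:decomposition TASEP} with $\a=\b$. Since $w_\a=w_\b=w_\b$, the factor $w_\b^{m-1}w_\b^{N-m}=w_\b^{N-1}$ does not depend on $m\in\llbracket1,N\rrbracket$. Hence
\[
\mathbb P^{(0,\b,\b)}(T_N=m)\propto \frac{\b}{4}\,p^{(\b)}_{m-1}p^{(\b)}_{N-m}\quad (m\ge1),\qquad \mathbb P^{(0,\b,\b)}(T_N=0)\propto w_\b\, p^{(\b)}_N.
\]
Write $p_\ell=p^{(\b)}_\ell$ and $p_\infty=\lim_\ell p_\ell=1-\b^{-2}>0$ (this limit comes from Lemma~\ref{lem:basic hitting monotone coupling} with $h=1$). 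The step that does the real work is a geometric convergence rate for $p_\ell$: by the argument of Lemma~\ref{lem:technical lem RW nonnegative}, $0\le p_\ell-p_\infty\le \lambda^{\ell+1}/(1-\lambda)$ with $\lambda=4/(\b+\b^{-1}+2)\in(0,1)$.

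Next, set $u_m=p_{m-1}p_{N-m}$ for $m\in\llbracket1,N\rrbracket$. Then
\[
|u_m-p_\infty^2|\le C(\lambda^{m}+\lambda^{N-m+1}),
\]
so $\sum_{m=1}^N|u_m-p_\infty^2|\le C'$ uniformly in $N$. The weight at $m=0$, namely $(4w_\b/\b)p_N$, is a bounded constant and contributes $O(1)$ to the unnormalized mass. The normalizing constant is therefore
\[
Z=\sum_{m=1}^N u_m+\tfrac{4w_\b}{\b}p_N = Np_\infty^2+O(1).
\]

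Then I would bound the total variation directly. With $v_m$ denoting the unnormalized weights, so that $v_m=u_m$ for $m\ge1$ and $v_0=4w_\b p_N/\b$,
\[
\sum_{m=0}^N\Big|\frac{v_m}{Z}-\frac1{N+1}\Big|\le \frac{1}{Z}\sum_{m=0}^N\Big|v_m-\frac{Z}{N+1}\Big|.
\]
Because $Z/(N+1)=p_\infty^2+O(1/N)$, each term is at most $|v_m-p_\infty^2|+O(1/N)$. Summing over $m$ gives $O(1)+O(1)$. Dividing by $Z\ge c N$ (using $u_m\ge p_\infty^2$) yields the bound $C/(N+1)$. The small $N$ are absorbed by enlarging $C$, since total variation is at most $1$.

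The only delicate point is confirming the geometric rate for $p_\ell\downarrow p_\infty$ with constants depending only on $\b$. The bound $\mathbb P(n<T<\infty)\le\sum_{j>n}\lambda^j$ from the proof of Lemma~\ref{lem:technical lem RW nonnegative} supplies exactly this, applied with the event $\{S_i\ge0,\ i\le\ell\}$, which coincides with not reaching level $-1$.
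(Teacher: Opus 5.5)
Your proposal is correct and follows essentially the same route as the paper's proof. Both use the explicit formula from Lemma~\ref{lem:decomposition TASEP} with $\a=\b$ and a geometric rate for $p_\ell\downarrow p_\infty=1-\b^{-2}$, which gives $O(1)$ control of $\sum_{m}|p_{m-1}p_{N-m}-p_\infty^2|$ and of the normalizing constant, followed by a termwise $\ell^1$ bound divided by a normalizer of order $N$. The only cosmetic difference is the source of the geometric rate: you use the Chernoff bound $\mathbb P(S_j<0)\le\lambda^j$ from the proof of Lemma~\ref{lem:technical lem RW nonnegative}, whereas the paper sums the bound $\mathbb P(S_k\le\kappa k)\le Ce^{-ck}$ from Lemma~\ref{lem:basic hitting monotone coupling}.
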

\begin{proof}
    Taking $\a=\b$ in Lemma \ref{lem:decomposition TASEP} and  $w=(\b+\b^{-1}+2)/4$, we obtain 
\begin{equation} \label{eq:jdtyjykmt}
    \mathbb{P}^{(0,\b,\b)}(T_N = m) = \begin{cases}
\displaystyle D^{-1}\cdot    \b/(4w) \cdot   p_{m-1}p_{N-m}  & \quad\mbox{ if }
m\in\llbracket1,N\rrbracket,\\ 
\displaystyle D^{-1}\cdot  p_N  &\quad\mbox{ if } m = 0.
\end{cases}
\end{equation}
Here, for $\ell\in\mathbb{Z}_{\geq0}$, we denote by $p_{\ell}$ the probability that a $\nu_{\b}$-random walk started at $0$ does not reach level $-1$ during the first $\ell$ steps, and 
\begin{equation}\label{eq:D}
    D = \sum_{m=1}^{N}   \b/(4w) \cdot  p_{m-1}p_{N-m} +  p_N.\end{equation}

We first prove the claim that there exist constants \(C_1,c_1\in(0,\infty)\), depending only on \(\b\), such that
\begin{equation}\label{eq:p ell exponential convergence}
    0\leq p_\ell-p_{\infty}\leq C_1e^{-c_1\ell}
    \quad\mbox{ for all }
    \ell\in\mathbb Z_{\geq0},\quad\mbox{ where } p_{\infty}=1-\b^{-2}.
\end{equation}
Let \((S_i)_{i\geq0}\) be the \(\nu_\b\)-random walk started at \(S_0=0\), and let $\tau_{-1}=\inf\{i\geq0:S_i=-1\}$.
By the first statement in Lemma \ref{lem:basic hitting monotone coupling} with $h=1$,   we obtain
 $p_\ell=\mathbb P_0(\tau_{-1}>\ell)\downarrow \mathbb P_0(\tau_{-1}=\infty)=p_{\infty}$.
Hence, by the last statement in Lemma \ref{lem:basic hitting monotone coupling}  with $\kappa=(\b-1)/(2(\b+1))$,
\[
p_\ell-p_{\infty}= \mathbb P_0(\ell< \tau_{-1}<\infty)\leq 
\sum_{k=\ell+1}^{\infty}\mathbb P_0(S_k=-1)\leq \sum_{k=\ell+1}^{\infty}\mathbb P_0(S_k\leq \kappa k)\leq \sum_{k=\ell+1}^{\infty} C_2e^{-c_2k} ,
\]
for some $C_2,c_2\in(0,\infty)$ depending only on $\b$. We conclude the claim \eqref{eq:p ell exponential convergence}. Hence, for $m\in\llbracket1,N\rrbracket$,
\begin{equation}\label{eqw:hrbsth}
    \left| p_{m-1}p_{N-m}-p_{\infty}^2 \right|\leq 
        |p_{m-1}-p_\infty|p_{N-m}
        +
        p_\infty|p_{N-m}-p_\infty|\leq C_1e^{-c_1(m-1)}+C_1e^{-c_1(N-m)}.
\end{equation}
Combining  \eqref{eq:D}, \eqref{eq:p ell exponential convergence}, and \eqref{eqw:hrbsth},
\begin{equation}\label{eq:fysrhth}
    \begin{split}
        &\left|D-(N+1) \cdot\b/(4w) \cdot p_{\infty}^2\right|\leq\sum_{m=1}^{N}  \b/(4w) \cdot\left| p_{m-1}p_{N-m}-p_{\infty}^2 \right|
        +\left| p_N- \b/(4w) \cdot p_{\infty}^2\right| \\
    &\leq  \b/(4w) \cdot\sum_{m=1}^{N}\left(C_1e^{-c_1(m-1)}+C_1e^{-c_1(N-m)} 
    \right)       +1+ \b/(4w)     \leq C_3,
    \end{split}
\end{equation}
where $C_3\in(0,\infty)$ depends only on $\b$. 
Hence, combining the last displays, for large enough $N$ and $m\in\llbracket1,N\rrbracket$,
\begin{equation}\label{eq:nmdftyfukmed}
    \begin{split}
        &\left|\mathbb P^{(0,\b,\b)}(T_N=m)-\frac{1}{N+1}\right|=\left| \frac{    (\b/(4w)) \cdot   p_{m-1}p_{N-m}}{D}-\frac{1}{N+1}\right|\\
        &=\frac{1}{(N+1)D}\cdot \left|D-(N+1)\frac{\b}{4w}p_{m-1}p_{N-m}\right|
        \leq \frac{1}{ D}\left(\frac{C_3}{N+1}+ \frac{\b}{4w} \left| p_{m-1}p_{N-m}-p_{\infty}^2 \right|\right)\\
        &\leq \frac{1}{ D}\left(\frac{C_3}{N+1}+ \frac{\b}{4w} \left(C_1e^{-c_1(m-1)}+C_1e^{-c_1(N-m)}\right)\right).
    \end{split}
\end{equation}
When $m=0$, we have 
\begin{equation}\label{eq:myudkuek}
\begin{split}
    &\left|\mathbb P^{(0,\b,\b)}(T_N=0)-\frac{1}{N+1}\right|=\left|\frac{p_N}{D}-\frac{1}{N+1}\right|\leq \frac{1}{D}+\frac{1}{N+1}
    \end{split}
\end{equation}
By \eqref{eq:fysrhth}, there is \(C_4\in(0,\infty)\), depending only on \(\b\), such that
\(D\leq C_4(N+1)\) for every \(N\in\mathbb Z_{\geq0}\); hence \(1/(N+1)\leq C_4/D\).
Summing \eqref{eq:nmdftyfukmed} over $m\in\llbracket1,N\rrbracket$  and combining with \eqref{eq:myudkuek}, for large enough $N$,
\[
\TV{
        \mathbb P^{(0,\b,\b)}(T_N\in\cdot)
        -
        \mathrm{Unif}_{\llbracket0,N\rrbracket}
    }
    \leq C_5/D\leq C/(N+1),
\]
where $C_5,C\in(0,\infty)$ depend only on $\b$. The last inequality follows from \eqref{eq:fysrhth}. Increasing \(C\), if necessary, covers the remaining finitely many \(N\in\mathbb Z_{\geq0}\), since the total variation distance is at most \(1\). 
\end{proof}

We now prove the desired upper bound on the total variation distance between the law of $T_N$ and the uniform distribution on the interval $\llbracket 0,N\rrbracket$ for general $q \in [0,1)$.

\begin{proof}[Proof of Proposition \ref{prop:macroscopic uniformity TN upper}]
For simplicity, we will write $\mathbb{P}^0=\mathbb{P}^{(0,\b,\b)}$, $\mathbb{P}^q=\mathbb{P}^{(q,\b,\b)}$, $\mathbb{E}^0=\mathbb{E}^{(0,\b,\b)}$,  and $\mathbb{E}^q=\mathbb{E}^{(q,\b,\b)}$ in this proof.
When $N=0$, the total variation distance is $0$. We next assume $N\geq1$. If $q=0$, the result follows from Proposition \ref{lem:macroscopic uniformity q0}. In the rest of the proof, we fix \(q\in(0,1)\). By the result for $\mathbb{P}^0$,
\begin{equation}\label{eq:q0TN}
    \TV{
        \mathbb P^{0}(T_N\in\cdot)
        -
        \mathrm{Unif}_{\llbracket0,N\rrbracket}
    }
    \leq C_1/(N+1)\quad\mbox{ for all }N\in\mathbb N,
\end{equation}
for some $C_1>0$ depending only on $q,\b$. 
% By \eqref{eq:PNbb again again again},
% \begin{equation}\label{eq:TN-upper-RN}
%     \mathbb P^q(B)
%     =
%     \frac{\mathbb E^0\left[{\bf1}_B W_{\bs(\bg)}(\b^2)\right]}
%     {\mathbb E^0\left[W_{\bs(\bg)}(\b^2)\right]}\quad\mbox{ for every event }B\subseteq\lrw_N.
% \end{equation} 
By Lemma \ref{lem:decomposition TASEP} with \(\a=\b\), 
\(\mathbb P^0(T_N=k)>0\) for every \(k\in\llbracket0,N\rrbracket\).
Let 
\[
p_{N,k}=\mathbb P^0(T_N=k)\quad\mbox{ and }\quad A_{N,k}=\mathbb E^0\left[W_{\bs(\bg)}(\b^2)\mid T_N=k\right]\quad\mbox{ for }k\in\llbracket0,N\rrbracket.
\] By \eqref{eq:q0TN},
\begin{equation}\label{eq:TN-upper-p-pointwise}
    p_{N,k}
    \le
   (C_1+1)/(N+1) \quad
    \mbox{for every }N\in\mathbb N,\ k\in\llbracket0,N\rrbracket .
\end{equation}
By Theorem \ref{thm:prob interpretation of Bryc polynomials} and Definition \ref{def:characterization measure}, \(W_{\bs(\bg)}(\b^2)\ge1\).  By Proposition \ref{prop:moment bound coex} with \(z=\b^2\) and \(m=1\), there exists    \(M\in(0,\infty)\)  depending only on \(q,\b\), such that
\begin{equation}\label{eq:TN-upper-W-moment}
    1\le A_{N,k}\le M
    \quad
    \mbox{for every }N\in\mathbb N,\ k\in\llbracket0,N\rrbracket .
\end{equation}
By Proposition \ref{prop:constant Lambda q b}, there exist \(\Lambda_{q,\b}\in[1,\infty)\) and \(C_2,c_2\in(0,\infty)\) depending only
on \(q,\b\), such that
\begin{equation}\label{eq:TN-upper-A-Lambda}
    |A_{N,k}-\Lambda_{q,\b}|
    \le
    C_2 e^{-c_2\min(k-1,N-k)}\quad
    \mbox{for every }N\in\mathbb N,\ k\in\llbracket1,N\rrbracket .
\end{equation} 
By \eqref{eq:TN-upper-p-pointwise}, \eqref{eq:TN-upper-W-moment}, and \eqref{eq:TN-upper-A-Lambda},
\begin{equation}\label{eq:TN-upper-average-A-Lambda}
\begin{aligned}
    \sum_{k=0}^N p_{N,k}|A_{N,k}-\Lambda_{q,\b}|
    &\le
    p_{N,0}(M+\Lambda_{q,\b})
    +
    C_2\sum_{k=1}^N p_{N,k}e^{-c_2\min(k-1,N-k)}                                      \\
    &\le
    \frac{(C_1+1)(M+\Lambda_{q,\b})}{N+1}
    +
    \frac{(C_1+1)C_2}{N+1}
    \sum_{k=1}^N e^{-c_2\min(k-1,N-k)}\le
    \frac{C_3}{N+1},
\end{aligned}
\end{equation}
for some constant \(C_3\in(0,\infty)\) depending only on \(q,\b\). Here, in the last step we used $\sum_{k=1}^N e^{-c_2\min(k-1,N-k)}
    \le
    2\sum_{r=0}^{\infty}e^{-c_2r}
    =
     2/(1-e^{-c_2})$. Note that 
\begin{equation}\label{eq:TN-upper-mu-formula}
1\leq \mathbb E^0\left[W_{\bs(\bg)}(\b^2) \right]=D_N=\sum_{j=0}^N p_{N,j}A_{N,j},\qquad 
   \mathbb P^q(T_N=k)
    = p_{N,k}A_{N,k}/D_N.
\end{equation}
Combining with \(\sum_{k=0}^N p_{N,k}=1\) and \eqref{eq:TN-upper-average-A-Lambda}, we also have 
 \begin{equation}\label{eq:TN-upper-D-Lambda}
    |D_N-\Lambda_{q,\b}|
    \le
    \sum_{k=0}^N p_{N,k}|A_{N,k}-\Lambda_{q,\b}|
    \le C_3/(N+1).
\end{equation} 
By \eqref{eq:TN-upper-mu-formula},  \eqref{eq:TN-upper-average-A-Lambda}, and \eqref{eq:TN-upper-D-Lambda},
\[
\begin{aligned} 
        &\TV{ \mathbb P^{q}(T_N\in\cdot)- \mathbb P^{0}(T_N\in\cdot)}\le\sum_{k=0}^N\left|\mathbb P^{q}(T_N=k)-\mathbb P^{0}(T_N=k)\right|   =   
    \sum_{k=0}^N
    p_{N,k}\left|\frac{A_{N,k}}{D_N}-1\right|                         \\
    &=
    \frac{1}{D_N}
    \sum_{k=0}^N p_{N,k}|A_{N,k}-D_N|                                   \le
    \sum_{k=0}^N p_{N,k}|A_{N,k}-\Lambda_{q,\b}|
    +
    |D_N-\Lambda_{q,\b}|                                                \le
    2C_3/(N+1).
\end{aligned}
\] 
Combining with \eqref{eq:q0TN}, by the triangle inequality, 
  $\TV{
        \mathbb P^q(T_N\in\cdot)
        -
        \mathrm{Unif}_{\llbracket0,N\rrbracket}
    }
                                            \le
    (2C_3+C_1)/(N+1)$. 
\end{proof}

\subsection{Lower bound on the distance from the uniform distribution: Proof of Proposition \ref{prop:macroscopic uniformity TN lower}}

We will now establish the corresponding lower bound on the total variation distance between the law of $T_N$ and the uniform distribution on the interval $\llbracket 0,N\rrbracket$.

\begin{proof}[Proof of Proposition \ref{prop:macroscopic uniformity TN lower}] 
We write $\mathbb P^0=\mathbb P^{(0,\b,\b)}$ and $\mathbb P^q=\mathbb P^{(q,\b,\b)}$. We split the proof into two steps.\\

\noindent\textbf{Step 1. } In this step, we conclude the proof of the proposition assuming the following claim
\begin{equation}\label{eq:claimheryhjtnwsyn}
    \TV{\mathbb P^q(T_N\in\cdot)-\mathrm{Unif}_{\llbracket0,N\rrbracket}}\geq \mathbb P^q(T_N=0,\ \gamma_1=0)/2.
\end{equation}
By \eqref{eq:PNbb again again again},
\begin{equation} \label{eq:ytnedyjnetyjn}
    \mathbb P^q(B)
    =
    \frac{\mathbb E^0\left[{\bf1}_B W_{\bs(\bg)}(\b^2)\right]}
    {\mathbb E^0\left[W_{\bs(\bg)}(\b^2)\right]}\quad\mbox{ for every event }B\subseteq\lrw_N.
\end{equation}
By Theorem \ref{thm:prob interpretation of Bryc polynomials}, $W_{\bs(\bg)}(\b^2)\ge1$ for every $\bg\in\lrw_N$.  
When \(q\in(0,1)\), by Proposition \ref{prop:moment bound coex} with \(z=\b^2\) and \(m=1\); and when $q=0$, by \(W_{\bs(\bg)}(\b^2)=1\), there exists \(M\in(0,\infty)\), depending only on \(q,\b\), such that
\begin{equation*} 
    \mathbb E^0\left[W_{\bs(\bg)}(\b^2)\mid T_N=k\right]\le M
    \quad
    \mbox{for every }N\in\mathbb N,\ k\in\llbracket0,N\rrbracket .
\end{equation*}
Hence \(\mathbb E^0[W_{\bs(\bg)}(\b^2)]\le M\). Combining this upper bound, the lower bound \(W_{\bs(\bg)}(\b^2)\ge1\), in view of \eqref{eq:ytnedyjnetyjn},
\begin{equation}\label{eq:q-vs-zero-lower-TN-converse}
    \mathbb P^q(B)
    \ge 
    \mathbb P^0(B)/M 
    \quad\mbox{ for every event }B\subseteq\lrw_N.
\end{equation}  
For $\ell\in\mathbb{Z}_{\geq0}$, we denote by $p_{\ell}$ the probability that a $\nu_{\b}$-random walk started at $0$ does not reach level $-1$ during the first $\ell$ steps, set \(p_\infty=1-\b^{-2}\), and set  $w=(\b+\b^{-1}+2)/4$.
Taking $\a=\b$ in Lemma \ref{lem:decomposition TASEP},  
\begin{equation*}
     \mathbb{P}^{0}(T_N = 0) =\frac{p_N}{\sum_{m=1}^{N}   \b/(4w) \cdot  p_{m-1}p_{N-m} +  p_N} \ge
    \frac{p_\infty}{N+1},
\end{equation*}
where we used $\b/(4w)<1$ and, by Lemma \ref{lem:basic hitting monotone coupling}, \(p_\infty\leq p_{\ell}\leq1\) for all $\ell\in\mathbb{Z}_{\geq0}$. 
By Lemma \ref{lem:decomposition TASEP}, under \(\mathbb P^0(\,\cdot\,|T_N=0)\), the path \((\gamma_i-\gamma_0)_{0\le i\le N}\) has the law of the \(\nu_\b\)-random walk started from \(0\), conditioned on staying nonnegative up to time $N$.
Thus, using the Markov property at time \(1\),
\[ 
    \mathbb P^0(\gamma_1=0\mid T_N=0)
     =
    \mathbb P_0(S_1=0\mid S_i\ge0\text{ for }i\in\llbracket0,N\rrbracket)                        
     = \nu_\b(0)\,p_{N-1}/p_N                              
     \ge
    \nu_\b(0)\,p_\infty , 
\]
where \(p_0=1\) when \(N=1\).  Taking $B=\{\gamma_1=0, T_N=0\}$ in \eqref{eq:q-vs-zero-lower-TN-converse} and combining with the last two displays,
 \[
 \mathbb P^q(T_N=0,\ \gamma_1=0)\geq    \nu_\b(0)\,p_\infty^2/(M(N+1)).
 \]
Combining with \eqref{eq:claimheryhjtnwsyn}, we conclude the proof.\\

\noindent\textbf{Step 2. } In this step, we prove the claim \eqref{eq:claimheryhjtnwsyn}.  Define
\(\mathrm{inv}:\lrw_N\to\lrw_N\) by \[(\mathrm{inv}(\bg))_i=\gamma_{N-i}-\gamma_N \quad\mbox{ for } i\in\llbracket0,N\rrbracket.\] 
Note that $\bs(\mathrm{inv}(\bg))=\bs(\bg)$, $H(\mathrm{inv}(\bg))=H(\bg)$, and
\[
    \min(\mathrm{inv}(\bg))=\min(\bg)-\gamma_N,\qquad
    (\mathrm{inv}(\bg))_N-2\min(\mathrm{inv}(\bg))
    =-\gamma_N-2(\min(\bg)-\gamma_N)
    =\gamma_N-2\min(\bg).
\]
Thus \eqref{eq:PNbb again again again} gives
$\mathbb P^q(\mathrm{inv}(\bg))=\mathbb P^q(\bg)$ for $\bg\in\lrw_N$.  
Let
\[
    Q_N(\bg)
    =
    \max\{i\in\llbracket0,N\rrbracket:\gamma_i=\min(\bg)\}\quad\mbox{ for }\bg\in\lrw_N,
\]   
then  $N-Q_N(\bg)=T_N(\mathrm{inv}(\bg))$ for $\bg\in\lrw_N$. Hence
$\mathbb P^q(T_N=N)=\mathbb P^q(Q_N=0)$.
Therefore, 
\begin{equation}\label{eq:TN-TV-by-endpoint-gap}
\begin{aligned}
    \TV{\mathbb P^{q}(T_N\in\cdot)-\mathrm{Unif}_{\llbracket0,N\rrbracket}}
    &\ge
    \max\left(\left|\mathbb P^{q}(T_N=0)-(N+1)^{-1}\right|,\ \left|\mathbb P^{q}(T_N=N)-(N+1)^{-1}\right|\right)       \\
    &\ge
    \frac12
    \left|
        \left(\mathbb P^{q}(T_N=0)-(N+1)^{-1}\right)-\left(\mathbb P^{q}(Q_N=0)-(N+1)^{-1}\right)
    \right|                                                  \\
    &=
    \frac12\,
    \mathbb P^q\left(T_N=0,\ Q_N>0\right)\geq\frac{1}{2}\mathbb P^q(T_N=0,\ \gamma_1=0),
\end{aligned}
\end{equation}
where we used \(\{Q_N=0\}\subseteq\{T_N=0\}\) and \(\{T_N=0,\gamma_1=0\}\subseteq\{T_N=0,Q_N>0\}\).
We conclude   \eqref{eq:claimheryhjtnwsyn}.
\end{proof}

\section{Approximating the stationary distribution of the open ASEP at the coexistence line: Proof of Theorem \ref{thm:ApproxCoex}}\label{sec: proof of main thm}

In this section, we establish our main total variation approximation result Theorem \ref{thm:ApproxCoex} on the stationary distribution of the open ASEP on the coexistence line, which, for clarity, we restate below again as Theorem~\ref{thm:ApproxCoex again}.
Recall that we write \(\ber_I(p)\) for the Bernoulli product measure on \(\{0,1\}^I\) with parameter \(p\).

\begin{theorem}\label{thm:ApproxCoex again}
 Assume $q\in[0,1)$ and $\b>1$. Then there exist constants $C,C',c>0$ depending only on $q$ and $\b$, such that for each $N\geq m\geq1$ and $k\in\llbracket0,N\rrbracket$, 
\begin{equation*}
        \TV{
    \mathbb{P} 
    \left(
        (\tau_i)_{i\in \llbracket1,k-m\rrbracket\cup \llbracket k+m,N\rrbracket}\in\cdot
        \,\middle|\,\mathsf{M}_N=k
    \right)
    -
    \ber_{\llbracket1,k-m\rrbracket} \left(\frac{1}{1+\b}\right)
    \otimes
    \ber_{\llbracket k+m,N\rrbracket} \left(\frac{\b}{1+\b}\right) 
}
\le
Ce^{-cm},
\end{equation*} as well as
\begin{equation*}
\qquad\frac{C'}{N}\leq  \TV{
        \mathbb{P}  (\mathsf M_N\in\cdot)
        -
        \mathrm{Unif}_{\llbracket0,N\rrbracket}
    }
    \leq \frac{C}{N} , 
\end{equation*} 
where we write $\mathrm{Unif}_{\llbracket0,N\rrbracket}$ to denote the uniform distribution on the lattice $\llbracket0,N\rrbracket$. In particular, as $N\to\infty$, $\mathsf M_N/N$ converge weakly to the uniform random variable on the interval $[0,1]$. 
\end{theorem}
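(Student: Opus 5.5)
We will transfer everything from the path-level results (conditioning on $T_N$) to the particle-level shock location $\mathsf M_N$. This follows the outline in the introduction. The basic coupling is Theorem~\ref{thm:characterization stationary measure ASEP}: $\tau$ is obtained from $\bg\sim\mathbb P^{(q,\b,\b)}$ by keeping up and down steps and replacing flat steps by independent fair coins $\kappa_i$.

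\textbf{Step 1: the $q=0$ picture given $T_N=k$.} Under $\mathbb P^0_{N,k}$, Lemma~\ref{lem:decomposition TASEP} says the increments of $\bg$ to the right of $k$ form a $\nu_\b$-walk conditioned to stay nonnegative. The increments to the left, read backwards, form a conditioned positive walk. After randomizing the flat steps, an unconditioned $\nu_\b$ step becomes $\tau_i=1$ with probability $(\b+1)/(\b+\b^{-1}+2)=\b/(1+\b)$. Read backwards, it becomes $\tau_i=1$ with probability $1/(1+\b)$. By Lemma~\ref{lem:stability low compositions coex}, Step 3 (the construction of $\mathsf P^0_{N,k,m}$), the law of the outside increments $\Delta_{N,k,m}$ is within $Ce^{-cm}$ in total variation of unconditioned $\nu_\b$ increments on each side. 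Proposition~\ref{prop:comparison path q} transfers this to $\mathbb P^q_{N,k}$. Consequently, the law of $(\tau_i)_{i\in\llbracket1,k-m\rrbracket\cup\llbracket k+m,N\rrbracket}$ given $T_N=k$ is within $Ce^{-cm}$ of $\ber(\rho_-)\otimes\ber(\rho_+)$. The indexing of $\tau_i$ versus the increment $\gamma_i-\gamma_{i-1}$ shifts the window by one, which is absorbed by changing $m$ by a constant.

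\textbf{Step 2: $\mathsf M_N-T_N$ has exponential tails, uniformly in $T_N$.} Write $s_j=\sum_{i\le j}(2\tau_i-1)$. Then $s_j$ equals $\gamma_j$ plus a symmetric $\pm1$ walk driven only by the flat steps. To the right of $T_N$, $s$ has drift $(\b-1)/(\b+1)>0$ up to the exponentially small errors from Step 1. To the left it has the opposite drift. Hence $s$ cannot attain its global minimum at distance $r$ from $T_N$ except with probability $Ce^{-cr}$. To prove this, I would apply Lemma~\ref{lem:basic hitting monotone coupling}-type large-deviation bounds to the nearly i.i.d. increments of $s$ outside a window of radius $r$, uniformly in $k$, and then use Step 1 to replace the conditioned law by the product law. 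This yields $\mathbb P(|\mathsf M_N-T_N|\ge r\mid T_N=k)\le Ce^{-cr}$.

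\textbf{Step 3: the first bound of the theorem.} I would condition on $\mathsf M_N=k$ rather than $T_N=k$. Decompose over $T_N=k'$ with $|k'-k|\le m/2$. The event $\{\mathsf M_N=k\}$ depends on the configuration far from $k$ only through the requirement that the far partial sums stay above the minimum. I expect this to be the \textbf{main obstacle}: conditioning on $\mathsf M_N=k$ is a global constraint, not a local one. My plan is as follows. Given the window, the far-field increments are nearly i.i.d. with positive drift away from the shock. The constraint that they never return to the window minimum therefore has probability at least $1-Ce^{-cm}$ once one adds the typical rise $\gtrsim m$ inside $\llbracket k,k+m\rrbracket$. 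So conditioning on it changes the outside law by only $Ce^{-cm}$. To make this precise, I would write $\{\mathsf M_N=k\}=E_{\mathrm{loc}}\cap E_{\mathrm{far}}$, where $E_{\mathrm{loc}}$ is measurable with respect to the sites within $m/2$ of $k$ and $E_{\mathrm{far}}^c$ has conditional probability $\le Ce^{-cm}$. I would then use decorrelation between the window and the outside; this is the same mechanism as in Lemma~\ref{lem:stability low compositions coex}, namely the independent extensions in $\mathsf P^0_{N,k,m}$ together with Proposition~\ref{prop:comparison path q}. Events with $\mathbb P(\mathsf M_N=k)$ very small are controlled by $\mathbb P(\mathsf M_N=k)\ge c/N$ in the bulk, and by separate handling of boundary $k$, where one side is empty.

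\textbf{Step 4: uniformity of $\mathsf M_N$.} For the upper bound, write
\[
\mathbb P(\mathsf M_N=j)=\sum_k\mathbb P(T_N=k)\,\mathbb P(\mathsf M_N-T_N=j-k\mid T_N=k).
\]
Using Lemma~\ref{lem:limit of low level composition} with a general $\Uppsi$, the displacement law given $T_N=k$ converges, at an exponential rate in $\min(k,N-k)$, to a fixed law $\pi$ on $\mathbb Z$ computed from the $V$-shaped walk. Here $\Uppsi$ encodes the local path together with the coins, reweighted by the Bryc factor as in Proposition~\ref{prop:constant Lambda q b}. Convolving the nearly uniform law of $T_N$ from Proposition~\ref{prop:macroscopic uniformity TN upper} with $\pi$ gives an error of $O(1/N)$, since the boundary layers have size $O(1)$ and each carries mass $O(1/N)$. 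For the lower bound, mirror Step 2 of Proposition~\ref{prop:macroscopic uniformity TN lower}. The reflection $\tau_i\mapsto1-\tau_{N+1-i}$ preserves $\mu_N$ on the coexistence line. It maps the first minimizer to $N$ minus the last minimizer, so
\[
\mathbb P(\mathsf M_N=0)-\mathbb P(\mathsf M_N=N)=\mathbb P(\mathsf M_N=0,\ \text{minimum not unique}).
\]
The event $\{\tau_1=\tau_2=\ldots\}$ is not quite what we need. Instead, use the event $\{\tau_1=0,\tau_2=1\}$ together with $s_j\ge0$ for all $j$. Its probability is $\ge c/N$ by comparison with $\{T_N=0\}$, after inserting the bound $W\ge1$ as in \eqref{eq:q-vs-zero-lower-TN-converse}. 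Weak convergence of $\mathsf M_N/N$ then follows immediately.
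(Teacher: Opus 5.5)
Your Steps 1, 2 and 4 follow the paper's route: Bernoulli approximation given $T_N=k$, exponential localization of $\mathsf M_N-T_N$, convergence of the displacement law to a fixed $\mathscr p$ followed by convolution with the nearly uniform law of $T_N$, and the inversion $\tau_i\mapsto 1-\tau_{N+1-i}$ for the lower bound. The genuine gap is in Step 3 for $q\in(0,1)$.

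After decomposing over $T_N=k'$, you need approximate independence under $\mathbf P^q_{N,k'}$ of a window-measurable event $F$ and an exterior event $A$. A typical $F$ is the event that the first minimizer of $s$ inside the window sits at $k$. With $W=W_{\bs(\bg)}(\b^2)$, the required estimate is $\mathbb E^0_{N,k'}[W\mathbf 1_F\mathbf 1_A]\approx \mathbb E^0_{N,k'}[W\mathbf 1_F]\,\mathbb P^0_{N,k'}(A)$.

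Your two cited tools do not give this. The independent extensions of $\mathsf P^0_{N,k,m}$ give it only when $W\equiv1$, i.e.\ at $q=0$. Proposition~\ref{prop:comparison path q} (proved via Lemma~\ref{lem:conditioning does not change much endpoint}) gives it only for $F=\lrw_N$: it controls the covariance of $W$ and $\mathbf 1_A$ under $\mathbb P^0_{N,k'}$, i.e.\ only the exterior marginal. The reweighting by $W$ acts on the window through the low-level counts, so agreement of exterior marginals does not combine with the $q=0$ decorrelation into a joint statement at $q>0$. Your sentence ``given the window, the far-field increments are nearly i.i.d.'' is exactly the unproved claim.

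The missing idea is that the Bryc weight is approximately local.
\begin{itemize}
\item Take $\ell=\lfloor\varepsilon r\rfloor$ and $H_{k,r}=G_\ell(\sigma_0^{[r]},\dots,\sigma_{\ell+1}^{[r]})$, built from the level counts inside the window.
\item Chaining Lemmas~\ref{lem:sum of K}, \ref{lem:convergence to limiting chain} and \ref{lem: first part proof limit of low level composition} gives $\mathbb E^0_{N,k}|W-H_{k,r}|\le Ce^{-cr}$. Here $\varepsilon$ must be small enough that $\|H_{k,r}\|_\infty\le\b^{2(\ell+1)}$ is beaten by the $e^{-cr}$ errors.
\item Reweight a reference measure with independent increments by $H_{k,r}\mathbf 1_{A_r}$, where $A_r$ is the survival constraint restricted to the window. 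The result is within $Ce^{-cr}$ of $\mathbf P^q_{N,k}$, and under it the exterior is exactly product Bernoulli and independent of the window (Lemma~\ref{asepm:lem:local}).
\item This yields the joint approximation of $(X_{k,m},\mathsf M_N-k)$ by $B_{k,m}\otimes\mathbf K_k$ (Proposition~\ref{asepm:prop:joint}).
\end{itemize}
You already use this $G_\ell$ approximation in Step 4; it has to enter Step 3 as well.

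Two smaller points.

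First, dividing by $\mathbb P(\mathsf M_N=k)$ requires $\mathbb P(\mathsf M_N=k)\ge c/(N+1)$ for every $k\in\llbracket0,N\rrbracket$, boundary included. It follows from $\mathbb P(T_N=k)\ge c/(N+1)$ together with $\mathbf P^q_{N,k}(\mathsf M_N=k)\ge \mathbf P^0_{N,k}(\mathsf M_N=k)/M\ge c$. The latter uses $W\ge1$ and the moment bound of Proposition~\ref{prop:moment bound coex}. You state this bound but do not prove it, and it is needed.

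Second, in the lower bound the event $\{\tau_1=0,\tau_2=1\}\cap\{s_j\ge0\text{ for all }j\}$ is empty, since $\tau_1=0$ forces $s_1=-1$. You need $\tau_1=1,\tau_2=0$. One way is two flat path steps with coins $\kappa_1=1,\kappa_2=0$, which keeps $T_N=0$. Follow this with a run of up-steps and an estimate as in Lemma~\ref{lem:technical lem in lower bound}, keeping both $\bg$ and $s$ nonnegative.
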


In order to show Theorem \ref{thm:ApproxCoex again}, we will work with the extended state space used in the two-layer representation for the stationary distribution of the open ASEP from Theorem~\ref{thm:characterization stationary measure ASEP}.
Assume $q\in[0,1)$ and $\b>1$ and recall that
\begin{equation}\label{eq:PNbb}
\mathbb{P}^{(q,\b,\b)}(\bg)=\frac{1}{Z_N^{(q,\b,\b)}} 2^{H(\bg)}\b^{\gamma_N-2\min(\bg)}  W_{\bs(\bg)}\left(\b^2\right)\quad\mbox{ for }\bg\in\lrw_N.
\end{equation}
We consider the enlarged probability space with probability measure denoted by $\mathbb{P}^{(q,\b,\b)}_{N,\uptau}$, which supports $\bg\sim\mathbb{P}^{(q,\b,\b)}$ together with i.i.d. Bernoulli variables $\kappa_i$, $i\in\llbracket1,N\rrbracket$ with mean $1/2$ that are independent of $\bg$. 
By Theorem \ref{thm:characterization stationary measure ASEP}, the random variable $\tau=(\tau_1,\dots,\tau_N)\in\{0,1\}^N$ on this enlarged probability space, given by   
\begin{equation}\label{eq:occupation coex}
    \tau_i=\begin{cases}
        1 & \mbox{ if } \gamma_i-\gamma_{i-1}=1,\\
        \kappa_i & \mbox{ if } \gamma_i-\gamma_{i-1}=0,\\ 
        0 & \mbox{ if } \gamma_i-\gamma_{i-1}=-1,
    \end{cases}
\end{equation}
is distributed as $\mu_N^{(q,\b,\b)}$. Here, $\mu_N^{(q,\b,\b)}$ is the stationary measure of the open ASEP on the lattice $\llbracket1,N\rrbracket$ with jump rates $\alpha,\beta,\gamma,\delta$ and $q$ satisfying \eqref{eq:conditions open ASEP} and Liggett's condition \eqref{eq:Liggett's condition} on the coexistence line $\a=\b>1$.
On this enlarged probability space, we define observables 
\begin{equation}\label{eq:observables TN MN}
    \begin{split}
            T_N &=\min\{i\in\llbracket0,N\rrbracket:\gamma_i=\min(\bg)\}  ,   \\
     \mathsf{M}_N &= \min\left\{
        j\in\llbracket0,N\rrbracket:
        s_j (\tau)=\min_{\ell\in\llbracket0,N\rrbracket}s_{\ell} (\tau)
    \right\}  , 
    \end{split}
\end{equation}
where $\tau\in\{0,1\}^N$ is as in \eqref{eq:occupation coex}, $s_0(\tau)=0$, and $s_j(\tau)=\sum_{i=1}^j\left(2\tau_i -1\right)$ for $j\in\llbracket1,N\rrbracket$. 

We have the following strategy to prove Theorem \ref{thm:ApproxCoex again}. In Section~\ref{sec:Approx1}, we establish the approximation result from Theorem \ref{thm:ApproxCoex again} when we condition on the location of $T_N$ instead of $\mathsf M_N$.
We then show in Section~\ref{sec:ApproxMid} that the location $\mathsf M_N$ is close to the position $T_N$, and that the approximation from Theorem \ref{thm:ApproxCoex again} can be achieved when jointly conditioning on $\mathsf M_N$ and $T_N$.
In Sections~\ref{sec:Approx3} and~\ref{sec:Approx4}, we provide upper and lower bounds on the total variation distance between $\mathsf M_N$  and the uniform distribution on $\llbracket 0,N \rrbracket$, which will follow from combining the upper bounds from Proposition \ref{prop:part of the thm upper bound} with the lower bounds from Proposition \ref{prop:part of the thm lower bound}.
 In Section~\ref{sec:Final}, we combine the above results in order to conclude Theorem~\ref{thm:ApproxCoex again}.

\subsection{Approximation by Bernoulli product measures}\label{sec:Approx1}

In this subsection, we prove the following approximation result when conditioning on the location $T_N$. To simplify notation, we denote by $\mathbf{P}_{N,k}^q$ the measure $\mathbb{P}^{(q,\b,\b)}_{N,\uptau}$ conditioned on $T_N=k$.

\begin{proposition}\label{prop:first part proof Thm}
    Assume $q\in[0,1)$ and $\b>1$.  Then there exist constants $C,c>0$ depending only on $q$ and $\b$, such that for each $N\geq m\geq1$ and $k\in\llbracket0,N\rrbracket$, 
\begin{equation}\label{eq:estimate by Bernoulli product}
\TV{
    \mathbf{P}_{N,k}^q
    \left(
        (\tau_i)_{i\in \llbracket1,k-m\rrbracket\cup \llbracket k+m,N\rrbracket}\in\cdot 
    \right)
    -
    \ber_{\llbracket1,k-m\rrbracket} \left(\frac{1}{1+\b}\right)
    \otimes
    \ber_{\llbracket k+m,N\rrbracket} \left(\frac{\b}{1+\b}\right) 
}
\le
Ce^{-cm}.
\end{equation}
\end{proposition}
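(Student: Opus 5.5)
The plan is to reduce to the case $q=0$ via Proposition~\ref{prop:comparison path q}, and then handle $q=0$ using the explicit decomposition of Lemma~\ref{lem:decomposition TASEP}.

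First, the occupation variables $(\tau_i)_{i\in\llbracket1,k-m\rrbracket\cup\llbracket k+m,N\rrbracket}$ are functions of two ingredients: the increments $(\gamma_i-\gamma_{i-1})$ at those sites, and the independent coins $\kappa_i$. The increment $\gamma_i-\gamma_{i-1}$ is the coordinate $\gamma_{j+1}-\gamma_j$ with $j=i-1$. For $i\le k-m$ we have $|j-k|\ge m+1$, and for $i\ge k+m$ we have $j-k\ge m-1$. Hence these increments are measurable with respect to $\Delta_{N,k,m-1}$ (with $m-1\ge0$). Since the coins are independent of $\bg$ under both measures, Proposition~\ref{prop:comparison path q} with $m-1$ in place of $m$ gives
\[
\TV{\mathbf P^q_{N,k}(\tau\text{-restriction}\in\cdot)-\mathbf P^0_{N,k}(\tau\text{-restriction}\in\cdot)}\le Ce^{-c(m-1)}
\]
for $q\in(0,1)$. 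For $q=0$ there is nothing to compare. It therefore suffices to prove \eqref{eq:estimate by Bernoulli product} for $q=0$.

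Second, for $q=0$, Lemma~\ref{lem:decomposition TASEP} says that, given $T_N=k$, the right piece $(\gamma_{k+j}-\gamma_k)_j$ is a $\nu_\b$-walk conditioned to stay nonnegative up to time $N-k$. The reversed left piece is an independent $\nu_\b$-walk (after the forced step $\gamma_{k-1}-\gamma_k=1$) conditioned to stay nonnegative up to time $k-1$. I would use Step~3 of the proof of Lemma~\ref{lem:stability low compositions coex}, namely \eqref{eq:stability proof6}. It shows that $\mathbb P^0_{N,k}$ is within $C_2e^{-c_2m}$ in total variation of $\mathsf P^0_{N,k,m}$. Under $\mathsf P^0_{N,k,m}$, the right increments from time $k+m$ on are i.i.d.\ $\nu_\b$, and the left increments before time $k-m+1$ are i.i.d.\ with the reversed law (reading forward in time, a reversed up-step is a down-step). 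All of these are independent. The index bookkeeping is off by one, so I would take $m' = m-2$ in \eqref{eq:stability proof6}, and cover small $m$ by enlarging $C$.

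Third, I would compute the induced occupation laws. A right increment distributed as $\nu_\b$, combined with a fair coin on the flat steps, gives
\[
\mathbb P(\tau_i=1)=\frac{\b+1}{\b+\b^{-1}+2}=\frac{\b}{1+\b}.
\]
On the left, the forward increments have $+1$ with probability $\b^{-1}/(\b+\b^{-1}+2)$, which gives
\[
\mathbb P(\tau_i=1)=\frac{\b^{-1}+1}{\b+\b^{-1}+2}=\frac{1}{1+\b}.
\]
Because the increments and coins are independent across sites, the result is exactly the product measure $\ber_{\llbracket1,k-m\rrbracket}(1/(1+\b))\otimes\ber_{\llbracket k+m,N\rrbracket}(\b/(1+\b))$. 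Total variation does not increase under the measurable map to $\tau$, so the errors combine by the triangle inequality.

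The main obstacle is the careful index alignment at the edges. This includes the forced step at $k-1$, the cases $k-m<1$ or $k+m>N$ (where a side is empty), and the case $k=0$. I would handle these by noting that empty sides contribute nothing, and by checking that the window of increments excluded by $\mathsf P^0_{N,k,m'}$ is contained in the excluded sites $\llbracket k-m+1,k+m-1\rrbracket$.
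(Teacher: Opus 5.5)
Your proposal is correct and follows the paper's proof. The reduction from $q\in(0,1)$ to $q=0$ is exactly the paper's application of Proposition~\ref{prop:comparison path q} with $m-1$. The $q=0$ case is likewise handled by showing that the conditioned walks of Lemma~\ref{lem:decomposition TASEP} have nearly i.i.d.\ $\nu_\b$ increments away from the minimum, followed by the same Bernoulli computation.

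The only difference is the estimate you cite for the $q=0$ case. The paper applies the imported Lemma~\ref{lem:appro conditioned random walk} with parameter $m-1$ separately to the right walk and to the reversed left walk, then combines the two sides via their independence. You instead invoke the joint coupling estimate \eqref{eq:stability proof6}, which treats both sides at once. Note that $m'=m-1$ already suffices there, though $m'=m-2$ is harmless.
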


The next lemma on the law of a $\nu_{\b}$-random walk is taken from \cite{SchmidYangShock}. 
\begin{lemma}[c.f. Lemma 5.3 in \cite{SchmidYangShock}]\label{lem:appro conditioned random walk}
Assume $\b>1$. Let $\{Y_i\}_{i=0}^N$ be a $\nu_{\b}$-random walk with $N$ steps started at $Y_0=0$, conditioned to stay non-negative. 
Then there exist constants $C,c>0$ depending only on $\b$, such that for each $N\in\mathbb N$ and $m\in\llbracket0,N\rrbracket$, 
    \begin{equation*}
        \TV{ \mathbb{P}((Y_{i}-Y_{i-1})_{i\in\llbracket m+1,N\rrbracket} \in \cdot\, ) -\nu_{\b}^{\otimes (N-m)}} \leq Ce^{-cm}.
    \end{equation*} 
\end{lemma}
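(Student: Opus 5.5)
This is the lemma imported from the companion paper. A self-contained proof would go as follows.

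Let $\{S_i\}$ be an ordinary $\nu_{\b}$-random walk started at $0$, and let $A_N=\{S_i\ge 0 \text{ for } i\in\llbracket0,N\rrbracket\}$, so that $Y$ is $S$ conditioned on $A_N$. We would compare the conditioned law of the increments after time $m$ with the unconditioned product law $\nu_{\b}^{\otimes(N-m)}$, in two steps.

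\textbf{Step 1 (Markov property at time $m$).} Conditionally on $(Y_0,\dots,Y_m)$ with $Y_m=x$, the future increments $(Y_i-Y_{i-1})_{i\in\llbracket m+1,N\rrbracket}$ have the law of the increments of a $\nu_{\b}$-walk started at $x$, conditioned on $\{S_i\ge0 \text{ for } i\in\llbracket0,N-m\rrbracket\}$. By Lemma~\ref{lem:basic hitting monotone coupling}, this event has probability at least $1-\b^{-2(x+1)}$. Conditioning on an event of probability $1-\epsilon$ moves total variation by at most $\epsilon/(1-\epsilon)$, so this conditional law is within $C\b^{-2x}$ of $\nu_{\b}^{\otimes(N-m)}$ in total variation. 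This is exactly the estimate \eqref{eq:stability proof7}.

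\textbf{Step 2 (average over $Y_m$).} Averaging the bound from Step 1 over the law of $Y_m$ gives
\[
\TV{\mathbb P((Y_i-Y_{i-1})_{i\in\llbracket m+1,N\rrbracket}\in\cdot)-\nu_{\b}^{\otimes(N-m)}}\le C\,\mathbb E[\b^{-2Y_m}]\wedge 1,
\]
using that total variation distance is convex under mixtures. We split according to whether $Y_m\le\kappa m$ or $Y_m>\kappa m$, where $\kappa=(\b-1)/(4(\b+1))$. On the first event we use the bound $1$; by the monotone coupling and the last statement of Lemma~\ref{lem:basic hitting monotone coupling}, $\mathbb P(Y_m\le\kappa m)\le Ce^{-cm}$. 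On the second event the integrand is at most $\b^{-2\kappa m}$. Together these give $\mathbb E[\b^{-2Y_m}]\le Ce^{-cm}$, exactly as in \eqref{eq:stability proof9}.

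The case $m=0$ is trivial after enlarging $C$, since total variation is at most $1$. The only delicate point is that the conditioning horizon $N-m$ is arbitrary, possibly $0$. The lower bound $1-\b^{-2(x+1)}$ holds uniformly in the horizon, because it comes from the infinite-horizon probability, which the finite-horizon probabilities decrease to. All constants therefore depend only on $\b$, uniformly in $N$ and $m$.
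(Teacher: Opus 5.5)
Your proof is correct and follows the same route the paper relies on. The paper imports this lemma from \cite{SchmidYangShock}, but it carries out exactly your argument in Step~3 of the proof of Lemma~\ref{lem:stability low compositions coex}, see \eqref{eq:stability proof7} and \eqref{eq:stability proof9}: the Markov property at time $m$, the horizon-uniform survival bound $1-\b^{-2(x+1)}$, convexity of total variation, and the drift estimate $\mathbb E[\b^{-2Y_m}]\le Ce^{-cm}$ via the monotone coupling, with $m=0$ and $N-m=0$ handled trivially as you do.
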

 Let us stress that strictly speaking, Lemma 5.3 in \cite{SchmidYangShock} asserts that $m \geq 1$. However, the case $m=0$ can be covered by increasing the constant $C>0$ if necessary.

\begin{proof}[Proof of Proposition \ref{prop:first part proof Thm}]
 In addition to the measure $\mathbf{P}_{N,k}^q$, we will also consider the measure \(\mathbb P_{N,k}^q\) on the path space which we obtain by \(\mathbb P^{(q,\b,\b)}\) from \eqref{eq:PNab} when conditioning on $T_N=k$. 
If $q=0$, Lemma \ref{lem:decomposition TASEP} with \(\a=\b\) gives
\(\mathbb P^{(0,\b,\b)}(T_N=k)>0\) for  \(k\in\llbracket0,N\rrbracket\). If \(q\in(0,1)\), Proposition \ref{prop:comparison path q} gives
\(\mathbb P^{(q,\b,\b)}(T_N=k)>0\) for  \(k\in\llbracket0,N\rrbracket\). Since \(T_N\) is a function only of \(\bg\), we know that $\mathbb P^{(q,\b,\b)}_{N,\uptau}(T_N=k)=\mathbb P^{(q,\b,\b)}(T_N=k)>0$, and hence  \(\mathbf P_{N,k}^q\) is well-defined. Since \(\{T_N=k\}\) is measurable with respect to \(\bg\), under \(\mathbf P_{N,k}^q\), the variables \((\kappa_i)_{i=1}^N\) remain i.i.d. \(\ber(1/2)\), independent of \(\bg\). Define  
\[
\Upphi:\{-1,0,1\}\times\{0,1\}\to\{0,1\},\qquad \Upphi(u,\kappa)={\bf1}_{u=1}+\kappa{\bf1}_{u=0}.
\]
Note that \(\tau_i=\Upphi(\gamma_i-\gamma_{i-1},\kappa_i)\) for $i\in\llbracket1,N\rrbracket$.
Observe that if $\updelta\sim \nu_{\b}$ and $\kappa\sim\ber(1/2)$ are independent variables, then $\Upphi(\updelta,\kappa)\sim \ber(\b/(1+\b))$ and $\Upphi(-\updelta,\kappa)\sim \ber(1/(1+\b))$.

We first prove the estimate \eqref{eq:estimate by Bernoulli product} for $q=0$. Let \(C_1,c_1>0\) be the constants from Lemma \ref{lem:appro conditioned random walk}, which depend only on $\b$. For the right block, if \(\llbracket k+m,N\rrbracket\ne\emptyset\), then by Lemma \ref{lem:decomposition TASEP} we know that, under \(\mathbb P_{N,k}^0\),  $R_j=\gamma_{k+j}-\gamma_k$, $j\in\llbracket0,N-k\rrbracket$ is a \(\nu_\b\)-random walk started from \(0\), conditioned to stay nonnegative up to time \(N-k\). Using Lemma \ref{lem:appro conditioned random walk} with length $N-k$ and parameter $m-1$,  
\[
\TV{
\mathbb P_{N,k}^0
\left(
    (R_j-R_{j-1})_{j\in\llbracket m,N-k\rrbracket}\in\cdot
\right)
-
\nu_\b^{\otimes(N-k-m+1)}
}
\le C_1e^{-c_1(m-1)}.
\]
For \(j\in\llbracket m,N-k\rrbracket\), \(R_j-R_{j-1}=\gamma_{k+j}-\gamma_{k+j-1}\). Applying the coordinatewise map \(\Upphi\) with independent variables $\kappa_i\sim\ber(1/2)$, $i\in\llbracket k+m,N\rrbracket$, gives
\begin{equation}\label{eq:coex-right-bernoulli-q0-fixed new}
\TV{
\mathbf P_{N,k}^0\left((\tau_i)_{i\in \llbracket k+m,N\rrbracket}\in\cdot\right)
-
\ber_{\llbracket k+m,N\rrbracket}\left(\frac{\b}{1+\b}\right)
}
\le C_1e^{-c_1(m-1)}.
\end{equation} 
For the left block, if $\llbracket1,k-m\rrbracket\neq\emptyset$, then by Lemma \ref{lem:decomposition TASEP} we know that, under \(\mathbb P_{N,k}^0\),  $Z_j=\gamma_{k-1-j}-\gamma_{k-1}$, $j\in\llbracket0,k-1\rrbracket$ is a \(\nu_\b\)-random walk started from \(0\),
conditioned to stay nonnegative up to time \(k-1\). Using Lemma \ref{lem:appro conditioned random walk} with length $k-1$ and parameter $m-1$,  
\[
\TV{
\mathbb P_{N,k}^0
\left(
    (Z_j-Z_{j-1})_{j\in\llbracket m,k-1\rrbracket}\in\cdot
\right)
-
\nu_\b^{\otimes(k -m )}
}
\le C_1e^{-c_1(m-1)}.
\]
For \(i=k-j\) with \(j\in\llbracket m,k-1\rrbracket\), we have
\(\gamma_i-\gamma_{i-1}=-(Z_j-Z_{j-1})\). Applying the coordinatewise map \((u,\kappa)\mapsto\Upphi(-u,\kappa)\) with independent variables $\kappa_i\sim\ber(1/2)$, $i\in\llbracket1,k-m\rrbracket$, gives
\begin{equation}\label{eq:coex-left-bernoulli-q0-fixed new}
\TV{
\mathbf P_{N,k}^0\left((\tau_i)_{i\in \llbracket1,k-m\rrbracket}\in\cdot\right)
-
\ber_{\llbracket1,k-m\rrbracket}\left(\frac{1}{1+\b}\right)
}
\le C_1e^{-c_1(m-1)}.
\end{equation}
When both left and right blocks are nonempty, Lemma \ref{lem:decomposition TASEP} gives the independence of the left and right paths under \(\mathbb P_{N,k}^0\). The auxiliary Bernoulli variables on disjoint sets of sites are independent of the paths and of one another. When one of the two blocks is empty, the corresponding marginal law and the corresponding Bernoulli product measure are both the unit mass on the one-point space.  Combining \eqref{eq:coex-right-bernoulli-q0-fixed new} and \eqref{eq:coex-left-bernoulli-q0-fixed new},  
\begin{equation}\label{eq:dbhrythetyjn}
\TV{
\mathbf P_{N,k}^0
\left((\tau_i)_{i\in \llbracket1,k-m\rrbracket\cup \llbracket k+m,N\rrbracket}\in\cdot\right)
-
\ber_{\llbracket1,k-m\rrbracket}\left(\frac{1}{1+\b}\right)
\otimes
\ber_{\llbracket k+m,N\rrbracket}\left(\frac{\b}{1+\b}\right)
}
\le 2C_1e^{-c_1(m-1)}.
\end{equation}
This proves \eqref{eq:estimate by Bernoulli product} in the case $q=0$.  
In the rest of the proof, we assume $q\in(0,1)$. Let \(C_2,c_2>0\) be the constants from Proposition \ref{prop:comparison path q}.
Recall from \eqref{eq:Ianddelta} that, for \(k,m\in\llbracket0,N\rrbracket\),
\[
\mathsf I_{N,k,m}  =  
    \{i\in\llbracket0,N-1\rrbracket: |i-k|\ge m\}\qquad\mbox{ and }\qquad\Delta_{N,k,m}(\bg) =\left(\gamma_{i+1}-\gamma_i\right)_{i\in\mathsf I_{N,k,m}}
    \in \{-1,0,1\}^{\mathsf I_{N,k,m}}.
\]
Denote $I=\llbracket1,k-m\rrbracket\cup \llbracket k+m,N\rrbracket$.
One can check   $\{i-1:i\in I\}\subseteq\mathsf I_{N,k,m-1}$. Thus every increment \(\gamma_i-\gamma_{i-1}\), \(i\in I\), is a coordinate projection of \(\Delta_{N,k,m-1}(\bg)\). The vector \((\tau_i)_{i\in I}\) is obtained from these increments by adjoining the independent Bernoulli variables \(\kappa_i\sim\ber(1/2)\), \(i\in I\), and applying \(\Upphi\).  By Proposition \ref{prop:comparison path q},
\begin{equation*} 
\begin{split}
&\TV{
\mathbf P_{N,k}^q\left((\tau_i)_{i\in I}\in\cdot\right)
-
\mathbf P_{N,k}^0\left((\tau_i)_{i\in I}\in\cdot\right)
} \\&\leq \TV{
\mathbb P_{N,k}^q\left(\Delta_{N,k,m-1}(\bg)\in\cdot\right)
-
\mathbb P_{N,k}^0\left(\Delta_{N,k,m-1}(\bg)\in\cdot\right)
}
\le C_2e^{-c_2(m-1)} .
\end{split}
\end{equation*}
Combining with \eqref{eq:dbhrythetyjn}, we conclude the proof of the estimate  \eqref{eq:estimate by Bernoulli product} for $q\in(0,1)$. 
\end{proof}

\subsection{Local approximation for the conditioned path measure} \label{sec:ApproxMid}

In the following, 
let $\mathbf{E}_{N,k}^q$ denote expectation under $\mathbf{P}_{N,k}^q$. Moreover, we set $\rho_-=\frac{1}{1+\b}$ and $\rho_+=\frac{\b}{1+\b}$, and write
\begin{equation}\label{eq:CoreDefs}
    \begin{split}
   I_{k,m}&=\llbracket 1,k-m \rrbracket\cup \llbracket k+m,N \rrbracket,\qquad X_{k,m}=(\tau_i)_{i\in I_{k,m}},\\
B_{k,m}&=\operatorname{Ber}_{\llbracket 1,k-m \rrbracket}(\rho_-)
          \otimes\operatorname{Ber}_{\llbracket k+m,N \rrbracket}(\rho_+),   
    \end{split}
\end{equation}
and set $W(\gamma)=W_{\sigma(\gamma)}(b^2)$ and $Z_k= \mathbf{E}_{N,k}^0 W$,
with the convention that $W\equiv1$ for $q=0$.
Note that from \eqref{eq:suff-coex-conditional-density} and Proposition~\ref{prop:moment bound coex}, we see that
\begin{equation}\label{asepm:eq:RN}
\frac{\mathrm d \mathbf{P}_{N,k}^q}{\mathrm d \mathbf{P}_{N,k}^0}=\frac W{Z_k}
\end{equation}
and  $Z_k \in [1,C_W]$ for some constant $C_W>0$ uniformly in $N,k$. For fixed $k\in \llbracket 0,N \rrbracket$ and $r \in \llbracket 1,N \rrbracket$, let $a=\max(0,k-r)$ and $z=\min(N,k+r)$. The goal of the following lemma is to construct a  probability measure $\mathbf{Q}_{k,r}$ on the same state space as $\mathbf{P}_{N,k}^q$, such that particles have a law close to $\mathbf{P}_{N,k}^q$ on $\llbracket a+1,z \rrbracket$ while the particles on $\llbracket 1,a \rrbracket\cup \llbracket z+1,N \rrbracket$ have a product law with parameter $\frac1{1+\b}$ on the left and $\frac\b{1+\b}$ on the right, and are independent of all path increments and Bernoulli random variables on $ \llbracket a+1,z \rrbracket$.

\begin{lemma}\label{asepm:lem:local}
There exists constants $c,C>0$ such that for every $N\ge1$, $k\in \llbracket 0,N \rrbracket$ and $r \in \llbracket 1,N \rrbracket$, we find a measure $\mathbf{Q}_{k,r}$ so that
\begin{equation}\label{asepm:eq:localTV}
\left\|\mathbf{P}_{N,k}^q-\mathbf{Q}_{k,r}\right\|_{\mathrm{TV}}\le Ce^{-cr}
\end{equation} while $\mathbf{Q}_{k,r}$ is on $\llbracket 1,a \rrbracket$ a Bernoulli product measure with parameter $\frac1{1+\b}$,  and on $\llbracket z+1,N \rrbracket$ a Bernoulli product measure with parameter$\frac\b{1+\b}$.
\end{lemma}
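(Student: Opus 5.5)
We build $\mathbf{Q}_{k,r}$ explicitly as a resampling of $\mathbf{P}_{N,k}^q$ outside the window $\llbracket a+1,z\rrbracket$, and bound the resulting total variation error in three pieces: a $q$-to-$0$ comparison, a conditioned-walk comparison in the TASEP case, and a $0$-to-$q$ comparison back.

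\emph{Construction.} Sample $(\bg,\kappa)$ under $\mathbf{P}_{N,k}^q$. Keep the path increments $(\gamma_i-\gamma_{i-1})_{i\in\llbracket a+1,z\rrbracket}$ and the Bernoulli variables $(\kappa_i)_{i\in\llbracket a+1,z\rrbracket}$. Independently of these, replace the occupation variables on $\llbracket 1,a\rrbracket$ by i.i.d.\ $\ber(\rho_-)$ and those on $\llbracket z+1,N\rrbracket$ by i.i.d.\ $\ber(\rho_+)$. To keep $\mathbf{Q}_{k,r}$ on the same extended state space, we realise this by resampling the path increments outside the window as i.i.d.\ $\nu_\b$ (with reversed sign on the left) together with fresh $\kappa_i$. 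The identities $\Upphi(\updelta,\kappa)\sim\ber(\rho_+)$ and $\Upphi(-\updelta,\kappa)\sim\ber(\rho_-)$ then give the required product marginals, independent of everything inside the window. The statement is most naturally read as a TV bound on the joint law of $(\tau_i)_{i\le N}$ together with the window data; we prove it in that form.

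\emph{Case $q=0$.} Here the claim follows directly from the argument in Step 3 of the proof of Lemma~\ref{lem:stability low compositions coex}. By Lemma~\ref{lem:decomposition TASEP}, under $\mathbb P_{N,k}^0$ the left and right pieces are independent conditioned $\nu_\b$-walks. Disintegrating at the central pieces, the two tails beyond distance $r$ from $k$ are walks started from height $L_{r-1}$, respectively $R_r$, conditioned to stay nonnegative. By \eqref{eq:stability proof7}--\eqref{eq:stability proof10}, each is within $C\b^{-2x}$ in total variation of the unconditioned walk, and $\mathbb E[\b^{-2R_r}]\le Ce^{-cr}$. This bounds the distance between $\mathbb P^0_{N,k}$ and the measure whose outside increments are free $\nu_\b$ walks independent of the window, which is $\mathbf{Q}_{k,r}$ for $q=0$, by $Ce^{-cr}$.

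\emph{Case $q\in(0,1)$.} We transfer the $q=0$ bound using \eqref{asepm:eq:RN}. The window quantities and the tails are not separated by Proposition~\ref{prop:comparison path q}, because $W$ depends on the whole path. The natural route is to write $\mathbf{P}^q_{N,k}=(W/Z_k)\,\mathbf{P}^0_{N,k}$ and $\mathbf{Q}^q_{k,r}$ as the image of this law under the resampling map $\Pi$. Then
\[
\|\mathbf{P}^q-\Pi_*\mathbf{P}^q\|\le \|(W/Z_k)\mathbf{P}^0-(W/Z_k)\Pi_*\mathbf{P}^0\|+\|(W/Z_k)\Pi_*\mathbf{P}^0-\Pi_*((W/Z_k)\mathbf{P}^0)\|.
\]
\begin{itemize}
\item The first term is at most a Cauchy--Schwarz combination of $\mathbb E^0[W^2]\le M$ (Proposition~\ref{prop:moment bound coex}) and the $q=0$ TV bound.
\item The second term requires showing that $W$ is close to a function of the window data only, in $L^1$. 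This is the main obstacle.
\end{itemize}

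\emph{Approximating $W$ by a window functional.} We use Lemma~\ref{lem:sum of K} with $\ell=\lfloor\eta r\rfloor$, replacing $W$ by $\mathsf K_{N,\ell}$ at cost $Ce^{-c\ell}$. Next, $\mathsf K_{N,\ell}$ depends only on $(\sigma_0,\dots,\sigma_{\ell+1})$, and by the first bound of Lemma~\ref{lem: first part proof limit of low level composition} these equal the truncated counts $\sigma^{[r/2]}$ outside an event of probability $Ce^{C\ell}e^{-cr}$. The truncated counts are functions of the increments in $\llbracket a+1,z\rrbracket$. The same facts hold under $\Pi_*\mathbf{P}^0$, because free $\nu_\b$ tails started from height $\gtrsim r$ likewise rarely return to low levels. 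Since the window part is preserved by $\Pi$, $\mathsf K$ (a bounded-by-$\b^{2(\ell+2)}$ window functional) has the same value under both laws on the good event. The error $\b^{2\ell}e^{C\ell}e^{-cr}$ is exponentially small once $\eta$ is chosen small, and the exponential moments from \eqref{eq:suff-coex-W L2} handle the bad event via Cauchy--Schwarz. Combining the three terms gives \eqref{asepm:eq:localTV}.
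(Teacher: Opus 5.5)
Your construction $\mathbf Q_{k,r}=\Pi_*\mathbf P^q_{N,k}$ has the required product structure outside $\llbracket a+1,z\rrbracket$. Your $q=0$ case is also correct: it is Step~3 of the proof of Lemma~\ref{lem:stability low compositions coex}, with the window indices shifted.

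The gap is in the case $q\in(0,1)$. Both terms of your decomposition need control of $W$ on the resampled path $\bg'\sim\Pi_*\mathbf P^0_{N,k}$.
\begin{itemize}
\item In the first term, Cauchy--Schwarz applied to $\sum W\,|\mathbf P^0_{N,k}-\Pi_*\mathbf P^0_{N,k}|$ needs $\mathbb E[W(\bg')^2]$, not only $\mathbb E^0_{N,k}[W^2]$.
\item The second term equals $Z_k^{-1}\mathbb E|W(\bg')-W(\bg)|$ under the coupling that keeps the window. So you need $W(\bg')\approx\mathsf K_{N,\ell}(\bg')$, and moments of $W(\bg')$ on the bad event.
\end{itemize}
However, Proposition~\ref{prop:moment bound coex}, Lemma~\ref{lem:sum of K} and \eqref{eq:suff-coex-W L2} are proved only under $\mathbb P^0_{N,k}$; their proofs go through Lemma~\ref{lem:comparision}.

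In fact $\Pi_*\mathbf P^0_{N,k}\le \mathbf P^0_{N,k}$ on $\{T_N=k\}$. So the whole difficulty is the mass that $\Pi_*\mathbf P^0_{N,k}$ puts on $\{T_N\neq k\}$. There the free tails dip below $\gamma_k$, so $\min(\bg')$ and the entire composition change, and $\mathbb P^0_{N,k}$ says nothing. The only bound on $W$ you have there is $W\le \mathbb E[\b^{2Q^0(N+1)}]\le 1+C(N+1)^C$, from Lemmas~\ref{lem:coupling birth death} and~\ref{lem:upward jumps U chain}, which is not uniform in $N$. Your remark that free tails rarely revisit low levels handles only the truncation of the counts, not these two quantities.

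The fix is to replace $W$ by a window functional \emph{before} resampling. Take $H=G_\ell(\sigma^{[r]}_0,\ldots,\sigma^{[r]}_{\ell+1})$ with $\ell=\lfloor\varepsilon r\rfloor$, where the counts are taken over $\llbracket a,z\rrbracket$ and are therefore functions of the window increments. Lemmas~\ref{lem:sum of K}, \ref{lem:convergence to limiting chain} and~\ref{lem: first part proof limit of low level composition} then give $\mathbb E^0_{N,k}|W-H|\le Ce^{-cr}$ for $\varepsilon$ small, working entirely under $\mathbb P^0_{N,k}$.

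Since $\Pi$ keeps the window, $\Pi_*(H\,\mathbf P^0_{N,k})=H\,\Pi_*\mathbf P^0_{N,k}$. Insert the intermediate measure $(H/Z_k)\mathbf P^0_{N,k}$, and use that $\Pi_*$ contracts signed measures and that $Z_k\ge1$. This gives
\[
\TV{\mathbf P^q_{N,k}-\Pi_*\mathbf P^q_{N,k}}\le 2\,\mathbb E^0_{N,k}|W-H|+\b^{2(\ell+1)}\,\TV{\mathbf P^0_{N,k}-\Pi_*\mathbf P^0_{N,k}},
\]
which is exponentially small in $r$ by your $q=0$ bound once $\varepsilon$ is small.

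This is essentially the paper's argument, except that the paper does not resample at all. It defines $\mathbf Q_{k,r}$ through the $\mathcal F_{k,r}$-measurable density $H_{k,r}\mathbf 1_{A_r}$ with respect to the product reference measure $\mathbf R_k$. It then compares this with $\mathrm d\mathbf P^q_{N,k}/\mathrm d\mathbf R_k=W\mathbf 1_A/(\mathbf R_k(A)Z_k)$, so $W$ is never evaluated off $\{T_N=k\}$.
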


\begin{proof}
Define a path measure $\mathbf{R}_k$ by taking independent path increments, with $\gamma_0=0$, and laws
\begin{equation}\label{asepm:eq:reference}
\gamma_i-\gamma_{i-1}\sim
\begin{cases}
\nu_{1/b},&i<k,\\
-1,&i=k,\quad k\ge1,\\
\nu_b,&i>k,
\end{cases}
\end{equation}
 where we recall
\[
\nu_b(1)=\frac b{b+b^{-1}+2},\quad
\nu_b(0)=\frac2{b+b^{-1}+2},\quad
\nu_b(-1)=\frac{b^{-1}}{b+b^{-1}+2}.
\]
Consider the event
\[
A=\{\gamma_i>\gamma_k\text{ for }i<k,\quad
       \gamma_i\ge\gamma_k\text{ for }i>k\}.
\]
and note that by Lemma~\ref{lem:decomposition TASEP} and Lemma~\ref{lem:basic hitting monotone coupling},
\begin{equation}\label{asepm:eq:conditionA}
\mathbf{P}_{N,k}^0=\mathbf{R}_k(\,\cdot\mid A) , \qquad \mathbf{R}_k(A) \geq (1-\b^{-2})^2>0. 
\end{equation}
 Similarly, consider the events
\begin{equation}\label{asepm:eq:Ar}
A_r=\{\gamma_i>\gamma_k\text{ for }a\le i<k,\quad
        \gamma_i\ge\gamma_k\text{ for }k<i\le z\}
\end{equation}
and note that $A\subseteq A_r$. Set
\[
\mathcal F_{k,r}
=\sigma(\gamma_i-\gamma_{i-1},\kappa_i:a+1\le i\le z)
\]
and note that the events $A_r$ are measurable with respect to $\mathcal F_{k,r}$. Moreover, we find constants $c,C>0$ such that 
\begin{equation}\label{asepm:eq:survivalerror}
\mathbf{R}_k(A_r\setminus A)
\le Ce^{-cr}
\end{equation} for all $r\geq 1$. Building on the tools developed in Section~\ref{sec:macroscopicTN}, we will now control the effect of the Bryc polynomial.

Assume in the following that $q\in(0,1)$. For $h\ge0$, define
\[
\sigma_h^{[r]}=\#\{i\in\llbracket a,z \rrbracket:\gamma_i=\gamma_k+h\}.
\]
These are $\mathcal F_{k,r}$-measurable and coincide with the truncated counts in Lemma~\ref{lem: first part proof limit of low level composition}. For $\ell=\lfloor\varepsilon r\rfloor$
with $\varepsilon>0$ sufficiently small, we set
\begin{equation}\label{asepm:eq:H}
H_{k,r}=G_\ell(\sigma_0^{[r]},\ldots,\sigma_{\ell+1}^{[r]}), 
\end{equation}
recalling from Definitions~\ref{def:limit chain} and \ref{def:01 seq u} together with Lemma~\ref{lem:convergence to limiting chain} the quantity $G_\ell$ for tuples of nonnegative integers.  Applying now Lemma~\ref{lem:sum of K} to approximate $W$ by $K_{N,\ell}$, Lemma~\ref{lem:convergence to limiting chain} to approximate $K_{N,\ell}$ by $G_\ell(\sigma_0,\ldots,\sigma_{\ell+1})$, and Lemma~\ref{lem: first part proof limit of low level composition} to truncate those occupation counts to $\sigma_h^{[r]}$, together with the fact that $\sup_{\bg}(|H_{k,r}(\bg)|,|W(\bg)|)\leq \b^{2(\ell+1)}$, we see that
\begin{align}\label{asepm:eq:weightapprox}
\mathbf E_{N,k}^0[|W-H_{k,r}|]
\le  C_1e^{-c_1\ell}+\b^{2(\ell+1)}(\ell+1)
\frac{q^{N-\ell+1}}{1-q^{N-\ell+1}}+C_2\b^{2(\ell+1)}e^{C_2(\ell+1)-c_2r}
\end{align} for some constants $c_1,C_1,c_2,C_2>0$.
For the last term, the two evaluations of $G_\ell$ agree whenever all the counts through level $\ell+1$ agree, and we use that their difference is bounded by $b^{2(\ell+1)}$, otherwise.
Choosing now $\varepsilon>0$ small enough, all terms in \eqref{asepm:eq:weightapprox} decay exponentially in $r$, and therefore
\begin{equation}\label{asepm:eq:weightL1}
\mathbf E_{N,k}^0[|W-H_{k,r}|]\le Ce^{-cr}
\end{equation} for all $r$ large enough with constants $c,C>0$.

Observe that from \eqref{asepm:eq:RN} and \eqref{asepm:eq:conditionA}, we get that 
\begin{equation}\label{asepm:eq:RP}
\frac{\mathrm d \mathbf P_{N,k}^q}{\mathrm d \mathbf R_k}
=\frac{W\mathbf1_A}{ \mathbf R_k(A)Z_k} 
\end{equation}
and we can define the measure $\mathbf Q_{k,r}$ by the relation
\begin{equation}\label{asepm:eq:Qdef}
 \frac{\mathrm d \mathbf Q_{k,r}}{\mathrm d \mathbf{R}_k}
=\frac{H_{k,r}\mathbf1_{A_r}}{\mathbf E_{\mathbf R_k}[H_{k,r}\mathbf1_{A_r}]}.
\end{equation}
Note that the density in \eqref{asepm:eq:Qdef} is $\mathcal F_{k,r}$-measurable, and has the correct marginals outside of $\llbracket a+1,z\rrbracket$. 
Comparing the unnormalized densities, we obtain
\begin{align}\label{asepm:eq:densityerror}
\mathbf E_{\mathbf R_k}[|W\mathbf1_A-H_{k,r}\mathbf1_{A_r}|]
= \mathbf R_k(A)\mathbf E_{N,k}^0[|W-H_{k,r}|]
 +\mathbf E_{\mathbf R_k}[H_{k,r}\mathbf1_{A_r\setminus A}] \leq Ce^{-cr},
\end{align}
where the last step uses \eqref{asepm:eq:weightL1}. Now using the fact that for nonnegative densities $f,g$, we have
\begin{equation}\label{asepm:eq:normalization}
\left\|\frac{f\,\mathrm d \mathbf R_k}{\mathbf E_{\mathbf R_k}[f]}-\frac{g\,\mathrm d \mathbf R_k}{\mathbf E_{\mathbf R_k}[g]}\right\|_{\mathrm{TV}}
\le\frac{\mathbf E_{\mathbf R_k}[|f-g|]}{\mathbf E_{\mathbf R_k}[f]}, 
\end{equation}
combining \eqref{asepm:eq:conditionA}, \eqref{asepm:eq:densityerror} and \eqref{asepm:eq:normalization} shows \eqref{asepm:eq:localTV} for all $r$ large enough as desired.
\end{proof}

As a direct consequence, we see that the locations $T_N$ and $\mathsf M_N$ must be close with high probability.

\begin{corollary}
\label{cor:particle-minimum-localization}
Assume $q\in[0,1)$ and $\b>1$.
There exist constants $C,c>0$, depending only on $q,\b$,
such that, for every $N\ge m\ge1$ and $k\in \llbracket 0,N \rrbracket$,
\[
\mathbf{P}_{N,k}^q(|\mathsf M_N-k|\ge m)\le Ce^{-cm}.
\]
\end{corollary}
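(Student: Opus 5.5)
We compare $\mathbf P^q_{N,k}$ with the measure $\mathbf Q_{k,r}$ from Lemma~\ref{asepm:lem:local}, taking $r$ comparable to $m$, say $r=\lceil m/4\rceil$, and then argue that under $\mathbf Q_{k,r}$ the particle height function $s_j(\tau)$ has its first global minimum within distance $m$ of $k$ except on an event of probability $Ce^{-cm}$. By \eqref{asepm:eq:localTV},
\[
\mathbf P^q_{N,k}(|\mathsf M_N-k|\ge m)\le \mathbf Q_{k,r}(|\mathsf M_N-k|\ge m)+Ce^{-cr},
\]
so it suffices to bound the first term. For $q=0$ one could proceed directly with $\mathbf P^0_{N,k}$, but the $\mathbf Q_{k,r}$ route covers both cases.

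We split the event $\{|\mathsf M_N-k|\ge m\}$ into a left part $\{\mathsf M_N\le k-m\}$ and a right part $\{\mathsf M_N\ge k+m\}$, and treat the right part; the left is symmetric with the reversed walk and density $\rho_-<1/2$. If $\mathsf M_N=j\ge k+m$, then $s_j(\tau)\le s_k(\tau)$, so
\[
\sum_{i=k+1}^{j}(2\tau_i-1)\le 0 .
\]
Write $d=j-k\ge m$.

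\textbf{Case $d\le 2r$ (window).} The increments on $\llbracket k+1,z\rrbracket$ come, under $\mathbf Q_{k,r}$, from the path on the event $A_r$ reweighted by $H_{k,r}$. The density $H_{k,r}/\mathbf E[H_{k,r}\mathbf 1_{A_r}]$ is bounded by $\b^{2(\ell+1)}$ times a constant, with $\ell=\lfloor\varepsilon r\rfloor$. Rather than analyze this, we use \eqref{asepm:eq:localTV} in reverse: inside the window the laws of $\mathbf Q_{k,r}$ and $\mathbf P^q_{N,k}$ are close. Under $\mathbf P^q_{N,k}$ we apply Proposition~\ref{prop:comparison path q} with index $m/2$, together with Lemma~\ref{lem:comparision} applied to $\mathbf P^0_{N,k}$. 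This bounds the probability that the path, viewed from $k$, has its window segment $\llbracket k+m/2,k+2r\rrbracket$ containing a sum $\sum(2\tau_i-1)\le 0$ started at $k$.

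\textbf{Case $d>2r$ (outside).} Split $\sum_{i=k+1}^{j}(2\tau_i-1)$ into the window portion $\sum_{i=k+1}^{z}$ and the outside portion $\sum_{i=z+1}^{j}$. The outside portion is a sum of i.i.d.\ variables with mean $2\rho_+-1>0$. The window portion is at least $-r$ deterministically, so a union bound over $j$ together with Hoeffding's inequality gives
\[
\sum_{j\ge z+1}\mathbf P\Big(\sum_{i=z+1}^{j}(2\tau_i-1)\le r\Big)\le Ce^{-cr},
\]
provided the drift term $(2\rho_+-1)(j-z)$ dominates $r$. When $j-z$ is small this fails; in that range we combine with the window estimate, or, simpler, choose $r=\lceil \eta m\rceil$ with $\eta$ small so that the window half-length is negligible relative to $m$.

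A cleaner route that we intend to follow avoids $\mathbf Q$ for the window entirely. Under $\mathbf P^0_{N,k}$, the increments $\gamma_{k+i}-\gamma_k=R_i$ form a $\nu_\b$-walk conditioned nonnegative. The particle increments $2\tau_i-1$ equal the path increments when these are nonzero, and equal $\pm1$ fair coins at flat steps. Hence
\[
s_{k+d}-s_k=R_d+\xi_d,
\]
where $\xi_d$ is a sum of independent fair signs over the flat steps, so $|\xi_d|$ is at most the number of flat steps. A cleaner bound: conditional on the path, $\xi_d$ is a symmetric sum over at most $d$ terms. By Lemma~\ref{lem:basic hitting monotone coupling}, $R_d\ge\kappa d$ with probability at least $1-Ce^{-cd}$, and Hoeffding gives $\mathbf P(\xi_d\le-\kappa d)\le e^{-\kappa^2 d/2}$. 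Summing over $d\ge m$ yields a bound $Ce^{-cm}$ under $\mathbf P^0_{N,k}$.

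To transfer this to $q>0$, we use the density $W/Z_k$ from \eqref{asepm:eq:RN}. By Cauchy--Schwarz and the bound on $\mathbf E^0_{N,k}[W^2]$ from Proposition~\ref{prop:moment bound coex},
\[
\mathbf P^q_{N,k}(E)\le \mathbf E^0_{N,k}[W^2]^{1/2}\,\mathbf P^0_{N,k}(E)^{1/2}\le Ce^{-cm/2}.
\]
This Cauchy--Schwarz transfer is the main point to check; it makes the $\mathbf Q_{k,r}$ comparison unnecessary. The left side is handled identically using the reversed left walk from Lemma~\ref{lem:decomposition TASEP}, with the strict inequality appropriate to the first-minimum convention; ties at $s_j=s_k$ for $j<k$ are included in the bound on $\mathbf P(s_{k-d}\le s_k)$.
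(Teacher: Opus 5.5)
Your final route, the direct $q=0$ argument followed by the Cauchy--Schwarz transfer, is correct, and it differs from the paper's proof.

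\textbf{The paper's route.} The paper never estimates $\mathsf M_N$ under $\mathbf P^0_{N,k}$. It takes $r=\lfloor m/2\rfloor$ and replaces $\mathbf P^q_{N,k}$ by the measure $\mathbf Q_{k,r}$ of Lemma~\ref{asepm:lem:local}, at total variation cost $Ce^{-cr}$. Under $\mathbf Q_{k,r}$ the occupation variables on $\llbracket 1,a\rrbracket$ and $\llbracket z+1,N\rrbracket$ are exactly i.i.d.\ Bernoulli with densities $\rho_-$ and $\rho_+$. It then bounds $\{\mathsf M_N\le k-m\}\subseteq\bigcup_{\ell\ge m-r}\{s_a-s_{a-\ell}\ge 0\}$ by a union bound over i.i.d.\ sums with negative drift. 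Because it compares $s_j$ with the window endpoint $s_a$ (resp.\ $s_z$) rather than with $s_k$, the window itself is never analyzed.

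\textbf{Your route.} You prove the estimate explicitly under $\mathbf P^0_{N,k}$ via $s_{k+d}-s_k=R_d+\xi_d$. Here $R$ is the conditioned drifted walk from Lemma~\ref{lem:decomposition TASEP} and $\xi_d$ is a Rademacher sum over the flat steps. You handle $R_d$ with Lemma~\ref{lem:basic hitting monotone coupling} and $\xi_d$ with Hoeffding, and the left side works the same way using $\gamma_{k-d}-\gamma_k=1+Z_{d-1}$. You then transfer to $q>0$ through
$\mathbf P^q_{N,k}(E)=\mathbf E^0_{N,k}[W\mathbf 1_E]/Z_k\le \mathbf E^0_{N,k}[W^2]^{1/2}\,\mathbf P^0_{N,k}(E)^{1/2}$.
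This uses $Z_k\ge 1$ and the uniform second-moment bound of Proposition~\ref{prop:moment bound coex}, the same device the paper uses in \eqref{eq:suff-coex-covariance bound proof1}.

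\textbf{Comparison.} Your argument is shorter and needs only the $L^2$ bound on the Bryc weight. It bypasses the approximation $W\approx H_{k,r}$ on which Lemma~\ref{asepm:lem:local} rests, at the harmless price of halving the exponent. The paper's route costs little in context, because $\mathbf Q_{k,r}$ is needed anyway for Proposition~\ref{asepm:prop:joint}, and the corollary comes out as a by-product.

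Drop your exploratory $\mathbf Q_{k,r}$ paragraphs, in particular the ``window'' case, which is muddled and plays no role in the final argument. If you did want that route, comparing with $s_z$ or $s_a$ instead of $s_k$, as the paper does, removes the need to treat the window at all.
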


\begin{proof}
In the following, set
\[
r=\lfloor m/2\rfloor,\qquad d=m-r
\] and recall $a=\max(0,k-r), z=\min(N,k+r)$.
By Lemma~\ref{asepm:lem:local}, there exists a probability
measure $\mathbf Q_{k,r}$ such that
\begin{equation}\label{eq:localization-law-comparison}
\|\mathbf{P}_{N,k}^q-\mathbf Q_{k,r}\|_{\mathrm{TV}}
\le C_0e^{-c_0r}, 
\end{equation}
where under $\mathbf Q_{k,r}$, the particles on $\llbracket 1,a \rrbracket$ are independent
Bernoulli variables with parameter $1/(1+\b)$, and 
on $\llbracket z+1,N \rrbracket$ independent Bernoulli variables with
parameter $\b/(1+\b)$.
Write
\begin{equation}\label{eq:PathUsed}
    s_j=\sum_{i=1}^j(2\tau_i-1),\qquad s_0=0,
\end{equation}
and note that we have
\[
\{\mathsf M_N\le k-m\}
\subseteq
\bigcup_{\ell=d}^{a}
\{s_a-s_{a-\ell}\ge0\}.
\]
Now using that $(s_j)_{j \in \llbracket 1,N\rrbracket}$ performs outside of the interval $\llbracket a+1,z \rrbracket$ biased random walks, we find a constant $c \in (0,1)$ so that
\begin{equation}\label{eq:localization-leftt-tail}
\mathbf Q_{k,r}(s_a-s_{a-\ell}\ge0)
\le
c^\ell
\end{equation} for all $\ell \geq d$,  and hence
\begin{equation}\label{eq:localization-left-tail}
\mathbf Q_{k,r}(M_N\le k-m)
\le
\tilde{c}^\ell 
\end{equation} for some constant $\tilde{c}>0$. 
A similar argument holds for $k+m>N$, and we get that
\begin{equation}\label{eq:localization-right-tail}
\mathbf Q_{k,r}(M_N\ge k+m)
\le
\tilde{c}^\ell 
\end{equation} for all $\ell \geq d$.
Thus, combining \eqref{eq:localization-law-comparison},
\eqref{eq:localization-left-tail}, and
\eqref{eq:localization-right-tail}, we conclude.
\end{proof}

Next, we argue that the measure $\mathbf P_{N,k}^q$ together with the location of $\mathsf M_N$ can be jointly approximated by a Bernoulli product measure and the marginal law of $\mathsf M_N$. To this end, we set
\begin{equation}\label{eq:Collection2}
    D_k:= \mathsf M_N-k, \qquad \mathbf K_k := \mathbf{P}_{N,k}^q( D_k \in \cdot )
\end{equation} as the variable $\mathsf M_N$ shifted by $k$, and its law under conditioning on $\{ T_N=k \}$.

\begin{proposition}\label{asepm:prop:joint}
There exist constants $c,C>0$ such that uniformly in $N\ge m\ge1$ and $k\in \llbracket 0,N \rrbracket$,
\begin{equation}\label{asepm:eq:joint}
\left\|\mathbf{P}_{N,k}^q\big( (X_{k,m},D_k) \in \cdot \big)-B_{k,m}\otimes \mathbf K_k\right\|_{\mathrm{TV}}\le Ce^{-cm}.
\end{equation}
\end{proposition}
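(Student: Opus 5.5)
Set $r=\lfloor m/4\rfloor$ (with small cases absorbed into $C$), and approximate $\mathbf{P}_{N,k}^q$ by the measure $\mathbf Q_{k,r}$ from Lemma~\ref{asepm:lem:local}. Under $\mathbf Q_{k,r}$ the occupation variables on $\llbracket 1,a\rrbracket\cup\llbracket z+1,N\rrbracket$ are independent Bernoulli with parameters $\rho_-$ and $\rho_+$ respectively, and they are independent of $\mathcal F_{k,r}$. Since $r<m$, the set $I_{k,m}$ lies in this outer region. Moreover $\|\mathbf P^q_{N,k}-\mathbf Q_{k,r}\|_{\mathrm{TV}}\le Ce^{-cr}$, and total variation only contracts under measurable maps. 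It therefore suffices to prove \eqref{asepm:eq:joint} with $\mathbf Q_{k,r}$ in place of $\mathbf P_{N,k}^q$ on the left side, and with $\mathbf K_k$ replaced by $\mathbf Q_{k,r}(D_k\in\cdot)$ on the right. These two marginal laws of $D_k$ differ by at most $Ce^{-cr}$, so the triangle inequality takes care of the replacement.

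Under $\mathbf Q_{k,r}$, the variable $D_k$ is not independent of $X_{k,m}$, because $\mathsf M_N$ depends on the whole configuration. I would therefore introduce a localized proxy
\[
\widetilde{\mathsf M}:=\min\Big\{j\in\llbracket a,z\rrbracket: s_j-s_a=\min_{\ell\in\llbracket a,z\rrbracket}(s_\ell-s_a)\Big\},
\]
which is $\mathcal F_{k,r}$-measurable. Under $\mathbf Q_{k,r}$, it is independent of the outer variables, and in particular of $X_{k,m}$. Consequently $\mathbf Q_{k,r}((X_{k,m},\widetilde{\mathsf M}-k)\in\cdot)=B_{k,m}\otimes\mathbf Q_{k,r}(\widetilde{\mathsf M}-k\in\cdot)$ exactly. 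Both $(X_{k,m},D_k)$ and $D_k$ alone change law by at most $\mathbf Q_{k,r}(\mathsf M_N\neq\widetilde{\mathsf M})$ when $\mathsf M_N$ is replaced by $\widetilde{\mathsf M}$, so it remains to bound this probability by $Ce^{-cr}$.

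The main obstacle is the estimate $\mathbf Q_{k,r}(\mathsf M_N\ne\widetilde{\mathsf M})\le Ce^{-cr}$. The event $\mathsf M_N\neq\widetilde{\mathsf M}$ requires either that the global minimum of $s$ is attained outside $\llbracket a,z\rrbracket$, or that the minimum over $\llbracket a,z\rrbracket$ is not attained strictly below the outer values (the tie-breaking rule toward the first minimizer has to be checked here). I would split this into two conditions. First, with probability $1-Ce^{-cr}$ we have $s_{k'}\le s_a-\kappa r$ and $s_{k'}\le s_z-\kappa r$ for the interior minimizer $k'$. This can be obtained by transferring from $\mathbf P^q_{N,k}$, where the path $\bg$ rises linearly away from $k$ by Lemma~\ref{lem:basic hitting monotone coupling}, and the Bernoulli fill-ins of the flat steps only add independent fair noise; Hoeffding's inequality then gives linear growth of $s$ on $\llbracket a,k\rrbracket$ and $\llbracket k,z\rrbracket$, and these are events in $\mathcal F_{k,r}$, so their probability transfers to $\mathbf Q_{k,r}$. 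Second, outside $\llbracket a,z\rrbracket$ the walk $s$ under $\mathbf Q_{k,r}$ has drift $\rho_+-\rho_->0$ to the right of $z$ and, read backwards from $a$, the same positive drift to the left. The probability that it ever falls $\kappa r$ below its starting value is therefore at most $e^{-c\kappa r}$ by the gambler's ruin estimate, uniformly in $N$. When $a=0$ or $z=N$ the corresponding side is empty. Combining these bounds with the $Ce^{-cr}$ errors from Lemma~\ref{asepm:lem:local} and $r\ge m/4-1$ gives \eqref{asepm:eq:joint}.
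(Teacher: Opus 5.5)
Your proof is correct, and its skeleton matches the paper's. Both arguments rely on three ingredients:
\begin{itemize}
\item a first minimizer localized to the window (your $\widetilde{\mathsf M}-k$, the paper's $D_{k,r}$), which is $\mathcal F_{k,r}$-measurable;
\item the exact product structure of $\mathbf Q_{k,r}$ off the window;
\item contraction of total variation under measurable maps.
\end{itemize}

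The only difference is the step you call the main obstacle. Since $\{\widetilde{\mathsf M}\neq\mathsf M_N\}=\{|\mathsf M_N-k|>r\}$, there is no tie-breaking issue: if $\mathsf M_N\in\llbracket a,z\rrbracket$, it is automatically the window's first minimizer. The paper therefore just invokes Corollary~\ref{cor:particle-minimum-localization}, proved immediately before, to get $\mathbf P^q_{N,k}(D_{k,r}\neq D_k)\le Ce^{-cr}$. You could do the same and transfer to $\mathbf Q_{k,r}$ at a cost of $Ce^{-cr}$.

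The paper's proof of that corollary is also leaner than your direct argument. It applies Lemma~\ref{asepm:lem:local} with radius $\lfloor m/2\rfloor$. It then notes that $\mathsf M_N\le k-m$ forces $s_a-s_{a-\ell}\ge0$ for some $\ell\ge m-\lfloor m/2\rfloor$, and similarly on the right. This is exponentially unlikely for the outer Bernoulli walk alone, so nothing about the interior of the window is needed.

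Your route instead uses linear growth of $s$ away from $k$ inside the window. Here there is a small step you have not justified. Lemma~\ref{lem:basic hitting monotone coupling} gives this growth only under $\mathbf P^0_{N,k}$, where the two pieces are conditioned $\nu_{\mathsf b}$-walks, not under $\mathbf P^q_{N,k}$. You need to add the change of measure
\[
\mathbf P^q_{N,k}(E)=\frac{\mathbf E^0_{N,k}[W\mathbf 1_E]}{Z_k}\le \mathbf E^0_{N,k}[W^2]^{1/2}\,\mathbf P^0_{N,k}(E)^{1/2},
\]
using $Z_k\ge1$ and the second-moment bound from Proposition~\ref{prop:moment bound coex}. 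With that step filled in, your bound $\mathbf Q_{k,r}(\mathsf M_N\neq\widetilde{\mathsf M})\le Ce^{-cr}$ holds and the proof goes through.
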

\begin{proof}
Set $r=m/2$ and consider the interval $ \llbracket a,z \rrbracket$ from Lemma~\ref{asepm:lem:local}. Recall \eqref{eq:PathUsed}, and let $D_{k,r}$ be the location relative to $k$ of the first minimum of
\[
(s_i(\tau)-s_k(\tau))_{i\in \llbracket a,z \rrbracket}.
\]
Note that  $D_{k,r}$ is $\mathcal F_{k,r}$-measurable. If $|M_N-k|\le r$, the global first minimum lies in this window and is also its first minimum. Thus
\[
\{D_{k,r}\ne D_k\}\subseteq\{|D_k|>r\}, 
\]
and so by Corollary~\ref{cor:particle-minimum-localization},
\begin{equation}\label{asepm:eq:Dlocal}
\mathbf{P}_{N,k}^q(D_{k,r}\ne D_k)\le Ce^{-cr}.
\end{equation}
Since $r<m$, all coordinates of $X_{k,m}$ lie outside the window $\llbracket a+1,z\rrbracket$. Now for the measure $\mathbf Q_{k,r}$ the independence assertion in its construction in Lemma~\ref{asepm:lem:local} gives
\begin{equation}\label{asepm:eq:Qproduct}
\mathbf Q_{k,r}\big((X_{k,m},D_{k,r}) \in \cdot \big)
=B_{k,m}\otimes  \mathbf Q_{k,r}(D_{k,r} \in \cdot ).
\end{equation}
By \eqref{asepm:eq:localTV}, \eqref{asepm:eq:Dlocal}, and the contraction property of the total variation distance under measurable maps,  we see that
\begin{equation}\label{asepm:eq:jointintermediate}
\left\|\mathbf{P}_{N,k}^q\big((X_{k,m},D_k) \in \cdot \big)
-B_{k,m}\otimes\mathbf Q_{k,r}(D_{k,r} \in \cdot )\right\|_{\mathrm{TV}}\le Ce^{-cr}.
\end{equation}
Taking marginals gives us that
\[
\left\|\mathbf K_k-\mathbf Q_{k,r}(D_{k,r} \in \cdot )\right\|_{\mathrm{TV}}\le Ce^{-cr}, 
\]
and so by the triangle inequality for the total variation distance, we obtain \eqref{asepm:eq:joint}.
\end{proof}

Next, we record upper and lower bounds on the probability of the location $T_N$ to attain a given value $k$, as well as a lower bound on the probability for $\mathsf M_N$ to attain a given value $j$, which we will use later on. 

\begin{lemma}\label{asepm:lem:pointwise} There exist constants $c,C>0$ such that uniformly in $N\ge1$ and $k,j\in \llbracket 0,N \rrbracket$,
\begin{equation}\label{asepm:eq:pointwise}
\frac c{N+1}\le\mathbb{P}^{(q,\b,\b)}_{N,\uptau}(T_N=k)\le\frac C{N+1},\qquad
\mathbb{P}^{(q,\b,\b)}_{N,\uptau}(\mathsf M_N=j)\ge\frac c{N+1}.
\end{equation}
\end{lemma}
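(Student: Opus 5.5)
The plan is to handle the two assertions separately: the bounds on $T_N$ come from the $q=0$ formula plus a Radon--Nikodym comparison, and the bound on $\mathsf M_N$ comes from summing the local displacement estimate over $k$.

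\textbf{Two-sided bound on $T_N$.} Since $T_N$ depends only on $\bg$, we have $\mathbb{P}^{(q,\b,\b)}_{N,\uptau}(T_N=k)=\mathbb P^{(q,\b,\b)}(T_N=k)$. By \eqref{eq:TN-upper-mu-formula}, this equals $p_{N,k}A_{N,k}/D_N$, where $p_{N,k}=\mathbb P^0(T_N=k)$ and $A_{N,k}=\mathbb E^0[W_{\bs(\bg)}(\b^2)\mid T_N=k]$.
\begin{itemize}
\item By \eqref{eq:TN-upper-W-moment}, $1\le A_{N,k}\le M$ and $1\le D_N\le M$, so $\mathbb P^q(T_N=k)$ lies between $p_{N,k}/M$ and $Mp_{N,k}$. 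For $q=0$ the ratio is simply $1$.
\item It therefore suffices to show $c/(N+1)\le p_{N,k}\le C/(N+1)$.
\item Use \eqref{eq:jdtyjykmt}. Every $p_\ell$ lies in $[p_\infty,1]$ with $p_\infty=1-\b^{-2}>0$, and $\b/(4w)$ is a fixed positive constant.
\item Each numerator, $\b/(4w)\,p_{m-1}p_{N-m}$ or $p_N$, is then bounded above and below by positive constants.
\item By \eqref{eq:D}, the normaliser $D$ is a sum of $N+1$ such terms, so $c(N+1)\le D\le C(N+1)$.
\end{itemize}
This gives both bounds on $p_{N,k}$ directly.

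\textbf{Lower bound on $\mathsf M_N$.} Write
\[
\mathbb P(\mathsf M_N=j)\ge \sum_{k\in\llbracket0,N\rrbracket}\mathbb P(T_N=k)\,\mathbf P^q_{N,k}(\mathsf M_N=j)\ge \frac{c}{N+1}\sum_k \mathbf P^q_{N,k}(\mathsf M_N=j).
\]
It therefore suffices to find some $k$ near $j$ with $\mathbf P^q_{N,k}(\mathsf M_N=j)\ge c'$, uniformly in $N$ and $j$.

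The natural choice is $k=j$. The target is a uniform positive lower bound $\mathbf P^q_{N,j}(\mathsf M_N=j)\ge c'$. Since $\mathbf P^q_{N,j}\ge \mathbf P^0_{N,j}/M$ by the density $W/Z_k$, it is enough to prove this under $q=0$.

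Under $\mathbf P^0_{N,j}$, the right path is a nonnegative $\nu_\b$-walk and the left path (reversed) is a positive $\nu_\b$-walk, and the two are independent. On the explicit event $E$ defined by
\begin{itemize}
\item every horizontal step is assigned $\kappa_i=0$ on the left of $j$ and $\kappa_i=1$ on the right,
\item the right path is strictly positive after time $0$,
\end{itemize}
the height function $s$ coincides with $\gamma$. The point $j$ is then the strict first minimum, so $\mathsf M_N=j$. The weak inequality on the right is handled by the second bullet.

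\textbf{Main obstacle.} Horizontal steps occur everywhere along the path, so asking that all $\kappa_i$ be favourable costs $2^{-(\#\text{horizontal steps})}$, which is not bounded below. The fix is to require favourable $\kappa_i$ only within a window of radius $r_0$ around $j$. Outside that window, the walks $s$ drift linearly away from the level $s_j$, so they stay above it with probability at least $1-Ce^{-cr_0}$. This can be made precise via Lemma~\ref{asepm:lem:local} (the Bernoulli product structure outside $\llbracket a+1,z\rrbracket$) or directly via Lemma~\ref{lem:basic hitting monotone coupling}.

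Within the window, the event that
\begin{itemize}
\item the path takes only non-horizontal steps $\pm1$ in the correct direction for $r_0$ steps on each side, and
\item the outside walks never come back near the level $s_j$,
\end{itemize}
has probability bounded below by a constant depending only on $\b$ and $r_0$. Choosing $r_0$ large but fixed then yields $c'>0$, and the boundary cases $j$ near $0$ or $N$ are handled identically.
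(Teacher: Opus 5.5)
Your proposal is correct and follows the paper's route. The $T_N$ bounds come from \eqref{eq:jdtyjykmt} together with the density bounds $W\ge1$, $Z_k\le C_W$, and the $\mathsf M_N$ bound from $\mathbb P(\mathsf M_N=j)\ge\mathbb P(T_N=j)\,\mathbf P^q_{N,j}(\mathsf M_N=j)$, with the diagonal term reduced to $q=0$ and bounded below uniformly by transience. Your forced-window-plus-drift argument is a more explicit version of the paper's comparison $\mathbf P^0_{N,k}(\mathsf M_N=k)\ge \mathbf R_k(E)/\mathbf R_k(A)$, and it handles the $\kappa_i$'s and the finite-time conditioning more carefully than the paper's sketch does.
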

\begin{proof}
Let $p_n$ be the probability that a $\nu_b$-walk stays nonnegative for $n$ steps, and note that 
\begin{equation}
    0 < \inf_{n \in \mathbb N} p_n \leq \sup_{n \in \mathbb N} p_n <1 . 
\end{equation}
Hence, we see from \eqref{eq:jdtyjykmt} that
\begin{equation}\label{asepm:eq:Tzero}
\frac{c}{N}\le\mathbb{P}^{(0,\b,\b)}_{N,\uptau}(T_N=k)
\le\frac{C}{N}.
\end{equation} for some constants $c,C>0$.
Recalling $Z_k$ from \eqref{asepm:eq:RN}, we see by a change of measure for $q \in (0,1)$ that
\[
\mathbb{P}^{(q,\b,\b)}_{N,\uptau}(T_N=k)
=\frac{\mathbb{P}^{(0,\b,\b)}_{N,\uptau}(T_N=k)Z_k}
{\sum_{h=0}^N\mathbb{P}^{(0,\b,\b)}_{N,\uptau}(T_N=h)Z_h}.
\]
Since by Proposition~\ref{prop:moment bound coex}, we have $Z_k \in [1,C_W]$ for some constant $C_W>0$ uniformly in $k \in \llbracket 1,N \rrbracket$, we obtain the first pair of bounds in \eqref{asepm:eq:pointwise}.

Next, we give a lower bound on $\mathbb{P}^{(q,\b,\b)}_{N,\uptau}(M_N=k)$. Recall the measure $\mathbf{R}_k$ from \eqref{asepm:eq:reference}, and let $E$ denote the event that a bi-infinite path $\bg$ stays non-negative. Note that by the transience of a biased random walk, we find a constant $c>0$, depending only on $q$ and $\b$, such that uniformly in $k\in \mathbb Z$, we have that
\begin{equation}
    \mathbf{R}_k(E)  \geq c . 
\end{equation}
From the definition of the measure $\mathbf{P}_{N,k}^0(M_N=k)$ together with  \eqref{asepm:eq:conditionA}, we see that
\begin{equation}\label{asepm:eq:diagonalzero}
\mathbf{P}_{N,k}^0(M_N=k)\ge\frac{\mathbf R_k(E)}{\mathbf R_k(A)} \geq c_1 >0
\end{equation} for some constant $c_1>0$.
Using \eqref{asepm:eq:RN} and that $Z_k \in [1,C_W]$ for some constant $C_W>0$ uniformly in $k \in \llbracket 1,N \rrbracket$, we get that
\begin{equation}\label{asepm:eq:diagonalq}
\mathbf{P}_{N,k}^q(M_N=k)
=\frac{\mathbf{E}_{N,k}^0[W\mathbf1_{\{M_N=k\}}]}{Z_k} >0 . 
\end{equation} 
Combining this with the lower bound for $\mathbb{P}^{(q,\b,\b)}_{N,\uptau}(T_N=k)$ yields
\[
\mathbb{P}^{(q,\b,\b)}_{N,\uptau}(M_N=k)
\ge\mathbb{P}^{(q,\b,\b)}_{N,\uptau}(T_N=k)\mathbf{P}_{N,k}^q(M_N=k)
\ge\frac c{N+1}
\]
for all $N$ large enough and some constant $c>0$, allowing us to conclude.
\end{proof}

\subsection{Upper bound on the total variation distance between $\mathsf M_N$ and the uniform distribution}\label{sec:Approx3}

Next, we establish an upper bound on the total variation distance between the law of the location $\mathsf M_N$ under the measure $\mathbb{P}^{(q,\b,\b)}_{N,\uptau}$ and the uniform distribution on $\llbracket 0,N \rrbracket$. 

\begin{proposition}\label{prop:part of the thm upper bound}
    Assume $q\in[0,1)$ and $\b>1$.   Then there exists a constant $C>0$ depending only on $q$ and $\b$, such that for each $N\in\mathbb N$,
    \[
    \TV{
        \mathbb{P}^{(q,\b,\b)}_{N,\uptau} (\mathsf M_N\in\cdot)
        -
        \mathrm{Unif}_{\llbracket0,N\rrbracket}
    }
    \leq \frac{C}{N}.
    \]
\end{proposition}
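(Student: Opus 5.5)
We write $\mathbb P=\mathbb{P}^{(q,\b,\b)}_{N,\uptau}$, $p_k=\mathbb P(T_N=k)$ and $\mathbf K_k=\mathbf P^q_{N,k}(D_k\in\cdot)$. The plan is to disintegrate over $T_N$:
\[
\mathbb P(\mathsf M_N=j)=\sum_{k=0}^N p_k\,\mathbf K_k(\{j-k\}).
\]
By Proposition~\ref{prop:macroscopic uniformity TN upper}, we may replace $p_k$ by $1/(N+1)$ at total cost $C/(N+1)$ in total variation. Indeed, the map $(k,d)\mapsto k+d$ is measurable, and replacing the mixing weights only changes the mixture by $\sum_k|p_k-(N+1)^{-1}|$. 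It therefore suffices to show
\[
\sum_{j=0}^N\Big|\frac1{N+1}\sum_{k=0}^N \mathbf K_k(\{j-k\})-\frac1{N+1}\Big|\le \frac{C}{N}.
\]

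The key intermediate claim is the existence of a fixed probability measure $\mathbf K_\infty$ on $\mathbb Z$, depending only on $q$ and $\b$, such that
\[
\TV{\mathbf K_k-\mathbf K_\infty}\le Ce^{-c\min(k,N-k)}.
\]
To build it, take $r=\lfloor \min(k,N-k)/3\rfloor$ and use Lemma~\ref{asepm:lem:local} together with the window-minimum variable $D_{k,r}$ from the proof of Proposition~\ref{asepm:prop:joint}. This gives $\TV{\mathbf K_k-\mathbf Q_{k,r}(D_{k,r}\in\cdot)}\le Ce^{-cr}$. Under $\mathbf Q_{k,r}$, the law of $D_{k,r}$ has density $H_{k,r}\mathbf 1_{A_r}$ relative to $\mathbf R_k$, and it depends only on the window path increments (shifted to be centered at $k$) and the $\kappa_i$. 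By Lemma~\ref{lem:limit of low level composition}, applied with a general $\Uppsi$ (this is why that lemma allows general $\Uppsi$) and with $G=G_\ell$, together with the analogous comparison of $\mathbf R_k(\cdot\mid A_r)$ with the $V$-shaped walk, this law is within $Ce^{-cr}$ of the corresponding functional of the bi-infinite $V$-shaped walk $\boldsymbol Y$ with i.i.d.\ $\kappa$'s. We therefore define
\[
\mathbf K_\infty(\{d\})=\lim_{r\to\infty}\frac{\mathbb E\big[G_{\ell}(\vartheta^{[r]})\,\mathbf 1\{D^{Y}_{r}=d\}\big]}{\mathbb E\big[G_{\ell}(\vartheta^{[r]})\big]}.
\]
Existence of the limit follows by a Cauchy argument exactly as for $\Lambda_{q,\b}$ in the proof of Proposition~\ref{prop:constant Lambda q b}. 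The normalizers are bounded below by $1$, so the errors pass through the normalization as in \eqref{asepm:eq:normalization}. I expect this step to be the main obstacle, namely choosing $\ell=\lfloor\varepsilon r\rfloor$ and $t\approx r$ so that all error terms decay exponentially, mirroring that earlier proof.

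Given the claim, the conclusion is a convolution estimate. The sum $\sum_k \mathbf K_k(\{j-k\})$ differs from $\sum_k\mathbf K_\infty(\{j-k\})$ by at most
\[
\sum_k\sum_j|\mathbf K_k-\mathbf K_\infty|(\{j-k\})\le 2\sum_k Ce^{-c\min(k,N-k)}\le C',
\]
so after the factor $1/(N+1)$ this costs $C'/(N+1)$. For the remaining term, $\sum_{k=0}^N\mathbf K_\infty(\{j-k\})=\mathbf K_\infty(\llbracket j-N,j\rrbracket)$. Corollary~\ref{cor:particle-minimum-localization} gives $\mathbf K_k(|d|\ge m)\le Ce^{-cm}$ uniformly, and this passes to the limit, so $\mathbf K_\infty$ has exponential tails. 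Hence
\[
\big|1-\mathbf K_\infty(\llbracket j-N,j\rrbracket)\big|\le Ce^{-c\min(j,N-j)},
\]
and summing over $j$ and dividing by $N+1$ gives $C/(N+1)$. Adding the three errors proves the proposition for $q\in(0,1)$. For $q=0$ the same argument applies with $W\equiv1$, which simplifies the construction of $\mathbf K_\infty$.
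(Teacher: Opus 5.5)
Your proposal is correct and follows the paper's overall structure. You disintegrate over $T_N$, invoke Proposition~\ref{prop:macroscopic uniformity TN upper}, and approximate every displacement law $\mathbf P^q_{N,k}(\mathsf M_N-k\in\cdot)$ by one fixed law on $\mathbb Z$ with error $Ce^{-c\min(k,N-k)}$ and exponential tails; this is exactly Lemma~\ref{lem:main claim in proof upper}. You then close with a convolution estimate, using the tails of $\mathbf K_\infty$ directly where the paper uses its first moment and shifted uniform laws; the two are interchangeable.

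The genuine difference is how you construct the limiting displacement law.

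\textbf{The paper's construction.} It first builds limit laws $\mathfrak p_m$ for the whole occupation window around $T_N$ (Lemma~\ref{lem:finite-window limits in the bulk}). This needs the $\ell\to\infty$ Cauchy argument for $\Theta_{m,\ell}$ in Lemma~\ref{lem:first technical finite-window limits in the bulk} and the identification of its total mass with $\Lambda_{q,\b}$ from Proposition~\ref{prop:constant Lambda q b}. Only then does it push $\mathfrak p_m$ forward under the window-minimum map and let $m\to\infty$.

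\textbf{Your construction.} You reuse the window-local measure $\mathbf Q_{k,r}$ of Lemma~\ref{asepm:lem:local}, together with the bound $\TV{\mathbf K_k-\mathbf Q_{k,r}(D_{k,r}\in\cdot)}\le Ce^{-cr}$ from the proof of Proposition~\ref{asepm:prop:joint}. The density $H_{k,r}\mathbf 1_{A_r}$ depends only on the window and is normalized there, so no identification of $\Lambda_{q,\b}$ is needed.

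\textbf{A further simplification.} For $1\le r\le\min(k,N-k)$, the shifted window increments and the $\kappa$'s under $\mathbf R_k$ have a law that does not depend on $(N,k)$. Since $H_{k,r}$, $A_r$ and $D_{k,r}$ are functions of them, $\mathbf Q_{k,r}(D_{k,r}\in\cdot)$ is the same measure for all such $(N,k)$. Comparing the measures for $r$ and $r'$ through a single $\mathbf K_k$ with $\min(k,N-k)\ge\max(r,r')$ gives the Cauchy property at once, so the $V$-shaped walk is unnecessary.

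\textbf{If you keep the $V$-shaped walk.} Lemma~\ref{lem:limit of low level composition} is stated for $\mathbb P^0_{N,k}$ with full compositions, not truncated counts under $\mathbf R_k(\cdot\mid A_r)$. Lemma~\ref{lem:technical lem RW nonnegative} as stated gives no decay when $m=n$. The comparison you need comes instead from two steps:
\begin{itemize}
\item \eqref{asepm:eq:survivalerror}, to pass from $\mathbf R_k(\cdot\mid A_r)$ to $\mathbf R_k(\cdot\mid A)$, cf.~\eqref{asepm:eq:conditionA};
\item Lemma~\ref{lem: second part proof limit of low level composition} with $t=r$.
\end{itemize}
The factors $\b^{2(\ell+1)}$ are absorbed by choosing $\ell=\lfloor\varepsilon r\rfloor$.

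\textbf{What each route buys.} The paper's route costs more here, but it yields Lemma~\ref{lem:finite-window limits in the bulk}, a local limit for the full configuration near $T_N$, and it does not rely on Lemma~\ref{asepm:lem:local}. Yours is the more economical proof of this particular proposition.
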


As a first step, we require the following technical lemma on the auxiliary chain $\mathfrak{X}$ from Definitions \ref{def:01 seq u} and \ref{def:limit chain}.

\begin{lemma}\label{lem:first technical finite-window limits in the bulk}
    Assume $q\in(0,1)$ and $\b>1$. As in Lemma \ref{lem:convergence to limiting chain}, for $\ell\in\mathbb N$, define the function
\[
G_\ell(\mathbf u)=\mathbb E\left[\b^{2\mathfrak{X}^{\boldsymbol{\mathsf S}^{\mathbf u}}(\ell+1)}\right]\quad
\mbox{ for each }\mathbf u=(u_0,\ldots,u_{\ell+1})\in\mathbb{Z}_{\geq0}^{\ell+2},
\]
where the $0$-$1$ sequence $\boldsymbol{\mathsf S}^{\mathbf u}$ and the Markov chain $\mathfrak{X}^{\boldsymbol{\mathsf S}^{\mathbf u}}$ are from Definitions \ref{def:01 seq u} and \ref{def:limit chain}.
Let \(\boldsymbol Y=(Y_i)_{i\in\mathbb Z}\) be the bi-infinite \(V\)-shaped \(\nu_\b\)-random walk from Definition \ref{def:V shaped RW}, and let
\[
    \vartheta_h=\#\{j\in\mathbb Z:Y_j=h\}
    \quad\mbox{ for } h\in\mathbb Z_{\ge0}.
\]
Recall from Lemma \ref{lem: first part proof limit of low level composition} that these variables are finite almost surely.
Suppose  $\Uppsi:\mathbb{Z}^{2m+1}\to[0,1]$ is a function. 
For \(\ell\ge m\geq1\), define
\[
    \Uptheta_{m,\ell} 
    =
    \mathbb E
    \left[\Uppsi\left((Y_i)_{|i|\leq m}\right)
        G_\ell(\vartheta_0,\ldots,\vartheta_{\ell+1})
    \right].
\]
Then as $\ell\to\infty$, the limit of the sequence $\Uptheta_{m,\ell }$ exists for the fixed function \(\Uppsi\), which we denote by $\Uptheta_{m }$. 
Moreover, there exist constants $C,c>0$ depending only on $q$ and $\b$, such that 
\[
    \left|\Uptheta_{m,\ell }-\Uptheta_{m }\right|
    \leq
    Ce^{-c\ell} 
    \quad\mbox{ for all } \ell\ge m\geq1. 
\]
\end{lemma}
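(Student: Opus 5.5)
The plan is to show that $(\Uptheta_{m,\ell})_{\ell\ge m}$ is Cauchy with an exponential rate, mirroring the proof of Proposition~\ref{prop:constant Lambda q b}. There the analogous sequence $a_\ell$ (the case $\Uppsi\equiv1$) was handled by comparing it with a finite-$N$ quantity. Here I would compare $\Uptheta_{m,\ell}$ with the finite-system quantity
\[
F_n=\mathbb E^0_{2n+1,n+1}\!\left[\Uppsi\left((\gamma_{n+1+i}-\gamma_{n+1})_{|i|\le m}\right)W_{\bs(\bg)}(\b^2)\right],
\]
which does not depend on $\ell$.

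\textbf{Step 1: compare $F_n$ with $\Uptheta_{m,\ell}$.} Fix $\ell\ge m$, set $N=2n+1$ and $k=n+1$, and let $n\to\infty$. Since $\Uppsi\in[0,1]$, Lemma~\ref{lem:sum of K} lets me replace $W$ by $\mathsf K_{N,\ell}$ at an $L^1$ cost of $C_1e^{-c_1\ell}$, uniformly in $n$ with $N\ge2\ell+2$. Lemma~\ref{lem:convergence to limiting chain} then replaces $\mathsf K_{N,\ell}$ by $G_\ell(\sigma_0,\ldots,\sigma_{\ell+1})$ at a cost $\b^{2(\ell+1)}(\ell+1)q^{N-\ell+1}/(1-q^{N-\ell+1})$, which tends to $0$ as $n\to\infty$. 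Next, Lemma~\ref{lem:limit of low level composition} is applied with $d=\ell+1$, $G=G_\ell$, the given $\Uppsi$, and some $t\ge m$. One detail needs attention: $\Uppsi$ is defined on $\mathbb Z^{2m+1}$, whereas the lemma takes functions on $\mathbb Z^{2t+1}$. I would view $\Uppsi$ as a function of the window $|i|\le t$ that depends only on the coordinates $|i|\le m$. Taking $t=\lfloor\theta n\rfloor$, the resulting error $C\b^{2(\ell+1)}\bigl(e^{C(\ell+1)}e^{-ct}+e^{-c(n-t)}\bigr)$ also vanishes as $n\to\infty$. Altogether,
\[
\limsup_{n\to\infty}|F_n-\Uptheta_{m,\ell}|\le C_1e^{-c_1\ell}.
\]

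\textbf{Step 2: Cauchy property, the limit, and the rate.} For $\ell,\ell'\ge m$, the triangle inequality through $F_n$ gives
\[
|\Uptheta_{m,\ell}-\Uptheta_{m,\ell'}|\le C_1e^{-c_1\ell}+C_1e^{-c_1\ell'}.
\]
So the sequence is Cauchy. Its limit $\Uptheta_m$ exists, and letting $\ell'\to\infty$ yields $|\Uptheta_{m,\ell}-\Uptheta_m|\le C_1e^{-c_1\ell}$. The constants depend only on $q$ and $\b$, through Lemmas~\ref{lem:sum of K} and \ref{lem:limit of low level composition}; the only place $\Uppsi$ enters is the bound $\|\Uppsi\|_\infty\le1$.

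\textbf{Main obstacle.} The delicate point is Step 1's use of Lemma~\ref{lem:sum of K} with the weight $\Uppsi$ inserted. That lemma controls $\mathbb E^0_{N,k}|W-\mathsf K_{N,\ell}|$, and multiplying inside the absolute value by $\Uppsi\in[0,1]$ only decreases it, so this goes through; I would still verify it explicitly. The second point to check is that $\ell$ is held fixed while $n\to\infty$, so that the $\b^{2(\ell+1)}$ factors are harmless. This is why the rate $e^{-c_1\ell}$ comes solely from Lemma~\ref{lem:sum of K}, without having to balance $\ell$ against $t$.
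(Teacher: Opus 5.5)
Your proposal is correct and follows essentially the same route as the paper. The paper compares $\Uptheta_{m,\ell}$ with the same finite-system quantity ($\Xi_n$, your $F_n$) and chains Lemma~\ref{lem:sum of K}, Lemma~\ref{lem:convergence to limiting chain}, and Lemma~\ref{lem:limit of low level composition}, with $\Uppsi$ extended trivially to the window $|i|\le t$, to get $\limsup_n|F_n-\Uptheta_{m,\ell}|\le C_1e^{-c_1\ell}$; it then concludes by the same Cauchy argument. The one detail to fix is that you need $\theta<1/2$, so that the hypothesis $n\ge 2t+2$ of Lemma~\ref{lem:limit of low level composition} holds for large $n$; the paper takes $t=\lfloor n/4\rfloor$.
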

\begin{proof}
    By Definition \ref{def:limit chain}, for every \(0\)-\(1\) sequence of length \(\ell+1\), $\mathfrak X^{\boldsymbol{\mathsf S}}(\ell+1)\in\llbracket0,\ell+1\rrbracket$. Hence 
    \begin{equation}\label{eq:first-technical-G-bound new}
    1\le G_\ell(\mathbf u)\le \b^{2(\ell+1)},
    \qquad
    \mathbf u\in\mathbb Z_{\ge0}^{\ell+2}.
\end{equation}
Therefore, \(\Uptheta_{m,\ell}\) is finite for every \(\ell\ge m\geq1\).  For \(n\ge m\), define
\[ 
    \Xi_n
    =
    \mathbb E^0_{2n+1,n+1}
    \left[
        \Uppsi\left((\gamma_{n+1+i}-\gamma_{n+1})_{|i|\le m}\right)
        W_{\bs(\bg)}(\b^2)
    \right],
\]
where \(\bs(\bg)\) is the composition associated to \(\bg\) by \eqref{eq:Signature}. We first prove the claim that, for every fixed \(\ell\ge m\),
\begin{equation}\label{eq:first-technical-limsup-Fn new}
    \limsup_{n\to\infty}
    \left| \Xi_n-\Uptheta_{m,\ell}\right|
    \le
    C_1 e^{-c_1\ell}
\end{equation}
for some constants \(C_1,c_1\in(0,\infty)\) depending only on \(q\) and \(\b\). Indeed, for all sufficiently large \(n\), we have
\(2n+1\ge2\ell+2\), \(\lfloor n/4\rfloor\ge m\), and \(n\ge2\lfloor n/4\rfloor+2\). For those \(n\), write
\[
    \mathsf K_{2n+1,\ell}(\bg)
    =
    \sum_{y\ge0}
        \b^{2y}
        K_{2n+1,\ell}
        \left(
            \boldsymbol{\mathsf S}^{\bs(\bg)};0,\{y\}
        \right),
\]
as in Lemma \ref{lem:sum of K}. Since \(0\le \Uppsi\le1\), Lemma \ref{lem:sum of K} gives \(C_2,c_2\in(0,\infty)\), depending only on \(q\) and \(\b\), such that
\begin{equation}\label{eq:first-technical-sumK-step new} 
 \left|
\Xi_n
-
\mathbb E^0_{2n+1,n+1}
\left[
    \Uppsi\left((\gamma_{n+1+i}-\gamma_{n+1})_{|i|\le m}\right)
    \mathsf K_{2n+1,\ell}(\bg)
\right]
\right|                                                         
\le
\mathbb E^0_{2n+1,n+1}
\left[
    \left|W_{\bs(\bg)}(\b^2)-\mathsf K_{2n+1,\ell}(\bg)\right|
\right]
\le
C_2e^{-c_2\ell}. 
\end{equation}
Let \(\bs(\bg)=(\sigma_0,\ldots,\sigma_r)\), and extend it by \(\sigma_h=0\) for \(h>r\). By Lemma \ref{lem:convergence to limiting chain},
\[
\sup_{\bg\in\lrw_{2n+1}}
\left|
    \mathsf K_{2n+1,\ell}(\bg)
    -
    G_\ell(\sigma_0,\ldots,\sigma_{\ell+1})
\right|
\le
\b^{2(\ell+1)}(\ell+1)
\frac{q^{2n -\ell+2}}{1-q^{2n -\ell+2}}.
\]
Hence
\begin{equation}\label{eq:first-technical-K-to-G-step new}
\begin{aligned}
&\Big|
\mathbb E^0_{2n+1,n+1}
\left[
    \Uppsi\left((\gamma_{n+1+i}-\gamma_{n+1})_{|i|\le m}\right)
    \mathsf K_{2n+1,\ell}(\bg)
\right] 
-
\mathbb E^0_{2n+1,n+1}
\left[
    \Uppsi\left((\gamma_{n+1+i}-\gamma_{n+1})_{|i|\le m}\right)
    G_\ell(\sigma_0,\ldots,\sigma_{\ell+1})
\right]
\Big|          \\
&\le
\b^{2(\ell+1)}(\ell+1)
\frac{q^{2n+1-\ell+1}}{1-q^{2n+1-\ell+1}}
\to 0
\qquad\mbox{as }n\to\infty.
\end{aligned}
\end{equation} 
Finally,  we apply Lemma \ref{lem:limit of low level composition} with \(N=2n+1\), \(k=n+1\), \(d=\ell+1\), \(t=\lfloor n/4\rfloor\), \(G=G_\ell\), and \(\Uppsi=\Uppsi_n'\), where $\Uppsi_n' ((x_i)_{|i|\le \lfloor n/4\rfloor} )=
    \Uppsi ((x_i)_{|i|\le m} ) $ for each  $(x_i)_{|i|\le \lfloor n/4\rfloor}\in\mathbb Z^{2\lfloor n/4\rfloor+1}$.
Combining with \eqref{eq:first-technical-G-bound new}, there exist constants \(C_3,c_3\in(0,\infty)\), depending only on \(q\) and \(\b\), such that
\begin{equation}\label{eq:first-technical-low-level-step new}
\begin{aligned}
&\left|
\mathbb E^0_{2n+1,n+1}
\left[
    \Uppsi\left((\gamma_{n+1+i}-\gamma_{n+1})_{|i|\le m}\right)
    G_\ell(\sigma_0,\ldots,\sigma_{\ell+1})
\right]
-
\Uptheta_{m,\ell}
\right|                                                        \\
& 
\le
C_3\b^{2(\ell+1)}
\left(
    e^{C_3(\ell+1)}e^{-c_3\lfloor n/4\rfloor}
    +
    e^{-c_3(n-\lfloor n/4\rfloor)}
\right)
\to 0
\qquad\mbox{as }n\to\infty.
\end{aligned}
\end{equation} 
Combining \eqref{eq:first-technical-sumK-step new},
\eqref{eq:first-technical-K-to-G-step new}, and
\eqref{eq:first-technical-low-level-step new}, we obtain \eqref{eq:first-technical-limsup-Fn new}. 

We return to the proof of the lemma. Let \(\ell,\ell'\ge m\). Then, by \eqref{eq:first-technical-limsup-Fn new},
\[
\begin{aligned}
\left|\Uptheta_{m,\ell}-\Uptheta_{m,\ell'}\right|
&\le
\limsup_{n\to\infty}
\left(
    \left|\Uptheta_{m,\ell}-\Xi_n\right|
    +
    \left|\Xi_n-\Uptheta_{m,\ell'}\right|
\right)         \le
C_1e^{-c_1\ell}+C_1e^{-c_1\ell'}.
\end{aligned}
\]
Therefore \((\Uptheta_{m,\ell})_{\ell\ge m}\) is a Cauchy sequence in \(\mathbb R\), and the limit $\Uptheta_m=\lim_{\ell\to\infty}\Uptheta_{m,\ell}$
exists. Letting \(\ell'\to\infty\)  gives
\[
    \left|\Uptheta_{m,\ell}-\Uptheta_m\right|
    \le
    C_1e^{-c_1\ell} 
    \quad\mbox{ for } \ell\ge m,
\] which concludes the proof. 
\end{proof}

Next, we provide an upper bound on the total variation distance  between the open ASEP stationary measure and a suitable sequence of probability measures $\mathfrak p_m$ in a radius of $m$ around the location $T_N$.

\begin{lemma}\label{lem:finite-window limits in the bulk}
    Assume $q\in[0,1)$ and $\b>1$.  Then there exist a sequence of probability measures $\mathfrak p_m$ on $\{-1,1\}^{\llbracket -m+1,m\rrbracket}$ for $m\in\mathbb N$ and constants $\varepsilon\in(0,1/4)$ and $C,c>0$ depending only on $q$ and $\b$, such that for each $N\in\mathbb N$, $k\in\llbracket0,N\rrbracket$, and $m\in\llbracket1,\lfloor\varepsilon\min(k-1,N-k)\rfloor\rrbracket$,
    \begin{equation}\label{eq:goal finite-window limits in the bulk}
    \TV{ \mathbf{P}_{N,k}^q\left((2\tau_{k+i}-1)_{i\in\llbracket -m+1,m\rrbracket}\in\,\cdot\,\right) -\mathfrak p_m  }\leq 
    C\left(e^{-cm}+e^{-c\min(k-1,N-k)}\right).
    \end{equation}
\end{lemma}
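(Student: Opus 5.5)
The plan is to define $\mathfrak p_m$ as a limit object built from the bi-infinite $V$-shaped walk $\boldsymbol Y$, reweighted by the limiting Bryc weight. Then show that the finite-$N$ window law is close to it, using the machinery behind Proposition~\ref{prop:constant Lambda q b} and Lemma~\ref{lem:first technical finite-window limits in the bulk}. The observed spins $(2\tau_{k+i}-1)_{i\in\llbracket -m+1,m\rrbracket}$ are obtained from the path increments $(\gamma_{k+i}-\gamma_{k+i-1})$ together with independent fair coins $\kappa_{k+i}$. It therefore suffices to approximate the law of the path window $(\gamma_{k+i}-\gamma_k)_{|i|\le m}$ under $\mathbb P_{N,k}^q$; the map to spins is a fixed Markov kernel and contracts total variation.

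For $q=0$, I would take $\mathfrak p_m$ to be the image of the law of $(Y_i)_{|i|\le m}$ under this kernel. Lemma~\ref{lem: second part proof limit of low level composition} with $t=m$ then gives the bound $Ce^{-c(\min(k-1,N-k)-m)}$. Since $m\le\varepsilon\min(k-1,N-k)$ with $\varepsilon<1/4$, this is at most $Ce^{-c'\min(k-1,N-k)}$.

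For $q\in(0,1)$, I would fix a test function $\Uppsi:\mathbb Z^{2m+1}\to[0,1]$. By \eqref{eq:suff-coex-conditional-density},
\[
\mathbb E_{N,k}^q[\Uppsi]=\frac{\mathbb E^0_{N,k}[\Uppsi\, W_{\bs(\bg)}(\b^2)]}{\mathbb E^0_{N,k}[W_{\bs(\bg)}(\b^2)]}.
\]
By Proposition~\ref{prop:constant Lambda q b}, the denominator equals $\Lambda_{q,\b}+O(e^{-c\min(k-1,N-k)})$ and is at least $1$. For the numerator, I would repeat the chain of approximations in the proof of Lemma~\ref{lem:first technical finite-window limits in the bulk}, now at general $(N,k)$ rather than $(2n+1,n+1)$:
\begin{enumerate}
\item Replace $W$ by $\mathsf K_{N,\ell}$ using Lemma~\ref{lem:sum of K}, with error $Ce^{-c\ell}$; this needs $\Uppsi\le1$.
\item Replace $\mathsf K_{N,\ell}$ by $G_\ell(\sigma_0,\dots,\sigma_{\ell+1})$ using Lemma~\ref{lem:convergence to limiting chain}, with error $\b^{2(\ell+1)}(\ell+1)q^{N-\ell+1}/(1-q)$.
\item Pass to $\mathbb E[\Uppsi((Y_i)_{|i|\le m})G_\ell(\vartheta_0,\dots,\vartheta_{\ell+1})]=\Uptheta_{m,\ell}$ using Lemma~\ref{lem:limit of low level composition} with $d=\ell+1$ and window $t$. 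Here $\Uppsi$ is extended trivially from radius $m$ to radius $t\ge m$.
\item Replace $\Uptheta_{m,\ell}$ by its limit $\Uptheta_m$ using Lemma~\ref{lem:first technical finite-window limits in the bulk}, with error $Ce^{-c\ell}$.
\end{enumerate}
The result is $|\mathbb E_{N,k}^q[\Uppsi]-\Uptheta_m(\Uppsi)/\Lambda_{q,\b}|\le$ a sum of the error terms above. I would define $\mathfrak p_m$ as the spin image of the measure $\Uppsi\mapsto\Uptheta_m(\Uppsi)/\Lambda_{q,\b}$. This is a probability measure: $\Uptheta_m$ is linear and nonnegative in $\Uppsi$, and applying the same estimates with $\Uppsi\equiv1$ gives $\Uptheta_m(1)=\lim a_\ell=\Lambda_{q,\b}$, by comparing with the proof of Proposition~\ref{prop:constant Lambda q b}.

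The main obstacle is balancing parameters so that the errors are uniform in the test function $\Uppsi$, which is needed for a total variation bound. The constants in each step do not depend on $\Uppsi$, so taking the supremum over $\Uppsi$ is harmless. The difficulty is the tension between the growth factor $\b^{2(\ell+1)}e^{C\ell}$ and the decay $e^{-ct}$ and $e^{-c(\min(k-1,N-k)-t)}$. Writing $M=\min(k-1,N-k)$, I would choose $\ell=\max(m,\lfloor\eta M\rfloor)$ and $t=\lfloor\theta M\rfloor$ with small $\eta\ll\theta$, exactly as at the end of the proof of Proposition~\ref{prop:constant Lambda q b}. The condition $\ell\ge m$ is needed for Lemma~\ref{lem:first technical finite-window limits in the bulk}; taking $\varepsilon\le\eta$ makes $\ell=\lfloor\eta M\rfloor\ge m$ automatic and keeps $t\ge m$.

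With these choices, all error terms are bounded by $Ce^{-cM}$. The requirement $M\ge2t+2$ holds for $M$ large, and the finitely many small values of $M$ are absorbed into $C$. The $e^{-cm}$ term in \eqref{eq:goal finite-window limits in the bulk} is then not even needed, but keeping it is harmless.
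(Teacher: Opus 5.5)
Your proposal is correct and is essentially the paper's proof. It uses the same $\mathfrak p_m$ (the spin image of $\Uptheta_m/\Lambda_{q,\b}$, with total mass $\lim_\ell a_\ell=\Lambda_{q,\b}$), the same chain of Lemma~\ref{lem:sum of K}, Lemma~\ref{lem:convergence to limiting chain}, Lemma~\ref{lem:limit of low level composition} and Lemma~\ref{lem:first technical finite-window limits in the bulk}, and the same normalization via Proposition~\ref{prop:constant Lambda q b}.

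The one real difference is the truncation level of the Bryc weight. The paper takes $\ell=m$ with $t=\lfloor \min(k-1,N-k)/4\rfloor$, which is why the $e^{-cm}$ term appears in the statement (coming from Lemma~\ref{lem:sum of K} and \eqref{eq:claim Cauchy}). You take $\ell=\lfloor\eta\min(k-1,N-k)\rfloor\ge m$, which makes every error exponentially small in $\min(k-1,N-k)$. This correctly proves a sharper bound than stated, although that extra strength is not needed in Lemma~\ref{lem:main claim in proof upper}.
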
 
\begin{proof}
Throughout this proof, we denote $D=\min(k-1,N-k)$ for simplicity.
Recall from Proposition~\ref{prop:first part proof Thm} that the conditional measures \(\mathbf P_{N,k}^q\) are well-defined. Again, we write
 $\mathbb P_{N,k}^q(\,\cdot\,)=\mathbb P^{(q,\b,\b)}(\,\cdot\,\mid T_N=k)$
for the corresponding conditional law of \(\bg\in\lrw_N\), and  \(\mathbb E_{N,k}^q\) for expectation with respect to this law.

For an event $A\subseteq\{-1,1\}^{\llbracket -m+1,m\rrbracket}$, we define $\Phi_A^m:\mathbb Z^{2m+1}\to[0,1]$ as follows.
For each $(y_i)_{|i|\leq m}$, if 
$y_i-y_{i-1}\notin\{-1,0,1\}$ for some $i\in\llbracket -m+1,m\rrbracket$, we let $\Phi_A^m((y_i)_{|i|\leq m})=0$. Otherwise, let 
\[
\Phi_A^m\left((y_i)_{|i|\leq m}\right)=\mathbb{P}\left((y_i-y_{i-1}+\upsigma_i{\bf1}_{y_i-y_{i-1}=0} )_{i\in\llbracket -m+1,m\rrbracket}\in A\right)
\]
where $\upsigma_i$, $i\in\mathbb Z$  are i.i.d. symmetric \(\{-1,1\}\)-valued variables.
Since \(T_N\) is a function of \(\bg\), under \(\mathbf P_{N,k}^q\) the variables \((\kappa_i)_{i=1}^N\) remain i.i.d. \(\ber(1/2)\) and independent of \(\bg\). Therefore, for $ m\in\llbracket1,D\rrbracket$, 
\begin{equation*}
    \mathbb E_{N,k}^q\!\left[\Phi_A^m\left((\gamma_{k+i}-\gamma_k)_{|i|\leq m}\right)\right]
    =\mathbf{P}_{N,k}^q
    \left(
        (2\tau_{k+i}-1)_{i\in \llbracket -m+1,m\rrbracket}\in A
    \right).
\end{equation*} 
Let \(\boldsymbol Y=(Y_i)_{i\in\mathbb Z}\) be the bi-infinite \(V\)-shaped \(\nu_\b\)-random walk from Definition \ref{def:V shaped RW}, and let
\[
    \vartheta_h=\#\{j\in\mathbb Z:Y_j=h\}
    \quad\mbox{ for } h\in\mathbb Z_{\ge0}.
\]
For \(t\ge m\), define $\Phi_{A,t}^m ((x_i)_{|i|\le t} )
    =
    \Phi_A^m ((x_i)_{|i|\le m} )$ for each  $(x_i)_{|i|\le t}\in\mathbb Z^{2t+1}$.
By Lemma \ref{lem:limit of low level composition} with $\Uppsi=\Phi_{A,t}^m$, there exist constants \(C_1,c_1\in(0,\infty)\), depending only on \(\b\), such that
for every \(d\in\mathbb Z_{\ge0}\) and $m, t\in\mathbb N$ satisfying \(m\leq t\) and \(D=\min(k-1,N-k)\geq2t+2\), and every bounded function \(G:\mathbb Z_{\ge0}^{d+1}\to\mathbb R\), 
\begin{equation}\label{eq:ntyrjhrfnbsg}
    \begin{split}
        &\left|
    \mathbb E_{N,k}^0
    \left[
        \Phi_A^m\left((\gamma_{k+i}-\gamma_k)_{|i|\leq m}\right)G(\sigma_0,\ldots,\sigma_d)
    \right]
    -
    \mathbb E
    \left[
        \Phi_A^m\left((Y_i)_{|i|\leq m}\right)G(\vartheta_0,\ldots,\vartheta_d)
    \right]
\right|      \\
&\leq C_1\|G\|_\infty
\left(
    e^{C_1d}e^{-c_1t}
    +
    e^{-c_1(D-t)}
\right),
    \end{split}
\end{equation} 
where we recall that \(\bs(\bg)=(\sigma_0,\ldots,\sigma_r)\) is extended by \(\sigma_h=0\) for \(h>r\).

We now construct the probability measures \(\mathfrak p_m\). First suppose \(q=0\).  For \(m\in\mathbb N\), define
\[
    \mathfrak p_m(A)
    =
    \mathbb E\left[\Phi_A^m\left((Y_i)_{|i|\le m}\right)\right]
    \quad\mbox{ for }
    A\subseteq\{-1,1\}^{\llbracket -m+1,m\rrbracket}.
\] 
Since, for each sample of \(\boldsymbol Y\),  
\(A\mapsto \Phi_A^m((Y_i)_{|i|\le m})\) is a probability measure on
\(\{-1,1\}^{\llbracket -m+1,m\rrbracket}\), \(\mathfrak p_m\) is a probability measure.
Choose $\varepsilon=\frac{1}{8}$. If $1\leq m\leq \varepsilon D$, set \(t=\lfloor D/4\rfloor\). Then \(D\ge8\), and hence \(m\le t\), \(D\ge2t+2\), \(t\ge D/8\), and \(D-t\ge3D/4\). Taking \(d=0\) and \(G\equiv1\) in \eqref{eq:ntyrjhrfnbsg} gives, for every  $A\subseteq\{-1,1\}^{\llbracket -m+1,m\rrbracket}$, 
\[
\left|
    \mathbf P_{N,k}^0
    \left(
        (2\tau_{k+i}-1)_{i\in \llbracket -m+1,m\rrbracket}\in A
    \right)
    -
    \mathfrak p_m(A)
\right|
\leq
C_1\left(e^{-c_1t}+e^{-c_1(D-t)}\right)
\leq
C_2e^{-c_2D} ,
\]
where $C_2,c_2>0$ depend only on $q$ and $\b$.
Taking the supremum over \(A\) proves the estimate \eqref{eq:goal finite-window limits in the bulk} for \(q=0\).

In the rest of the proof, we assume $q\in(0,1)$. As in Lemma \ref{lem:convergence to limiting chain}, define the function
\[
G_\ell(\mathbf u)=\mathbb E\left[\b^{2\mathfrak{X}^{\boldsymbol{\mathsf S}^{\mathbf u}}(\ell+1)}\right]\quad
\mbox{ for each }\mathbf u=(u_0,\ldots,u_{\ell+1})\in\mathbb{Z}_{\geq0}^{\ell+2},
\]
where the $0$-$1$ sequence $\boldsymbol{\mathsf S}^{\mathbf u}$ and the Markov chain $\mathfrak{X}^{\boldsymbol{\mathsf S}^{\mathbf u}}$ are from Definitions \ref{def:01 seq u} and \ref{def:limit chain}. Then 
\begin{equation}
    1\leq G_\ell(\mathbf u)\leq \b^{2(\ell+1)}\quad
\mbox{ for each }\mathbf u \in\mathbb{Z}_{\geq0}^{\ell+2}.
\end{equation}
For \(m\in\mathbb N\), \(A\subseteq\{-1,1\}^{\llbracket -m+1,m\rrbracket}\), and \(\ell\ge m\), define
\[
    \Theta_{m,\ell}(A)
    =
    \mathbb E
    \left[
        \Phi_A^m\left((Y_i)_{|i|\leq m}\right)
        G_\ell(\vartheta_0,\ldots,\vartheta_{\ell+1})
    \right].
\]
By Lemma \ref{lem:first technical finite-window limits in the bulk} with $\Uppsi=\Phi_A^m$, the limit
\[
    \Theta_{m}(A)=\lim_{\ell\to\infty}\Theta_{m,\ell}(A)
\]
exists, and  there exist constants $C_3,c_3>0$ depending only on $q$ and $\b$, such that for all $A$
\begin{equation}\label{eq:claim Cauchy}
    \left|\Theta_{m,\ell}(A)-\Theta_{m}(A)\right|
    \leq
    C_3e^{-c_3\ell} 
    \qquad \mbox{ for}\quad\ell\ge m. 
\end{equation}

Let $E=\{-1,1\}^{\llbracket-m+1,m\rrbracket}$ be the full event.  
We next claim that
\begin{equation}\label{eq:full event}
\Theta_{m}\left(E\right)= \Lambda_{q,\b}\in[1,\infty),
\end{equation}
where the right side is the constant from Proposition \ref{prop:constant Lambda q b}. Indeed,
\(\Phi_E^m((x_i)_{|i|\le m})=1\) if
\(x_i-x_{i-1}\in\{-1,0,1\}\) for all
\(i\in\llbracket-m+1,m\rrbracket\).   Hence
$\Theta_{m,\ell}\left(E\right)=\mathbb E\left[ G_\ell(\vartheta_0,\ldots,\vartheta_{\ell+1}) \right]$.
The estimate \eqref{eq:first-technical-limsup-Fn new} in the proof of Lemma~\ref{lem:first technical finite-window limits in the bulk}, applied with \(\Uppsi=\Phi_{E}^m\), gives constants \(C_4,c_4>0\), depending only on \(q,\b\), such that, for every \(\ell\ge m\),
\[
    \limsup_{n\to\infty}
    \left|\mathbb E_{2n+1,n+1}^0\left[W_{\bs(\bg)}\left(\b^2\right)\right]-\Theta_{m,\ell}(E)\right|
    \le C_4e^{-c_4\ell}.
\]
By Proposition \ref{prop:constant Lambda q b}, \(\mathbb E_{2n+1,n+1}^0[W_{\bs(\bg)}(\b^2)]\to\Lambda_{q,\b}\). Combining this with \eqref{eq:claim Cauchy}, we get, for every \(\ell\ge m\),
\[
\left|\Theta_m(E)-\Lambda_{q,\b}\right|
\le
C_3e^{-c_3\ell}+C_4e^{-c_4\ell}.
\]
Letting \(\ell\to\infty\) proves \eqref{eq:full event}. We now define 
\[
    \mathfrak p_m(A)
    = \Theta_{m}(A) \cdot\Lambda_{q,\b}^{-1} 
    \qquad\mbox{ for }  A\subseteq\{-1,1\}^{\llbracket-m+1,m\rrbracket}.
\]
For each \(\ell\), the map \(A\mapsto\Theta_{m,\ell}(A)\) is a finite measure on
\(\{-1,1\}^{\llbracket-m+1,m\rrbracket}\). Hence
\(A\mapsto\Theta_m(A)\) is also a finite measure, and \eqref{eq:full event}
shows that its total mass is \(\Lambda_{q,\b}\). Hence \(\mathfrak p_m\) is a probability measure on \(\{-1,1\}^{\llbracket-m+1,m\rrbracket}\).

It remains to prove the estimate \eqref{eq:goal finite-window limits in the bulk}. Choose  \(\varepsilon\in(0,1/8]\), depending only
on \(q\) and \(\b\), such that 
\begin{equation}\label{eq:ujmnteuy}
    2\varepsilon(2\log\b+C_1)<c_1/8,
    \qquad
    4\varepsilon\log\b<c_1/2,
    \qquad
    \left( 2/\varepsilon-1\right)\log(1/q)>4\log\b . 
\end{equation}
In the rest of the proof, we fix \(N\in\mathbb N\), \(k\in\llbracket0,N\rrbracket\), and
\(m\in\llbracket1,\lfloor\varepsilon D\rfloor\rrbracket\), where we recall \(D=\min(k-1,N-k)\). Set \(t=\lfloor D/4\rfloor\). 
We have \(D\ge 8\), and hence \(m\le t\), \(D\ge2t+2\), and \(N\ge2D+1\ge2m+2\).  

We claim that there exist constants \(C_5,c_5>0\), depending only on \(q,\b\), such that
\begin{equation}\label{eq:rwy.i} 
         \b^{2(m+1)}(m+1)
\frac{q^{N-m+1}}{1-q^{N-m+1}}
 \leq C_5e^{-c_5m} ,  \qquad
 \b^{2(m+1)}
\left(
    e^{C_1(m+1)}e^{-c_1t}
    +
    e^{-c_1(D-t)}
\right)
 \leq
C_5e^{-c_5D}. 
\end{equation} 
Indeed, the first estimate follows from $N-m+1\ge 2D+2-m\ge (2/\varepsilon-1)m+2 $, \(1-q^{N-m+1}\ge1-q\), \((2/\varepsilon-1)\log(1/q)>4\log\b\), and absorbing the factor \(m+1\) into the exponential; and the second estimate follows from the fact that the two summands are bounded, up to constants depending only on \(q,\b\), respectively by $\exp( (2\varepsilon(2\log\b+C_1)-c_1/4 )D )$ and $\exp ( (4\varepsilon\log\b-3c_1/4 )D )$, which decay exponentially in \(D\) by \eqref{eq:ujmnteuy}.

For \(A\subseteq\{-1,1\}^{\llbracket-m+1,m\rrbracket}\), define
\[
    \Upgamma_{N,k}(A)
    =
    \mathbb E_{N,k}^0
    \left[
        \Phi_A^m\left((\gamma_{k+i}-\gamma_k)_{|i|\leq m}\right)
        W_{\bs(\bg)}(\b^2)
    \right].
\]
Using Lemma \ref{lem:sum of K} with \(\ell=m\), Lemma
\ref{lem:convergence to limiting chain} with \(\ell=m\), and
\eqref{eq:ntyrjhrfnbsg} with \(d=m+1\), \(G=G_m\), and $t=\lfloor D/4\rfloor$,
together with \(1\le G_m\le \b^{2(m+1)}\) and \eqref{eq:rwy.i}, we obtain
\begin{equation}\label{eq:tagged N1}
    \begin{split}
        &\left|
    \Upgamma_{N,k}(A)-\Theta_{m,m}(A)
\right|\leq \mathbb E_{N,k}^0
\left[
    \left|
        W_{\bs(\bg)}(\b^2)-\mathsf K_{N,m}(\bg)
    \right|
\right] +\sup_{\bg\in\lrw_N}
\left|
    \mathsf K_{N,m}(\bg)
    -
    G_m(\sigma_0,\ldots,\sigma_{m+1})
\right| \\
&\qquad+\left|
\mathbb E_{N,k}^0
\left[
    \Phi_A^m\left((\gamma_{k+i}-\gamma_k)_{|i|\leq m}\right)
    G_m(\sigma_0,\ldots,\sigma_{m+1})
\right]
-
\Theta_{m,m}(A)
\right| \leq
C_6\left(e^{-c_6m}+e^{-c_6D}\right),
    \end{split}
\end{equation}
where $C_6,c_6>0$ are constants depending only on $q$ and $\b$.
Combining with \eqref{eq:claim Cauchy},
\begin{equation}\label{eq:tagged N2}
    \left|\Upgamma_{N,k}(A)-\Theta_m(A)\right|
    \leq
    C_7\left(e^{-c_7m}+e^{-c_7D}\right),
\end{equation}
for $C_7,c_7>0$ depending only on $q$ and $\b$.
Write
 $Z_{N,k}=\mathbb E_{N,k}^0[W_{\bs(\bg)}(\b^2)]$.
Theorem \ref{thm:prob interpretation of Bryc polynomials} gives \(Z_{N,k}\ge1\),  and
\[
    \mathbb P_{N,k}^q(B)
    =
   \mathbb E_{N,k}^0\left[{\bf1}_B W_{\bs(\bg)}(\b^2)\right]\cdot Z_{N,k}^{-1}
    \qquad\mbox{for every event }B\subseteq\lrw_N ,
\]
in view of \eqref{eq:PNbb}.
Therefore, using the definition of \(\Phi_A^m\), 
\[
\begin{aligned}
    \mathbf P_{N,k}^q
    \left(
        (2\tau_{k+i}-1)_{i\in \llbracket-m+1,m\rrbracket}\in A
    \right)
    &=
    \mathbb E_{N,k}^q
    \left[
        \Phi_A^m\left((\gamma_{k+i}-\gamma_k)_{|i|\le m}\right)
    \right]   =
    \Upgamma_{N,k}(A) \cdot Z_{N,k}^{-1}  .
\end{aligned}
\]
Since \(A\mapsto\Theta_m(A)\) is a finite measure with total mass \(\Lambda_{q,\b}\), we have
\(0\le \Theta_m(A)\le \Lambda_{q,\b}\). Hence
\begin{equation}
    \begin{split}
        &\left|
    \frac{\Upgamma_{N,k}(A)}{Z_{N,k}}
    -
    \frac{\Theta_m(A)}{\Lambda_{q,\b}}
\right|\le
\frac{|\Upgamma_{N,k}(A)-\Theta_m(A)|}{Z_{N,k}}
+
\Theta_m(A)
\left|
    \frac1{Z_{N,k}}-\frac1{\Lambda_{q,\b}}
\right| \\
&  \le
|\Upgamma_{N,k}(A)-\Theta_m(A)|
+
|Z_{N,k}-\Lambda_{q,\b}|.
    \end{split}
\end{equation}  
Since \(Z_{N,k}=\Upgamma_{N,k}(E)\) and
\(\Theta_m(E)=\Lambda_{q,\b}\), applying \eqref{eq:tagged N2} first to \(A\) and then to \(E\) gives 
\[
\left|
    \mathbf P_{N,k}^q
    \left(
        (2\tau_{k+i}-1)_{i\in \llbracket-m+1,m\rrbracket}\in A
    \right)
    -
    \mathfrak p_m(A)
\right|
\le
C_8\left(e^{-c_8m}+e^{-c_8D}\right),
\]
for constants $C_8,c_8>0$ depending only on $q$ and $\b$.
Taking the supremum over all events \(A\subseteq\{-1,1\}^{\llbracket -m+1,m\rrbracket}\) gives the desired total variation bound \eqref{eq:goal finite-window limits in the bulk} for \(q\in(0,1)\). 
\end{proof}

 Next, we construct a good approximation $\mathscr p$ for the law of $\mathsf M_N$ with respect to $\mathbf{P}_{N,k}^q$.

\begin{lemma}\label{lem:main claim in proof upper}
    Assume $q\in[0,1)$ and $\b>1$. For $N\in\mathbb N$ and $k\in\llbracket0,N\rrbracket$,, recall that $\mathbf{P}_{N,k}^q$ is the measure $\mathbb{P}^{(q,\b,\b)}_{N,\uptau}$ conditioned on $T_N=k$. Then there exists a unique probability measure $\mathscr p$ on $\mathbb Z$ and  $C,c>0$ depending only on $q$ and $\b$, such that for each $N\in\mathbb N$, $k\in\llbracket0,N\rrbracket$,
    \begin{equation}\label{eq:goal main claim in proof upper}
     \TV{ \mathbf{P}_{N,k}^q\left(\mathsf M_N-k\in\,\cdot\,\right) -\mathscr p  }
    \le
    C\left(e^{-c k}+e^{-c(N-k)}\right).
     \end{equation}
    Moreover, 
    \[
        \mathscr p\left(\mathbb Z\setminus \llbracket -m,m\rrbracket\right)
        \le Ce^{-cm}
        \qquad\mbox{for every }m\in\mathbb N .
    \]
\end{lemma}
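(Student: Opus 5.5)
We define $\mathscr p$ as the limit, as the distance to both boundaries grows, of the laws of $\mathsf M_N-k$ under $\mathbf P_{N,k}^q$. The two inputs are the localization estimate of Corollary~\ref{cor:particle-minimum-localization} and the finite-window limits $\mathfrak p_m$ of Lemma~\ref{lem:finite-window limits in the bulk}. Throughout, write $D=\min(k-1,N-k)$.

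\textbf{Step 1: $\mathsf M_N-k$ is determined by a window.} Fix $m\ge1$ and work on the event $\{|\mathsf M_N-k|< m\}$. On this event, $\mathsf M_N-k$ is the location, relative to $k$, of the first minimum of $(s_{k+i}-s_k)_{i\in\llbracket -m,m\rrbracket}$. This location is a deterministic function $f_m$ of the spins $(2\tau_{k+i}-1)_{i\in\llbracket -m+1,m\rrbracket}$. Hence, by Corollary~\ref{cor:particle-minimum-localization},
\[
\TV{\mathbf P_{N,k}^q(\mathsf M_N-k\in\cdot)-\mathbf P_{N,k}^q\bigl(f_m((2\tau_{k+i}-1)_{i})\in\cdot\bigr)}\le Ce^{-cm}.
\]
By Lemma~\ref{lem:finite-window limits in the bulk} and the contraction of total variation under the map $f_m$, the second law is within $C(e^{-cm}+e^{-cD})$ of $\mathfrak p_m\circ f_m^{-1}$, provided $m\le\varepsilon D$.

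\textbf{Step 2: construct $\mathscr p$.} Set $\pi_m=\mathfrak p_m\circ f_m^{-1}$, a probability measure on $\llbracket -m,m\rrbracket$. For $m\le m'$, choose $N,k$ with $D$ large enough that both $m,m'\le\varepsilon D$. Step 1 applied with $m$ and with $m'$ to the same law gives $\TV{\pi_m-\pi_{m'}}\le C(e^{-cm}+e^{-cD})$. Letting $D\to\infty$ yields $\TV{\pi_m-\pi_{m'}}\le Ce^{-cm}$, so $(\pi_m)$ is Cauchy in total variation and converges to a probability measure $\mathscr p$ with $\TV{\pi_m-\mathscr p}\le Ce^{-cm}$. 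Since $\pi_m$ is supported in $\llbracket -m,m\rrbracket$, the tail bound $\mathscr p(\mathbb Z\setminus\llbracket -m,m\rrbracket)\le Ce^{-cm}$ follows. Uniqueness is immediate: two measures both satisfying \eqref{eq:goal main claim in proof upper} differ by at most $C(e^{-ck}+e^{-c(N-k)})$ for every $N$ and $k$, and this tends to $0$ along $k=N/2$, $N\to\infty$.

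\textbf{Step 3: the bound \eqref{eq:goal main claim in proof upper}.} If $D\ge 1/\varepsilon$, take $m=\lfloor\varepsilon D\rfloor$ and combine Steps 1 and 2 to get an error $Ce^{-c\varepsilon D}$, which is at most $C(e^{-ck}+e^{-c(N-k)})$ after adjusting constants. If $D$ is small, i.e.\ $k$ or $N-k$ is bounded, the right side of \eqref{eq:goal main claim in proof upper} is at least a positive constant, and enlarging $C$ covers this case because total variation is at most $1$.

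The main obstacle is the Cauchy argument in Step 2. It has to be uniform in $m$ and must use a single pair $(N,k)$ that is valid for both windows at once. One also has to check that $f_m$ on the smaller window agrees with $f_{m'}$ on the larger window whenever the minimum lies within distance $m$; this is exactly what makes the two estimates of Step 1 compare the same random variable.
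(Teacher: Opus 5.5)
Your proposal is correct and follows the paper's proof essentially step for step. Your $\pi_m=\mathfrak p_m\circ f_m^{-1}$ is the paper's $\mathfrak q_m$, and every other ingredient matches:
\begin{itemize}
\item the window reduction via Corollary~\ref{cor:particle-minimum-localization} and Lemma~\ref{lem:finite-window limits in the bulk};
\item the Cauchy argument along symmetric pairs $(N,k)$ with $D\to\infty$;
\item the tail bound read off from the support of $\pi_m$;
\item the choice $m=\lfloor\varepsilon D\rfloor$;
\item the uniqueness argument.
\end{itemize}
The consistency check you flag at the end (that $f_m$ agrees with $f_{m'}$ on the larger window) is not needed. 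Both applications of Step~1 compare against the law of $\mathsf M_N-k$ itself, so the triangle inequality already gives the Cauchy bound.
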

\begin{proof}
    For \(m\in\mathbb N\) and
\(x=(x_i)_{i\in\llbracket -m+1,m\rrbracket}\in
\{-1,1\}^{\llbracket -m+1,m\rrbracket}\), define a path
\(S(x)=(S_j(x))_{j\in\llbracket -m,m\rrbracket}\)
by
\[
S_j (x)=-{\bf1}_{\{j\in\llbracket-m,-1\rrbracket\}}\cdot\sum_{i=j+1}^{0}x_i+{\bf1}_{\{j\in\llbracket1,m\rrbracket\}}\cdot\sum_{i=1}^{j}x_i\qquad\mbox{ for } j\in\llbracket -m,m\rrbracket.
\]
We define
\[
    \mathtt{M}_m(x)
    =
    \min\left(
        j\in\llbracket -m,m\rrbracket:
        S_j(x)
        =
        \min_{\ell\in\llbracket -m,m\rrbracket}
         S_\ell (x)
    \right).
\]
Let \(\varepsilon \in(0,1/4)\), \(C_1,c_1>0\), and \((\mathfrak p_m)_{m\in\mathbb N}\) be the objects supplied by Lemma \ref{lem:finite-window limits in the bulk}. Throughout this proof, we denote $D=\min(k-1,N-k)$ for simplicity. We now define a probability measure \(\mathfrak q_m\) on \(\mathbb Z\) by $\mathfrak q_m(B)
    =
    \mathfrak p_m (\mathtt{M}_m^{-1}(B) )$ for $B\subseteq\mathbb Z$, i.e.,  \(\mathfrak q_m\) is the law of \(\mathtt{M}_m\) when the
increment vector has law \(\mathfrak p_m\). We claim that there exist constants \(C_2,c_2>0\), depending only on \(q\) and \(\b\), such that, when \(m\in\llbracket1,\lfloor \varepsilon D\rfloor\rrbracket\),
\begin{equation}\label{eq:main-claim-local-qm-approx new}
    \TV{
        \mathbf P_{N,k}^q(\mathsf M_N-k\in\cdot)-\mathfrak q_m
    }
    \le
    C_2\left(e^{-c_2m}+e^{-c_2D }\right).
\end{equation}
Indeed, for such $m$, we have $\{k+i: i\in\llbracket -m+1,m\rrbracket\}\subseteq \llbracket1,N\rrbracket$. Denote 
\[
    X_{N,k}^{m}=(2\tau_{k+i}-1)_{i\in\llbracket -m+1,m\rrbracket} 
    \qquad\mbox{ and }\qquad
    \mathtt{M}_{N,k}^{m}=\mathtt{M}_m\left(X_{N,k}^{m}\right).
\]
Note that, for every \(j\in\llbracket -m,m\rrbracket\),  
 $S_j (X_{N,k}^{m})= s_{k+j}(\tau)-s_k(\tau)$.
Hence, on the event \(\{|\mathsf M_N-k|\le m\}\), the point \(\mathsf M_N-k\) is the first minimum of \((S_j (X_{N,k}^{m}))_{j\in\llbracket -m,m\rrbracket}\), and therefore
\[
    \left\{|\mathsf M_N-k|\le m\right\}
    \subseteq
    \left\{\mathtt{M}_{N,k}^{m}=\mathsf M_N-k\right\}.
\]
By Corollary~\ref{cor:particle-minimum-localization} with some constants $C_3,c_3>0$, we get
\begin{equation}\label{eq:mryu9o}
\begin{aligned}
    &\TV{
        \mathbf P_{N,k}^q(\mathsf M_N-k\in\cdot)
        -
        \mathbf P_{N,k}^q(\mathtt{M}_{N,k}^{m}\in\cdot)
    }                                                \\
    & \le
    2\mathbf P_{N,k}^q\left(\mathtt{M}_{N,k}^{m}\ne \mathsf M_N-k\right)
    \le
    2\mathbf P_{N,k}^q(|\mathsf M_N-k|>m)
    \le
    2C_3e^{-c_3m}.
\end{aligned}
\end{equation}
Next, for every \(B\subseteq\mathbb Z\),
\[
\begin{aligned}
&\left|
    \mathbf P_{N,k}^q\left(\mathtt{M}_{N,k}^{m}\in B\right)-\mathfrak q_m(B)
 \right|                                                      =
\left|
    \mathbf P_{N,k}^q\left(X_{N,k}^{m}\in \mathtt{M}_m^{-1}(B)\right)
    -
    \mathfrak p_m\left(\mathtt{M}_m^{-1}(B)\right)
\right|                                                      \le
\TV{
    \mathbf P_{N,k}^q(X_{N,k}^{m}\in\cdot)-\mathfrak p_m
}.
\end{aligned}
\]
Taking the supremum over \(B\subseteq\mathbb Z\) and using Lemma \ref{lem:finite-window limits in the bulk}, we obtain
\[
    \TV{
        \mathbf P_{N,k}^q\left(\mathtt{M}_{N,k}^{m}\in\cdot\right)-\mathfrak q_m
    }
    \le
    C_1\left(e^{-c_1m}+e^{-c_1D}\right).
\]
Combining this with \eqref{eq:mryu9o}, we conclude the proof of the claim \eqref{eq:main-claim-local-qm-approx new}.

We next prove that \((\mathfrak q_m)_{m\in\mathbb N}\) is a Cauchy sequence with respect to the space of probability measures on $\mathbb{Z}$ equipped with the total variation distance.
Fix \(n\ge m\geq1\). For every \(K\in\mathbb N\) such that \(n\le \lfloor\varepsilon K\rfloor\), apply \eqref{eq:main-claim-local-qm-approx new} with \(N=2K+1\) and \(k=K+1\), with \(m\) and \(n\).  Then \(D=K\), and 
\[
\begin{aligned}
    \TV{\mathfrak q_m-\mathfrak q_n}
    &\le
    \TV{
        \mathfrak q_m-
        \mathbf P_{2K+1,K+1}^q(\mathsf M_{2K+1}-(K+1)\in\cdot)
    }                                                        +
    \TV{
        \mathbf P_{2K+1,K+1}^q(\mathsf M_{2K+1}-(K+1)\in\cdot)
        -\mathfrak q_n
    }                                                        \\
    &\le
    C_2\left(e^{-c_2m}+e^{-c_2n}+2e^{-c_2K}\right).
\end{aligned}
\]
Since the left side above does not depend on \(K\), letting \(K\to\infty\) yields
\begin{equation}\label{eq:hrytwjh3}
    \TV{\mathfrak q_m-\mathfrak q_n}
    \le
    2C_2e^{-c_2m} 
    \qquad\mbox{ for each } n\ge m.
\end{equation}
Thus the sequence \((\mathfrak q_m)_{m\in\mathbb N}\) is a Cauchy sequence in the space of probability measure on $\mathbb{Z}$ equipped with the total variation distance. Since \(\mathbb Z\) is countable, total variation convergence is equivalent to convergence of the point masses in \(\ell^1(\mathbb Z)\). The space \(\ell^1(\mathbb Z)\) is complete, 
hence there exists a probability measure \(\mathscr p\) on \(\mathbb Z\) such that
\[
    \lim_{m\to\infty}\TV{\mathfrak q_m-\mathscr p}=0.
\]
Letting \(n\to\infty\) in \eqref{eq:hrytwjh3} and combining with the above equation,
\begin{equation}\label{eq:main-claim-qm-to-p new}
    \TV{\mathfrak q_m-\mathscr p}
    \le 2C_2e^{-c_2m} 
   \qquad\mbox{ for each } m\in\mathbb N .
\end{equation} 
Since \(\mathfrak q_m\) is supported on \(\llbracket -m,m\rrbracket\), this also implies
\begin{equation}\label{eq:main-claim-tail-p new}
    \mathscr p\left(\mathbb Z\setminus\llbracket -m,m\rrbracket\right)
    \le
    \TV{\mathscr p-\mathfrak q_m}
    \le
    2C_2e^{-c_2m} 
    \qquad\mbox{ for each } m\in\mathbb N .
\end{equation}

We next prove \eqref{eq:goal main claim in proof upper}. When $D \geq\lceil 2/\varepsilon \rceil+1$, we let $m=\lfloor\varepsilon D \rfloor$, so \(m\in\llbracket1,\lfloor\varepsilon D\rfloor\rrbracket\). By \eqref{eq:main-claim-local-qm-approx new} and \eqref{eq:main-claim-qm-to-p new},
\[
\begin{aligned}
    \TV{
        \mathbf P_{N,k}^q(\mathsf M_N-k\in\cdot)-\mathscr p
    }
    &\le
    \TV{
        \mathbf P_{N,k}^q(\mathsf M_N-k\in\cdot)-\mathfrak q_m
    }
    +
    \TV{\mathfrak q_m-\mathscr p}                         \\
    &\le
    C_2\left(e^{-c_2m}+e^{-c_2D}\right)
    +
    2C_2e^{-c_2m}                                  \le
    C_4e^{-c_4D },
\end{aligned}
\]
for constants \(C_4,c_4>0\) depending only on \(q\) and \(\b\). Since \(D=\min(k-1,N-k)\),
 $e^{-c_4D}
    \le
    e^{c_4}e^{-c_4k}+e^{-c_4(N-k)} $, and we conclude the estimate \eqref{eq:goal main claim in proof upper}.  For the finitely many cases $D\leq \lceil 2/\varepsilon \rceil $,   $e^{-c_4k}+e^{-c_4(N-k)}\ge e^{-c_4(\lceil 2/\varepsilon \rceil+2)}$, and the total variation distance is bounded by $1$, so they can be absorbed into the constants.  

Finally, we prove uniqueness of the measure \(\mathscr p\). Suppose that \(\mathscr p'\) is another probability measure on \(\mathbb Z\) satisfying the  estimate \eqref{eq:goal main claim in proof upper} of the lemma, possibly with different constants. Taking \(N=2L\) and \(k=L\), 
\[
\begin{aligned}
    \TV{\mathscr p- \mathscr p'}
    &\le
    \TV{
        \mathscr p-
        \mathbf P_{2L,L}^q(\mathsf M_{2L}-L\in\cdot)
    }
    +
    \TV{
        \mathbf P_{2L,L}^q(\mathsf M_{2L}-L\in\cdot)
        -
         \mathscr p'
    }                                                       \\
    &\le
    C\left(e^{-cL}+e^{-cL}\right)
    +
      C'\left(e^{- c' L}+e^{- c' L}\right).
\end{aligned}
\]
Letting \(L\to\infty\) gives \(\TV{\mathscr p- \mathscr p'}=0\). Hence \(\mathscr p= \mathscr p'\), completing the proof.
\end{proof}

\begin{proof}[Proof of Proposition \ref{prop:part of the thm upper bound}]
We regard probability measures on $\llbracket0,N\rrbracket$ as probability measures on $\mathbb Z$. Let
    \[
        u_N=\mathrm{Unif}_{\llbracket0,N\rrbracket},\qquad
        \upmu_N^{T}
        =
        \mathbb{P}^{(q,\b,\b)}_{N,\uptau}(T_N\in\cdot),
        \qquad
        \upmu_N^{\mathsf M}
        =
        \mathbb{P}^{(q,\b,\b)}_{N,\uptau}(\mathsf M_N\in\cdot).
    \]
    Since $T_N$ is a function only of the path $\bg$, its law under the enlarged measure $\mathbb{P}^{(q,\b,\b)}_{N,\uptau}$ is the same as its law under $\mathbb P^{(q,\b,\b)}$. By Proposition   \ref{prop:macroscopic uniformity TN upper}, there exists a constant
    $C_1\in(0,\infty)$, depending only on $q$ and $\b$, such that
    \begin{equation}\label{eq:MN-upper-TN-unif}
        \TV{\upmu_N^{T}-u_N}
        \le \ C_1/N \quad\mbox{ for each }N\in\mathbb N.
    \end{equation}
    In particular, 
    \begin{equation}\label{eq:MN-upper-pointwise-TN}
        \upmu_N^{T}(\{k\})
        \le
        u_N(\{k\})+\TV{\upmu_N^{T}-u_N}
        \le \left(1+C_1\right)/N\quad\mbox{ for each }N\in\mathbb N,\,k\in\llbracket0,N\rrbracket.
    \end{equation}

By Lemma \ref{lem:main claim in proof upper}, there exist a probability measure $\mathscr p$ on $\mathbb Z$ and constants $C_2,c_2\in(0,\infty)$ depending only on $q$ and $\b$, such that
 \begin{equation}\label{eq:MN-upper-conditional-main}
        \TV{
            \mathbf P_{N,k}^q(\mathsf M_N-k\in\cdot)-\mathscr p
        }
        \le
        C_2\left(e^{-c_2k}+e^{-c_2(N-k)}\right)\quad\mbox{ for each }N\in\mathbb N,\,k\in\llbracket0,N\rrbracket,
    \end{equation}    
and 
    \begin{equation}\label{eq:MN-upper-tail-p}
        \mathscr p\left(\mathbb Z\setminus\llbracket-m,m\rrbracket\right)
        \le C_2e^{-c_2m}\quad\mbox{ for each }m\in\mathbb N.
    \end{equation}
Note that $\mathscr p$ has finite first absolute moment. Indeed,  
    \begin{equation}\label{eq:MN-upper-first-moment-p}
      \sum_{r\in\mathbb Z}|r|\mathscr p(\{r\}) 
        \le
        1+\sum_{m=1}^{\infty}C_2e^{-c_2m}=C_3
        <\infty .
    \end{equation}

For a probability measure $\lambda$ on $\llbracket0,N\rrbracket$, define the convolution $\lambda*\mathscr p$ on $\mathbb Z$ by
    \[
        (\lambda*\mathscr p)(A)
        =
        \sum_{k=0}^{N}\lambda(\{k\})\,\mathscr p(A-k) 
        \qquad\mbox{ for each }
        A\subseteq\mathbb Z,
    \]
    where we set $A-k:=\{r\in\mathbb Z:k+r\in A\}$.
    We first compare $\upmu_N^{\mathsf M}$ with $\upmu_N^{ T}*\mathscr p$. For $k\in\llbracket0,N\rrbracket$, denote
    \[
        K_{N,k}(B)
        =
        \mathbf P_{N,k}^q(\mathsf M_N-k\in B) 
        \qquad\mbox{ for each } B\subseteq\mathbb Z.
    \]
    Then
    \[
        \upmu_N^{\mathsf M}(A)
        =
        \sum_{k=0}^{N}
        \upmu_N^{ T}(\{k\})\,K_{N,k}(A-k)\qquad\mbox{ for each }
        A\subseteq\mathbb Z.
    \]
Therefore, using \eqref{eq:MN-upper-conditional-main} and \eqref{eq:MN-upper-pointwise-TN},
\begin{equation}\label{eq:MN-upper-first-comparison}
    \begin{split}
         \TV{\upmu_N^{\mathsf M}-\upmu_N^{ T}*\mathscr p}
        &\le
        \sum_{k=0}^{N}
        \upmu_N^{ T}(\{k\})
        \TV{K_{N,k}-\mathscr p}                                    \le
        C_2
        \sum_{k=0}^{N}
        \upmu_N^{ T}(\{k\})
        \left(e^{-c_2k}+e^{-c_2(N-k)}\right)\\
        &\leq  C_2(1+C_{1}) N^{-1}
        \sum_{k=0}^{N}
        \left(e^{-c_2k}+e^{-c_2(N-k)}\right)                      \le
        2C_2(1+C_{1}) \left(1-e^{-c_2}\right)^{-1}  N^{-1}.
    \end{split}
\end{equation}

Next we compare $\upmu_N^{ T}*\mathscr p$ with $u_N*\mathscr p$. The map which sends a point $k$ to the probability measure $\mathscr p(\cdot-k)$ is a Markov kernel from  $\llbracket0,N\rrbracket$ to $\mathbb Z$.  Hence
    \begin{equation}\label{eq:MN-upper-second-comparison}
        \TV{\upmu_N^{ T}*\mathscr p-u_N*\mathscr p}
        \le
        \TV{\upmu_N^{ T}-u_N}
        \le C_{1}/N .
    \end{equation}

We now compare $u_N*\mathscr p$ with $u_N$. For $r\in\mathbb Z$, let $u_N^{(r)}$ be the law of $U+r$, where $U$ has law $u_N$. Then
        $u_N*\mathscr p=\sum_{r\in\mathbb Z}\mathscr p(\{r\})u_N^{(r)}$.
    By convexity of total variation distance,
    \[
        \TV{u_N*\mathscr p-u_N}
        \le
        \sum_{r\in\mathbb Z}
        \mathscr p(\{r\})\TV{u_N^{(r)}-u_N}.
    \]
    For fixed $r$, the measures $u_N^{(r)}$ and $u_N$ are uniform measures, with mass $(N+1)^{-1}$ at each point, on the intervals
    $\llbracket r,N+r\rrbracket$ and $\llbracket0,N\rrbracket$ respectively.   Therefore
    $ \TV{u_N^{(r)}-u_N} \le 2|r|/(N+1)$.
  Combining with \eqref{eq:MN-upper-first-moment-p}, 
    \begin{equation}\label{eq:MN-upper-third-comparison}
        \TV{u_N*\mathscr p-u_N}
        \le
         2 (N+1)^{-1} 
        \sum_{r\in\mathbb Z}|r|\mathscr p(\{r\})
         \le
        2C_3(N+1)^{-1} .
    \end{equation}
    
   Finally, combining  \eqref{eq:MN-upper-first-comparison},   \eqref{eq:MN-upper-second-comparison}, and    \eqref{eq:MN-upper-third-comparison},
    \[
    \begin{aligned}
        \TV{\upmu_N^{\mathsf M}-u_N}
        &\le
        \TV{\upmu_N^{\mathsf M}-\upmu_N^{ T}*\mathscr p}
        +
        \TV{\upmu_N^{ T}*\mathscr p-u_N*\mathscr p}
        +
        \TV{u_N*\mathscr p-u_N}                                      \\
        &\le 
        \left(
             2C_2(1+C_{1}) \left(1-e^{-c_2} \right)^{-1}
            +C_{1}
            +2C_3
        \right) N^{-1}.
    \end{aligned}
    \]
    The quantity in parentheses is finite and depends only on $q$ and $\b$. We conclude the proof.
\end{proof}

\subsection{Upper bound on the total variation distance between $\mathsf M_N$ and the uniform distribution}\label{sec:Approx4}

We start with a general lower bound on random walks, which we then turn into a lower bound on the total variation distance between $\mathsf M_N$ and the uniform distribution on $\llbracket 0,N\rrbracket$.

\begin{lemma}\label{lem:technical lem in lower bound}
   Let $\b>1$.  Let $(\updelta_i,\upupsilon_i)$, $i\in\mathbb N$ be i.i.d.  variables such that \(\updelta_i\sim\nu_\b\), and, conditionally on \(\updelta_i\), we set
   \[
   \upupsilon_i=\updelta_i+{\bf1}_{\updelta_i=0}\upsigma_i ,
   \]
   where $\upsigma_i$, $i\in\mathbb N$  are i.i.d. symmetric \(\{-1,1\}\)-valued random variables, independent of $\updelta_i$,   $i\in\mathbb N$.
   Then there exists a sufficiently large $N$, depending only on $\b$, such that  
\begin{equation}\label{eq:hnrstyhj}
    \mathbb P\left(
        N+\updelta_1+\dots+\updelta_t\geq0
        \text{ and }
       N+\upupsilon_1+\dots+\upupsilon_t\geq0
        \text{ for all }t\in\mathbb{Z}_{\geq0}
    \right)
    \ge \frac{1}{2} .
\end{equation}
\end{lemma}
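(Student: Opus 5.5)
The plan is a union bound over the two walks, followed by a standard gambler's ruin estimate for each walk separately. Write $S_t=\updelta_1+\dots+\updelta_t$ and $S'_t=\upupsilon_1+\dots+\upupsilon_t$. The complement of the event in \eqref{eq:hnrstyhj} is contained in $\{\exists t: S_t\le -N-1\}\cup\{\exists t: S'_t\le -N-1\}$. It therefore suffices to show that each of these two probabilities tends to $0$ as $N\to\infty$, at a rate depending only on $\b$. Once both are at most $1/4$, the claim follows.

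For the first walk, $S$ is a $\nu_\b$-random walk. By Lemma \ref{lem:basic hitting monotone coupling} with $h=N+1$, we have $\mathbb P(S_t\ge -N\text{ for all }t)=1-\b^{-2(N+1)}$, so the first probability equals $\b^{-2(N+1)}$.

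For the second walk, the increments $\upupsilon_i\in\{-1,1\}$ are i.i.d. with
\[
\mathbb P(\upupsilon_i=1)=\nu_\b(1)+\tfrac12\nu_\b(0)=\frac{\b+1}{\b+\b^{-1}+2}=\frac{\b}{1+\b},
\]
using $\b+\b^{-1}+2=(\b+1)^2/\b$. So $S'$ is a simple random walk with up-probability $p=\b/(1+\b)>1/2$. By the classical gambler's ruin formula, the probability that it ever reaches $-N-1$ is $((1-p)/p)^{N+1}=\b^{-(N+1)}$. Alternatively, one can use the martingale $\b^{-S'_t}$: it satisfies $\mathbb E[\b^{-\upupsilon}]=\frac{\b}{1+\b}\b^{-1}+\frac{1}{1+\b}\b=1$, and optional stopping gives the same bound.

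Combining the two estimates, the probability of the event in \eqref{eq:hnrstyhj} is at least $1-\b^{-2(N+1)}-\b^{-(N+1)}$, which is at least $1/2$ once $\b^{-(N+1)}\le 1/4$. This gives an explicit $N$ depending only on $\b$.

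I do not expect a real obstacle here. The only point to check is the identity for the marginal law of $\upupsilon_i$, which shows it is the same biased $\pm1$ step with density $\rho_+$ as in the Bernoulli mapping $\Upphi$ used earlier.
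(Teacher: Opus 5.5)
Your proof is correct. It has the same skeleton as the paper's: split the failure event into the two walks, then show each ruin probability is exponentially small in $N$. The difference is in how you bound the ruin probabilities.

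The paper never identifies the law of $\upupsilon_i$ and never uses an exact ruin formula. It notes that $\updelta_1$ and $\upupsilon_1$ both have mean $(\b-1)/(\b+1)>0$. It then chooses $\theta>0$ with $\rho=\mathbb E[e^{-\theta\updelta_1}]<1$ and $\rho'=\mathbb E[e^{-\theta\upupsilon_1}]<1$. Markov's inequality gives $\mathbb P(N+\updelta_1+\dots+\updelta_t<0)\le e^{-\theta N}\rho^t$, and summing over $t$ bounds the failure probability by $e^{-\theta N}\rho/(1-\rho)+e^{-\theta N}\rho'/(1-\rho')$. This is at most $1/2$ for $N$ large.

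You instead compute both ruin probabilities exactly:
\begin{itemize}
\item $\b^{-2(N+1)}$ for the lazy walk, from Lemma~\ref{lem:basic hitting monotone coupling} with $h=N+1$;
\item $\b^{-(N+1)}$ for the second walk, after observing that $\upupsilon_i$ is a $\pm1$ step with up-probability $\b/(1+\b)$. This is correct, since $\b+\b^{-1}+2=(\b+1)^2/\b$.
\end{itemize}

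Your route gives sharp constants and an explicit threshold: any $N$ with $\b^{N+1}\ge 4$ works. The paper's Chernoff bound with a union over times is cruder and leaves $N$ implicit. Its advantage is that it uses only positive drift and exponential moments, so it would work unchanged for increment laws without closed-form ruin probabilities.
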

\begin{proof}
    We have $ \mathbb E[\updelta_1]=\mathbb E[\upupsilon_1]=(\b-1)/(\b+1)$. For \(Y\in\{\updelta_1,\upupsilon_1\}\), the function \(\theta\mapsto\mathbb E[e^{-\theta Y}]\) is differentiable at \(0\), has value \(1\) at \(0\), and has derivative \(-\mathbb E[Y]<0\) at \(0\). Therefore there exists \(\theta >0\) such that $\rho =\mathbb E[e^{-\theta  \updelta_1}]<1$ and $\rho'=\mathbb E[e^{-\theta \upupsilon_1}]<1$. 
For \(N\in\mathbb N\), Markov's inequality and a union bound give 
\[ 
    \mathbb P\left(
        \mbox{there exists } t\in\mathbb N\mbox{ such that }
        N+\updelta_1+\dots+\updelta_t<0
    \right)
     \le
    \sum_{t\ge1}
    e^{-\theta  N}\rho^t
    =
    e^{-\theta N} \rho/(1-\rho), 
\]
and a corresponding bound where the $\updelta_i$ on the left side are replaced by $\upupsilon_i$, and $\rho$ on the right side is replaced by  $\rho'$.
Choose \(N\in\mathbb N\) such that
 $e^{-\theta N}\rho/(1-\rho)
    +
    e^{-\theta N}\rho'/(1-\rho')
    \le 1/2$.
Then \eqref{eq:hnrstyhj} holds for this $N$.
\end{proof}

\begin{proposition}\label{prop:part of the thm lower bound}
    Assume $q\in[0,1)$ and $\b>1$.  Then there exists a constant $C>0$ depending only on $q$ and $\b$, such that for each $N\geq2$,
    \[
    \TV{
        \mathbb{P}^{(q,\b,\b)}_{N,\uptau} (\mathsf M_N\in\cdot)
        -
        \mathrm{Unif}_{\llbracket0,N\rrbracket}
    }
    \geq \frac{C}{N}.
    \]
\end{proposition}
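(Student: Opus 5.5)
We will mimic the endpoint argument used for $T_N$ in Proposition~\ref{prop:macroscopic uniformity TN lower}, using the reversal symmetry of the model. The map $\mathrm{inv}$ from Step~2 of that proof preserves $\mathbb P^{(q,\b,\b)}$. Combined with the substitution $\kappa_i\mapsto \kappa_{N+1-i}$, it induces a measure-preserving involution of the enlarged space. Under this involution the particle path $s$ is sent to its reversal $s'_j=s_{N-j}-s_N$: a horizontal step stays horizontal, and the resampled spin is again a fair coin. Hence $\mathsf M_N(\tau')=N-\mathsf Q_N(\tau)$, where $\mathsf Q_N$ is the \emph{last} location of the minimum of $s$, and so
\[
\mathbb P(\mathsf M_N=N)=\mathbb P(\mathsf Q_N=0).
\]
As in \eqref{eq:TN-TV-by-endpoint-gap}, we then get
\[
\TV{\mathbb P(\mathsf M_N\in\cdot)-\mathrm{Unif}}\ \ge\ \tfrac12\,\mathbb P(\mathsf M_N=0,\ \mathsf Q_N>0)\ \ge\ \tfrac12\,\mathbb P(\mathsf M_N=0,\ s_2=0,\ s_j\ge 0\ \forall j).
\]
Note that $s_1=0$ is impossible, since $s$ has $\pm1$ steps, so we use $s_2=0$ instead.

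It therefore suffices to show that
\[
\mathbb P\bigl(s_j\ge0 \text{ for all } j,\ s_2=0\bigr)\ \ge\ \frac{c}{N+1}.
\]
We restrict to $T_N=0$ and use \eqref{eq:q-vs-zero-lower-TN-converse} to pass from $q$ to $q=0$ at the cost of a factor $1/M$; the Bernoulli variables are unaffected because they are independent of $\bg$. This leaves the bound $\mathbb P^0(T_N=0)\ge p_\infty/(N+1)$ from the proof of Proposition~\ref{prop:macroscopic uniformity TN lower}, together with a uniform positive lower bound on
\[
\mathbb P^0\bigl(s_j\ge 0\ \forall j,\ s_2=0 \mid T_N=0\bigr).
\]
Under $\mathbb P^0(\cdot\mid T_N=0)$, the path $\bg$ is a $\nu_\b$-walk conditioned to stay nonnegative up to time $N$. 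The pair $(\gamma_j-\gamma_{j-1},\,2\tau_j-1)$ has exactly the joint law of $(\updelta_j,\upupsilon_j)$ from Lemma~\ref{lem:technical lem in lower bound}, except that the walk is conditioned.

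We bound the conditioned probability from below by the unconditioned one divided by $p_N\le1$. Let $L$ be the constant from Lemma~\ref{lem:technical lem in lower bound}, and fix a finite prefix of length $2+2L$:
\begin{itemize}
\item first an up step and then a down step of $\bg$, so $\tau_1=1$, $\tau_2=0$ and $s_2=0$ (this requires $\gamma_2=0$, which is allowed because $T_N=0$ only asks $\gamma_j\ge 0$ after position $0$ with the first minimum at $0$, and a later return to $0$ is fine);
\item then $2L$ up steps, after which $\gamma$ and $s$ both sit at height $2L\ge L$.
\end{itemize}
This prefix has probability bounded below by a constant depending only on $\b$. After the prefix, the Markov property and Lemma~\ref{lem:technical lem in lower bound} show that, with probability at least $1/2$, both the $\gamma$-path and the $s$-path stay at least $0$ forever, and in particular up to time $N$. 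On this event $T_N=0$ holds, since all $\gamma_j\ge0=\gamma_0$ with first minimum at $0$, and also $s_j\ge0$ and $s_2=0$. For $N$ smaller than the prefix length, we just enlarge constants, since $N\ge2$ and finitely many $N$ each give a positive probability.

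Combining these steps gives a lower bound of order $1/(N+1)\ge 1/(2N)$. The main obstacle is the reversal symmetry for $\mathsf M_N$ with ties, specifically checking that the reversed configuration has the correct law and that the first and last argmin swap correctly. This reduces to the $\mathrm{inv}$ invariance already proved, together with the fact that $\tau_i=\kappa_i$ on flat steps, where the fair coins are symmetric under relabeling.
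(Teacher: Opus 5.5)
Your proposal is correct and is essentially the paper's proof. You use reversal symmetry to reduce to $\mathbb P(\mathsf M_N=0,\ \mathsf Q_N>0)\gtrsim 1/N$, pass from $q$ to $q=0$ at the cost of $1/M$, restrict to $T_N=0$, and combine a fixed prefix with Lemma~\ref{lem:technical lem in lower bound}. Your two departures are harmless: an up--down prefix instead of the paper's two flat steps with $\kappa_1=1,\kappa_2=0$, and the explicit bound $\mathbb P^{(0,\b,\b)}(T_N=0)\ge p_\infty/(N+1)$ instead of the limit \eqref{eq:claim limit TN0}.

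One small fix: for $s'_j=s_{N-j}-s_N$ to hold pathwise, the coins must be mapped by $\kappa_i\mapsto 1-\kappa_{N+1-i}$, as in the paper. With $\kappa_i\mapsto\kappa_{N+1-i}$ the spins on flat steps keep their sign. The identity $\mathbb P(\mathsf M_N=N)=\mathbb P(\mathsf Q_N=0)$ still holds in law either way, because those spins are conditionally i.i.d.\ fair coins given $\bg$.
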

\begin{proof}
    We recall that, for
\(\tau=(\tau_1,\ldots,\tau_N)\in\{0,1\}^N\),  
\begin{equation}\label{eq:occupation walk}
    s_0(\tau)=0,\qquad
    s_j(\tau)= (2\tau_1-1) +\dots+(2\tau_j-1) 
    \quad \mbox{ for }j\in\llbracket1,N\rrbracket,
\end{equation}
and
 $\mathsf M_N
    =
    \min (
        j\in\llbracket0,N\rrbracket:
        s_j(\tau)=\min_{\ell\in\llbracket0,N\rrbracket}s_\ell(\tau) )$.
Define 
\begin{equation}\label{eq:ANBN}
    A_N=\{\tau:s_j(\tau)\ge0\text{ for all }j\in\llbracket0,N\rrbracket\}\qquad\mbox{ and }\qquad
    B_N=\{\tau:s_j(\tau)>0\text{ for all }j\in\llbracket1,N\rrbracket\}.
\end{equation}
In this proof, we write $\mathbb P^{0}_{N,\uptau}=\mathbb P^{(0,\b,\b)}_{N,\uptau}$,  $\mathbb P^{0} =\mathbb P^{(0,\b,\b)}$, $\mathbb P^{q}_{N,\uptau}=\mathbb P^{(q,\b,\b)}_{N,\uptau}$, and $\mathbb P^{q} =\mathbb P^{(q,\b,\b)}$.
For clarity, we split the rest of the proof into two steps.\\

\noindent\textbf{Step 1. } In this step, we prove the proposition assuming the following claim: there exists a constant $c_1>0$ depending only on $\b$, such that 
\begin{equation}\label{eq:MN-tie-lower-target new}
    \mathbb P^{0}_{N,\uptau}(A_N\setminus B_N)
    \ge c_1/N 
    \qquad\mbox{ for all } N\ge2 .
\end{equation}
We first prove the corresponding bound for general $q\in[0,1)$.
Indeed, in view of \eqref{eq:PNbb}, 
\begin{equation} \label{eq:ytnedyjnetyjn again}
    \mathbb{P}^{q}_{N,\uptau}(B)
    =
    \frac{\mathbb E^{0}_{N,\uptau}\left[{\bf1}_B W_{\bs(\bg)}(\b^2)\right]}
    {\mathbb E^{0}\left[W_{\bs(\bg)}(\b^2)\right]}\quad\mbox{ for every event }B .
\end{equation}
By Theorem \ref{thm:prob interpretation of Bryc polynomials}, $W_{\bs(\bg)}(\b^2)\ge1$ for every $\bg\in\lrw_N$.  
When \(q\in(0,1)\), by Proposition \ref{prop:moment bound coex} with \(z=\b^2\) and \(m=1\); and when $q=0$, by \(W_{\bs(\bg)}(\b^2)=1\), there exists \(M\in(0,\infty)\), depending only on \(q,\b\), such that
\begin{equation*} 
    \mathbb E^{0}\left[W_{\bs(\bg)}(\b^2)\mid T_N=k\right]\le M
    \quad
    \mbox{for every }N\in\mathbb N,\ k\in\llbracket0,N\rrbracket .
\end{equation*}
Hence \(\mathbb E^{0}[W_{\bs(\bg)}(\b^2)]\le M\). Combining this upper bound, the lower bound \(W_{\bs(\bg)}(\b^2)\ge1\), by \eqref{eq:ytnedyjnetyjn again},
\begin{equation*} 
    \mathbb{P}^{q}_{N,\uptau}(B)
    \ge 
    \mathbb{P}^{0}_{N,\uptau}(B)/M 
    \quad\mbox{ for every event }B .
\end{equation*}  
Using this inequality for the event $A_N\setminus B_N$ and combining with \eqref{eq:MN-tie-lower-target new},
\begin{equation}\label{eq:MN-tie-lower-target new q}
    \mathbb P^{q}_{N,\uptau}(A_N\setminus B_N)
    \ge C_1/(MN) 
    \qquad\mbox{ for all } N\ge2 .
\end{equation}
To lower bound the total variation distance, we recall the map   in Step 2 of the proof of Proposition \ref{prop:macroscopic uniformity TN lower},  
\[
\mathrm{inv}:\lrw_N\to\lrw_N,\qquad (\mathrm{inv}(\bg))_i=\gamma_{N-i}-\gamma_N\quad \mbox{ for }i\in\llbracket0,N\rrbracket.
\]
As in the proof of Proposition \ref{prop:macroscopic uniformity TN lower},  we know that
\begin{equation}\label{eq:tdujkmdfytk}\mathbb P^{q}(\mathrm{inv}(\bg))=\mathbb P^{q}(\bg) \quad\mbox{ for } \bg\in\lrw_N .\end{equation}
We now extend this map to the enlarged space of random walk paths and Bernoulli random variables. Define 
\[
    (\mathrm{inv}(\kappa))_i=1-\kappa_{N+1-i}\quad \mbox{ for }i\in\llbracket1,N\rrbracket .
\]
 Checking the cases in \eqref{eq:occupation coex}, we know that
\begin{equation}\label{eq:tdujkmdfytk2}
    (\mathrm{inv}(\tau))_i=1-\tau_{N+1-i}\quad \mbox{ for }i\in\llbracket1,N\rrbracket .
\end{equation}
By \eqref{eq:tdujkmdfytk} and the fact that the product Bernoulli\((1/2)\) law of $\kappa$ is invariant under the inversion map,
we know that this map preserves the measure \(\mathbb P^{q}_{N,\uptau}\) on the enlarged space.
Observe from  \eqref{eq:ANBN} that $A_N=\{\mathsf M_N=0\}$. Since $s_j(\mathrm{inv}(\tau))  = s_{N-j}(\tau)-s_N(\tau)$ by \eqref{eq:tdujkmdfytk2}, we know that
\(\mathrm{inv}(\tau)\in B_N\) if and only if $s_i(\tau)>s_N(\tau)$ for all $i\in\llbracket0,N-1\rrbracket$, which is equivalent to \(\mathsf M_N=N\). 
Hence  
$\mathbb P^{q}_{N,\uptau}(\mathsf M_N=N)
    =
    \mathbb P^{q}_{N,\uptau}(B_N)$, and therefore,
 \[
 \mathbb P^{q}_{N,\uptau}(\mathsf M_N=0)
    -
    \mathbb P^{q}_{N,\uptau}(\mathsf M_N=N)      =
    \mathbb P^{q}_{N,\uptau}(A_N\setminus B_N).
 \]
By the above display, we have \[
\begin{aligned}
    &\TV{\mathbb{P}^{q}_{N,\uptau} (\mathsf M_N\in\cdot)-\mathrm{Unif}_{\llbracket0,N\rrbracket}}
     \ge
    \max\left(\left|\mathbb{P}^{q}_{N,\uptau} (\mathsf M_N=0)-(N+1)^{-1}\right|,\left|\mathbb{P}^{q}_{N,\uptau} (\mathsf M_N=N)-(N+1)^{-1}\right|\right)       \\
    &\ge
    \frac12\left|
        \left(\mathbb{P}^{q}_{N,\uptau} (\mathsf M_N=0)-(N+1)^{-1}\right)-\left(\mathbb{P}^{q}_{N,\uptau} (\mathsf M_N=N)-(N+1)^{-1}\right)
    \right|                                          =
    \frac12\,
    \mathbb P^{q}_{N,\uptau}(A_N\setminus B_N).
\end{aligned}
\]
Combining with \eqref{eq:MN-tie-lower-target new q}, we conclude the proof.\\

\noindent\textbf{Step 2. } In this step, we prove the claim \eqref{eq:MN-tie-lower-target new}, which we restate as follows: there exists a constant $c_1>0$ depending only on $\b$, such that 
\begin{equation}\label{eq:MN-tie-lower-target new new}
    \mathbb P^{0}_{N,\uptau}(A_N\setminus B_N)
    \ge c_1/N 
    \qquad\mbox{ for all } N\ge2 .
\end{equation}
We begin by proving the following claim
\begin{equation}\label{eq:claim limit TN0}
    \lim_{N\to\infty}
    (N+1) \cdot 
        \mathbb P^{0}(T_N=0)  
    = (\b+1)/(\b-1).
\end{equation}
Indeed, this follows from the techniques developed in the proof of Lemma \ref{lem:macroscopic uniformity q0}. Recall from \eqref{eq:jdtyjykmt}  that $\mathbb P^{0}(T_N=0)=p_N/D$, where   for $\ell\in\mathbb{Z}_{\geq0}$, $p_{\ell}$ denotes the probability that a $\nu_{\b}$-random walk started at $0$ does not reach level $-1$ during the first $\ell$ steps,  $w=(\b+\b^{-1}+2)/4$, and 
 $    D = \sum_{m=1}^{N}   \b/(4w) \cdot  p_{m-1}p_{N-m} +  p_N $. 
Recall from \eqref{eq:p ell exponential convergence} and \eqref{eq:fysrhth} that there exist constants \(C_2,c_2,C_3,c_3\in(0,\infty)\), depending only on \(\b\), such that
\begin{equation*} 
    0\leq p_\ell-p_{\infty}\leq C_2e^{-c_2\ell}
    \quad\mbox{ for all }
    \ell\in\mathbb Z_{\geq0},\qquad \qquad\qquad \left|D-(N+1) \cdot\b/(4w) \cdot p_{\infty}^2\right|    \leq C_3, 
\end{equation*} 
where $p_{\infty}=1-\b^{-2}$. Taking $\ell=N$ we know  $\lim_{N\to\infty} (N+1)p_N/D= (\b+1)/(\b-1)$, and hence  \eqref{eq:claim limit TN0}.

Let $N_0\in\mathbb N$ be the constant chosen from Lemma \ref{lem:technical lem in lower bound}, which depends only on $\b$. For \(N\ge N_0+2\), define an event \(E_N\) on the enlarged space as the event that all of the following hold:
\begin{align*}
    & 0=\gamma_0=\gamma_1=\gamma_2,\qquad 
    \kappa_1=1,\mbox{ }
    \kappa_2=0,\qquad 
    \gamma_i-\gamma_{i-1}=1\quad\mbox{for }i\in\llbracket3,N_0+2\rrbracket,\\
    &    N_0+\sum_{i=N_0+3}^t  (\gamma_i-\gamma_{i-1})\ge0 
    \quad\mbox{and}\quad
    N_0+\sum_{i=N_0+3}^t  (2\tau_i-1)\ge0 \qquad\mbox{ for }t\in\llbracket N_0+2,N \rrbracket.
\end{align*} 
Then $E_N\subseteq A_N\setminus B_N$ and $E_N\subseteq\{T_N=0\}$. Indeed, on $E_N$, the occupation walk \eqref{eq:occupation walk} satisfies $s_0=0$, $s_1=1$, $s_2=0$, then increases by \(1\) during the next \(N_0\) steps, and after time \(N_0+2\) remains nonnegative; similarly, the path \(\bg\) remains nonnegative and \(\gamma_0=0\).

We next compute the conditional probability $\mathbb P^{0}_{N,\uptau} (E_N\, |\,T_N=0 )$. By Lemma \ref{lem:decomposition TASEP}, under \(\mathbb P^{0}(\,\cdot\,|T_N=0)\), the path \((\gamma_i-\gamma_0)_{i\in\llbracket0,N\rrbracket}\) is a \(\nu_\b\)-random walk started at \(0\), conditioned to stay nonnegative up to time \(N\).  Since \(T_N\) depends only on the path, the $\kappa_i$, $i\in\llbracket1,N\rrbracket$ remain i.i.d. \(\ber(1/2)\) and
independent of the path.  Under the unconditioned \(\nu_\b\)-walk together with the independent i.i.d. $\kappa_i$, the prescribed first \(N_0+2\) path increments and $\kappa_1,\kappa_2$ have probability $\nu_\b(0)^2\,\nu_\b(1)^{N_0}\,2^{-2}$.
After these prescribed steps, the future pairs $(\gamma_{j+N_0+2}-\gamma_{j+N_0+1},2\tau_{j+N_0+2}-1)$, $j\in\mathbb N$  have the same joint law as $(\updelta_j,\upupsilon_j)$, $j\in\mathbb N$ from Lemma \ref{lem:technical lem in lower bound}, independently of the prescribed initial segment.  Therefore, by Lemma \ref{lem:technical lem in lower bound}, for every
\(N\ge N_0+2\),
\begin{equation*}
    \begin{split}
        & \mathbb P^{0}_{N,\uptau}
    \left(E_N\,\middle|\,T_N=0\right)\\&=p_N^{-1}\cdot\nu_\b(0)^2\,\nu_\b(1)^{N_0}\,2^{-2}
        \,
        \mathbb P\left(
            N_0+  \updelta_1+\dots+\updelta_t\ge0,\
            N_0+  \upupsilon_1+\dots+\upupsilon_t\ge0
            \text{ for all }t\in\llbracket0,N-N_0-2\rrbracket
        \right)                                            \\
        &\ge 
        p_N^{-1}\cdot\nu_\b(0)^2\,\nu_\b(1)^{N_0} \,2^{-2}\cdot 1/2\ge 1/8\cdot\nu_\b(0)^2\,\nu_\b(1)^{N_0} =C_4,
    \end{split}
\end{equation*} 
where $C_4>0$ depends only on $\b$. Consequently, for every \(N\ge N_0+2\),
\begin{equation*} 
    \mathbb P^{0}_{N,\uptau}(A_N\setminus B_N)
    \ge C_4\cdot
    \mathbb P^{0}(T_N=0).
\end{equation*}
Combining with \eqref{eq:claim limit TN0}, there exist constants $N_1\in\mathbb N$ and $C_5>0$ depending only on $\b$, such that 
\[
\mathbb P^{0}_{N,\uptau}(A_N\setminus B_N)\geq C_5/N\quad\mbox{ for }N\geq N_1.
\]
To conclude the proof of the claim \eqref{eq:MN-tie-lower-target new new}, it suffices to handle the finitely many values $N\in\llbracket2,N_1-1\rrbracket$. 
We next show that $\mathbb P^{0}_{N,\uptau}(A_N\setminus B_N)>0$ for all $N\geq2$. Indeed, for every \(N\ge2\), the marginal law of $\tau$ under \(\mathbb P^{0}_{N,\uptau}\) has full support on \(\{0,1\}^N\). To prove this, fix \(\tau\in\{0,1\}^N\) and choose the path \(\bg \in\lrw_N\) with $\gamma_i-\gamma_{i-1}=2\tau_i-1$ for $i\in\llbracket1,N\rrbracket$. Then $\mathbb{P}^{0}(\bg)>0$, and conditionally on this path the configuration \(\tau\) is produced with probability one, since $\bg$ has no horizontal steps. Observe that the configuration $(1,0,1,1,\ldots,1)$ belongs to \(A_N\setminus B_N\). Hence $\mathbb P^{0}_{N,\uptau}(A_N\setminus B_N)>0$ for all $N\geq2$, and choosing now $C_1$ sufficiently small, we conclude.  
\end{proof}

\subsection{Proof of Theorem~\ref{thm:ApproxCoex again}}\label{sec:Final}

We now have all tools to prove Theorem~\ref{thm:ApproxCoex again} on approximating the stationary distribution of the open ASEP on the coexistence line.

\begin{proof}[Proof of Theorem~\ref{thm:ApproxCoex again}]
Fix $j\in \llbracket 0,N \rrbracket$, recall \eqref{eq:CoreDefs}, and abbreviate
\[
F=X_{j,m}=(\tau_i)_{i \in I_{j,m}},\quad B=B_{j,m},\quad
\mu_k=\mathbb{P}^{(q,\b,\b)}_{N,\uptau}(T_N=k),\quad w_j=\mathbb{P}^{(q,\b,\b)}_{N,\uptau}(\mathsf M_N=j), \quad D_k=\mathsf M_N-k.
\]
Set $h=m/3$. For any event $E$ in the state space of $F$,
\begin{align}\label{asepm:eq:mixture}
\mathbb{P}^{(q,\b,\b)}_{N,\uptau}(F\in E,M_N=j)-w_jB(E)=\sum_{k=0}^N\mu_k
\bigl[\mathbf{P}_{N,k}^q(F\in E,M_N=j)-\mathbf{P}_{N,k}^q(M_N=j)B(E)\bigr].
\end{align}
We split now the sum according to $|k-j|\le h$ and $|k-j|>h$. 
Note that if $|k-j|\le h$, then we get
\begin{equation}\label{asepm:eq:projection}
I_{j,m}\subseteq I_{k,h}.
\end{equation}
Hence, applying Proposition~\ref{asepm:prop:joint} with radius $h$, projected onto $F$, and taking the displacement coordinate equal to $j-k$, we see that for some constants $c_1,C_1>0$
\begin{equation}\label{asepm:eq:nearpoint}
\left|\mathbf{P}_{N,k}^q(F\in E,M_N=j)-\mathbf{P}_{N,k}^q(M_N=j)B(E)\right|
\le C_1e^{-c_1h}.
\end{equation}
Since $\mu_k\le C/(N+1)$, the contribution of coordinates  $|k-j|\le h$ to \eqref{asepm:eq:mixture} is at most $(N-1)^{-1}C_2(2h+1)e^{-c_2h}$ for some constants $C_2,c_2>0$.

Now consider the case where $|k-j|>h$. Note that the difference between the summands in \eqref{asepm:eq:mixture} is at most $\mathbf{P}_{N,k}^q(M_N=j)$. Hence, Corollary~\ref{cor:particle-minimum-localization} gives
\[
\mathbf{P}_{N,k}^q(M_N=j)
\le \mathbf{P}_{N,k}^q(|M_N-k|\ge|j-k|)
\le C_3e^{-c_3|j-k|} , 
\]
and so the total contribution is bounded from above by
\begin{align}\label{asepm:eq:far}
\sum_{|k-j|>h}\mu_k\mathbf{P}_{N,k}^q(M_N=j)
\le\frac {C_4}{N+1}\sum_{|k-j|>h}e^{-c_4|j-k|}\le\frac{C_5e^{-c_5h}}{N+1}.
\end{align} for some constants $c_3,C_3,c_4,C_4,c_5,C_5>0$. 
Combining now the above estimates,  we get that 
\[
\left|\mathbb{P}^{(q,\b,\b)}_{N,\uptau}(F\in E,M_N=j)-w_jB(E)\right|
\le\frac{C_6(h+1)e^{-c_6h}}{N+1}.
\]
uniformly in $E$ with some constants $c_6,C_6>0$. Recall from Lemma~\ref{asepm:lem:pointwise} that $w_j\ge c/(N+1)$. We now divide by $w_j$ and take the supremum over $E$ to see that
\begin{equation}\label{asepm:eq:final}
\left\|\mathbb{P}^{(q,\b,\b)}_{N,\uptau}(F \in \cdot \mid M_N=j)-B\right\|_{\mathrm{TV}}
\le C_7(h+1)e^{-c_7h}\le C_8e^{-c_8m}.
\end{equation}
for all $h$ large enough and constants $c_7,C_7,c_8,C_8>0$. This yields the desired Bernoulli approximation when conditioning on the event $M_N=j$.
The upper and lower total variation bounds for $M_N$ follow now from
Propositions \ref{prop:part of the thm upper bound} and \ref{prop:part of the thm lower bound}. 
The convergence of $M_N/N$ to the uniform measure on $[0,1]$
follows from their total variation approximation. This completes the proof.
\end{proof}

\section{Fluctuations and correlations of the stationary measure of the open ASEP at the coexistence line}\label{sec:Applications}

In the following, we establish the results on fluctuations and correlations of the stationary measure of the open ASEP on the coexistence line.

\subsection{Moderate deviation estimates: Proof of Theorem \ref{thm:moderate deviation Coex}}

We start with the moderate deviation bounds from Theorem \ref{thm:moderate deviation Coex}, which we state below again for clarity. 

\begin{theorem}\label{thm:moderate deviation Coex again}
Consider the open ASEP on $\llbracket1,N\rrbracket$ with jump rates $\alpha,\beta,\gamma,\delta$ and $q$ satisfying \eqref{eq:conditions open ASEP}. Assume Liggett's condition \eqref{eq:Liggett's condition} and that the open ASEP is on the coexistence line $\a=\b>1$, where $\a=(1-\alpha)/\alpha$ and $\b=(1-\beta)/\beta$. We denote the stationary measure by $\mu_N^{(q,\b,\b)}$. Let $\tau =(\tau_1,\ldots,\tau_N)\sim\mu_N^{(q,\b,\b)}$ and $\mathsf{M}_N =\mathsf{M} (\tau)$ be given by \eqref{eq:Mtau}. 
Denote $\rho_-=1/(1+\b)$ and $\rho_+=\b/(1+\b)$. 
There exist constants \(C,c\in(0,\infty)\), depending only on \(q\) and \(\b\), such that, for every \(N\in\mathbb N\), every nonempty set \(I\subseteq\llbracket1,N\rrbracket\), and every \(x\geq0\),
\[
\mu_N^{(q,\b,\b)}\left(
\left|
\sum_{i\in I}\tau_i
-\rho_-\left|I\cap\llbracket1,\mathsf M_N\rrbracket\right|
-\rho_+\left|I\cap\llbracket\mathsf M_N+1,N\rrbracket\right|
\right|
\geq x\sqrt{|I|}
\right)
\leq Ce^{-cx^2}.
\]
\end{theorem}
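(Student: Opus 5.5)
We plan to condition on the shock location $\mathsf M_N=k$ and reduce to sums of independent Bernoulli variables, using Theorem~\ref{thm:ApproxCoex again}. Write $S_I=\sum_{i\in I}\tau_i-\rho_-|I\cap\llbracket1,\mathsf M_N\rrbracket|-\rho_+|I\cap\llbracket\mathsf M_N+1,N\rrbracket|$. We may assume $x\geq x_0$ for a large constant $x_0$, since otherwise the bound is trivial after enlarging $C$. Since $|S_I|\le |I|$, we may also assume $x\le\sqrt{|I|}$. Fix $k$ and choose a window radius $m=\lceil \eta x^2\rceil$ with $\eta>0$ small. Split $I$ into $I_{\mathrm{out}}=I\cap I_{k,m}$ and $I_{\mathrm{in}}=I\setminus I_{k,m}$, where $I_{k,m}=\llbracket1,k-m\rrbracket\cup\llbracket k+m,N\rrbracket$ as in \eqref{eq:CoreDefs}. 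Then $|I_{\mathrm{in}}|\le 2m$, and the centered contribution of $I_{\mathrm{in}}$ is at most $2m\le 2\eta x^2$ in absolute value.

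The contribution of the window is handled as follows. If $2\eta x^2\le x\sqrt{|I|}/2$, it is absorbed. This holds when $x\le \sqrt{|I|}/(4\eta)$, which is automatic from $x\le\sqrt{|I|}$ once $\eta\le1/4$. So the window costs at most half of the threshold.

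For the outer part, the indices in $I_{k,m}\cap\llbracket1,k\rrbracket$ lie in $\llbracket1,k-m\rrbracket$ and carry centering $\rho_-$. The indices in $I_{k,m}\cap\llbracket k+1,N\rrbracket$ lie in $\llbracket k+m,N\rrbracket$ and carry centering $\rho_+$; the site $k+m$ is consistent since $m\ge1$. Hence, under $B_{k,m}$, the centered outer sum is a sum of at most $|I|$ independent centered variables bounded by $1$. Hoeffding's inequality gives probability at most $2e^{-x^2/2}$ that it exceeds $x\sqrt{|I|}/2$. By the first bound of Theorem~\ref{thm:ApproxCoex again},
\[
\mu_N\bigl(|S_I|\ge x\sqrt{|I|}\,\big|\,\mathsf M_N=k\bigr)\le 2e^{-x^2/2}+Ce^{-cm}\le 2e^{-x^2/2}+Ce^{-c\eta x^2}.
\]
The edge cases $k<m$ or $k>N-m$, where one block is empty, are covered by the same statement. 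Averaging over $k$ with weights $\mu_N(\mathsf M_N=k)$ yields the claim with a constant $c'=\min(1/2,c\eta)$.

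The main obstacle is purely bookkeeping. The conditional approximation holds only outside a window of radius $m$, so $m$ must be tied to $x^2$ rather than held fixed. This choice lets the exponential total variation error $e^{-cm}$ become Gaussian in $x$, while the deterministic window error stays below $x\sqrt{|I|}$ in the regime $x\le\sqrt{|I|}$. The remaining checks are that the conditioning events $\{\mathsf M_N=k\}$ have positive probability, which follows from Lemma~\ref{asepm:lem:pointwise}, and that every constant depends only on $q$ and $\b$.
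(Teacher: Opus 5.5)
Your proof is correct, but it conditions on a different variable than the paper does. The paper conditions on the path minimum $T_N=k$. It uses the Bernoulli approximation of Proposition~\ref{prop:first part proof Thm} together with the localization bound $\mathbb P(|T_N-\mathsf M_N|\ge n)\le Ce^{-cn}$ from Corollary~\ref{cor:particle-minimum-localization}. Because the statement is centered at $\mathsf M_N$ rather than $T_N$, the paper must also use the Lipschitz bound $|P_I(k)-P_I(\ell)|\le |k-\ell|$ for the profile $P_I(k)=\rho_-|I\cap\llbracket1,k\rrbracket|+\rho_+|I\cap\llbracket k+1,N\rrbracket|$ on $\{|T_N-\mathsf M_N|<n\}$. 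It pays an extra $e^{-cn}$ for the complement, with $n=\lfloor x^2/16\rfloor$.

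You instead condition directly on $\mathsf M_N=k$ and invoke the first bound of Theorem~\ref{thm:ApproxCoex again}. The centering is then exactly the deterministic profile $\rho_-\mathbf 1_{\{i\le k\}}+\rho_+\mathbf 1_{\{i>k\}}$. The window, Hoeffding and total-variation steps coincide with the paper's, but you need no Lipschitz or localization correction.

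Your route is shorter and uses only the headline theorem as a black box. The paper's route works with the more primitive $T_N$-conditioned estimates. The $\mathsf M_N$-conditioned bound you use is itself derived from these, via Proposition~\ref{asepm:prop:joint} and the lower bound $\mathbb P(\mathsf M_N=j)\ge c/(N+1)$. The coupling with $T_N$ that the paper sets up here is also reused in the fluctuation theorem.

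Two bookkeeping points need fixing. First, with $m=\lceil \eta x^2\rceil$ you only have $|I_{\mathrm{in}}|\le 2m-1\le 2\eta x^2+1$, not $2\eta x^2$. Take, say, $\eta\le 1/8$ and $x\ge 2$; then this is at most $x^2/2\le x\sqrt{|I|}/2$. Second, Theorem~\ref{thm:ApproxCoex again} requires $1\le m\le N$. This holds because $x\le \sqrt{|I|}\le\sqrt N$ and $\eta\le 1$.
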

\begin{proof}
    By Theorem \ref{thm:ApproxCoex}, $\tau\sim\mu_N^{(q,\b,\b)}$ can be coupled with a random variable $T_N\in\llbracket0,N\rrbracket$, such that 
\begin{align}  
                &\TV{
    \mathbb{P} 
    \left(
        (\tau_i)_{i\in \llbracket1,k-m\rrbracket\cup \llbracket k+m,N\rrbracket}\in\cdot
        \,\middle|\,T_N=k
    \right)
    -
    \ber_{\llbracket1,k-m\rrbracket} \left(\rho_-\right)
    \otimes
    \ber_{\llbracket k+m,N\rrbracket} \left(\rho_+\right) 
}
\le C_1 e^{-c_1m},\label{eq:approx in proof of moderate}\\
&\mathbb{P} \left(|T_N- \mathsf{M}_N |\ge m\right)\le C_1 e^{-c_1m}, \label{eq: dist in proof moderate}
\end{align}
for $N\geq m\geq1$, $k\in\llbracket0,N\rrbracket$, 
where $C_1,c_1\in(0,\infty)$ depend only on $q$ and $\b$. 
For \(k\in\llbracket0,N\rrbracket\), \(i\in\llbracket1,N\rrbracket\), let
\[
p_i(k)=\rho_-{\bf1}_{\{i\leq k\}}+\rho_+ {\bf1}_{\{i > k\}}.
\]
Then for each nonempty set \(I\subseteq\llbracket1,N\rrbracket\) and $k,\ell\in\llbracket0,N\rrbracket$,
\begin{align}
 \label{eq:PIk}  & P_I(k)=  \sum_{i\in I}p_i(k)
=\rho_-|I\cap\llbracket1,k\rrbracket|
+\rho_+|I\cap\llbracket k+1,N\rrbracket|, \\
\label{eq:coex profile Lipschitz}&\left|P_I(k)-P_I(\ell)\right|
\leq   |k-\ell|.
\end{align}
Choose $c=\min(1/16, c_1/32)$, $C=2C_1+6$. It suffices to prove that, for every nonempty \(I\subseteq\llbracket1,N\rrbracket\), \(x\geq0\),
\begin{equation}\label{eq:coex auxiliary intrinsic}
\mathbb P\left(
\left|\sum_{i\in I}\tau_i-P_I(\mathsf M_N)\right|
\geq x\sqrt{L}
\right)
\leq Ce^{-cx^2},
\end{equation} 
where $L=|I|$.
When $x\leq 4$,  we have $1\leq Ce^{-cx^2}$ and \eqref{eq:coex auxiliary intrinsic} holds. When $x\geq\sqrt{L}$, the event on the left side of \eqref{eq:coex auxiliary intrinsic} is empty since 
$ |\sum_{i\in I}\tau_i-P_I(\mathsf M_N) |
\leq\sum_{i\in I}|\tau_i-p_i(\mathsf M_N)|<L$. We next assume $4<x<\sqrt{L}$ and denote $n=\lfloor x^2/16\rfloor$.
For each $k\in\llbracket0,N\rrbracket$ and $m\in\llbracket1,N\rrbracket$, we denote
$E_{k,m}=\llbracket1,k-m\rrbracket\cup\llbracket k+m,N\rrbracket$.
By contraction of total variation under projections, using \eqref{eq:approx in proof of moderate} with $m=n$, 
\begin{equation}\label{eq:in proof projected TV}
\TV{
\mathbb{P} 
    \left(
        (\tau_i)_{i\in I\cap E_{k,n}}\in\cdot
        \,\middle|\,T_N=k
    \right) 
-
\ber_{I\cap \llbracket1,k-n\rrbracket}(\rho_-)
\otimes
\ber_{I\cap \llbracket k+n,N\rrbracket}(\rho_+)
}
\leq C_1e^{-c_1 n}.
\end{equation}
We next claim that
\begin{equation}\label{eq:in proof inclusion}
\left\{\left|\sum_{i\in I}\tau_i-P_I(\mathsf M_N)\right|
\geq x\sqrt{L}\right\}\cap\{|T_N-\mathsf M_N|<n\}
\subseteq
\left\{
\left|\sum_{i\in I\cap E_{T_N,n}   }(\tau_i-p_i(T_N))\right|
\geq x\sqrt{L}-3n
\right\}.
\end{equation}
Indeed, this follows from
\[
\sum_{i\in I}\tau_i-P_I(\mathsf M_N)
=
\sum_{i\in I\cap E_{T_N,n} }(\tau_i-p_i(T_N))
+\sum_{i\in I\setminus  E_{T_N,n} }(\tau_i-p_i(T_N))
+P_I(T_N)-P_I(\mathsf M_N),
\]\
in view of
 $|\sum_{i\in I\setminus E_{T_N,n}}(\tau_i-p_i(T_N))|
\leq |I\setminus E_{T_N,n}|<2n$,
and the fact that, on \(\{|T_N-\mathsf M_N|<n\}\), we have
$|P_I(T_N)-P_I(\mathsf M_N)|
\leq |T_N-\mathsf M_N|<n$ by \eqref{eq:coex profile Lipschitz}.

For \(k\in\llbracket0,N\rrbracket\), we have $|I\cap E_{k,n}|\in\mathbb N$ since 
$|I\cap E_{k,n}|=L-|I\setminus E_{k,n}|
>L-2n
\geq L- x^2/8
>0$.
Under the product measure in \eqref{eq:in proof projected TV}, the coordinates indexed by \(I\cap E_{k,n}\) are independent Bernoulli variables with means
\(p_i(k)\). Moreover,
 $x\sqrt L-3n
>0$, and so by \eqref{eq:in proof projected TV} and \cite[Lemma 7.2]{SchmidYangShock},
\[
\begin{split}
 \mathbb P\left(
\left.
\left|\sum_{i\in I\cap E_{k,n}}(\tau_i-p_i(k))\right|
\geq x\sqrt L-3n
\,\right|\,T_N=k
\right) 
&\leq
2 e^{  - 2\left(x\sqrt L-3n\right)^2/|I\cap E_{k,n}| }
+C_1e^{-c_1n } \leq
 e^{- c_2 x^2}
+C_2e^{-c_3n } 
\end{split}
\] for some constants $c_1,C_1,c_2,C_2,c_3>0$, 
where we used $|I\cap E_{k,n}|\leq L$ for the second inequality.  
The displayed bound is uniform in \(k\). Using \eqref{eq:in proof inclusion} on \(\{|T_N-\mathsf M_N|<n\}\) and 
\eqref{eq: dist in proof moderate} on its complement,  and that $n \geq  x^2/32$ as $x>4$, we obtain
\[
\mathbb P\left(\left|\sum_{i\in I}\tau_i-P_I(\mathsf M_N)\right|
\geq x\sqrt L\right)\leq  C_3e^{-c_4x^2}
\]  
for some constants $C_3,c_4>0$, allowing us to conclude the proof of \eqref{eq:coex auxiliary intrinsic}.
\end{proof}

\subsection{Second-order limit fluctuations: Proof of Theorem \ref{thm:ApproxCoex Fluctuations} }

We will now show the fluctuation limit theorem for the number of particles under the stationary open ASEP on the coexistence line. 

\begin{theorem} 
\label{thm:ApproxCoex Fluctuations again}
Consider the open ASEP on $\llbracket1,N\rrbracket$ with jump rates $\alpha,\beta,\gamma,\delta$ and $q$ satisfying \eqref{eq:conditions open ASEP}. Assume Liggett's condition \eqref{eq:Liggett's condition} and that the open ASEP is on the coexistence line $\a=\b>1$. Let  $\mathsf{M} =\mathsf{M} (\tau)$ be given by \eqref{eq:Mtau} and set
\[
h(x)=\sum_{i=1}^{\lfloor Nx\rfloor}\tau_i\quad\mbox{ for }x\in[0,1],
\]
where $\tau =(\tau_1,\ldots,\tau_N)\sim\mu_N^{(q,\b,\b)}$.
Let \(U\sim\mathrm{Unif}[0,1]\), and let
\(B=\{B_x\}_{0\le x\le1}\) and \(B'=\{B'_x\}_{0\le x\le1}\) be two
standard Brownian motions, with \(U,B,B'\) independent.  Then, as $N\to\infty$, 
\[
\left(
    \frac{ \mathsf{M}  }{N},
    \left\{
    \frac{1}{\sqrt N}\left(
        h(x) 
        -\rho_-\left(\lfloor Nx\rfloor\wedge  \mathsf{M}  \right)
        -\rho_+\left(\lfloor Nx\rfloor-  \mathsf{M}  \right)_+\right)
    \right\}_{0\le x\le1}
\right)
\Rightarrow
\left(
    U,
    \left\{
        \sigma
        \left(B_{x\wedge U}+B'_{(x-U)_+}\right)
    \right\}_{0\le x\le1}
\right),
\] 
where $\rho_-=1/(1+\b)$, $\rho_+=\b/(1+\b)$, and $\sigma= \sqrt{\b}/(1+\b)$. Here, the convergence is in the space \([0,1]\times D[0,1]\), where \(D[0,1]\) is equipped with the uniform topology induced by the supremum norm $\|f\|_\infty=\sup_{0\le t\le1}|f(t)|$.
\end{theorem}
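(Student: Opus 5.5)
Our plan is to replace the stationary configuration by an explicit coupled object built from the shock location $T_N$ and two independent Bernoulli blocks, prove the invariance principle for that object, and then transfer it back using the total variation estimates already established. Write $\mathcal H_N(x)=N^{-1/2}\bigl(h(x)-\rho_-(\lfloor Nx\rfloor\wedge\mathsf M)-\rho_+(\lfloor Nx\rfloor-\mathsf M)_+\bigr)$.

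First we reduce from $\mathsf M$ to $T_N$. By Corollary~\ref{cor:particle-minimum-localization}, together with the pointwise bound $\mathbb P(T_N=k)\le C/(N+1)$ from Lemma~\ref{asepm:lem:pointwise}, we have $|\mathsf M-T_N|\le (\log N)^2$ with probability tending to one. The centering profile is $1$-Lipschitz in the shock position, as in \eqref{eq:coex profile Lipschitz}. Hence replacing $\mathsf M$ by $T_N$ in both coordinates changes them by $o(1)$ in $[0,1]\times D[0,1]$. By Proposition~\ref{prop:macroscopic uniformity TN upper}, $T_N/N\Rightarrow U$.

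Next we approximate the configuration given $T_N=k$. Take $m=m_N=\lceil(\log N)^2\rceil$. Proposition~\ref{prop:first part proof Thm} gives a coupling, conditional on $T_N=k$, of $(\tau_i)_{i\in I_{k,m}}$ with an independent product $\xi\sim B_{k,m}$, valid uniformly in $k$. The coupling agrees except on an event of probability $Ce^{-cm}=o(1)$. We then build $\xi$ from two fixed i.i.d. sequences. The first is $(\xi^-_j)_{j\ge1}\sim\ber(\rho_-)$, read backwards from site $k-m$. The second is $(\xi^+_j)_{j\ge1}\sim\ber(\rho_+)$, read forwards from site $k+m$. Both are independent of $T_N$. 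The $2m-1$ sites in the window, together with the centering there, contribute at most $O(m/\sqrt N)\to0$ uniformly in $x$, so we may discard them.

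The main step is to check that the resulting process converges. The left block gives the partial sums $\sum_{i\le \lfloor Nx\rfloor\wedge k}(\xi_i-\rho_-)$. Reindexing from the left boundary, we use the sequence $\xi^-$ directly on sites $1,\dots,k-m$ instead of reading it from the shock. This is legitimate because the law of the block is the same product measure either way, and it turns the block into a random-time evaluation of a fixed random walk. The right block is a fixed random walk evaluated at $(\lfloor Nx\rfloor-k)_+$. By Donsker's theorem, applied jointly and independently to the two walks with variance $\rho_\pm(1-\rho_\pm)=\b/(1+\b)^2=\sigma^2$, these converge to $\sigma B$ and $\sigma B'$. Since $T_N/N\Rightarrow U$ is independent of both walks, and the composition map $(u,f,g)\mapsto f(\cdot\wedge u)+g((\cdot-u)_+)$ is continuous under the uniform topology when $f,g$ are continuous, the continuous mapping theorem yields the limit $\sigma(B_{x\wedge U}+B'_{(x-U)_+})$.

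We expect the main obstacle to be justifying the independence of the Bernoulli blocks from $T_N$. The conditional coupling is uniform in $k$, so we will glue the couplings over $k$ by sampling $T_N$ first. This produces a joint law in which the error is $\sum_k\mathbb P(T_N=k)\,Ce^{-cm}\to0$, while the idealized blocks are independent of $T_N$ by construction.
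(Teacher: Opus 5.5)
Your proposal is correct and follows essentially the paper's route: you couple the configuration outside a window around $T_N$ with two Bernoulli sequences independent of $T_N$, apply Donsker together with $T_N/N\Rightarrow U$ and the map $(u,f,g)\mapsto f(\cdot\wedge u)+g((\cdot-u)_+)$, and pass from $T_N$ to $\mathsf M_N$ via Corollary~\ref{cor:particle-minimum-localization} and the Lipschitz centering. The differences are cosmetic: you use a $(\log N)^2$ window instead of $\lfloor N^{1/4}\rfloor$, and you glue the conditional couplings from Proposition~\ref{prop:first part proof Thm} explicitly, which the paper leaves implicit; as the paper does, you should also work with linear interpolations of the partial-sum processes, so that Donsker and the continuous mapping theorem are applied in $C[0,1]$.
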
 
\begin{proof}
In view of Theorem \ref{thm:ApproxCoex}, after enlarging the probability space, there exist independent random variables
\[
\varsigma_i^-\sim\ber(\rho_-), \,i\in\llbracket1,N\rrbracket,\qquad\qquad\varsigma_i^+\sim\ber(\rho_+), \,i\in\llbracket1,N\rrbracket,
\]
independent of \(T_N\), and constants $C,c\in(0,\infty)$ depending only on $q$ and $\b$, such that 
\begin{equation}\label{eq:EN in proof flux}
    \mathbb P\left(E_N^{\complement}\right)\le Ce^{-c\lfloor N^{1/4}\rfloor},\qquad\mbox{where }
E_N=
\left\{
\begin{array}{ll}
\tau_i=\varsigma_i^-\quad &\mbox{ for }
1\le i\le T_N-\lfloor N^{1/4}\rfloor,\\ 
\tau_i=\varsigma_{i-T_N}^+\quad &\mbox{ for }
T_N+\lfloor N^{1/4}\rfloor\le i\le N.
\end{array}
\right\}
\end{equation} Throughout the proof, write
\(\mathsf M_N=\mathsf M(\tau)=\mathsf M\).
Define, for $x\in[0,1]$,
\begin{align*}
    Z_N(x)&=\frac{1}{\sqrt N}
\left(
\sum_{i=1}^{\lfloor Nx\rfloor}\tau_i
-\rho_-\left(\lfloor Nx\rfloor\wedge\mathsf M_N\right)
-\rho_+\left(\lfloor Nx\rfloor-\mathsf M_N\right)_+
\right),\\
X_N(x)&=\frac{1}{\sqrt N}
\left(
\sum_{i=1}^{\lfloor Nx\rfloor\wedge T_N}\left(\varsigma_i^--\rho_-\right)
+
\sum_{j=1}^{(\lfloor Nx\rfloor-T_N)_+}\left(\varsigma_j^+-\rho_+\right)
\right),\\
R_N^\pm(x)&=
\frac{1}{\sqrt N}
\sum_{i=1}^{\lfloor Nx\rfloor}(\varsigma_i^\pm-\rho_\pm),
\end{align*} 
and define \(S_N^\pm(x)\), $x\in[0,1]$ to be the linear interpolations of $R_N^\pm(x)$. 
Define 
\[
\Psi:[0,1]\times C[0,1]^2\to C[0,1],\qquad\Psi(u,f,g)(x)=f(x\wedge u)+g((x-u)_+).
\] 
Note that the map $\Psi$ is continuous in the supremum norms.  Indeed, if
\((u_n,f_n,g_n)\to(u,f,g)\), then
\[ 
\|\Psi(u_n,f_n,g_n)-\Psi(u,f,g)\|_\infty
\le 
\|f_n-f\|_\infty+\|g_n-g\|_\infty +\omega_f(\lvert u_n-u\rvert)+\omega_g(\lvert u_n-u\rvert)
\to0, 
\]
where \(\omega_f\) and \(\omega_g\) are the moduli of continuity.
Donsker's invariance principle 
and \(T_N/N\Rightarrow U\) from Theorem~\ref{thm:ApproxCoex} give
$ (T_N/N,S_N^-,S_N^+ )
\Rightarrow (U,\sigma B,\sigma B' )$ in $[0,1]\times C[0,1]^2$.
By the continuous mapping theorem,  
\[
\left(
\frac{T_N}{N},
\Psi\left(
\frac{T_N}{N}, S_N^-,  S_N^+
\right)
\right)
\Rightarrow
\left(
U,
\left\{
\sigma\left(B_{x\wedge U}+B'_{(x-U)_+}\right)
\right\}_{0\le x\le1}
\right)\qquad\mbox{ in }[0,1]\times C[0,1].
\]  To prove the theorem, it suffices to show that 
\begin{equation}\label{eq:flux suffices to show}
\left|
\frac{\mathsf M_N}{N}-\frac{T_N}{N}
\right|
+
\left\|
Z_N-
\Psi\left(
\frac{T_N}{N}, S_N^-, S_N^+
\right)
\right\|_\infty
\xrightarrow{\mathbb P}0,
\end{equation}
Note that 
\begin{equation*} 
\|Z_N-X_N\|_\infty
\le
N^{-1/2}\left(2\lfloor N^{1/4}\rfloor-1+
\lvert\rho_+-\rho_-\rvert\,\lvert\mathsf M_N-T_N\rvert\right)  
\qquad\text{ on }E_N.
\end{equation*}
On the other hand, since
$\|R_N^\pm- S_N^\pm\|_\infty\le 1/\sqrt N$ and 
$X_N(x)
=
R_N^- (x\wedge (T_N/N) )
+
R_N^+ ( (x- (T_N/N)  )_+ )$, 
\begin{equation*} 
\left\|
X_N-
\Psi\left(
\frac{T_N}{N}, S_N^-, S_N^+
\right)
\right\|_\infty
\le\frac{2}{\sqrt N}.
\end{equation*}
Combining the last two displays and \eqref{eq:EN in proof flux}, together with Theorem \ref{thm:ApproxCoex}, we conclude \eqref{eq:flux suffices to show}.
\end{proof}

\subsection{Limits of the correlation functions: Proof of Theorem \ref{thm:asymptotics of correlation functions}}

As a third application of our characterization and approximation results for the stationary distribution of the open ASEP on the coexistence line, we show the following statement on its correlations.

\begin{theorem}\label{thm:asymptotics of correlation functions again}
Consider the open ASEP on the lattice $\llbracket1,N\rrbracket$ with jump rates $\alpha,\beta,\gamma,\delta$ and $q$ satisfying \eqref{eq:conditions open ASEP}. Assume Liggett's condition \eqref{eq:Liggett's condition} holds and that the open ASEP is on the coexistence line $\a=\b>1$, where $\a=(1-\alpha)/\alpha$ and $\b=(1-\beta)/\beta$. We denote the stationary measure of this model by $\mu_N^{(q,\b,\b)}$, which is a probability measure on the space $\{0,1\}^N$. Suppose $\tau^{(N)}=(\tau_1^{(N)},\dots,\tau_N^{(N)})\sim\mu_N^{(q,\b,\b)}$.

Fix $m\in\mathbb N$ and \(0\le x_1\le\cdots\le x_m\le1\).
Suppose that for each $N\geq m$, there exist integers  
\(j_{1,N},\ldots,j_{m,N}\) satisfying $1\le j_{1,N}<\cdots<j_{m,N}\le N$ and 
$\lim_{N\to\infty}j_{r,N}/N =x_r$ for each $r\in\llbracket1,m\rrbracket$. 
Then
\[
\lim_{N\to\infty}
\mathbb E 
\left[
    \prod_{r=1}^m \tau_{j_{r,N}}^{(N)}
\right]
=
\sum_{\ell=0}^m
    (x_{\ell+1}-x_\ell)\rho_-^\ell\rho_+^{m-\ell},
\]
where \(x_0=0\), \(x_{m+1}=1\), $\rho_-=1/(1+\b)$, and $\rho_+=\b/(1+\b)$.
\end{theorem}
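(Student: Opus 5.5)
Since the product $\prod_{r=1}^m\tau_{j_{r,N}}$ takes values in $[0,1]$, I will condition on the shock location and replace the configuration away from it by Bernoulli product measures using Theorem~\ref{thm:ApproxCoex}. Choose a window radius $w_N=\lfloor N^{1/4}\rfloor$. Writing $\mathbb E[\prod_r\tau_{j_{r,N}}]=\sum_{k=0}^N\mathbb P(\mathsf M_N=k)\,\mathbb E[\prod_r\tau_{j_{r,N}}\mid\mathsf M_N=k]$, I split the sum into two classes of $k$. The \emph{good} $k$ are those with $|k-j_{r,N}|\ge w_N$ for every $r$; the \emph{bad} $k$ are the rest.

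For a good $k$, all sites $j_{r,N}$ lie in $\llbracket1,k-w_N\rrbracket\cup\llbracket k+w_N,N\rrbracket$. Using the first bound of Theorem~\ref{thm:ApproxCoex} with $m=w_N$ and projecting onto these coordinates gives
\[
\Bigl|\mathbb E\Bigl[\prod_r\tau_{j_{r,N}}\,\Big|\,\mathsf M_N=k\Bigr]-\rho_-^{\ell(k)}\rho_+^{m-\ell(k)}\Bigr|\le Ce^{-cw_N},
\]
where $\ell(k)=\#\{r:j_{r,N}\le k\}$. This estimate is uniform in $k$.

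For the bad $k$, there are at most $m(2w_N+1)$ of them. By the upper bound $\mathbb P(\mathsf M_N=k)\le 1/(N+1)+\TV{\mathbb P(\mathsf M_N\in\cdot)-\mathrm{Unif}_{\llbracket0,N\rrbracket}}$, combined with the $C'/N$ bound of Theorem~\ref{thm:ApproxCoex}, their total mass is bounded. Here I will instead use the sum directly: the total variation bound gives $\sum_{k\text{ bad}}\mathbb P(\mathsf M_N=k)\le m(2w_N+1)/(N+1)+C'/N\to0$.

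It follows that $\mathbb E[\prod_r\tau_{j_{r,N}}]=\sum_k\mathbb P(\mathsf M_N=k)\rho_-^{\ell(k)}\rho_+^{m-\ell(k)}+o(1)$, where we have reinserted the bad $k$ at cost $o(1)$. The main term equals $\mathbb E[f_N(\mathsf M_N)]$ with $f_N(k)=\rho_-^{\ell(k)}\rho_+^{m-\ell(k)}$. Replacing the law of $\mathsf M_N$ by $\mathrm{Unif}_{\llbracket0,N\rrbracket}$ costs at most $C'/N$ since $0\le f_N\le1$. Under the uniform law, $\ell(k)=\ell$ holds exactly for $k\in\llbracket j_{\ell,N},j_{\ell+1,N}-1\rrbracket$, with the conventions $j_{0,N}=0$ and $j_{m+1,N}=N+1$. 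This set has uniform probability $(j_{\ell+1,N}-j_{\ell,N})/(N+1)\to x_{\ell+1}-x_\ell$, which yields the stated limit.

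The only delicate point is to keep the error from the conditional approximation uniform in $k$ while handling the $o(N)$ bad shock locations near the observed sites. Both are controlled by the explicit constants in Theorem~\ref{thm:ApproxCoex}, so I expect no real obstacle. Ties $x_r=x_{r+1}$ cause no issue, because the corresponding interval simply has vanishing mass.
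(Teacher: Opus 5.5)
Your proof is correct and follows essentially the same route as the paper. Both arguments split off the shock locations within a sublinear window of the observed sites, replace the remaining conditional laws by Bernoulli products, swap the law of the shock location for $\mathrm{Unif}_{\llbracket0,N\rrbracket}$ at cost $O(1/N)$ using $0\le f_N\le 1$, and count the uniform mass of each block $\llbracket j_{\ell,N},j_{\ell+1,N}-1\rrbracket$. The differences are cosmetic. The paper conditions on the path minimum $T_N$, using the analogous approximation and uniformity bounds for $T_N$, with a window of size $\lfloor\sqrt N\rfloor$. You condition directly on $\mathsf M_N$ with window $\lfloor N^{1/4}\rfloor$, which matches the literal statement of Theorem~\ref{thm:ApproxCoex} and is if anything slightly cleaner.
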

\begin{proof}
    We work on the enlarged probability space as in Theorem \ref{thm:ApproxCoex}. Define 
    \begin{equation}\label{eq:correlation bad set cardinality}
{B_N}
=
\left\{
k\in\llbracket0,N\rrbracket:
\min_{1\leq r\leq m}
\left|k-j_{r,N}\right|< \lfloor\sqrt N \rfloor
\right\},\qquad\mbox{ then } 
|{B_N}|\leq m\left(2\lfloor\sqrt N \rfloor-1\right).
\end{equation}
For \(k\in\llbracket0,N\rrbracket\), define  $L_N(k)
=
\#\{
r\in\llbracket1,m\rrbracket:
j_{r,N}\leq k \}$ and  
$F_N(k)
=
\rho_-^{L_N(k)}
\rho_+^{m-L_N(k)}$.
By the total-variation estimate in Theorem \ref{thm:ApproxCoex}, there exist constants \(C,c>0\) depending
only on \(q\) and \(\b\), such that
\begin{equation*} 
\left|
\mathbb E\left[
\prod_{r=1}^m\tau_{j_{r,N} }^{(N)}
\right]
-
\mathbb E\left[F_N(T_N)\right]
\right|
\leq
Ce^{-c\lfloor\sqrt N\rfloor}
+
\mathbb P(T_N\in{B_N}).
\end{equation*}
By Theorem \ref{thm:ApproxCoex} and \eqref{eq:correlation bad set cardinality}, 
\begin{equation*} 
\mathbb P(T_N\in{B_N})
\leq
\mathrm{Unif}_{\llbracket0,N\rrbracket}({B_N})
+
\left\|
\mathbb P(T_N\in\,\cdot\,)-\mathrm{Unif}_{\llbracket0,N\rrbracket}
\right\|_{\mathrm{TV}} \leq
\frac{m(2\lfloor\sqrt N\rfloor-1)}{N+1}
+
\frac{C}{N}. 
\end{equation*}
Moreover, since \(0\leq F_N\leq1\), we claim that
\begin{equation*} 
\left|
\mathbb E[F_N(T_N)]
-
\frac{1}{N+1}\sum_{k=0}^N F_N(k)
\right|
\leq
\frac{C}{N}.
\end{equation*}
Indeed, for each probability measure \(\nu\) on \(\llbracket0,N\rrbracket\), we have
$\mathbb E_\nu[F_N]
=
\int_0^1
\nu (\{k:F_N(k)>t\} )\,dt$.
Taking the difference of this identity for the distributions \(\mathbb P(T_N\in\,\cdot\,)\) and \(\mathrm{Unif}_{\llbracket0,N\rrbracket}\),  we conclude this claim.

Let $j_{0,N}=0$ and $j_{m+1,N}=N+1$. For   \(\ell\in\llbracket0,m\rrbracket\),   $L_N(k)=\ell$ if and only if  $j_{\ell,N}
\leq k
<
j_{\ell+1,N}$. Hence
\begin{equation*} 
\frac{1}{N+1}\sum_{k=0}^N F_N(k)
=
\sum_{\ell=0}^m
\frac{
j_{\ell+1,N} -j_{\ell,N} 
}{N+1}
\rho_-^\ell\rho_+^{m-\ell}.
\end{equation*}

Combining the last four displayed equations, we obtain 
\[
\begin{split}
&\left|
\mathbb E\left[
\prod_{r=1}^m\tau_{j_{r,N} }^{(N)}
\right]
-
\sum_{\ell=0}^m
\frac{
j_{\ell+1,N} -j_{\ell,N} 
}{N+1}
\rho_-^\ell\rho_+^{m-\ell}
\right| \leq
Ce^{-c\lfloor\sqrt N\rfloor}
+
\frac{m(2\lfloor\sqrt N\rfloor-1)}{N+1}
+
\frac{2C}{N}.
\end{split}
\]
The right side tends to zero. Note that for every \(\ell\in\llbracket0,m\rrbracket\),
$ 
\lim_{N\to\infty}(j_{\ell+1,N}-j_{\ell,N})
/(N+1)
=
x_{\ell+1}-x_\ell$, and so we conclude the proof.
\end{proof}

\bibliographystyle{goodbibtexstyle.bst}
\bibliography{RWApproximation}

\end{document}